\documentclass[11pt,a4paper]{article}

\usepackage[T1]{fontenc}
\usepackage{lmodern}
\usepackage{microtype}
\usepackage[a4paper,top=22mm,bottom=23mm,left=23mm,right=23mm,headheight=14pt,headsep=9mm]{geometry}
\usepackage{amsmath,amssymb,mathtools,amsthm}
\usepackage{tikz}
\usepackage{fancyhdr}
\usepackage{needspace}
\usepackage{tabularx}
\usepackage[hidelinks]{hyperref}

\hypersetup{
  pdftitle={Finite-time blowup of the Euler equation without forcing},
  pdfsubject={Amplification, exact packets, stage induction, and the limiting initial datum},
  pdfauthor={Fan Zheng}
}
\numberwithin{equation}{section}
\newtheorem{introlemma}[equation]{Lemma}
\newtheorem{maintheorem}{Theorem}
\newcommand{\diag}{\mathrm{diag}}
\newcommand{\av}[1]{\langle #1\rangle}

\newcommand{\lead}{\mathrm{lead}}
\newcommand{\eps}{\varepsilon}

\newcommand{\R}{\mathbb{R}}

\begin{document}
\thispagestyle{plain}
\begin{center}
  {\LARGE\bfseries Finite-time blowup of the Euler equation\\[3pt]
  without forcing}\par
  \vspace{6pt}
  {\large An exposition of the construction in \cite{euler}}\par
  \vspace{6pt}
  {\large Fan Zheng}\par
  \vspace{3pt}
  {\small Instituto de Ciencias Matem\'aticas}\par
  {\small\href{mailto:fan.zheng@icmat.es}{\texttt{fan.zheng@icmat.es}}}
\end{center}
\vspace{3pt}

\section*{Overview and main result}
\phantomsection\addcontentsline{toc}{section}{Overview and main result}
These notes present the construction in \cite{euler} of finite-time blowup
for the three-dimensional incompressible, unforced Euler equations on $\R^3$:
\begin{equation}\tag{E}\label{eq:euler}
 \partial_tu+(u\cdot\nabla)u+\nabla p=0,\qquad \nabla\cdot u=0.
\end{equation}
The construction successively adds oscillatory layers to smooth
parent solutions. Each layer is constructed by a finite expansion and
an exact correction, so that the resulting child solves Euler.
The initial increments are then summed to obtain a single initial datum.
The conclusion established by the argument is the following.

\begin{maintheorem}[Finite maximal smooth lifespan]\label{thm:main}
There exists a divergence-free $u_0\in C_c^\infty(\R^3;\R^3)$ whose
unique maximal smooth Euler solution has lifespan $T_*=T_*(u_0)$ with
\[
 0<T_*\le T<\infty,
\]
where $T$ is the accumulation time of the construction's target times.
Here the solution belongs to $C([0,S];H^m(\R^3))$ for every fixed
integer $m\ge0$ and every $S<T_*$. At its maximal time,
\[
 \limsup_{t\uparrow T_*}\|\nabla u(t)\|_{L^\infty}=\infty,
 \qquad
 \int_0^{T_*}\|\operatorname{curl}u(t)\|_{L^\infty}\,dt=\infty.
\]
\end{maintheorem}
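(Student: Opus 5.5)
The plan follows the overview: build an infinite sequence of exact smooth Euler solutions $u^{(n)}$, each obtained from its parent $u^{(n-1)}$ by adding a compactly supported oscillatory layer. Then pass to the limit of the initial data, and use local well-posedness plus the Beale--Kato--Majda criterion to conclude.

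\textbf{Step 1 (stage induction).} Fix target times $0<t_1<t_2<\dots\uparrow T$. Starting from $u^{(0)}\equiv0$, I will construct for each $n$ a divergence-free $u^{(n)}_0\in C_c^\infty$ with the following properties:
\begin{enumerate}
\item[(a)] the Euler solution $u^{(n)}$ exists smoothly on $[0,t_n]$;
\item[(b)] $\|u^{(n)}_0-u^{(n-1)}_0\|_{H^{m}}\le 2^{-n}$ for all $m\le n$, with supports in a fixed ball;
\item[(c)] $\|\nabla u^{(n)}(t_n)\|_{L^\infty}\ge n$, or more robustly $\int_0^{t_n}\|\operatorname{curl}u^{(n)}\|_{L^\infty}\,dt\ge n$;
\item[(d)] a stability margin: every datum within $\delta_n$ of $u_0^{(n)}$ in $H^{m_n}$ yields a solution on $[0,t_n]$ that still has vorticity integral $\ge n-1$.
\end{enumerate}
The layer at stage $n$ is a high-frequency packet of frequency $\lambda_n$ and amplitude $\lambda_n^{-N}$, so that its $H^n$ norm is tiny. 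It is placed where the parent flow has a stretching (amplification) direction, so that over $[t_{n-1},t_n]$ its gradient is amplified by a factor $e^{c\lambda_n^{\alpha}}$ that beats the small amplitude. The exact child is the parent plus a finite WKB-type expansion plus an exact correction solving the perturbed Euler equation. The correction must be controlled on $[0,t_n]$ by energy estimates in $H^{m}$, using that the expansion error is $O(\lambda_n^{-K})$.

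\textbf{Step 2 (limit).} Choose the parameters inductively so that $2^{-k}$ summed over $k>n$ is less than $\delta_n$. Then $u_0=\lim u_0^{(n)}$ converges in every $H^m$, is supported in the fixed ball, and so lies in $C_c^\infty$ and is divergence-free. By (d), the maximal solution $u$ from $u_0$, if it exists smoothly on $[0,t_n]$, satisfies $\int_0^{t_n}\|\operatorname{curl}u\|_{L^\infty}\ge n-1$. Hence either $T_*<T$, or the vorticity integral diverges as $t\uparrow T$. In both cases $T_*\le T$, and by Beale--Kato--Majda $\int_0^{T_*}\|\operatorname{curl}u\|_{L^\infty}=\infty$. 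Since $\|\operatorname{curl}u\|_{L^\infty}\lesssim\|\nabla u\|_{L^\infty}$, this also gives $\limsup_{t\uparrow T_*}\|\nabla u\|_{L^\infty}=\infty$. Positivity of $T_*$ and the regularity $C([0,S];H^m)$ follow from standard local well-posedness in $H^m$, $m>5/2$, together with persistence of higher regularity.

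\textbf{Main obstacle.} The hard step is (d) together with the exact correction in Step 1. Stability of smooth Euler solutions on $[0,t_n]$ holds with constants depending on $\sup_{[0,t_n]}\|\nabla u^{(n)}\|_{L^\infty}$, which is huge because of the amplification. The next layer must therefore be smaller than $\exp(-C\int_0^{t_n}\|\nabla u^{(n)}\|_{L^\infty})$ in high norms, yet still blow up its own gradient by time $t_{n+1}$. I would handle this by choosing the parameters in strict order: first $\delta_n$ from the Gronwall stability bound for stage $n$, then $\lambda_{n+1}$ large enough that the amplification on $[t_n,t_{n+1}]$ compensates an amplitude below $\delta_n$. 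This needs a quantitative amplification lemma for packets on a parent flow with a hyperbolic (strain) point, where the growth rate depends only on the parent strain, not on $\lambda$. Uniform-in-stage control of the packet's physical support, so that the ball stays fixed, is the secondary difficulty.
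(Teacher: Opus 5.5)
Your endpoint argument (local well-posedness plus Beale--Kato--Majda) matches the paper's, and Step~2 is logically sound \emph{given} (d). But (d) is exactly where the plan breaks, and the fix you sketch does not close it.

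As you say, a margin around $u^{(n)}$ on $[0,t_n]$ costs at least $\log(1/\delta_n)\gtrsim\int_0^{t_n}\|\nabla u^{(n)}\|_{L^\infty}\,dt$. It costs more for $H^{m_n}$ energy estimates, which see $\sup_t\|u^{(n)}\|_{H^{m_n+1}}$. Every later increment must fit inside this margin.

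Taking $\lambda_{n+1}$ large cannot pay for it. The leading amplitude obeys the frequency-independent ODEs $\dot m=-M^Tm$, $\dot v=-Mv+2m(m\cdot Mv)/|m|^2$, so the gain is fixed by the parent. Meanwhile the constraint $a_{n+1}\lambda_{n+1}^{m_n}\lesssim\delta_n$ with $m_n\ge3$ only tightens as $\lambda_{n+1}$ grows. The factor $e^{c\lambda_n^\alpha}$ in Step~1 also contradicts your own remark that the rate is $\lambda$-independent.

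What you would need is a log-gain after $t_n$ exceeding the whole accumulated Gronwall cost on $[0,t_n]$. The available mechanism is far weaker:
\begin{itemize}
\item The strong part of a layer's gradient is the nilpotent shear $hqp^T$ (because $m\cdot v=0$). By itself it gives only polynomial growth of the next wave.
\item Exponential growth occurs only at the geometric-mean rate $\sqrt{2a_jh_{j-1}}$, through the $O(1)$ older gradient, and stops once the normal turns. So the total gain is at most $e^{Cx_{j-1}}$, up to polynomial factors.
\item By contrast, the last unit of scaled time before $t_{j-1}$ already gives $\int_0^{t_{j-1}}\|\nabla U^{j-1}\|_{L^\infty}\,dt\gtrsim h_{j-1}/\sqrt{h_{j-2}}$. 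Its logarithm is at least $x_{j-2}/(2j^5)$.
\end{itemize}
A margin of size $\exp(-c\,e^{x_{j-2}/(2j^5)})$ is out of reach of a gain $e^{Cj^2x_{j-2}}$.

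The missing idea is that no margin around the finite stages is needed. Argue by contradiction: if $T_*>T$, the limiting solution $u$ is smooth on $[0,T]$ with $A=\sup_{[0,T]}\|u\|_{H^4}<\infty$. Use $u$, not $U^j$, as the reference.
\begin{itemize}
\item The difference $w=U^j-u$ solves $\partial_tw+(u+w)\cdot\nabla w+w\cdot\nabla u+\nabla\pi=0$.
\item Transport and pressure cancel in the $H^3$ energy, and every term linear in $w$ has coefficients from $u$. Hence $\frac{d}{dt}\|w\|_{H^3}\le C(A\|w\|_{H^3}+\|w\|_{H^3}^2)$, with no dependence on the huge norms of $U^j$.
\item Plain convergence $U^j(0)\to u_0$ in $H^3$, with no rate, then gives $\sup_{[0,t_j]}\|U^j-u\|_{H^3}\to0$.
\item This contradicts $\|\nabla U^j(t_j,0)\|\ge h_j/2\to\infty$ against $\|\nabla u(t_j,0)\|\le CA$.
\end{itemize}
The gain then only has to beat costs polynomial in $k_j$ and $\ell_j^{-1}$ when summing the initial increments in each $H^m$. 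In the paper this is $\alpha_j\le P_j^ce^{-bx_{j-1}}$ against $\log k_j=x_{j-1}/j^2$.

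Two further ingredients are absent from your Step~1.
\begin{itemize}
\item In the unforced problem, the new layer must already exist throughout the parent's growth phase $[0,t_{j-1}]$ without an $\exp(\int\|\nabla u\|_{L^\infty}\,dt)$ loss. It must also arrive at activation with a prescribed alignment (in the paper, $v_q=1$, $v_p\le0$). The paper obtains both from a coercive variational history problem.
\item The exact correction carries a Gronwall factor $e^{P^c}$ with $P_j\ge h_j$. No fixed-order residual $O(k^{-K})$ can absorb this while keeping $\log k_j=o(x_{j-1})$. The paper therefore expands in Gevrey-2 up to order $N=\lfloor k^\vartheta\rfloor$, obtaining a residual $\exp(-0.7k^\vartheta\log k)$.
\end{itemize}
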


The proof gives an upper bound $T_*\le T$, not an identification of
these times or a location of the limiting singularity. Only the initial
velocity series is summed in all Sobolev norms. Stability relative to a
hypothetical smooth solution through $T$ then contradicts the growing
finite-stage gradients. Convergence of the full velocity and pressure
series through $T$ is not required.
Stability of the blowup under perturbations of the initial datum is not
established here.

Section~\ref{sec:wave-derivation} derives useful identities and
introduces the stage parameters and clocks. Sections~\ref{sec:amplification}--\ref{sec:quantitative-amplification}
prove the wave-model comparison, amplification, and target geometry.
Sections~\ref{sec:history-activation}--\ref{sec:mean-inverse}
construct the history and forced inverses. Sections~\ref{sec:packet-expansion}--\ref{sec:complete-packet}
construct an increment yielding an exact child solution and establish
its flow-map and initial-data estimates.
Section~\ref{sec:wave-packet-interface} connects the wave estimates to
that construction. Sections~\ref{sec:stage-parameters}--\ref{sec:stage-induction}
choose the parameters, construct the seed, and close the induction.
Finally, Propositions~\ref{prop:limiting-initial-datum},
\ref{prop:limiting-finite-lifespan}, and~\ref{prop:endpoint-divergence}
in Sections~\ref{sec:limiting-initial-datum}--\ref{sec:endpoint-conclusions}
prove Theorem~\ref{thm:main}.

The construction and proof strategy are those of \cite{euler}.
These notes reorganize their presentation and expand the simultaneous
velocity--pressure derivation, variational inverses, fixed-scale Gevrey
estimates, and interfaces between the stages. All equation numbers are
local. The bibliography identifies the supplied source file by its hash.

\clearpage
\begingroup
\setlength{\parskip}{0pt}
\tableofcontents
\endgroup
\clearpage

\section*{Notation guide}
\phantomsection\addcontentsline{toc}{section}{Notation guide}
Time $t$ is physical time throughout; primes in the wave-model comparison
denote differentiation in $\tau$. The following guide records the
principal local uses of symbols; each is defined at its first use.
\begin{center}
\renewcommand{\arraystretch}{1.1}
\begin{tabularx}{\linewidth}{@{}lX@{}}
\hline
Symbol & Meaning and location \\
\hline
$a,\ell y$ & Physical parent label; $a_j$ separately denotes the coupling at the origin in \eqref{eq:amplification-scales}. \\
$F,M,H$ & Parent deformation, velocity gradient, and pressure Hessian, pulled back to labels; \eqref{intro:eq:parent}. \\
$m_j,v_j$ & New wave normal and velocity amplitude; $p,q,n$ are the preceding wave's frame at the origin; \eqref{eq:inherited-frame}. \\
$Z=(P,Q,N),\ Y=(U,V)$ & Scaled normal and velocity components; \eqref{eq:scaled-components}. The third velocity component is reconstructed by transversality. \\
$W,A_p,B_p,C_p$ & Lifted velocity increment and its oscillatory, mean, and curl-correction coefficients; Section~\ref{sec:wave-derivation}. \\
$G,K$ & $G=FR_\perp$ in the transverse inverse; $G=F^{-1}F^{-T}$ and $K=G^{-1}$ in the exact correction. $K_h,\mathcal K$ are regularity bounds. \\
$P,R,\varrho$ & Input budget, Gevrey derivative scale, and Gevrey radius; Sections~\ref{sec:packet-expansion}--\ref{sec:exact-correction}. This $P$ is not the normal component. \\
$j,p,N$ & Stage, coefficient order, and finite truncation order; $N$ in $Z$ instead denotes the third normal component. \\
$t_{j-1},t_j,S_j,\Theta_j$ & Activation, target, retained physical endpoint, and scaled comparison endpoint; Subsection~\ref{intro:subsec:time-intervals}. \\
$T,T_*$ & Target accumulation time and actual maximal smooth lifespan, with \mbox{$T_*\le T$}. \\
\hline
\end{tabularx}
\end{center}

\section{Introduction}\label{sec:wave-derivation}
We first derive identities for one child layer over a smooth parent:
the leading velocity and pressure, the successive corrections enforcing
the Euler equation and incompressibility, and the residual left by a
finite expansion. We then explain how to correct this residual and
recover the physical solution and its flow. Finally, we introduce the
parameters of the successive layers, explain the amplification and
turning mechanisms that prepare each layer to generate the next, and
describe the activation, target, and retained time intervals.

\subsection{The increment equation in Lagrangian and lifted coordinates}

Let $(u,p)$ solve the incompressible Euler equations \eqref{eq:euler}.
Write the proposed child as
$u^{\rm new}=u+w$, $p^{\rm new}=p+\pi$ and set
$D_t=\partial_t+u\cdot\nabla_x$, $M_{ij}=\partial_{x_j}u_i$.
Subtracting the parent equation gives the exact increment equation
\begin{equation}\label{intro:eq:physical-increment}
 D_tw+Mw+(w\cdot\nabla_x)w+\nabla_x\pi=0,
 \qquad \operatorname{div}_xw=0.
\end{equation}
The term $Mw=(w\cdot\nabla_x)u$ is the interaction with the parent.

Let $a$ be the parent initial label and introduce its flow:
\begin{equation}
 X_t(t,a)=u(t,X(t,a)),\qquad X(0,a)=a.
\end{equation}
Fix a localization length $\ell>0$, a frequency $k>0$, and a unit vector
$m_0$. Put $a=\ell y$ and, from now on, use pulled-back matrices
\begin{equation}
 F(t,y)=\nabla_aX(t,\ell y),\qquad
 M(t,y)=\nabla_xu(t,X(t,\ell y)),\qquad
 H(t,y)=\nabla_x^2p(t,X(t,\ell y)).
\end{equation}
They satisfy
\begin{equation}\label{intro:eq:parent}
 F_t=MF,\qquad F_{tt}=-HF,\qquad F(0)=I,\qquad \det F=1.
\end{equation}
For a physical scalar $h$ with pullback
$\widehat h(t,y)=h(t,X(t,\ell y))$, the spatial chain rule gives
\[
 \nabla_y\widehat h=\ell F^T(\nabla_xh)\circ X,
 \qquad (\nabla_xh)\circ X=\ell^{-1}d\widehat h,
 \qquad d:=F^{-T}\nabla_y.
\]
Thus $d$ represents $\ell\nabla_x$ in parent label coordinates.

Choose the physical phase to be constant along parent trajectories:
\[
 \theta(t,X(t,\ell y))=km_0\cdot y,\qquad D_t\theta=0.
\]
Its gradient gives the local oscillation direction and frequency.
Define the normal by removing the fixed factor $k/\ell$:
\[
 m(t,y):=\frac{\ell}{k}(\nabla_x\theta)(t,X(t,\ell y)).
\]
Applying the preceding chain rule to this phase, then differentiating
in time using $F_t=MF$, gives
\begin{equation}\label{intro:eq:normal}
 m=k^{-1}d(km_0\cdot y)=F^{-T}m_0,\qquad
 m_t=-F^{-T}(F_t)^Tm=-M^Tm.
\end{equation}
Here $\partial_t$ holds $y$ fixed, so it is material differentiation
along the parent trajectory.

We will choose the velocity increment $w$, the pressure increment $\pi$,
and their successive corrections as periodic profiles evaluated at the
transported phase. For the full increments, write
\begin{equation}\label{intro:eq:normalized-increment}
 w(t,X(t,\ell y))=\ell W(t,y,km_0\cdot y),\qquad
 \pi(t,X(t,\ell y))=\ell^2q(t,y,km_0\cdot y).
\end{equation}
To construct these profiles, now reuse $\theta\in\mathbb T$ as an
independent angle, rather than as the physical function $\theta(t,x)$.
For any profile $A(t,y,\theta)$, let
$A^\diag(t,y)=A(t,y,km_0\cdot y)$ and set
\begin{equation}\label{intro:eq:lifted-derivative}
 \tilde d=d+km\partial_\theta.
\end{equation}
The chain rules are
\begin{equation}\label{intro:eq:chain}
 D_t[A^\diag(t,y)]\big|_{x=X(t,\ell y)}=(\partial_tA)^\diag,
 \qquad
 \nabla_x[A^\diag(t,y)]\big|_{x=X(t,\ell y)}
 =\ell^{-1}(\tilde dA)^\diag.
\end{equation}
These formulas mean that $A^\diag$ is viewed as a physical field via
$x=X(t,\ell y)$; for a vector, apply the gradient identity componentwise.
In particular, $\partial_\theta$ alone brings no factor $k$.

Substituting \eqref{intro:eq:normalized-increment} and using
\eqref{intro:eq:chain}, a sufficient lifted equation for
\eqref{intro:eq:physical-increment} is
\begin{equation}\label{intro:eq:lifted-W}
 W_t+MW+W\cdot\tilde dW+\tilde dq=0,
 \qquad \tilde d\cdot W=0.
\end{equation}
We impose it for every independent $\theta$, then restrict to the graph.

\subsection{The leading order velocity and the pressure coefficients}\label{intro:subsec:leading-coefficients}

Let $f_\delta$ be the smooth odd, mean-zero periodic profile defined by
\eqref{eq:seed-profile-definition}, so that $f_\delta'(0)=\delta^{-1}$ and
$f_\delta'\ge-C$. For a cutoff $\chi_1(y)$, set
\begin{equation}\label{intro:eq:leading-coefficients}
 A_1=\alpha\chi_1v f_\delta(\theta),\qquad
 W_{\rm lead}=k^{-1}A_1,\qquad
 q_{\rm lead}=k^{-2}\pi_1.
\end{equation}
The coefficient of $k^0$ in the divergence equation is
$m\cdot\partial_\theta A_1=0$. We impose $m\cdot v=0$, hence
$m\cdot A_1=0$. The phase self-interaction then vanishes identically:
\begin{equation}
 (A_1\cdot m)\partial_\theta A_1=0.
\end{equation}
Matching the coefficient of $k^{-1}$ in \eqref{intro:eq:lifted-W} gives
\begin{equation}\label{intro:eq:first-order}
 (A_1)_t+MA_1+m\partial_\theta\pi_1=0.
\end{equation}
Substituting $A_1$ into \eqref{intro:eq:first-order} gives
\[
 \alpha\chi_1f_\delta(\theta)(v_t+Mv)+m\partial_\theta\pi_1=0.
\]
The pressure term is parallel to $m$. Wherever $\chi_1\ne0$, matching
the profile therefore requires $v_t+Mv$ to be parallel to $m$.
We impose this relation globally through the coefficient equation
\[
 v_t=-Mv+\lambda m,\qquad
 \partial_\theta\pi_1=-\alpha\chi_1\lambda f_\delta,
\]
where $\lambda=\lambda(t,y)$ is a scalar. To determine it, differentiate
the constraint using $m_t=-M^Tm$:
\[
 0=\partial_t(m\cdot v)
 =-m\cdot Mv+m\cdot(-Mv+\lambda m)
 =-2m\cdot Mv+\lambda|m|^2.
\]
Thus $\lambda=2(m\cdot Mv)/|m|^2$. Substitution and a zero-mean
primitive in $\theta$ give
\begin{equation}\label{intro:eq:wave}
 \begin{aligned}
 v_t&=-Mv+2m\frac{m\cdot Mv}{|m|^2},\qquad m\cdot v=0,\\
 \pi_1&=-2\alpha\chi_1\frac{m\cdot Mv}{|m|^2}
                  \partial_\theta^{-1}f_\delta.
 \end{aligned}
\end{equation}
Here $\partial_\theta^{-1}$ is the zero-mean periodic primitive.
These equations preserve $m\cdot v=0$ from its initial value.

Combining \eqref{intro:eq:normalized-increment} and
\eqref{intro:eq:wave} gives the physical leading increments:
\begin{equation}\label{eq:leading-velocity}
 \begin{aligned}
 w_{\lead}(t,x)&=\frac\ell k\alpha\chi_1v f_\delta(\theta),\\
 \pi_{\lead}(t,x)&=-2\left(\frac\ell k\right)^2\alpha\chi_1
       \frac{m\cdot Mv}{|m|^2}\varphi_\delta(\theta),
 \qquad \varphi_\delta=\partial_\theta^{-1}f_\delta.
 \end{aligned}
\end{equation}
Here $\theta=km_0\cdot y$ and $x=X(t,\ell y)$.
Differentiating the phase gives the leading gradient and pressure Hessian:
\begin{align}
 \nabla_x w_{\lead}
 &=\alpha\chi_1v\otimes m\,f_\delta'(\theta)
   +\frac\ell k f_\delta(\theta)\nabla_x(\alpha\chi_1v),
 \label{eq:gradient}\\
 \nabla_x^2\pi_{\lead}
 &=-2\alpha\chi_1(m\cdot Mv)\frac{mm^T}{|m|^2}
     f_\delta'(\theta)
   +\text{terms with derivatives of slow coefficients}.
 \label{eq:leading-pressure-hessian}
\end{align}
These display the rank-one shear and pressure Hessian used below.
Coefficient matching means $k\to\infty$ with $\ell,\alpha,\delta$ and
smooth coefficient bounds fixed. Under this convention the omitted
pressure-Hessian terms are $O(\ell/k)$ in amplitude. The leading increment
\eqref{eq:leading-velocity} still has slow divergence, pressure, and nonlinear residuals; the next
subsections explain the corrections. Matching the coefficient equations
never requires division by the profile at its zeros.

\subsection{Enforcing exact divergence-freeness}\label{intro:subsec:divergence}

Let $A_p$ be the coefficient of the $p$th-order oscillatory velocity
correction $k^{-p}A_p$, with $A_1$ as in Subsection~\ref{intro:subsec:leading-coefficients}.
Require transversality, $m\cdot A_p=0$, and zero angle mean
so that its periodic primitive exists. Transversality alone does not imply divergence freedom. Define the curl potential and divergence correction by
\begin{equation}\label{intro:eq:curl-correction}
 Q_p=-\partial_\theta^{-1}\frac{m\times A_p}{|m|^2},\qquad
 C_p=d\times Q_p,\qquad C_0=0.
\end{equation}
Since $m$ is angle independent and $m\cdot A_p=0$,
\begin{equation}
 m\times\partial_\theta Q_p=A_p,\qquad
 A_p+k^{-1}C_p=k^{-1}\tilde d\times Q_p.
\end{equation}
To justify divergence of this curl, fix $t$. The operators $d_i$ are
the pullbacks of $\ell\partial_{x_i}$, so $[d_i,d_j]=0$.
Also $m=d(m_0\cdot y)$; hence
\[
 d_im_j=d_id_j(m_0\cdot y)=d_jd_i(m_0\cdot y)=d_jm_i.
\]
Both the coefficients of $d_i$ and $m_i$ are independent of $\theta$.
Expanding the commutator therefore gives
\[
 [\tilde d_i,\tilde d_j]
 =[d_i,d_j]+k(d_im_j-d_jm_i)\partial_\theta=0.
\]
For the alternating symbol $\epsilon_{ijr}$, antisymmetry in $i,j$ now gives
\[
 \tilde d\cdot(\tilde d\times Q_p)
 =\sum_{i,j,r}\epsilon_{ijr}\tilde d_i\tilde d_j(Q_p)_r
 =\frac12\sum_{i,j,r}\epsilon_{ijr}
       [\tilde d_i,\tilde d_j](Q_p)_r=0.
\]
Thus $\tilde d\cdot(A_p+k^{-1}C_p)=0$ for independent $(y,\theta)$,
and consequently also after restriction to the physical phase graph.
\subsection{The order-$p$ equations: mean and oscillatory parts}\label{intro:subsec:order-expansion}

Products of oscillations produce an angle-independent part, so introduce
mean velocity and pressure coefficients $B_p,\bar q_p$ in addition to
$A_p,\pi_p$. We define
\[
 \av{f}:=\frac1{2\pi}\int_0^{2\pi}f\,d\theta.
\]
Then we impose
\[
 \av{A_p}=0,\qquad \av{\pi_p}=0,\qquad d\cdot B_p=0.
\]
The coefficient $\pi_p$ collects only the zero-mean part of the pressure;
the mean part is recorded separately in $\bar q_p$.
The volume-preserving parent map gives
\begin{equation}\label{intro:eq:piola}
 d\cdot B=\operatorname{div}_y(F^{-1}B).
\end{equation}
To see this geometrically, fix $t$ and set
$\mathcal X(y)=\ell^{-1}X(t,\ell y)$, so
$D\mathcal X=F$ and $\det D\mathcal X=1$.
Let $\omega$ be the standard volume form and define a vector field $v$
in the target coordinates by $v\circ\mathcal X=B$.
Its pullback is $\mathcal X^*v=F^{-1}B$, while
$\mathcal X^*\omega=\omega$. Using
$\mathrm d(\iota_v\omega)=(\operatorname{div}v)\omega$
and commutation of pullback with exterior differentiation gives
\[
 \operatorname{div}_y(F^{-1}B)\,\omega
 =\mathrm d\bigl(\iota_{\mathcal X^*v}\omega\bigr)
 =\mathcal X^*\bigl(\mathrm d(\iota_v\omega)\bigr)
 =((\operatorname{div}v)\circ\mathcal X)\,\omega.
\]
Here $\mathrm d$ is exterior differentiation, distinct from the vector
operator $d$. The chain rule completes the proof of \eqref{intro:eq:piola}:
\[
 (\operatorname{div}v)\circ\mathcal X
 =\operatorname{tr}\bigl((\nabla_yB)F^{-1}\bigr)
 =\sum_{i,j}(F^{-1})_{ji}\partial_{y_j}B_i=d\cdot B,
\]

Since $W_{\rm lead}$ and $q_{\rm lead}$ have zero angular mean,
take $B_1=0$ and $\bar q_1=0$. For a finite order $N$, set
\begin{equation}\label{intro:eq:finite-packet}
 \begin{aligned}
 W^a&=\sum_{p=1}^N\bigl[k^{-p}(A_p+B_p)+k^{-(p+1)}C_p\bigr],\\
 q^a&=\sum_{p=1}^N\bigl[k^{-p}\bar q_p+k^{-(p+1)}\pi_p\bigr].
 \end{aligned}
\end{equation}
Then $\tilde d\cdot W^a=0$ exactly, at every truncation order.

The whole velocity coefficient at order $k^{-p}$ is
\begin{equation}\label{intro:eq:total-velocity-coefficient}
 V_p=A_p+B_p+C_{p-1}\quad(1\le p\le N),\qquad
 V_{N+1}=C_N,\qquad W^a=\sum_{p=1}^{N+1}k^{-p}V_p.
\end{equation}
At step $p$, all lower orders and $C_{p-1}$ are already known.
The new unknowns are $(B_p,\bar q_p)$ and $(A_p,\pi_p)$.
Label derivatives and angle derivatives enter different powers:
\begin{equation}
 W^a\cdot dW^a=\sum_{i,j}k^{-(i+j)}V_i\cdot dV_j,
 \qquad
 k(W^a\cdot m)\partial_\theta W^a
 =\sum_{i,j}k^{-(i+j-1)}(V_i\cdot m)\partial_\theta V_j.
\end{equation}
Consequently, for $2\le p\le N$, matching order $k^{-p}$ requires
\begin{equation}\label{intro:eq:pth-equation}
 \partial_t(A_p+B_p)+M(A_p+B_p)+d\bar q_p
        +m\partial_\theta\pi_p=f_p,
\end{equation}
where
\begin{equation}\label{intro:eq:pth-force}
 \begin{aligned}
 f_p={}&-\partial_tC_{p-1}-MC_{p-1}-d\pi_{p-1}\\
 &-\sum_{i+j=p}V_i\cdot dV_j
 -\sum_{i+j=p+1}(V_i\cdot m)\partial_\theta V_j.
 \end{aligned}
\end{equation}
All indices are positive. In the phase sum the $(1,p)$ term vanishes,
since $m\cdot V_1=m\cdot A_1=0$. The $(p,1)$ term contains $B_p$, but
not $A_p$, because $A_p\cdot m=0$. Hence
\begin{equation}\label{intro:eq:old-force}
 \begin{aligned}
 f_p&=f_p^{\rm old}-(B_p\cdot m)\partial_\theta A_1,\\
 f_p^{\rm old}&=-\partial_tC_{p-1}-MC_{p-1}-d\pi_{p-1}
 -\sum_{i+j=p}V_i\cdot dV_j\\
 &\quad -(C_{p-1}\cdot m)\partial_\theta A_1
 -\sum_{\substack{i+j=p+1\\2\le i,j\le p-1}}
                 (V_i\cdot m)\partial_\theta V_j.
 \end{aligned}
\end{equation}
Every term in $f_p^{\rm old}$ is known. The only term containing a
current-order unknown, $-(B_p\cdot m)\partial_\theta A_1$, has zero angle
average because $B_p$ and $m$ are angle independent. Taking the mean and then subtracting it gives the
linear triangular sequence
\begin{align}
 \partial_tB_p+MB_p+d\bar q_p&=\av{f_p^{\rm old}},
 &d\cdot B_p&=0,\label{intro:eq:mean-step}\\
 \partial_tA_p+MA_p+m\partial_\theta\pi_p&=f_p^\perp,
 &m\cdot A_p&=0,\label{intro:eq:wave-step}\\
 f_p^\perp&=f_p^{\rm old}-\av{f_p^{\rm old}}
                  -(B_p\cdot m)\partial_\theta A_1.\notag
\end{align}
First determine $(B_p,\bar q_p)$; then its value is part of the known
forcing for $(A_p,\pi_p)$; finally form $(Q_p,C_p)$ by
\eqref{intro:eq:curl-correction}. The selection of history or initial data is
additional to these coefficient identities.

We follow OpenAI~\cite{euler}. Related constructions include the
triangular mean--oscillation hierarchy in \cite[\S4.2]{cheverry2006}
and increasing-order approximations that keep the source uniformly
smooth in the IPM layer cascade of \cite{cordoba-martinez-zoroa2024}.

To solve the oscillatory equation \eqref{intro:eq:wave-step}, consider its
general form
\[
 A_t+MA+m\partial_\theta\pi=f,\qquad m\cdot A=0,
 \qquad \av{A}=\av{\pi}=\av{f}=0.
\]
At order $p$, take $A=A_p$, $\pi=\pi_p$, and $f=f_p^\perp$,
which is known after solving the mean equation \eqref{intro:eq:mean-step}.
As in the leading-order calculation, $m_t=-M^Tm$ gives
\[
 0=\partial_t(m\cdot A)=-m\cdot MA+m\cdot A_t,
 \qquad m\cdot A_t=m\cdot MA.
\]
Taking the scalar product of the momentum equation with $m$ therefore yields
\[
 2m\cdot MA+|m|^2\partial_\theta\pi=m\cdot f.
\]
Solving for the pressure derivative and substituting back into the
momentum equation gives
\begin{equation}\label{intro:eq:forced-transverse}
 \begin{aligned}
 \partial_\theta\pi&=\frac{m\cdot f-2m\cdot MA}{|m|^2},\\
 A_t&=-MA+2m\frac{m\cdot MA}{|m|^2}
          +\left(I-\frac{mm^T}{|m|^2}\right)f.
 \end{aligned}
\end{equation}
The second line is a forced linear ODE for $A$ at each $(y,\theta)$.
Given transverse, zero-mean initial data for $A$ and zero-mean forcing $f$,
the ODE preserves both conditions on $A$:
$\partial_t(m\cdot A)=0$, and averaging its right side gives a
homogeneous equation for $\av{A}$. Recover pressure by inverting the
first line of \eqref{intro:eq:forced-transverse}. Its right-hand side has zero
angle mean because $m,M$ are angle independent and $\av{A}=\av{f}=0$,
so it has a unique zero-mean periodic primitive. Substitution recovers the original
forced transverse system, in particular \eqref{intro:eq:wave-step}.

For the mean equation,
$B=F\beta$ gives the coordinate form
\begin{equation}
 \beta_t+2F^{-1}F_t\beta+G\nabla_y\bar q=F^{-1}\av{f_p^{\rm old}},
 \qquad \operatorname{div}_y\beta=0,\qquad G=F^{-1}F^{-T}.
\end{equation}

For the estimates, let $t_a$ denote activation and $[0,S]$ the construction
interval (see Figure~\ref{fig:stage-timeline}). The leading coefficient $A_1$ uses the coercive variational history
construction of Section~\ref{sec:history-activation}.
Lemma~\ref{lem:history-endpoint-choice} selects a history that reaches
activation with $v_q=1$ and
$-C\le v_p\le0$, the signs needed for subsequent amplification.
This choice retains the polynomial history bounds without requiring
an inverse for arbitrary terminal velocities.
For $p\ge2$,
Section~\ref{sec:forced-transverse} estimates $A_p$ on $[0,t_a]$ by a
coercive variational problem with homogeneous endpoint conditions,
then on $[t_a,S]$ by variation of constants and the growing-wave
propagator. After factoring out
the growth profile on $[t_a,S]$, which includes the exponential amplification,
the remaining losses are polynomial in the parent bounds. The mean coefficient $B_p$ is
instead estimated on the whole interval $[0,S]$ by the coercive
variational inverse of Section~\ref{sec:mean-inverse}, which also
localizes its velocity at time zero; no separate growth estimate on
$[t_a,S]$ is needed. Both inverses have Gevrey-2 derivative bounds, combined in
Section~\ref{sec:packet-expansion} to control the coefficient recursion.

\subsection{The remaining residual and its correction}\label{intro:subsec:exact-identities}

Define the residual exactly by
\begin{equation}
 \mathcal R^a=W_t^a+MW^a+W^a\cdot\tilde dW^a+\tilde dq^a.
\end{equation}
If the coefficient equations hold through order $N$, then
\begin{equation}\label{intro:eq:residual}
 \begin{aligned}
 \mathcal R^a={}&k^{-(N+1)}(\partial_tC_N+MC_N+d\pi_N)\\
 &+\sum_{\substack{1\le i,j\le N+1\\i+j>N}}
           k^{-(i+j)}V_i\cdot dV_j\\
 &+\sum_{\substack{1\le i,j\le N+1\\i+j-1>N}}
           k^{-(i+j-1)}(V_i\cdot m)\partial_\theta V_j.
 \end{aligned}
\end{equation}
The first line is the linear tail; the other lines are the unmatched
nonlinear orders. To explain why this residual can be made small, each
step of the coefficient recursion loses a fixed number of derivatives,
so truncation at order $N$ incurs a factorial shift of order $N$.
In Gevrey class $2$, this gives bounds of the form
\[
 \|\text{coefficient at order }p\|
 \le (P^C N^C)^{p+1},
\]
in a Gevrey norm at a radius independent of $N$. Here $P$ collects the input bounds,
including fixed powers of the old frequency $k_{j-1}$. Each new order also brings an
inverse power of the new frequency $k=k_j$. Choosing
\begin{equation}\label{intro:eq:frequency-separation}
 N=\lfloor k^\vartheta\rfloor,\qquad P^Q\le k^\vartheta,
\end{equation}
with $\vartheta>0$ sufficiently small and $Q$ sufficiently large makes
$P^CN^C$ a small power of $k$. The inverse powers of $k$ therefore
dominate the derivative losses, leaving a normalized residual of size
\[
 \|r^a\|\le k^{-cN}=e^{-cN\log k},\qquad
 r^a=kF^{-1}\mathcal R^a.
\]
The exact-correction estimate introduces a Gronwall factor
$e^{O(P^C)}$ on a time interval of length $O(1)$. The same separation
\eqref{intro:eq:frequency-separation} ensures $P^C\ll N$, so this factor
is absorbed by the residual smallness. Thus the finite expansion
remains accurate enough to construct an exact solution despite the
accumulated derivative losses.

To find the exact correction, fix the divergence operator by putting
\begin{equation}\label{intro:eq:fixed-div}
 D=k^{-1}\nabla_y+m_0\partial_\theta,\qquad
 W=k^{-1}Fz,\qquad \mathfrak p=k^2Dq,\qquad K=G^{-1}=F^TF.
\end{equation}
The vector $\mathfrak p$ is distinguished from the parent scalar
pressure $p$. We have
\begin{equation}\label{intro:eq:fixed-div-identity}
 \tilde d=kF^{-T}D,\qquad
 \tilde d\cdot(k^{-1}Fz)=\operatorname{div}_Dz.
\end{equation}
The first identity follows directly from the definitions and
\eqref{intro:eq:normal}:
$\tilde d=d+km\partial_\theta
=F^{-T}(\nabla_y+km_0\partial_\theta)=kF^{-T}D$.
For the second identity, the label part is
$d\cdot(k^{-1}Fz)=k^{-1}\operatorname{div}_y(F^{-1}Fz)
=k^{-1}\operatorname{div}_y z$ by \eqref{intro:eq:piola}.
The angle part is $km\cdot\partial_\theta(k^{-1}Fz)
=m\cdot F\partial_\theta z=m_0\cdot\partial_\theta z$,
since $F$ is angle independent and $F^Tm=m_0$ by \eqref{intro:eq:normal}.
Their sum is $\operatorname{div}_Dz$.

Substituting $W=k^{-1}Fz$ into \eqref{intro:eq:lifted-W} and multiplying
by $kF^{-1}$ gives
\begin{equation}\label{intro:eq:fixed-momentum}
 z_t+2F^{-1}F_tz+z\cdot Dz
 +k^{-1}F^{-1}(z\cdot\nabla_yF)z+G\mathfrak p=0,
 \qquad\operatorname{div}_Dz=0.
\end{equation}
Both differentiating $F$ and the parent term $MW$ contribute
$F^{-1}F_tz$, explaining the factor two.
The quadratic terms come from $kF^{-1}(W\cdot\tilde dW)
=F^{-1}(z\cdot D)(Fz)$: differentiating $z$ gives $z\cdot Dz$,
while differentiating $F$ gives $k^{-1}F^{-1}(z\cdot\nabla_yF)z$,
because $DF=k^{-1}\nabla_yF$ as $F$ is independent of $\theta$.

Set $z^a=kF^{-1}W^a$, $r^a=kF^{-1}\mathcal R^a$, and
$\mathfrak p^a=k^2Dq^a$. The approximate equation is
\begin{equation}\label{intro:eq:approximate-fixed-momentum}
 \begin{aligned}
 z_t^a+2F^{-1}F_tz^a+z^a\cdot Dz^a
 +k^{-1}F^{-1}(z^a\cdot\nabla_yF)z^a+G\mathfrak p^a&=r^a,\\
 \operatorname{div}_Dz^a&=0.
 \end{aligned}
\end{equation}
Write $z=z^a+e$ and $\mathfrak p=\mathfrak p^a+\mathfrak p^e$.
Subtracting \eqref{intro:eq:approximate-fixed-momentum} from
\eqref{intro:eq:fixed-momentum} gives
\begin{equation}\label{intro:eq:correction}
 e_t+b\cdot\nabla_{y,\theta}e+Z(e)+G\mathfrak p^e=-r^a,
 \qquad \operatorname{div}_De=0,\qquad e(0)=0,
\end{equation}
where
\begin{equation}\label{intro:eq:b-Z}
 \begin{aligned}
 b&=(k^{-1}(z^a+e),m_0\cdot(z^a+e)),\\
 Z(e)&=e\cdot Dz^a+2F^{-1}F_te\\
 &\quad+k^{-1}F^{-1}
 \bigl[((z^a+e)\cdot\nabla_yF)(z^a+e)-(z^a\cdot\nabla_yF)z^a\bigr].
 \end{aligned}
\end{equation}
In particular, $\operatorname{div}_{y,\theta}b=0$ and $Z(e)$ has
no derivative of $e$.

To find the pressure, we use the closed gradient space
$\mathcal G=\overline{\{D\varphi\}}^{L^2}$ and its orthogonal projection
$\Pi$. Then $\mathcal G^\perp=\ker\operatorname{div}_D$.
Since $D$ is independent of time and $\operatorname{div}_De=0$,
we have $e_t\in\mathcal G^\perp$ and hence $\Pi e_t=0$.
Applying $\Pi$ to \eqref{intro:eq:correction} gives
\[
 \Pi G\mathfrak p^e=-\Pi\bigl(b\cdot\nabla e+Z(e)+r^a\bigr).
\]
Since $G=F^{-1}F^{-T}$ is uniformly positive definite,
$\Pi G|_{\mathcal G}$ is coercive and invertible. Hence
\begin{equation}\label{intro:eq:correction-pressure}
 \mathcal T=(\Pi G|_{\mathcal G})^{-1}\Pi,\qquad
 \mathfrak p^e=-\mathcal T\bigl(b\cdot\nabla e+Z(e)+r^a\bigr).
\end{equation}
Thus \eqref{intro:eq:correction} is equivalently
$e_t=-(I-G\mathcal T)(b\cdot\nabla e+Z(e)+r^a)$.
An auxiliary viscous construction adds $\nu\Delta_{y,\theta}e$ to
both versions; it does not change the pressure identity
\eqref{intro:eq:correction-pressure} because
$\Pi\Delta e=0$.

\subsection{The physical flow from the exact solution}\label{intro:subsec:physical-child}

If $\mathfrak p^e\in\mathcal G$, then $D\times\mathfrak p^e=0$.
The graph chain rule gives the curl-free identity
\[
 \nabla_y\times(\mathfrak p^e)^\diag
 =k(D\times\mathfrak p^e)^\diag=0.
\]
Choose a scalar $q^e$ on $\R^3$ with
\begin{equation}\label{intro:eq:scalar-pressure-recovery}
 \nabla_yq^e=k^{-1}(\mathfrak p^e)^\diag.
\end{equation}
The physical fields are
\begin{equation}\label{intro:eq:physical-child}
 \begin{aligned}
 u^{\rm new}(t,X(t,\ell y))
 &=u(t,X(t,\ell y))+\ell(W^a+k^{-1}Fe)^\diag,\\
 p^{\rm new}(t,X(t,\ell y))
 &=p(t,X(t,\ell y))+\ell^2\bigl((q^a)^\diag+q^e\bigr).
 \end{aligned}
\end{equation}
Their increment obeys \eqref{intro:eq:physical-increment} whenever the
correction equation holds. Since $e(0)=0$, the initial velocity increment
is exactly $\ell(W^a)^\diag(0,y)$.

With $z=z^a+e$, the flow of the four-component field $b$ is
\begin{equation}\label{intro:eq:lifted-flow}
 \dot y=k^{-1}z(t,y,\theta),\qquad
 \dot\theta=m_0\cdot z(t,y,\theta),\qquad
 \Phi(0,y_0,\theta_0)=(y_0,\theta_0).
\end{equation}
It preserves the graph because
\begin{equation}
 \frac{d}{dt}(\theta-km_0\cdot y)=0.
\end{equation}
For a fixed initial child label $a_0$, define its changing parent label
and physical position by
\begin{equation}\label{intro:eq:relative-flow}
 \Psi(t,a_0)=\ell\pi_y\Phi\left(t,\frac{a_0}{\ell},
                      \frac{km_0\cdot a_0}{\ell}\right),\qquad
 X^{\rm new}(t,a_0)=X(t,\Psi(t,a_0)).
\end{equation}
Indeed, with $y=\Psi/\ell$ and $\theta=km_0\cdot y$,
\begin{equation}
 \begin{aligned}
 \Psi_t&=\ell k^{-1}z(t,y,\theta),\\
 \partial_t X(t,\Psi)
 &=X_t(t,\Psi)+\nabla_aX(t,\Psi)\Psi_t\\
 &=u(t,X(t,\Psi))+\ell k^{-1}F(t,y)z(t,y,\theta)\\
 &=u^{\rm new}(t,X(t,\Psi)),\qquad X^{\rm new}(0,a_0)=a_0.
 \end{aligned}
\end{equation}
The last equality uses $W=k^{-1}Fz$ from \eqref{intro:eq:fixed-div} and
$w(t,X(t,\ell y))=\ell W^\diag(t,y)$ from
\eqref{intro:eq:normalized-increment}: on the graph,
$\ell k^{-1}Fz=\ell W=w(t,X(t,\ell y))$, and $u^{\rm new}=u+w$.
Finally,
\begin{equation}
 \operatorname{div}_y[k^{-1}z^\diag]
 =(k^{-1}\operatorname{div}_yz+m_0\cdot\partial_\theta z)^\diag
 =(\operatorname{div}_Dz)^\diag=0.
\end{equation}
Thus the relative flow $\Psi$, and hence $X^{\rm new}=X\circ\Psi$,
preserve physical volume. The term $\ell k^{-1}Fz$ supplies the velocity
increment; the time derivative $X_t$ of $X$ supplies the parent velocity.

\subsection{Increments at each stage and their parameters}\label{intro:subsec:stage-parameters}

Start with a base Euler flow $(U^{J-2},p^{J-2})=(U_B,p_B)$.
At stage $j$, add an exact velocity increment $w_j$ and pressure
increment $\pi_j$ as in \eqref{intro:eq:physical-child}:
\[
 U^j=U^{j-1}+w_j,\qquad p^j=p^{j-1}+\pi_j.
\]
Thus the finite-stage solutions telescope as
\[
 U^L=U_B+\sum_{j=J-1}^{L}w_j,\qquad
 p^L=p_B+\sum_{j=J-1}^{L}\pi_j.
\]
The corresponding infinite series are formally
\[
 U=U_B+\sum_{j=J-1}^{\infty}w_j,\qquad
 p=p_B+\sum_{j=J-1}^{\infty}\pi_j,
 \qquad
 u_0=U_B(0)+\sum_{j=J-1}^{\infty}w_j(0).
\]
The finite sums solve Euler wherever the stages are defined; the
infinite sums require convergence, and the pressure requires a common
choice of its time-dependent additive constants. The smooth initial
datum is obtained from the last series.

For stage $j$, use the parent particle map $X^{j-1}$ and introduce
\[
 x=X^{j-1}(t,\ell_jy),\qquad
 F_j(t,y)=\nabla_aX^{j-1}(t,\ell_jy),\qquad
 \theta_j=k_jm_{0,j}\cdot y,\qquad m_j=F_j^{-T}m_{0,j}.
\]
The length $\ell_j$ sets the spatial localization scale of the leading
increment at time zero; $k_j$ is its frequency
in $y$, so the initial physical phase gradient is
$(k_j/\ell_j)m_{0,j}$. The unit vector $m_{0,j}$ selects the phase
direction. The profile parameter $\delta_j$ fixes the shape of the
mean-zero periodic function $f_{\delta_j}$, normalized by
$f_{\delta_j}'(0)=\delta_j^{-1}$. As in \eqref{intro:eq:wave}, the pressure
profile is the zero-mean periodic primitive
$\varphi_{\delta_j}=\partial_\theta^{-1}f_{\delta_j}$, so
$\partial_\theta\varphi_{\delta_j}=f_{\delta_j}$.
Combining the physical prefactors $\ell,\ell^2$ in
\eqref{intro:eq:normalized-increment} with $k^{-1},k^{-2}$ in
\eqref{intro:eq:leading-coefficients} gives the leading increments
\[
 \begin{aligned}
 w_{j,\mathrm{lead}}(t,X^{j-1}(t,\ell_jy))
 &=\frac{\ell_j}{k_j}\alpha_j\chi_1(y)v_j(t,y)f_{\delta_j}(\theta_j),\\
 \pi_{j,\mathrm{lead}}(t,X^{j-1}(t,\ell_jy))
 &=-2\left(\frac{\ell_j}{k_j}\right)^2
 \alpha_j\chi_1(y)\frac{m_j\cdot M^{j-1}v_j}{|m_j|^2}
 \varphi_{\delta_j}(\theta_j),
 \end{aligned}
\]
where $M^{j-1}=(\nabla U^{j-1})\circ X^{j-1}$ is evaluated at $\ell_jy$.
The wave normal and velocity satisfy $m_j\cdot v_j=0$.
The exact increments also contain the divergence correction, higher-order
terms in the expansion \eqref{intro:eq:finite-packet}, and the final residual
correction described above.

Starting from a fixed large $x_{J-1}$ (denoted $x_0$ in
Section~\ref{sec:stage-parameters}), define $x_j$ recursively:
\[
 x_j=j^2x_{j-1}\quad(j\ge J).
\]
For stage $j\ge J$, let the physical localization length be
\[
 \ell_j=\exp\!\left(-\frac{x_{j-1}}{j^{7/2}}\right),
\]
the frequency in the scaled label be
\[
 k_j=\exp\!\left(\frac{x_{j-1}}{j^2}\right),
\]
the angular profile parameter be
\[
 \delta_j=\exp\!\left(-\frac{x_{j-1}}{j^3}\right),
\]
and the target shear strength be
\[
 h_j=\exp\!\left(\frac{x_{j-1}}{j^5}\right).
\]
The expansion \eqref{intro:eq:finite-packet} is truncated at order
$N_j=\lfloor k_j^\vartheta\rfloor$, with $\vartheta$ chosen as in
\eqref{intro:eq:frequency-separation}.
Their quantitative compatibility is verified in
Section~\ref{sec:stage-parameters}.

The seed $j=J-1$ is prescribed at time $t_{J-1}=0$, with $D_0=1000$ and
\begin{equation}\label{intro:eq:seed-scales}
 \ell_{J-1}=x_{J-1}^{-D_0},\quad k_{J-1}=x_{J-1}^{D_k},\quad
 \delta_{J-1}=x_{J-1}^{-(D_0+10)},\quad h_{J-1}=x_{J-1}^{D_0}.
\end{equation}
Its initial normal and velocity amplitudes have unit length, so choose
\[
 \alpha_{J-1}=\delta_{J-1}h_{J-1}=x_{J-1}^{-10}
\]
to make its initial leading shear equal to $h_{J-1}$.
The fixed constants $D_k,J,x_{J-1}$ are chosen in that order as
specified in Section~\ref{sec:stage-parameters}.

For $j\ge J$, activation occurs at $t_{j-1}$ and the target at $t_j$.
Figure~\ref{fig:stage-timeline} in Subsection~\ref{intro:subsec:time-intervals}
shows these times and the intervals used in the construction.
The prescribed target shear $h_j$ determines the amplitude by
\begin{equation}\label{intro:eq:amplitude-normalization}
 \alpha_j=\frac{\delta_jh_j}{|m_j(t_j,0)|\,|v_j(t_j,0)|}.
\end{equation}
The parent geometry determines $a_j,\beta_j$ in the following way:
the preceding layer's frame is
\[
 p_{j-1}=\frac{m_{j-1}}{|m_{j-1}|},\qquad
 q_{j-1}=\frac{v_{j-1}}{|v_{j-1}|},\qquad
 n_{j-1}=p_{j-1}\times q_{j-1},
\]
evaluated at the origin and at the new layer's activation time $t_{j-1}$.
Transversality $m_{j-1}\cdot v_{j-1}=0$ makes this an orthonormal frame:
$p_{j-1}$ is the
phase-normal direction and $q_{j-1}$ the velocity direction. The preceding
layer's leading gradient is the shear $h_{j-1}q_{j-1}p_{j-1}^T$,
which maps the $p_{j-1}$-direction into the $q_{j-1}$-direction.
Measure the older gradient in this frame by setting
\[
 B_j=\nabla U^{j-2}(t_{j-1},0),\qquad
 a_j=p_{j-1}\cdot B_jq_{j-1},\qquad
 \beta_j=\frac{n_{j-1}\cdot B_jq_{j-1}}{a_j}.
\]
The construction maintains the activation ranges
\begin{equation}\label{intro:eq:activation-ranges}
 \frac12\le a_j\le2,\qquad
 \frac{1}{2x_{j-1}^2}\le\beta_j\le\frac{2}{x_{j-1}^2}
 \qquad(j\ge J).
\end{equation}
In particular, both coefficients are positive. These are induction
requirements: Proposition~\ref{prop:seed-packet} supplies the seed values.
The target-frame estimate \eqref{eq:target-a-transfer} gives summable
relative changes in $a_j$, keeping it within its prescribed range.
For $\beta_j$, only the stated two-sided range of $\beta_jx_{j-1}^2$
is needed; the sharper and more delicate asymptotic \eqref{eq:target-beta-transfer}
restores this range at the next stage. These estimates preserve the
maintained activation bounds \eqref{eq:induction-activation}, as proved
in Proposition~\ref{prop:stage-induction}.
\subsection{The clocks and amplification}\label{intro:subsec:amplification-clocks}

The preceding layer's shear amplifies a more localized new layer.
Related layer cascades appear in forced Euler
\cite[\S1.2]{cordoba-martinez-zoroa2023}, forced hypodissipative
Navier--Stokes with sufficiently small dissipation order
\cite[\S1.2]{cordoba-martinez-zoroa-zheng2026}, and unforced Euler
with $C^{1,\alpha}\cap L^2$ initial velocity smooth away from the origin
\cite[\S1]{cordoba-martinez-zoroa-zheng2025}.

\textbf{The clock for velocity amplification.}
At the origin, temporarily suppress the layer subscripts
on the moving frame and the new wave. Write
\[
 M=B+hqp^T+E,\qquad B(t)=\nabla U^{j-2}(t,0),\qquad
 h(t_{j-1})=h_{j-1},\qquad B_{pq}(t_{j-1})=a_j.
\]
Here $E$ is the inherited gradient error and $B_{rs}=r\cdot Bs$ for
frame vectors $r,s$. The old wave equations give the frame identities
(derived in \eqref{eq:frame-components} below)
\[
 \dot p=-B_{pq}q-B_{pn}n,\qquad
 \dot q=B_{pq}p-B_{nq}n.
\]
For $v_p=p\cdot v$, $v_q=q\cdot v$, and
$\gamma=(m\cdot Mv)/|m|^2$, the product rule and \eqref{intro:eq:wave} yield
\begin{equation}\label{intro:eq:stage-component-motivation}
 \begin{aligned}
 \dot v_p&=-2B_{pq}v_q-B_{pp}v_p-2B_{pn}v_n
                   +2\gamma m_p-p\cdot Ev,\\
 \dot v_q&=-hv_p-(B_{qp}-B_{pq})v_p-B_{qq}v_q
                   -(B_{qn}+B_{nq})v_n+2\gamma m_q-q\cdot Ev.
 \end{aligned}
\end{equation}
There are two copies of $-B_{pq}v_q$ in the first equation: one from
$p\cdot\dot v$ and one from $\dot p\cdot v$. The term $-hv_p$ in the
second comes from the parent's strong shear.

At activation, the new normal is chosen so that
$m(t_{j-1},0)=s_0n$, with $s_0>0$. Thus $m_p=m_q=0$, and
transversality $m\cdot v=0$ gives $v_n=0$ there. In particular, the
pressure terms $2\gamma m_p,2\gamma m_q$ in these two equations vanish
initially. The pressure vector $2\gamma m$ itself need not vanish:
it points along $n$. These statements concern the origin; neighboring
labels have the activation errors controlled in Section~\ref{sec:history-activation}.

For the initial amplification balance, put
$\varepsilon_j=\sqrt{a_j/h_{j-1}}$ and view $v_p$ as order
$\varepsilon_j$ and $v_q$ as order $1$, relative to a common amplitude,
anticipating the growing-mode alignment explained below.
Freeze $h,B_{pq}$ at their activation values. Setting aside the inherited
error $E$ and the pressure and $v_n$ terms discussed above, the remaining
older-gradient terms in \eqref{intro:eq:stage-component-motivation} give
\[
 \dot v_p=-2a_jv_q+O(G_*\varepsilon_j),\qquad
 \dot v_q=-h_{j-1}v_p+O(G_*),\qquad |B|\le CG_*.
\]
Indeed, $B_{pp}v_p$ has size $O(G_*\varepsilon_j)$, while
$B_{qq}v_q$ and $(B_{qp}-B_{pq})v_p$ have sizes $O(G_*)$ and
$O(G_*\varepsilon_j)$, respectively. The retained terms have scales
$a_j$ and $h_{j-1}\varepsilon_j=a_j/\varepsilon_j$. After division
by these respective scales, both errors are
\[
 O\!\left(\frac{\varepsilon_jG_*}{a_j}\right)
 =O\!\left(\frac{G_*}{\sqrt{a_jh_{j-1}}}\right)\ll1.
\]
To see this from the parameter choices, take $G_*\lesssim h_{j-2}$
and use $a_j\in[1/2,2]$ from \eqref{intro:eq:activation-ranges}.
For $j\ge J+2$,
\[
 \log\frac{h_{j-2}}{\sqrt{h_{j-1}}}
 =-\frac{x_{j-2}}{2(j-1)^5}
   \left(1-\frac{2(j-1)^5}{(j-2)^7}\right)
 \le-\frac{x_{j-2}}{4(j-1)^5},
\]
for sufficiently large $J$. Thus the relative errors are exponentially
small. At the first two stages, the seed choices \eqref{intro:eq:seed-scales} give
\begin{equation}\label{intro:eq:seed-clock-errors}
 \frac{\varepsilon_jG_*}{a_j}\lesssim
 \begin{cases}
 x_{J-1}^{-500},&j=J,\\
 x_{J-1}^{1000}\exp\!\left(-\dfrac{x_{J-1}}{2J^5}\right),&j=J+1,
 \end{cases}
\end{equation}
which are also small. It is the
relative error that is small; the unscaled $O(G_*)$ need not be.

Dropping these terms gives the initial simplified system
\[
 \dot v_p\approx-2a_jv_q,\qquad
 \dot v_q\approx-h_{j-1}v_p,
 \qquad\Longrightarrow\qquad
 \ddot v_q\approx2a_jh_{j-1}v_q.
\]
\textbf{Exponential velocity growth from the two couplings.}
With the selected signs $v_q>0$ and $v_p\le0$, the term $-2a_jv_q$
makes $v_p$ more negative. The strong shear then contributes
$-h_{j-1}v_p>0$ to $\dot v_q$, increasing $v_q$ and strengthening the
first effect. If $v_p=0$ initially, the first equation immediately
creates the negative component. Shear alone would keep $v_p$ constant
and produce only linear growth of $v_q$ in this model; the return
coupling through $a_j$ produces exponential growth proportional to
$\exp\!\bigl(\sqrt{2a_jh_{j-1}}(t-t_{j-1})\bigr)$, motivating the clock
\begin{equation}\label{intro:eq:stage-clock}
 \tau_j(t)=\sqrt{a_jh_{j-1}}(t-t_{j-1})
 =\frac{a_j}{\varepsilon_j}(t-t_{j-1}).
\end{equation}
With $v_p=\varepsilon_jU$, $v_q=V$, the main equations become
$U'=-2V$, $V'=-U$, where primes denote $\tau_j$ derivatives.
Using the signs $v_q>0$, $v_p\le0$ above, normalize the activation data as
$v_q(t_{j-1})=1$ and $v_p(t_{j-1})=-\varepsilon_j\lambda$, with
$\lambda\ge0$. Since $\lambda$ may be large, the activation velocity
need not be nearly aligned with $q$.
Amplification in the simplified system also aligns the velocity,
however. Its explicit solution gives
\begin{equation}\label{intro:eq:velocity-alignment}
 \begin{aligned}
 \frac{v_p}{v_q}
 &=-\varepsilon_j
 \frac{\lambda\cosh(\sqrt2\tau_j)+\sqrt2\sinh(\sqrt2\tau_j)}
      {\cosh(\sqrt2\tau_j)+(\lambda/\sqrt2)\sinh(\sqrt2\tau_j)}\\
 &=-\sqrt2\,\varepsilon_j\bigl(1+O(e^{-2\sqrt2\tau_j})\bigr),
 \qquad \tau_j\ge1,
 \end{aligned}
\end{equation}
uniformly for $\lambda\ge0$. Thus, after an initial transient, the
$p$-component is small relative to the $q$-component, even though both
may grow in absolute size. This explains the scaling
$v_p=O(\varepsilon_jv_q)$ used above during the initial amplification
regime, before the turning of the normal substantially changes the system.

\textbf{The clock for turning the normal.}
The same moving-frame product rule, now applied to
$\dot m=-M^Tm$ from \eqref{intro:eq:normal}, gives
\begin{equation}\label{intro:eq:stage-normal-components}
 \begin{aligned}
 \dot m_p&=-hm_q-B_{pp}m_p-(B_{qp}+B_{pq})m_q
                  -(B_{np}+B_{pn})m_n-(Ep)\cdot m,\\
 \dot m_q&=-2B_{nq}m_n-B_{qq}m_q-(Eq)\cdot m,\\
 \dot m_n&=(B_{nq}-B_{qn})m_q-B_{nn}m_n-(En)\cdot m.
 \end{aligned}
\end{equation}
To compare the terms, write $m=s_0(Pp+\varepsilon_jQq+Nn)$,
with $(P,Q,N)=(0,0,1)$ at activation. On the $\tau_j$ clock, freezing $h$ and
$B_{nq}$ at $h_{j-1}$ and $a_j\beta_j$, and setting aside $E$, gives
\[
 \begin{aligned}
 P'&=-Q+O\!\left(\frac{\varepsilon_jG_*}{a_j}
                     (|P|+\varepsilon_j|Q|+|N|)\right),\\
 Q'&=-2\beta_jN+O\!\left(\frac{\varepsilon_jG_*}{a_j}|Q|\right),\\
 N'&=O\!\left(\frac{\varepsilon_jG_*}{a_j}
                     (\varepsilon_j|Q|+|N|)\right).
 \end{aligned}
\]
The inherited range \eqref{intro:eq:activation-ranges} gives
\[
 \frac12\le\beta_jx_{j-1}^2\le2,
 \qquad\text{so}\qquad \beta_j\asymp x_{j-1}^{-2}.
\]
The error coefficient in $Q'$ is small even relative to $\beta_j$:
using the parameter calculation above, for $j\ge J+2$,
\[
 \frac{\varepsilon_jG_*}{a_j\beta_j}
 \lesssim x_{j-1}^2\frac{h_{j-2}}{\sqrt{h_{j-1}}}
 \le Cx_{j-1}^2
       \exp\!\left(-\frac{x_{j-2}}{4(j-1)^5}\right)\ll1.
\]
Here $x_{j-1}=(j-1)^2x_{j-2}$, so the exponential beats the polynomial
factor. At the first two stages, multiplying the bounds in
\eqref{intro:eq:seed-clock-errors}
by $x_{j-1}^2$ still gives a quantity tending to zero.
Thus, while $Q$ is bounded and $N$ stays comparable to $1$,
the error in $Q'$ is $o(\beta_j)$ and $-2\beta_jN$ is the main term.
Controlling the errors until the normal turns also requires their
accumulation to be bounded. The
comparison bounds in Sections~\ref{sec:actual-to-ideal}--\ref{sec:quantitative-amplification} control their
accumulation, as well as $E$ and the variation of the frozen coefficients.
The prescribed scales supply that stronger smallness.

Dropping the errors displayed above gives
\[
 P'=-Q,\qquad Q'=-2\beta_jN,\qquad N'=0,
 \qquad (P,Q,N)(0)=(0,0,1).
\]
Equivalently, the retained physical-time terms are
$\dot m_q\approx-2a_j\beta_jm_n$ and
$\dot m_p\approx-h_{j-1}m_q$: the older gradient creates a
$q$-component, and the strong shear turns it into a $p$-component.
Integration gives
\begin{equation}\label{intro:eq:leading-normal-components}
 \frac{m_p}{s_0}\approx\beta_j\tau_j^2,\qquad
 \frac{m_q}{s_0}\approx-2\varepsilon_j\beta_j\tau_j,\qquad
 \frac{m_n}{s_0}\approx1.
\end{equation}
Thus the $p$-component becomes comparable to the original $n$-component
when $\beta_j\tau_j^2$ reaches order one. This motivates
\begin{equation}\label{intro:eq:stage-turning-clock}
 x=\sqrt{\beta_j}\,\tau_j(t),\qquad \frac{m_p}{s_0}\approx x^2.
\end{equation}
For $x\ll1$ the normal remains close to $n$; near $x=1$ it turns
substantially, and the pressure projection changes the velocity growth
regime.

\textbf{The frame supplied to the next layer.}
The leading order geometry explains how the construction can repeat. Relative
to the preceding moving frame $(p,q,n)$, the new normal starts along
$n$. By \eqref{intro:eq:leading-normal-components}, at the large target
$x=x_j$ (see Figure~\ref{fig:stage-timeline}) its $p$-component dominates, so its unit direction $p_j$ is
close to $p$. Meanwhile, the velocity amplification in
\eqref{intro:eq:velocity-alignment} aligns $v_j$ toward $q$ in the
initial growth regime. Together, these mechanisms suggest that the
target frame $(p_j,q_j,n_j)$ is close to $(p,q,n)$, with
$q_j=v_j/|v_j|$ and $n_j=p_j\times q_j$.
The new layer thus supplies a frame close to the preceding one, together with the larger
shear $h_jq_jp_j^T$. The next layer starts its normal along $n_j$ and
repeats the turn toward $p_j$ and the velocity alignment toward $q_j$.

The later turning prepares the frame for the next layer. The small deviations
from alignment must still be controlled to keep $a_j$ and $\beta_j$ in
their prescribed ranges. In particular, obtaining a small positive
$\beta_{j+1}$, and hence another long amplification interval before the
next turn, relies on subtle cancellations too complicated to outline here.
Proposition~\ref{prop:target-frame} supplies the
quantitative estimates \eqref{eq:target-a-transfer} and
\eqref{eq:target-beta-transfer}; Proposition~\ref{prop:stage-induction}
uses them to close the induction.

\subsection{The stage timeline and interval containment}\label{intro:subsec:time-intervals}
\refstepcounter{figure}\label{fig:stage-timeline}
\begin{center}
\begin{tikzpicture}[x=0.9cm,y=0.65cm,font=\small]
 \draw[->] (0,0)--(10.5,0) node[right] {$t$};
 \foreach \x/\lab in {0/0,1.8/t_{j-1},4.4/\widehat t_j,9.6/S_{j-1}} {
   \draw (\x,-0.1)--(\x,0.1);
   \node[above] at (\x,0.12) {$\lab$};
   \draw[gray!45,densely dotted] (\x,-0.2)--(\x,-5.3);
 }
 \foreach \x in {3.1,3.3} {
   \draw (\x,-0.1)--(\x,0.1);
   \draw[gray!45,densely dotted] (\x,-0.2)--(\x,-5.3);
 }
 \node[above left] at (3.1,0.12) {$t_j$};
 \node[above right] at (3.3,0.12) {$S_j$};
 \draw[thick,dashed] (3.2,0.8)--(3.2,-5.3);
 \node[above] at (3.2,0.8) {$T$};
 \node[anchor=east] at (-0.3,-1) {Wave history};
 \draw[line width=1.3pt] (0,-1)--(1.8,-1);
 \fill (0,-1) circle (1.6pt); \fill (1.8,-1) circle (1.6pt);
 \node[anchor=east] at (-0.3,-2) {Activation to target of $U^j$};
 \draw[line width=1.3pt] (1.8,-2)--(3.1,-2);
 \fill (1.8,-2) circle (1.6pt); \fill (3.1,-2) circle (1.6pt);
 \node[anchor=east] at (-0.3,-3) {Child $U^j$};
 \draw[line width=1.3pt] (0,-3)--(3.3,-3);
 \fill (0,-3) circle (1.6pt); \fill (3.3,-3) circle (1.6pt);
 \node[anchor=east] at (-0.3,-4) {Wave-model comparison $I_j$};
 \draw[line width=1.3pt] (1.8,-4)--(4.4,-4);
 \fill (1.8,-4) circle (1.6pt); \fill (4.4,-4) circle (1.6pt);
 \node[anchor=east] at (-0.3,-5) {Parent $U^{j-1}$};
 \draw[line width=1.3pt] (0,-5)--(9.6,-5);
 \fill (0,-5) circle (1.6pt); \fill (9.6,-5) circle (1.6pt);
\end{tikzpicture}
\end{center}
\noindent\textbf{Figure~\thefigure.} All rows use the same axis. For $a_j\approx1$ and
$\beta_jx_{j-1}^2\approx1$, the positions of
$t_j,\widehat t_j,S_{j-1}$ are approximately $1,2,6$ in units of
$L_j=x_jx_{j-1}/\sqrt{a_jh_{j-1}}$ from activation at $t_{j-1}$.
The history interval is compressed, and the much smaller gaps between
$t_j,T,S_j$ are enlarged for visibility.
The intervals are explained in the row order of the diagram.

\emph{Wave history.} The child's history $[0,t_{j-1}]$ is controlled
by the history construction. At the first normal stage, $t_{J-1}=0$
and this interval reduces to a point.

\emph{Activation to target.} The target value of this clock is prescribed to be $x_j$, so
\begin{equation}
 \sqrt{\beta_j}\,\tau_j(t_j)=x_j,\qquad
 t_j-t_{j-1}=\frac{x_j}{\sqrt{\beta_ja_jh_{j-1}}}.
 \label{eq:actual-target-time}
\end{equation}
Here $x$ and $x_j$ are dimensionless clock values, not spatial coordinates.

\emph{Child $U^j$.} Let
\[
 S_j=t_j+2W_{j+1},\qquad
 W_j=\frac{3x_jx_{j-1}}{\sqrt{h_{j-1}}}
\]
be the physical endpoint through which the child solution is constructed
to serve as the parent of the next layer. Its interval is $[0,S_j]$.

\emph{Wave-model comparison $I_j$.} Let
\[
 \Theta_j=2x_jx_{j-1}
\]
be the bound on the scaled time on which the wave ODEs are compared.
The comparison interval is
$I_j=[t_{j-1},\widehat t_j]$, where
$\widehat t_j=t_{j-1}+\varepsilon_j\Theta_j/a_j$.
The bound $\Theta_j$ is not another target: we require
$\tau_j(S_j)\le\Theta_j\le\tau_j(S_{j-1})$, so the comparison
covers the child's retained interval while staying within the interval
on which the parent is available. These two inequalities are proved in
Lemmas~\ref{intro:lem:clock-child} and~\ref{intro:lem:clock-parent}, respectively.
The bound $\Theta_j$ also controls the time factors
in the wave-error estimates.

\begin{introlemma}[The comparison covers the child interval]\label{intro:lem:clock-child}
With the parameter choices of Subsection~\ref{intro:subsec:stage-parameters}, assume
$1/2\le a_j\le2$ and $\beta_jx_{j-1}^2\ge1/2$, as supplied by
\eqref{intro:eq:activation-ranges}.
For sufficiently large $J$ and then sufficiently large $x_{J-1}$,
\[
 \tau_j(S_j)\le\Theta_j,\qquad
 0\le\tau_j(S_j)-\tau_j(t_j)\le1\qquad(j\ge J).
\]
\end{introlemma}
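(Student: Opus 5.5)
The argument is a direct computation with the clock $\tau_j(t)=\sqrt{a_jh_{j-1}}(t-t_{j-1})$, which is linear and increasing in $t$. Since $S_j=t_j+2W_{j+1}\ge t_j$, the lower bound $\tau_j(S_j)-\tau_j(t_j)\ge0$ is immediate. Linearity gives
\[
 \tau_j(S_j)-\tau_j(t_j)=\sqrt{a_jh_{j-1}}\cdot 2W_{j+1}
 =\frac{6x_{j+1}x_j\sqrt{a_jh_{j-1}}}{\sqrt{h_j}}.
\]
By \eqref{eq:actual-target-time},
\[
 \tau_j(t_j)=\frac{x_j}{\sqrt{\beta_j}}\le\sqrt2\,x_jx_{j-1},
\]
using $\beta_jx_{j-1}^2\ge1/2$. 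Hence both claims follow once the increment is at most $1$. Indeed, $\sqrt2\,x_jx_{j-1}+1\le 2x_jx_{j-1}=\Theta_j$ as soon as $x_jx_{j-1}\ge 1/(2-\sqrt2)$, and this holds because $x_{J-1}$ is large and $x_j$ increases.

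It remains to bound $6x_{j+1}x_j\sqrt{2h_{j-1}/h_j}\le1$, using $a_j\le2$. I would take logarithms. Since $x_{j+1}=(j+1)^2j^2x_{j-1}$ and $x_j=j^2x_{j-1}$, the prefactor is polynomial:
\[
 \log\bigl(6\sqrt2\,x_{j+1}x_j\bigr)=O(\log j)+2\log x_{j-1}.
\]
For $j\ge J+1$ the exponent is
\[
 \tfrac12\log\frac{h_{j-1}}{h_j}
 =\tfrac12\left(\frac{x_{j-2}}{(j-1)^5}-\frac{x_{j-1}}{j^5}\right)
 =-\frac{x_{j-1}}{2j^5}\left(1-\frac{j^5}{(j-1)^7}\right)
 \le-\frac{x_{j-1}}{4j^5}
\]
for $J$ large, exactly as in the clock computation of Subsection~\ref{intro:subsec:amplification-clocks}. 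It then suffices that
\[
 \frac{x_{j-1}}{4j^5}\ge 2\log x_{j-1}+C\log j .
\]
Since $x_{j-1}\ge x_{J-1}\prod_{i\le j-1}i^2$ grows superexponentially in $j$ (faster than any polynomial in $j$) and is large at $j=J$, this holds uniformly for $j\ge J+1$, after fixing $J$ and then taking $x_{J-1}$ large. The monotonicity of $x\mapsto x/(4j^5)-2\log x$ for $x\ge 8j^5$ handles the uniformity.

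The main obstacle is the first normal stage $j=J$, where $h_{j-1}=h_{J-1}=x_{J-1}^{D_0}$ comes from the seed \eqref{intro:eq:seed-scales} rather than the exponential formula. There
\[
 \frac{h_{J-1}}{h_J}=x_{J-1}^{D_0}\exp\!\left(-\frac{x_{J-1}}{J^5}\right),
\]
so the increment is at most
\[
 C J^{6}x_{J-1}^{2+D_0/2}\exp\!\left(-\frac{x_{J-1}}{2J^5}\right),
\]
which is below $1$ once $x_{J-1}$ is large relative to $J$. This is consistent with the order of choices ($J$ first, then $x_{J-1}$). With both cases settled, the bound $\tau_j(S_j)-\tau_j(t_j)\le1$ and the containment $\tau_j(S_j)\le\Theta_j$ follow for all $j\ge J$.
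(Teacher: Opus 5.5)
Your proposal is correct and follows the paper's proof essentially step for step. You bound $\tau_j(t_j)=x_j/\sqrt{\beta_j}\le\sqrt2\,x_jx_{j-1}$, write the post-target increment as $6\sqrt{a_j}\,x_{j+1}x_j\sqrt{h_{j-1}/h_j}$, and show it is at most $1$, using the exponent $-x_{j-1}/(4j^5)$ for $j>J$ and the seed value $h_{J-1}=x_{J-1}^{D_0}$ at $j=J$. One harmless slip: the product in your lower bound for $x_{j-1}$ should run over $J\le i\le j-1$, not all $i\le j-1$.
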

\begin{proof}
The target relation and $\beta_jx_{j-1}^2\ge1/2$ give
\[
 \tau_j(t_j)=\frac{x_j}{\sqrt{\beta_j}}
 \le\sqrt2\,x_jx_{j-1}.
\]
The additional scaled time after the target is
\[
 \tau_j(S_j)-\tau_j(t_j)
 =2\sqrt{a_jh_{j-1}}W_{j+1}
 =6\sqrt{a_j}\,x_{j+1}x_j\sqrt{\frac{h_{j-1}}{h_j}}.
\]
For $j>J$, the logarithm of the square-root factor is
\[
 \frac12\log\frac{h_{j-1}}{h_j}
 =\frac{x_{j-1}}{2(j-1)^7}-\frac{x_{j-1}}{2j^5}
 \le-\frac{x_{j-1}}{4j^5}
\]
for sufficiently large $J$. Its exponential dominates the factor
$x_{j+1}x_j$, so the additional scaled time is at most $1$.
For $j=J$, \eqref{intro:eq:seed-scales} gives instead
$(D_0/2)\log x_{J-1}-x_{J-1}/(2J^5)$, with the same conclusion
for sufficiently large $x_{J-1}$. Therefore
\[
 \tau_j(S_j)\le\sqrt2\,x_jx_{j-1}+1
 \le2x_jx_{j-1}=\Theta_j.
\]
\end{proof}

\emph{Parent $U^{j-1}$.} The parent is already constructed on
$[0,S_{j-1}]$, with $S_{j-1}=t_{j-1}+2W_j$.
For the seed this gives
\[
 S_{J-1}=2W_J=6J^2x_{J-1}^{2-D_0/2}.
\]
Although the comparison endpoint $\widehat t_j$ may exceed $S_j$,
the wave ODEs use the already constructed parent $U^{j-1}$, which
exists through $S_{j-1}$. The next lemma ensures that this parent
is available throughout $I_j$.

\begin{introlemma}[The comparison stays within the parent interval]\label{intro:lem:clock-parent}
If $a_j\ge1/2$, as in \eqref{intro:eq:activation-ranges}, then
\[
 \Theta_j\le\tau_j(S_{j-1}).
\]
\end{introlemma}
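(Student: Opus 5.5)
Since $\tau_j$ is affine in $t$, the claim is a single explicit computation. By \eqref{intro:eq:stage-clock} and the definition $S_{j-1}=t_{j-1}+2W_j$,
\[
 \tau_j(S_{j-1})=\sqrt{a_jh_{j-1}}\,(S_{j-1}-t_{j-1})
 =2\sqrt{a_jh_{j-1}}\,W_j
 =2\sqrt{a_jh_{j-1}}\cdot\frac{3x_jx_{j-1}}{\sqrt{h_{j-1}}}
 =6\sqrt{a_j}\,x_jx_{j-1}.
\]
The factor $\sqrt{h_{j-1}}$ cancels exactly. This cancellation is the reason $W_j$ was normalized by $\sqrt{h_{j-1}}$ rather than by a later shear.

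The hypothesis $a_j\ge1/2$ then gives
\[
 \tau_j(S_{j-1})\ge\frac{6}{\sqrt2}\,x_jx_{j-1}=3\sqrt2\,x_jx_{j-1}\ge2x_jx_{j-1}=\Theta_j,
\]
since $3\sqrt2>2$. Equivalently, $\widehat t_j\le S_{j-1}$. Here one checks that $\widehat t_j-t_{j-1}=\varepsilon_j\Theta_j/a_j=\Theta_j/\sqrt{a_jh_{j-1}}$, so that $\tau_j(\widehat t_j)=\Theta_j$, and uses that $\tau_j$ is increasing.

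Two consistency points need checking. First, the seed stage must be covered. For $j=J$ the parent is the seed child with $S_{J-1}=t_{J-1}+2W_J$ and $t_{J-1}=0$, which is the same formula, so nothing changes: $W_J=3x_Jx_{J-1}/\sqrt{h_{J-1}}$ uses the seed shear $h_{J-1}$, and the identity $\tau_J(S_{J-1})=6\sqrt{a_J}\,x_Jx_{J-1}$ still holds. Second, $a_j>0$ ensures that the square root is real and the clock increasing.

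There is no genuine obstacle. The only step requiring care is confirming that the $h_{j-1}$ in the clock $\tau_j$ and the $h_{j-1}$ in $W_j$ refer to the same quantity. Both are indexed by the preceding layer's shear, so they do.
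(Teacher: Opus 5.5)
Your computation is correct and is the paper's argument: the paper bounds $\varepsilon_j\Theta_j/a_j=2x_jx_{j-1}/\sqrt{a_jh_{j-1}}\le(2\sqrt2/3)W_j<2W_j=S_{j-1}-t_{j-1}$ in physical time and then multiplies by $a_j/\varepsilon_j$. This is the same inequality you write directly in the clock as $\tau_j(S_{j-1})=6\sqrt{a_j}\,x_jx_{j-1}\ge3\sqrt2\,x_jx_{j-1}>\Theta_j$.
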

\begin{proof}
Since $a_j\ge1/2$, the definitions give
\[
 \frac{\varepsilon_j\Theta_j}{a_j}
 =\frac{2x_jx_{j-1}}{\sqrt{a_jh_{j-1}}}
  \le\frac{2\sqrt2}{3}W_j<2W_j=S_{j-1}-t_{j-1}.
\]
Multiplying by $a_j/\varepsilon_j$ gives
$\Theta_j<\tau_j(S_{j-1})$, which implies the stated bound.
\end{proof}

The dashed line marks the accumulation time $T=\lim_{i\to\infty}t_i$.
The scale choices give $W_{i+1}\le W_i/4$ and
$t_i-t_{i-1}\le2W_i/3$, so
\[
 0<T-t_j\le\frac23\sum_{i=j+1}^{\infty}W_i
 \le\frac89W_{j+1}<2W_{j+1}=S_j-t_j.
\]
Thus $T<S_j$: each child's retained interval $[0,S_j]$ reaches beyond
the accumulation time. To compare the remaining time with the preceding
stage duration, use \eqref{eq:actual-target-time} and the ranges in
\eqref{intro:eq:activation-ranges},
\[
 t_j-t_{j-1}
 =\frac{W_j}{3\sqrt{a_j(\beta_jx_{j-1}^2)}}
 \in\left[\frac{W_j}{6},\frac{2W_j}{3}\right].
\]
Consequently,
\[
 \frac{T-t_j}{t_j-t_{j-1}}
 \le\frac{16}{3}\frac{W_{j+1}}{W_j}\ll1.
\]
Thus the remaining time is much shorter than the stage just completed:
$T$ lies close to $t_j$. The eventual blowup argument
gives a maximal smooth existence time no greater than $T$; it need not
equal $T$.

Since $\tau_j$ is increasing, Lemmas~\ref{intro:lem:clock-child}
and~\ref{intro:lem:clock-parent} give the second and third containments below;
together with $t_j\le S_j$, they imply
\[
 [t_{j-1},t_j]\subseteq[t_{j-1},S_j]\subseteq I_j
 \subseteq[t_{j-1},S_{j-1}],\qquad
 [0,S_j]\subseteq[0,S_{j-1}].
\]

\section{From the wave equations to amplification}
\label{sec:amplification}

We now continue from \eqref{intro:eq:normal} and \eqref{intro:eq:wave} to the
\emph{ideal} amplification system \eqref{eq:ideal-amplification-system}.
The route is to differentiate moving-frame components, rescale, and retain the
leading coefficients. The moving-frame calculation is exact; passing to the
ideal coefficients is an approximation controlled in
Subsection~\ref{subsec:ideal-errors} and
Section~\ref{sec:comparison-proofs}.

\textbf{Notation in this section.}
We suppress the stage index in the introductory clocks:
$a=a_j$, $\beta=\beta_j$, $\eps=\varepsilon_j$, and $t_0=t_{j-1}$.
Thus $\eps=\sqrt{a/h_*}$ is the amplification scale. The letters $U,V,W$ are scalar velocity components;
the vector profile $W$ of the Introduction will not be used here.
A dot denotes physical-time differentiation at the origin;
a prime denotes rescaled-time differentiation.

\subsection{The parent gradient and its moving frame}

Let $U^{j-1}$ be the full parent Euler velocity and $U^{j-2}$ the older
Euler solution. The increment is $U^{j-1}-U^{j-2}$.
Let $m_{j-1}$ and $v_{j-1}$ denote that increment's leading phase normal and
velocity-amplitude coefficient. Since the parent $U^{j-1}$ is odd, the origin stays still: $X(t,0)=0$. On the interval where the increment supplies the
parent shear, define
\begin{equation}
\begin{aligned}
  M(t)&=\nabla U^{j-1}(t,0),&
  B(t)&=\nabla U^{j-2}(t,0),\\
  p(t)&=\frac{m_{j-1}(t,0)}{|m_{j-1}(t,0)|},&
  q(t)&=\frac{v_{j-1}(t,0)}{|v_{j-1}(t,0)|}.
\end{aligned}
\label{eq:inherited-frame}
\end{equation}
The vectors $p,q$ are the unit phase-normal and transverse-velocity
directions of the previous leading increment already contained in the parent.
The variables $m=m_j$ and $v=v_j$ are the phase normal and velocity
amplitude of the new leading increment.

Thus the leading gradient of the increment at the origin is
\begin{equation}
  \frac{\alpha_{j-1}}{\delta_{j-1}}\,
     v_{j-1}\otimes m_{j-1}=h(t)q(t)p(t)^T,
  \qquad
  h(t)=\frac{\alpha_{j-1}}{\delta_{j-1}}
        |m_{j-1}|\,|v_{j-1}|.
  \label{eq:inherited-shear-size}
\end{equation}
We can then write
\begin{equation}
  M=B+hqp^T+E,\qquad h(t_0)=h_*.
  \label{eq:parent-shear}
\end{equation}

Here $h$ is the strength of that inherited shear,
$B$ is the older flow's gradient, and $E$ is the inherited gradient
remainder; see \eqref{eq:gradient} and
Section~\ref{sec:actual-to-ideal}.
All quantities in \eqref{eq:inherited-shear-size} are evaluated at $(t,0)$,
where $\chi_1(0)=1$ and $f'_{\delta_{j-1}}(0)=\delta_{j-1}^{-1}$.
Although the wave coefficients depend on time and particle label, we
always evaluate them at the stationary trajectory at the origin, so we
suppress their arguments $(t,0)$, for both the previous and the new
leading increments.
For the first stage, the seed supplies this shear; see
Subsection~\ref{intro:subsec:stage-parameters}.

\textbf{Evolution of the inherited frame and shear.}
At the origin, the previous leading increment's wave coefficients
satisfy \eqref{intro:eq:normal} and \eqref{intro:eq:wave} with the older
gradient $B(t)=\nabla U^{j-2}(t,0)$ in place of $M(t)$.
Their normalized directions obey $p\cdot q=0$. With $n=p\times q$, write
$B_{ij}=i\cdot Bj$ for $i,j\in\{p,q,n\}$. In particular, $p$ is a unit vector,
rather than scalar pressure. Taking the scalar product of each old wave equation
\eqref{intro:eq:normal}, \eqref{intro:eq:wave} (with $M=B$)
with its own coefficient gives
\begin{equation}
  \frac{d}{dt}\log|m_{j-1}|=-B_{pp},\qquad
  \frac{d}{dt}\log|v_{j-1}|=-B_{qq},\qquad
  \frac{\dot h}{h}=-B_{pp}-B_{qq}.
  \label{eq:old-wave-norms}
\end{equation}
The third identity follows by adding the first two, since
$h=(\alpha_{j-1}/\delta_{j-1})|m_{j-1}|\,|v_{j-1}|$ and the prefactor
is constant. It will control the variation of $h$ when we freeze the
shear coefficient in \eqref{eq:frozen-coefficients}.
Differentiating the normalizations in \eqref{eq:inherited-frame} therefore
gives the frame equations:
\begin{equation}
  \dot p=-B^Tp+B_{pp}p,\qquad
  \dot q=-Bq+2B_{pq}p+B_{qq}q.
\label{eq:inherited-frame-evolution}
\end{equation}
Expanding in the frame, and differentiating $n=p\times q$, gives
\begin{equation}
  \dot p=-B_{pq}q-B_{pn}n,\qquad
  \dot q=B_{pq}p-B_{nq}n,\qquad
  \dot n=B_{pn}p+B_{nq}q.
  \label{eq:frame-components}
\end{equation}

The new leading increment's wave coefficients $(m,v)=(m_j,v_j)$ evolve under the
full parent gradient $M=B+hqp^T+E$. At activation,
$m(t_0,0)=s_0n(t_0)$.

For use at stage $j+1$, the new leading increment supplies the next directions
\begin{equation}
  p_{\mathrm{next}}(t)=\frac{m_j(t,0)}{|m_j(t,0)|},\qquad
  q_{\mathrm{next}}(t)=\frac{v_j(t,0)}{|v_j(t,0)|}.
  \label{eq:next-layer-frame}
\end{equation}
On the retained interval after its target, these become the next inherited
frame, with older gradient $B_{\mathrm{next}}(t)=\nabla U^{j-1}(t,0)=M(t)$ and
parent gradient $M_{\mathrm{next}}(t)=\nabla U^j(t,0)$
as in Subsection~\ref{intro:subsec:stage-parameters}; the quantitative
frame transfer is proved in Proposition~\ref{prop:target-frame}. Thus $p,q$ are produced by the stage-$(j-1)$ leading increment;
normalizing the present $m,v$ gives the frame for the
\emph{next} calculation.

The exact moving-frame velocity identities, including $E$, are
\eqref{intro:eq:stage-component-motivation}. They follow by applying
$\dot v_i=\dot i\cdot v+i\cdot\dot v$ and
\eqref{eq:frame-components}; in particular the two copies of
$-B_{pq}v_q$ arise from the moving frame and from $-Mv$.
We now set $E=0$ while deriving the ideal model. Its contribution is
restored in the comparison estimates.

\subsection{Rescale time, the ray, and the velocity}

Recall the parameters of Subsection~\ref{intro:subsec:stage-parameters}
and the clock \eqref{intro:eq:stage-clock}, suppressing the stage index.
Their positivity follows from \eqref{intro:eq:activation-ranges}:
\begin{equation}
  a=B_{pq}(t_0)>0,\qquad
  \beta=\frac{B_{nq}(t_0)}{a}>0,\qquad
  \eps=\sqrt{\frac{a}{h_*}},\qquad
  \tau=\frac{a}{\eps}(t-t_0).
  \label{eq:amplification-scales}
\end{equation}
Thus $d/dt=(a/\eps)d/d\tau$ and $\eps^2h_*/a=1$.
With the constant $s_0=|m(t_0,0)|$, write
\begin{equation}
  m=s_0(Pp+\eps Qq+Nn),\qquad
  v=\eps Up+Vq+\eps Wn.
  \label{eq:scaled-components}
\end{equation}
Transversality becomes
\begin{equation}
  PU+QV+NW=0,\qquad
  W=-\frac{PU+QV}{N}\quad(N\ne0).
  \label{eq:scaled-transversality}
\end{equation}
Consequently there are only two independent velocity components.
The scalars $P,Q,N$ and $U,V,W$ describe the \emph{new} layer in the
\emph{previous} layer's moving frame; $p,q$ retain the meanings in
\eqref{eq:inherited-frame} throughout this rescaling.
At activation ($\tau=0$, hence physical time $t=t_0$) and the origin
($y=0$), the complete scaled data are
\begin{equation}
  (P,Q,N)(0)=(0,0,1),\qquad
  (U,V,W)(0)=(-\lambda,1,0),\qquad \lambda\ge0.
\label{eq:scaled-activation}
\end{equation}
The normal is chosen by \eqref{eq:history-normal-choice};
Lemma~\ref{lem:history-endpoint-choice} supplies the velocity signs
and normalization. Then \eqref{eq:scaled-components} gives these data,
with $\lambda=-v_p(t_0,0)/\eps\ge0$ and $W(0)=0$ by transversality.


The corresponding normal-component identities are
\eqref{intro:eq:stage-normal-components}, obtained from
$\dot m_i=\dot i\cdot m-(Mi)\cdot m$. Set $E=0$ in those identities
and insert \eqref{eq:scaled-components}.
For example, the shear contribution to $P'$ is
$-(\eps^2h/a)Q$, and the leading contribution to $Q'$ is
$-2(B_{nq}/a)N$. The remaining coefficients carry small scale factors or
controlled errors, see Subsection~\ref{subsec:ideal-errors} below,
which also bounds the error in freezing the principal coefficients.
The leading coefficient replacements are
\begin{equation}
  \frac{\eps^2h}{a}\ \rightsquigarrow\ 1,\qquad
  \frac{B_{pq}}a\ \rightsquigarrow\ 1,\qquad
  \frac{B_{nq}}a\ \rightsquigarrow\ \beta.
  \label{eq:frozen-coefficients}
\end{equation}
Here $\rightsquigarrow$ means ``replace by the ideal coefficient,'' not an
exact equality.

The resulting ideal equations are
\begin{equation}
  P_0'=-Q_0,\qquad Q_0'=-2\beta N_0,\qquad N_0'=0,
  \qquad (P_0,Q_0,N_0)(0)=(0,0,1).
  \label{eq:ideal-ray-system}
\end{equation}
Integration gives
\begin{equation}
  P_0=\beta\tau^2,\qquad Q_0=-2\beta\tau,\qquad N_0=1.
\label{eq:ideal-ray}
\end{equation}
The comparison bound \eqref{eq:actual-ray-comparison} keeps the actual
$N\ge1/2$, as required in
\eqref{eq:scaled-transversality}.

\subsection{Retain the pressure projection and the shear coupling}

The pressure-related term in the amplitude equation cannot be discarded.
With $\gamma=(m\cdot Mv)/|m|^2$, introduce the exact scaled quantities
\begin{equation}
  D_a=\frac{|m|^2}{s_0^2}=P^2+\eps^2Q^2+N^2,\qquad
  J=\frac{m\cdot Mv}{s_0a},\qquad
  \gamma=\frac{aJ}{s_0D_a}.
  \label{eq:scaled-pressure-quantities}
\end{equation}
Expanding $m\cdot Mv$ using \eqref{eq:scaled-components}, the three terms that
survive the scaling are
\begin{equation}
  J=
  \underbrace{\frac{B_{pq}}aPV}_{\text{$p$-row of $Bv$}}
  +\underbrace{\frac{\eps^2h}{a}QU}_{\text{parent shear}}
  +\underbrace{\frac{B_{nq}}aNV}_{\text{$n$-row of $Bv$}}
  +\mathcal R_J.
  \label{eq:three-pressure-contributions}
\end{equation}
The remainder $\mathcal R_J$ contains the other coefficient terms, including
$E$ when restored. After eliminating $W$ by transversality
\eqref{eq:scaled-transversality}, the explicit remainder
\eqref{eq:short-J-remainder} and its bound \eqref{eq:short-J-bound}
control $\mathcal R_J$ on short scaled intervals.
Section~\ref{sec:comparison-proofs} treats the full comparison interval,
including the normal error \eqref{eq:actual-ray-comparison} and the
resulting pressure and velocity coefficient errors.
Thus the ideal replacements are
\begin{equation}
  J\ \rightsquigarrow\ J_0:=(P_0+\beta)V+Q_0U,\qquad
  D_a\ \rightsquigarrow\ D:=1+P_0^2.
  \label{eq:ideal-pressure-quantities}
\end{equation}

It remains to collect the contributions to the two velocity derivatives.
Since $v_p=\eps U$ and $v_q=V$, their physical derivatives are
$\dot v_p=aU'$ and $\dot v_q=(a/\eps)V'$.
Dividing \eqref{intro:eq:stage-component-motivation} by $a$, the retained nonprojected term is
\[
  -\frac{2B_{pq}v_q}{a}\ \rightsquigarrow\ -2V,
\]
whereas the projected term is
\begin{equation}
  \frac{2\gamma m_p}{a}=\frac{2PJ}{D_a}
  \ \rightsquigarrow\
  \frac{2P_0\bigl[(P_0+\beta)V+Q_0U\bigr]}{1+P_0^2}.
  \label{eq:p-projection-survives}
\end{equation}
The omitted nonprojected terms have coefficients of size $O(\eta)$;
Subsection~\ref{subsec:ideal-errors} defines $\eta$ and proves this bound
on fixed scaled intervals.
Multiplying \eqref{intro:eq:stage-component-motivation} by $\eps/a$, the retained shear term is
\[
  -\frac{\eps h v_p}{a}=-\frac{\eps^2h}{a}U
  \ \rightsquigarrow\ -U.
\]
In contrast, the projected term in that component is
\begin{equation}
  \frac{\eps}{a}\,2\gamma m_q
      =\frac{2\eps^2QJ}{D_a}\ \rightsquigarrow\ 0.
  \label{eq:q-projection-lower-order}
\end{equation}
The extra factor $\eps^2$ places this term among the controlled coefficient
errors. Subsection~\ref{subsec:ideal-errors} below gives explicit short-time
bounds for this projected term and all the remaining background terms
omitted from the two equations in
\eqref{intro:eq:stage-component-motivation} with $E=0$.

\needspace{10\baselineskip}
Denoting the ideal velocity components again by $U,V$, collecting only these
retained contributions gives the ideal amplification system
\begin{equation}
\begin{aligned}
  U'&=-2V+
    \frac{2P_0\bigl[(P_0+\beta)V+Q_0U\bigr]}{1+P_0^2},\\
  V'&=-U,
\end{aligned}
\qquad
\begin{gathered}
  P_0=\beta\tau^2,\quad Q_0=-2\beta\tau,\\
  U(0)=-\lambda,\quad V(0)=1.
\end{gathered}
\label{eq:ideal-amplification-system}
\end{equation}
The equalities in
\eqref{eq:ideal-amplification-system} are exact for the ideal model; comparing
its solutions with the physical scaled components is the subject of
Section~\ref{sec:comparison-proofs}.

\textbf{The third component is reconstructed, not set to zero for all time.}
Once $U,V$ are known, transversality determines $W$. In the ideal model,
$N_0=1$, so
\begin{equation}
  W=-P_0U-Q_0V=-\beta\tau^2U+2\beta\tau V.
  \label{eq:ideal-W-reconstruction}
\end{equation}
In particular, $W(0)=0$ but $W'(0)=2\beta V(0)=2\beta>0$ in this model.
A nonzero component along the inherited $n$ can therefore develop while
$v$ remains perpendicular to the evolving new normal $m$.

\subsection{Eliminate one component to expose the amplification equation}

Set $D=1+P_0^2$. Since $U=-V'$, the first equation of
\eqref{eq:ideal-amplification-system} gives
\begin{align*}
  -V''&=-2V+
    \frac{2P_0\bigl[(P_0+\beta)V-Q_0V'\bigr]}{D},\\
  DV''&=2(1-\beta P_0)V+2P_0Q_0V'.
\end{align*}
But $P_0'=-Q_0$, so $D'=2P_0P_0'=-2P_0Q_0$. Therefore
\begin{equation}
(DV')'=2(1-\beta P_0)V.
\label{eq:scalar-amplification-compact}
\end{equation}
Equivalently, inserting the explicit value of $P_0$,
\begin{equation}
  \frac{d}{d\tau}\left[(1+\beta^2\tau^4)\frac{dV}{d\tau}\right]
     =2(1-\beta^2\tau^2)V,
  \qquad V(0)=1,\quad V'(0)=\lambda\ge0.
\label{eq:scalar-amplification}
\end{equation}
This scalar equation is used for the growth estimates in
Section~\ref{sec:quantitative-amplification}. The two essential ingredients in reaching it
are the \emph{frame rotation}, which supplies the second copy in the $-2V$
coupling, and the \emph{incompressibility projection}, which supplies the rational term in \eqref{eq:ideal-amplification-system}.

\subsection{Errors in the ideal approximation}
\label{subsec:ideal-errors}

The component calculation above identifies the ideal system. We now
record the coefficient estimates that justify the discarded terms.
The scale separation is motivated in
Subsection~\ref{intro:subsec:amplification-clocks}; here we track its
consequences for the pressure numerator and the velocity equations.

At stage $j$, let $h_*=h_{j-1}$ and assume the older background satisfies
$\|B(t)\|\lesssim G_*$ throughout the interval considered, with
$G_*=1+h_{j-2}$ and $a\asymp1$. Define
\begin{equation}
 \eta=\frac{\eps G_*}{a}
      =\frac{G_*}{\sqrt{ah_*}}
      \asymp\frac{1+h_{j-2}}{\sqrt{h_{j-1}}}.
 \label{eq:short-eta}
\end{equation}
The parameter choices in Subsection~\ref{intro:subsec:stage-parameters}
make this ratio small, as calculated in
Subsection~\ref{intro:subsec:amplification-clocks}.
The unspecified entries of $B$ have scaled coefficients
$\eps B_{ij}/a=O(\eta)$ or $\eps^2B_{ij}/a=O(\eps\eta)$,
whereas the principal shear coefficient is $\eps^2h_*/a=1$.

On a fixed scaled interval $0\le\tau\le T$, with $T$ independent
of the parameters, first set $E=0$ and freeze the principal coefficients:
$B_{pq}/a=1$, $B_{nq}/a=\beta$, and $\eps^2h/a=1$.
Constants $C_T$ may depend on $T$; the other coefficients are bounded by
$CG_*$. The cost of freezing is estimated below.

Recall $J=(m\cdot Mv)/(s_0a)$ from \eqref{eq:scaled-pressure-quantities}. Expanding by rows of $M=B+hqp^T$ gives
\begin{equation}
 \begin{aligned}
 J={}&\frac Pa(\eps B_{pp}U+B_{pq}V+\eps B_{pn}W)\\
 &+\frac{\eps Q}{a}
       \bigl(\eps(h+B_{qp})U+B_{qq}V+\eps B_{qn}W\bigr)\\
 &+\frac Na(\eps B_{np}U+B_{nq}V+\eps B_{nn}W).
 \end{aligned}
 \label{eq:short-J-full}
\end{equation}
Consequently $J=PV+QU+\beta NV+\mathcal R_J$, where
\begin{equation}
 \begin{aligned}
 \mathcal R_J={}&\frac\eps a
  \bigl(PB_{pp}U+PB_{pn}W+QB_{qq}V
                       +NB_{np}U+NB_{nn}W\bigr)\\
 &+\frac{\eps^2}{a}Q(B_{qp}U+B_{qn}W).
 \end{aligned}
 \label{eq:short-J-remainder}
\end{equation}
Every term in this remainder has a small coefficient. On a fixed scaled
interval, the normal comparison \eqref{eq:actual-ray-comparison},
under the hypotheses of Section~\ref{sec:comparison-proofs}, gives $|P|+|Q|+|N|\le C_T$ and $N\ge1/2$
for sufficiently small errors. Transversality then implies
$|W|\le C_T(|U|+|V|)$, and hence
\begin{equation}
 |\mathcal R_J|\le C_T\eta(|U|+|V|),\qquad
 |J|\le C_T(|U|+|V|),\qquad D_a\ge1/4.
 \label{eq:short-J-bound}
\end{equation}
These are bounds on coefficients of the linear velocity equation. They
do not assume that $U,V$ are order one; in particular they allow the
history-selected slope $\lambda$ to be large.

Before dropping terms, the scaled versions of the two equations in
\eqref{intro:eq:stage-component-motivation}, with $E=0$, are
\begin{equation}
 \begin{aligned}
 U'={}&-2V+\frac{2PJ}{D_a}
       -\frac\eps aB_{pp}U-\frac{2\eps}{a}B_{pn}W,\\
 V'={}&-U+\frac{2\eps^2QJ}{D_a}
       -\frac{\eps^2}{a}(B_{qp}-B_{pq})U\\
      &-\frac\eps aB_{qq}V
       -\frac{\eps^2}{a}(B_{qn}+B_{nq})W.
 \end{aligned}
 \label{eq:short-velocity-full}
\end{equation}
The nonprojected background terms are bounded by
$C_T\eta(|U|+|V|)$. The projected term in $V'$ has the additional
factor $\eps^2$:
\[
 \left|\frac{2\eps^2QJ}{D_a}\right|
       \le C_T\eps^2(|U|+|V|).
\]
There is no such factor in $2PJ/D_a$, so the projection in $U'$ must
remain. Substituting \eqref{eq:short-J-bound} yields
\begin{equation}
 \begin{aligned}
 U'&=-2V+\frac{2P(PV+QU+\beta NV)}{D_a}+r_U,\\
 V'&=-U+r_V,\\
 |r_U|&\le C_T\eta(|U|+|V|),\qquad
 |r_V|\le C_T(\eta+\eps^2)(|U|+|V|).
 \end{aligned}
 \label{eq:short-velocity-errors}
\end{equation}
Replacing the ray by its ideal values and $D_a$ by $1+P_0^2$
then gives \eqref{eq:ideal-amplification-system}.

For the actual time-dependent coefficients, measure the freezing error by
\begin{equation}
 d_f=\sup_{0\le\tau\le T}
 \left(\left|\frac{B_{pq}}a-1\right|
       +\left|\frac{B_{nq}}a-\beta\right|
       +\left|\frac{h}{h_*}-1\right|\right).
 \label{eq:short-freezing-error}
\end{equation}
The moving entries of $B$ have physical-time derivatives bounded by
$CG_*^2$. The shear equation \eqref{eq:old-wave-norms} gives a
logarithmic derivative bounded by $CG_*$. Thus, when the latter
variation is small,
\begin{equation}
 d_f\le C_T\left(\frac{\eps G_*^2}{a^2}
                         +\frac{\eps G_*}{a}\right).
 \label{eq:short-freezing-bound}
\end{equation}
Small $\eta$ controls the explicitly suppressed coefficients; small
$d_f$ is an additional requirement for replacing the varying principal
coefficients by constants. The parameter estimates in Section~\ref{sec:stage-parameters}
ensure both requirements.

The coefficient estimates above explain the ideal approximation on a
fixed scaled interval. The normal comparison, replacement of $D_a$
by $1+P_0^2$, and velocity solution comparison on the full amplification
interval are proved in Section~\ref{sec:comparison-proofs}, including
the inherited correction $E$ and the spatial errors introduced in
Section~\ref{sec:actual-to-ideal}.

These are absolute coefficient estimates, not pointwise relative
estimates for $J$. At activation the ideal value is $J=\beta V$,
which is small. Preserving its positive sign requires errors small
relative to $\beta$, beyond merely controlling the absolute error;
see Section~\ref{sec:quantitative-amplification}.

\section{Quantitative inputs for the wave-model comparison}
\label{sec:actual-to-ideal}
\fancyhead[R]{\small Inputs for the wave-model comparison}

Section~\ref{sec:amplification} identifies the ideal equations and their
coefficient errors. We now supply the uniform parent and activation-data
bounds needed to compare the actual wave with that model on the full
amplification interval. The parent bounds are conditional inputs from
the preceding stages; the lemmas below quantify their consequences.

\subsection{Parent inputs and comparison interval}
\label{subsec:wave-parent-inputs}

Fix stage $j$ with exact parent $u=U^{j-1}$. Use the frame and
shear decomposition \eqref{eq:inherited-frame}--\eqref{eq:parent-shear}
and the frame and shear identities \eqref{eq:inherited-frame-evolution}
and \eqref{eq:old-wave-norms}, all evaluated at the origin. In the notation of
Subsections~\ref{intro:subsec:stage-parameters}--\ref{intro:subsec:time-intervals},
set $t_a=t_{j-1}$, $h_*=h_{j-1}$, $a=a_j\in[1/2,2]$,
$\beta=\beta_j\in(0,1]$, and $\varepsilon=\varepsilon_j=\sqrt{a/h_*}\le1$.
The stated ranges of $a,\beta$ follow from
\eqref{intro:eq:activation-ranges} with $x_{j-1}\ge\sqrt2$.
The clocks are \eqref{eq:amplification-scales} and
\eqref{intro:eq:stage-turning-clock}. Use the already chosen
$\Theta=\Theta_j=2x_jx_{j-1}$ and the comparison interval
\[
 I=I_j=[t_a,t_a+\varepsilon\Theta/a],\qquad 0\le\tau\le\Theta.
\]
Lemmas~\ref{intro:lem:clock-child}--\ref{intro:lem:clock-parent}
ensure that this interval covers the child's retained interval and
lies within the parent's interval of existence.
The later comparison estimates may also be applied with any $\Theta$
for which their stated hypotheses hold.

Throughout $I$, assume the older background and inherited correction obey
\begin{equation}
 \|B\|\le CG_*,\qquad \|\dot B\|\le CG_*^2,\qquad |E(t)|\le E_*.
 \label{eq:actual-Gstar}
\end{equation}
At stage $j$, the inputs are $G_*=1+h_{j-2}$ and
$E_*=k_{j-1}^{-1/4}$. The latter is a uniform-in-time induction input,
not merely an activation bound; no time derivative of $E$ is needed.
The background derivative bound is consistent with
$\dot B=-B^2-H^{j-2}(t,0)$, where
$H^{j-2}=\nabla^2p^{j-2}$ is the older solution's pressure Hessian.
The parameter estimates in Section~\ref{sec:stage-parameters} give
$\eta\Theta\ll1$, with $\eta$ from \eqref{eq:short-eta}; hence
$\varepsilon\Theta=(a/G_*)\eta\Theta\le2\eta\Theta\ll1$.
Thus the physical interval is short even though $\Theta$ is large.

Write $K_h\ge2$ for the parent particle-map bound: derivatives of
$X-\mathrm{id},X_t,X_{tt}$ in physical particle labels obey Gevrey-2
Sobolev bounds $K_h^{r+1}(r!)^2$ at order $r$, as in \eqref{eq:child-parent-map}.
Sobolev embedding and the Jacobian identities give first-label-derivative
bounds for the coefficients used here. Fix once and for all an input
exponent $\kappa=7$ such that these coefficient bounds (including
their first physical-label derivatives) and the history inputs in
Section~\ref{sec:history-parent-inputs} are at most $CK_h^\kappa$.
The value of $\kappa$ is independent of the stage; it is not enlarged
in subsequent estimates. Products add exponents. We will obtain
$K_h^{19\kappa}$ as a common activation budget in
Lemma~\ref{lem:history-label-bounds}. To verify this choice, Sobolev
embedding in the fixed $H^s$ norm ($s>3/2$) gives
$|\partial_b^r(F,F_t,F_{tt})|\le C_rK_h^{r+2}$.
Since $\det F=1$, the cofactor formula, followed by
$M=F_tF^{-1}$ and $H=-F_{tt}F^{-1}$, gives
\begin{equation}
 \begin{aligned}
 |\partial_b^r F^{-1}|&\le C_rK_h^{r+4},\\
 |\partial_b^r(M,H)|&\le C_rK_h^{r+6}.
 \end{aligned}
 \label{eq:parent-exponent-budget}
\end{equation}
Here each derivative of a quadratic cofactor adds one to the exponent,
and multiplication by $F_t$ or $F_{tt}$ adds two. Thus $r\le1$
costs at most $K_h^7$. In the history interval the remaining input
$t_a^{-1}$ is also covered: \eqref{eq:history-size-input} and
\eqref{eq:history-activation-geometry}, with $\zeta<1/2$, imply
$t_a^{-1}\le h_*\le2|M(t_a,0)|\le CK_h^6$.

The physical label width is $\ell$. The spatial lemma below converts
these derivative bounds into a gradient error of size $\ell K_h^\kappa$.
All other powers denoted by $c$ below are fixed independently of the stage;
explicit powers of $K_h$ are tracked using $\kappa$.

\subsection{Coefficient errors}
\label{subsec:three-errors}

\refstepcounter{equation}
\label{lem:central-coefficients}
\textbf{Lemma \theequation\ (Temporal error bounds at the origin).}
Set $t_a=t_{j-1}$ and $I=[t_a,t_a+\varepsilon\Theta/a]$.
Assume $a\in[1/2,2]$, $G_*,\Theta\ge1$, $0<\varepsilon\le1$,
and $\varepsilon\Theta G_*^2\le c_0$ for a sufficiently small fixed
constant $c_0$. Suppose throughout $I$, in a (time-dependent) orthonormal
frame $(p,q,n)$ at the origin,
\[
 M(t,0)=B(t)+h(t)q(t)p(t)^T+E(t),\qquad
 |E(t)|\le E_*,\qquad h(t)>0.
\]
The background satisfies
\[
 |B|\le CG_*,\qquad |\dot B|\le CG_*^2.
\]
The frame $(p,q,n)$ and shear magnitude $h$ obey the identities \eqref{eq:inherited-frame-evolution} and \eqref{eq:old-wave-norms}, respectively.
At activation, $B_{pq}(t_a)=a$, $B_{nq}(t_a)=a\beta$, and
$h(t_a)=h_*$. Here $B_{ik}=i\cdot Bk$ denotes a moving-frame component.
Then, uniformly for $t\in I$,
\begin{equation}
 \begin{aligned}
 \left|\frac{B_{pq}(t)}a-1\right|
 +\left|\frac{B_{nq}(t)}a-\beta\right|
 &\le C\varepsilon\Theta G_*^2,\\
 \left|\frac{h(t)}{h_*}-1\right|&\le C\varepsilon\Theta G_*.
 \end{aligned}
 \label{eq:budget-principal-coefficients}
\end{equation}
The other coefficients obey
\begin{equation}
 \frac\varepsilon a|B_{ik}(t)|\le C\varepsilon G_*,\qquad
 \frac{|E(t)|}{a}\le CE_*,\qquad i,k\in\{p,q,n\}.
 \label{eq:budget-other-coefficients}
\end{equation}
The constants depend only on the fixed constants in the hypotheses,
not on the stage.

\textbf{Remark.} The shear estimate is relative; its corresponding absolute bound is
$|h-h_*|\le Ch_*\varepsilon\Theta G_*$. The bound on $E$ is an
input, not a consequence of the background ODE estimates.

\emph{Proof.} The frame equations \eqref{eq:inherited-frame-evolution} and
orthonormality give
\[
 |\dot p|\le2|B|,\qquad |\dot q|\le4|B|,\qquad
 |\dot n|=|\dot p\times q+p\times\dot q|
 \le|\dot p|+|\dot q|.
\]
Consequently $|\dot p|+|\dot q|+|\dot n|\le CG_*$.
Likewise, the shear identity gives
$|\dot h|/h=|B_{pp}+B_{qq}|\le2|B|\le CG_*$.
Thus these derivative bounds follow from the identities and the size
bound on $B$; they are not additional assumptions.

Now differentiate the moving-frame component:
\[
 \frac{d}{dt}(p\cdot Bq)
 =\dot p\cdot Bq+p\cdot\dot Bq+p\cdot B\dot q.
\]
Each term is bounded by $CG_*^2$. The same argument with $n$ in
place of $p$ applies to $B_{nq}$. Integrating from $t_a$ and using
$|I|=\varepsilon\Theta/a$ gives
\[
 |B_{pq}/a-1|+|B_{nq}/a-\beta|
 \le C\varepsilon\Theta G_*^2.
\]
Next the shear identity gives
\[
 \left|\log\frac{h(t)}{h_*}\right|
 \le CG_*|t-t_a|\le C\varepsilon\Theta G_*.
\]
By the assumed smallness, $|e^r-1|\le C|r|$ applies to this logarithm.
Thus $|h/h_*-1|\le C\varepsilon\Theta G_*$, proving
\eqref{eq:budget-principal-coefficients}. Also
$\varepsilon|B_{ik}|/a\le C\varepsilon G_*$, and $|E|/a\le CE_*$
follows from the assumption and $a\asymp1$.
\hfill$\square$

The spatial region for the comparison is $\Omega=\{|y|\le1/2\}$.
It contains the leading oscillatory increment because
$\operatorname{supp}\chi_1\subset\{|y|<1/2\}$; its physical image is
$X(t,\ell\Omega)$. The mean and exact corrections have separate global
estimates and need not be supported here. We use the frame evaluated
at the origin for every label.

\refstepcounter{equation}
\label{lem:spatial-coefficients}
\textbf{Lemma \theequation\ (Parent-gradient bounds on the support of the leading increment).}
Let $I$ be the retained interval, with $\Omega$ and the frame as above.
Suppose the decomposition
\begin{equation}\label{eq:spatial-origin-decomposition}
 M(t,0)=B(t)+h(t)q(t)p(t)^T+E(t),\qquad |E(t)|\le E_*
\end{equation}
holds at the origin throughout $I$, and $a\in[1/2,2]$. In physical particle labels
$b$, define
\[
 \widehat M(t,b)=\nabla U^{j-1}(t,X(t,b)),\qquad
 M(t,y)=\widehat M(t,\ell y).
\]
Assume the full parent gradient has the uniform spatial bound
\[
 \sup_{t\in I,\ |b|\le\ell/2}|\nabla_b\widehat M(t,b)|\le CK_h^{\kappa}.
\]
Then the remainder
$\mathfrak E(t,y)=M(t,y)-B(t)-h(t)q(t)p(t)^T$ satisfies
\begin{equation}
 \begin{aligned}
 \sup_{t\in I,\ y\in\Omega}|M(t,y)-M(t,0)|&\le C\ell K_h^{\kappa},\\
 \sup_{t\in I,\ y\in\Omega}\frac{|\mathfrak E(t,y)|}{a}
 &\le C(E_*+\ell K_h^{\kappa}).
 \end{aligned}
 \label{eq:budget-spatial-coefficients}
\end{equation}

\textbf{Remark.} Only the bound on $E(t)$ at the origin is needed in this proof; no spatial
or time derivative bound on $E$ is required. The spatial derivative
assumption concerns the full parent gradient. It follows from the
particle-map estimates \eqref{eq:parent-exponent-budget}:
the identity
\[
 \widehat M=(\partial_bX_t)(\partial_bX)^{-1}
\]
and its label derivative express it in terms of the controlled map
derivatives and inverse Jacobian. All these inputs must hold throughout $I$.
Unlike Lemma~\ref{lem:central-coefficients}, Lemma~\ref{lem:spatial-coefficients} does not
require a small time interval: it applies on any interval on which its
two uniform inputs hold.

\emph{Proof.} The decomposition \eqref{eq:spatial-origin-decomposition} gives the exact identity
\[
 \mathfrak E(t,y)=E(t)+\widehat M(t,\ell y)-\widehat M(t,0).
\]
The segment from $0$ to $\ell y$ stays in $|b|\le\ell/2$. Hence
\[
 \widehat M(t,\ell y)-\widehat M(t,0)
 =\int_0^1\nabla_b\widehat M(t,s\ell y)[\ell y]\,ds,
 \qquad
 |\mathfrak E(t,y)|\le E_*+C\ell K_h^{\kappa}.
\]
Since $a\asymp1$, these estimates prove
\eqref{eq:budget-spatial-coefficients}.\hfill$\square$

To insert these bounds into the equations, use the scaled components
\eqref{eq:scaled-components} at each label. Restore the time-dependent
principal coefficients $B_{pq}/a$, $B_{nq}/a$, and $h/h_*$ in
\eqref{eq:short-J-full} and \eqref{eq:short-velocity-full}; their
variations are bounded by \eqref{eq:budget-principal-coefficients}.
With $\mathfrak E$ from Lemma~\ref{lem:spatial-coefficients}, the parent
gradient is $M=B+hqp^T+\mathfrak E$. Relative to the part $B+hqp^T$,
the correction adds $-\mathfrak E^Tm$ to the normal equation,
$-\mathfrak Ev$ to the nonprojected velocity equation, and
$m\cdot\mathfrak Ev/(s_0a)$ to $J$ and hence to the pressure projection.
Transversality still gives $W=-(PU+QV)/N$ when $N\ne0$.

The gradient of the parent's newest increment at the origin contains the
remainder $E$ in \eqref{eq:parent-shear}. To see how it enters the scaled
equations, consider the term $-Ev$ in the velocity equation. Since
$v_p=\varepsilon U$, we have $\dot v_p=aU'$, so its contribution to $U'$ is
\begin{equation}
 -\frac1a p\cdot Ev
 =-\frac{E_{pq}}aV-\frac{\varepsilon E_{pp}}aU
                        -\frac{\varepsilon E_{pn}}aW.
 \label{eq:actual-E-example}
\end{equation}
The first coefficient is $O(E_*)$, not $O(E_*/h_*)$.
Thus an error merely small relative to the enormous shear can still
destroy the order-one coupling in the model. The pressure contribution
of $E$ is estimated in the same coefficient accounting, with polynomial
losses from the ray components. Increasing the preceding frequency gives
the absolute bound $E_*=k_{j-1}^{-1/4}$ needed here.

\subsection{Activation data and the combined error budget}
\label{subsec:wave-activation-budget}

Write $Z=(P,Q,N)^T$ and $Y=(U,V)^T$ for the scaled components in
\eqref{eq:scaled-components}. At activation $t=t_{j-1}$, or $\tau=0$,
the values at the origin are $Z(0,0)=(0,0,1)^T$ and
$Y(0,0)=(-\lambda,1)^T$. Phase transport and the history construction
supply the neighboring data. Their physical-label derivative estimates
are additional inputs, beyond the coefficient lemmas; they are proved
under the history hypotheses in Section~\ref{sec:history-activation}.
By the mean-value
theorem, these give physical differences of size $C\ell K_h^{19\kappa}$ on
$\Omega$. Scaling in the common frame at the origin introduces
$\varepsilon^{-1}$: for example,
\[
 |v(t_{j-1},\ell y)-v(t_{j-1},0)|\le C\ell K_h^{19\kappa}
 \quad\Longrightarrow\quad
 |U(0,y)-U(0,0)|\le C\ell K_h^{19\kappa}/\varepsilon.
\]
Here the arguments of $v$ are physical particle labels, and
$U=v_p/\varepsilon$. Similarly, $Q=m_q/(s_0\varepsilon)$ incurs this
loss. More precisely, the history estimates give
$|\nabla_bm|+s_0^{-1}\le CK_h^\kappa$, so the scaled normal costs
$C\ell K_h^{2\kappa}/\varepsilon$; the velocity costs
$C\ell K_h^{19\kappa}/\varepsilon$. Thus the activation bounds
used in Proposition~\ref{prop:conditional-wave-control} are
\begin{equation}
 \begin{aligned}
 \sup_{y\in\Omega}|Z(0,y)-(0,0,1)^T|&\le C\ell K_h^{19\kappa}/\varepsilon,\\
 \sup_{y\in\Omega}|Y(0,y)-(-\lambda,1)^T|&\le C\ell K_h^{19\kappa}/\varepsilon.
 \end{aligned}
 \label{eq:budget-activation-data}
\end{equation}
The spatial gradient error of $M$
in Lemma~\ref{lem:spatial-coefficients} costs only $\ell K_h^\kappa$;
the larger budget $\ell K_h^{19\kappa}/\varepsilon$ also covers these scaled data $Z$ and $Y$.

For the comparison estimates, combine the coefficient and activation
errors into the common budget
\begin{equation}
 e_*=C\left(
 \varepsilon\Theta G_*^2+E_*+
                  \frac{\ell K_h^{19\kappa}}{\varepsilon}\right).
 \label{eq:actual-error-budget}
\end{equation}
Choose the prefactor $C$ once to dominate the constants in the
coefficient and activation estimates above, including their dependence
on the fixed parent size bounds. Thus each normalized input error is
bounded by $e_*$ with a universal numerical prefactor. Since
$G_*,\Theta\ge1$ and $\varepsilon\le1$, also $\varepsilon\le e_*$.
All dependence on those parent constants is carried by this choice of
budget; the subsequent finite-dimensional comparison constants and its
smallness threshold can then be taken universal. These inputs do not yet bound the accumulated solution
error or the full two-component matrix error after eliminating $W$ and
inserting the evolving ray into the pressure projection. Section~\ref{sec:comparison-proofs}
derives $|Z-Z_0|\le Ce_*\Theta^5$, $\|\mathcal E\|\le Ce_*\Theta^9$,
and the relative velocity error $Ce_*\Theta^{24}$ for $\tau\ge1$.

The comparison estimates in Section~\ref{sec:comparison-proofs} below
lose only fixed powers of $\Theta$.
Consequently, if $e_*\Theta^N$ is sufficiently small for a sufficiently
large fixed exponent $N$, all the errors in the wave-model comparison
remain controlled throughout the amplification interval. The parameter
choices provide this smallness, with additional margins for the pressure
sign and transfer to the next layer. Section~\ref{sec:comparison-proofs}
gives the detailed bounds and the precise common smallness condition.

Increasing $h_*$, and hence decreasing $\varepsilon$,
improves time freezing but makes spatial accuracy in the scaled components
more demanding.
The new layer must be narrow enough that
$\ell K_h^{19\kappa}\Theta^N\ll\varepsilon$, with $N$ as above.
Large parent derivatives enter through $K_h$, so
small support must be chosen to compensate for them.

\section{Wave-model comparison bounds}
\label{sec:comparison-proofs}
\fancyhead[R]{\small Wave-model comparison bounds}

We apply the coefficient and activation-data bounds of
Section~\ref{sec:actual-to-ideal} to the wave ODEs
\eqref{intro:eq:normal} and \eqref{intro:eq:wave}. Let $m_j,v_j$
solve these equations with $m_j\cdot v_j=0$ at activation;
the constraint is then preserved. Use the scaled decomposition
\eqref{eq:scaled-components}, with $Z=(P,Q,N)^T$ and $Y=(U,V)^T$
as in Subsection~\ref{subsec:wave-activation-budget}.
The frame depends only on time; the components also depend on the spatial label $y$.

\refstepcounter{equation}
\label{prop:conditional-wave-control}
\textbf{Proposition \theequation\ (Conditional control of $m_j,v_j$).}
Fix a stage $j$ and an interval
\[
 I_j=[t_{j-1},t_{j-1}+\varepsilon\Theta/a],\qquad
 \tau=\frac a\varepsilon(t-t_{j-1}),\qquad \Theta\ge1.
\]
All spatial bounds below hold on $\Omega=\{|y|\le1/2\}$, on which
the leading part of the stage-$j$ wave is supported. Assume the following inputs.

\textit{Parent and frame.}
The parent flow is smooth on $I_j$ and satisfies the hypotheses of
Lemmas~\ref{lem:central-coefficients} and~\ref{lem:spatial-coefficients}
on this whole interval, with the same decomposition and frame at the origin.
In addition, require $\beta\in(0,1]$ and $\varepsilon=\sqrt{a/h_*}$.
Let $e_*$ be the common budget in \eqref{eq:actual-error-budget}, and assume
\[
 e_*\Theta^{26}\le c,
\]
where $c>0$ is a universal sufficiently small constant after the
normalization of \eqref{eq:actual-error-budget}. The constants in the
conclusions below are universal as well.\footnote{The
exponent $26$ is not optimal but suffices for all conclusions below.}

\textit{Wave and activation data.}
Assume the activation-data bounds \eqref{eq:budget-activation-data},
including the exact $y=0$ values specified there, with
$\lambda\ge0$. They are at most $Ce_*$ by \eqref{eq:actual-error-budget}.

Then the wave ODEs remain well posed on $I_j$,
and have the following bounds, uniformly over the support.

\textit{(i) Phase control and nondegeneracy.}
For $0\le\tau\le\Theta$, set
$Z_0=(\beta\tau^2,-2\beta\tau,1)^T=(P_0,Q_0,1)^T$. Then
\[
 |Z-Z_0|\le Ce_*\Theta^5,\qquad N\ge1/2,
 \qquad D_a=P^2+\varepsilon^2Q^2+N^2\ge1/4.
\]
In particular, with $m_{j,0}=s_0(P_0p+\varepsilon Q_0q+n)$,
\[
 |m_j-m_{j,0}|\le Cs_0e_*\Theta^5,\qquad
 \tfrac12s_0\le|m_j|\le Cs_0\Theta^2.
\]

\textit{(ii) Velocity propagation and comparison.}
Let $V_\lambda$ solve \eqref{eq:scalar-amplification} with $V_\lambda(0)=1$, $V_\lambda'(0)=\lambda$, and put
$Y_\lambda=(-V_\lambda',V_\lambda)^T$.
At each fixed spatial label $y$, let $\Phi(\tau,s)$ be the propagator
of the actual system \eqref{eq:actual-matrix-comparison} for $Y=(U,V)^T$,
so that $Y(\tau,y)=\Phi(\tau,s)Y(s,y)$, with the dependence of
$\Phi$ on $y$ suppressed. Uniformly in $y\in\Omega$, it satisfies
\begin{equation}
 \|\Phi(\tau,s)\|\le C\Theta^7\frac{V_0(\tau)}{V_0(s)}
 \quad(0\le s\le\tau\le\Theta).
 \label{eq:conditional-velocity-propagator}
\end{equation}
Moreover,
\begin{equation}
 |Y-Y_\lambda|\le Ce_*\Theta^{24}(1+\lambda)V_0
 \quad(0\le\tau\le\Theta),
 \label{eq:conditional-Y-absolute}
\end{equation}
while for $1\le\tau\le\Theta$,
\begin{equation}
 \frac{|Y-Y_\lambda|}{V_\lambda}\le Ce_*\Theta^{24},
 \qquad V\ge\tfrac12V_\lambda>0.
 \label{eq:conditional-Y-relative}
\end{equation}
All third components are recovered from $W=-(PU+QV)/N$.
If $W_\lambda=-P_0(-V_\lambda')-Q_0V_\lambda$ and
$v_{j,0}=\varepsilon(-V_\lambda')p+V_\lambda q+
\varepsilon W_\lambda n$, then
\[
 |v_j-v_{j,0}|\le Ce_*\Theta^{26}V_\lambda
 \qquad(1\le\tau\le\Theta).
\]

\textit{Interval and later use.}
If $\Theta\ge x_j/\sqrt\beta$, these conclusions include the target
$t_j$. They include any later time in $I_j$ for which the assumed inputs
hold. For the choice $\Theta=\Theta_j$ in
Subsection~\ref{subsec:wave-parent-inputs}, the interval containments in
Lemmas~\ref{intro:lem:clock-child}--\ref{intro:lem:clock-parent}
place the retained interval, and hence the accumulation time, inside $I_j$.
The conclusions there remain conditional on the parent and activation
inputs throughout $I_j$. This proposition does not prove those induction inputs,
the pressure sign, or the next-frame conditions. It supplies a nonvanishing
normal, a positive growing velocity component, control of the full wave
relative to its ideal reference, and a propagator bound for subsequent
estimates; the positivity also makes target amplitude normalization
well defined when $\tau(t_j)\ge1$.

The following subsections prove the proposition: first the normal
comparison, then the velocity matrix bound, and finally the ideal and
actual propagator estimates.

\subsection{Comparing \texorpdfstring{$Z$ and $Z_0$}{Z and Z0}}

We first compute the propagator of \eqref{eq:ideal-ray-system}
and use it to transport the coefficient and activation errors.

To compute the ideal propagator, start from arbitrary data at scaled
time $s$. Successive integration,
with $d=\tau-s$, gives
\[
 N_0(\tau)=N_0(s),\qquad Q_0(\tau)=Q_0(s)-2\beta dN_0(s),\qquad
 P_0(\tau)=P_0(s)-dQ_0(s)+\beta d^2N_0(s).
\]
Equivalently, the matrix that maps the data at $s$ to the solution at
$\tau$ is
\begin{equation}
 \Phi_0(\tau,s)=
 \begin{pmatrix}
 1&-d&\beta d^2\\
 0&1&-2\beta d\\
 0&0&1
 \end{pmatrix},\qquad d=\tau-s.
 \label{eq:explicit-ray-propagator}
\end{equation}
This is the ideal propagator. For $0\le s\le\tau\le\Theta$,
using $\beta\lesssim1$ and $\Theta\ge1$, we obtain
\[
 \|\Phi_0(\tau,s)\|
 \le C(1+d+\beta d^2)\le C\Theta^2.
\]
In particular, $Z_0(0)=(0,0,1)^T$ gives
$Z_0(\tau)=(\beta\tau^2,-2\beta\tau,1)^T$ and
$\sup_{0\le\tau\le\Theta}|Z_0(\tau)|\le C\Theta^2$.

\emph{Proof of Proposition~\ref{prop:conditional-wave-control}(i).}
To derive the error equation and its integral estimate, fix a spatial
label $y$ and write the actual and ideal equations as
\[
 Z'=(A_0+R)Z,\qquad
 Z_0'=A_0Z_0,\qquad
 A_0=
 \begin{pmatrix}0&-1&0\\0&0&-2\beta\\0&0&0\end{pmatrix}.
\]
The input estimates give $\|R\|\le Ce_*$ uniformly in scaled time
and over the new layer's support,
while the first bound in \eqref{eq:budget-activation-data} gives
$|Z(0)-Z_0(0)|\le Ce_*$.
Here $R$ is the difference of the coefficient matrices, not of their
propagators. Set $F=Z-Z_0$. Subtraction gives the differential equation
\[
 F'=A_0F+RZ_0+RF.
\]
Variation of constants with the explicit ideal propagator yields
\[
 F(\tau)=\Phi_0(\tau,0)F(0)
 +\int_0^\tau\Phi_0(\tau,s)
                  R(s)\bigl(Z_0(s)+F(s)\bigr)\,ds.
\]
Define $D(\tau)=\sup_{0\le s\le\tau}|F(s)|$; the same estimates
hold after also taking the supremum over the new layer's support. The bounds above
imply the integral inequality
\[
 D(\tau)\le Ce_*\Theta^2
       +Ce_*\Theta^2\int_0^\tau\bigl(\Theta^2+D(s)\bigr)\,ds.
\]
The first term comes from the initial-data error. The integral transports
the forcing $RZ_0$ and the error-dependent term $RF$. Since
$\tau\le\Theta$ and $D(s)\le D(\Theta)$, it follows that
\[
 D(\Theta)\le Ce_*\Theta^2+Ce_*\Theta^5
                    +Ce_*\Theta^3D(\Theta)
 \le Ce_*\Theta^5+Ce_*\Theta^3D(\Theta).
\]
If $Ce_*\Theta^3<1/2$, absorption proves
\begin{equation}
 |P-P_0|+|Q-Q_0|+|N-1|\le Ce_*\Theta^5,
 \qquad N\ge1/2.
 \label{eq:actual-ray-comparison}
\end{equation}
The last assertion uses a correspondingly small $e_*\Theta^5$.
It justifies eliminating $W$ by \eqref{eq:scaled-transversality}.
Since $|Z|\le C\Theta^2$, we obtain
\begin{equation}
 \begin{aligned}
 |W|&\le C\Theta^2(|U|+|V|),\\
 |D_a-(1+P_0^2)|
 &\le (|P|+|P_0|)|P-P_0|+(|N|+1)|N-1|+\varepsilon^2Q^2
 \le Ce_*\Theta^7.
 \end{aligned}
 \label{eq:actual-rational-errors}
\end{equation}
For the last term, $|Q|\le C\Theta^2$ and
$\varepsilon^2\lesssim e_*$ by \eqref{eq:actual-error-budget}, so
$\varepsilon^2Q^2\le Ce_*\Theta^4$.
Also $D_a\ge N^2\ge1/4$ directly. The pressure-numerator comparison
is proved together with the full velocity matrix estimate below.

\subsection{The velocity coefficient matrix}

We now turn to the velocity system, using $A_0$ and $\Phi_0$ below
for its ideal coefficient matrix and propagator.
Eliminating $W$ by \eqref{eq:scaled-transversality} from the exact scaled
velocity equations \eqref{eq:actual-scaled-velocity} below, and using
$D_a\ge1/4$, gives the two-component system
\begin{equation}
 Y'=(A_0+\mathcal E)Y,\qquad Y=(U,V)^T,
 \qquad \|\mathcal E\|\le Ce_*\Theta^9,
 \label{eq:actual-matrix-comparison}
\end{equation}
where $A_0$ is the matrix of \eqref{eq:ideal-amplification-system}.

We verify \eqref{eq:actual-matrix-comparison} in three steps: the
nonprojected terms, the pressure numerator, and the pressure projection.
All bounds below are uniform on the scaled interval and the new layer's
support. We assume the smallness conditions used above, in particular
$e_*\Theta^5\ll1$, and use $\Theta\ge1$ and $\varepsilon\le1$.

\textit{1. The full nonprojected terms.}
Write $b_{ik}=B_{ik}/a$, $r_h=h/h_*-1$, and
$\mathfrak e_{ik}=\mathfrak E_{ik}/a$. The input estimates of
Subsection~\ref{subsec:three-errors}, with the budget
\eqref{eq:actual-error-budget}, imply
\[
 |b_{pq}-1|+|b_{nq}-\beta|+|r_h|\le Ce_*,\qquad
 \varepsilon|b_{ik}|\le Ce_*,\qquad
 |\mathfrak e_{ik}|\le Ce_*.
\]
The last bound includes both the preceding correction and spatial
variation. Indeed, $|\mathfrak E|\lesssim E_*+\ell K_h^{\kappa}$, while
$e_*$ includes the larger localization budget $\ell K_h^{19\kappa}/\varepsilon$.
The frame is the moving frame at the origin, so its derivatives are already
included in the component equations.

Restoring $B_{pq}/a$ and $h/h_*$ in place of the frozen coefficients
in \eqref{eq:short-velocity-full}, and adding the contributions of
$\mathfrak E$, gives the following exact scaled equations. The terms
$f_U$ and $f_V$ collect the resulting nonprojected errors:
\begin{equation}\label{eq:actual-scaled-velocity}
 U'=-2V+\frac{2PJ}{D_a}+f_U,\qquad
 V'=-U+\frac{2\varepsilon^2QJ}{D_a}+f_V,
\end{equation}
where
\begin{align*}
 f_U={}&-2(b_{pq}-1)V-\varepsilon b_{pp}U
       -2\varepsilon b_{pn}W
       -\varepsilon\mathfrak e_{pp}U-\mathfrak e_{pq}V
       -\varepsilon\mathfrak e_{pn}W,\\
 f_V={}&-r_hU-\varepsilon^2(b_{qp}-b_{pq})U
       -\varepsilon b_{qq}V-\varepsilon^2(b_{qn}+b_{nq})W\\
       &-\varepsilon^2\mathfrak e_{qp}U
       -\varepsilon\mathfrak e_{qq}V-\varepsilon^2\mathfrak e_{qn}W.
\end{align*}
Transversality and the ray estimates give
\[
 W=-\frac{PU+QV}{N},\qquad |W|\le C\Theta^2|Y|,
 \qquad Y=(U,V)^T.
\]
Every coefficient displayed in $f_U,f_V$ is bounded by $Ce_*$.
Consequently, after eliminating $W$,
\begin{equation}
 |f_U|+|f_V|\le Ce_*\Theta^2|Y|.
 \label{eq:all-nonprojected-errors}
\end{equation}
In particular, the physical shear error may have size $h_*|r_h|$,
but its direct scaled contribution is only $-r_hU$, because
$\varepsilon^2h_*/a=1$.

\textit{2. All errors in the pressure numerator.}
Expansion by rows, including the correction, gives
\[
 J=b_{pq}PV+(1+r_h)QU+b_{nq}NV+\rho_B+\rho_E,
\]
where the remaining background and correction terms are
\begin{align*}
 \rho_B={}&\varepsilon(Pb_{pp}U+Pb_{pn}W+Qb_{qq}V
                  +Nb_{np}U+Nb_{nn}W)
             +\varepsilon^2Q(b_{qp}U+b_{qn}W),\\
 \rho_E={}&P(\varepsilon\mathfrak e_{pp}U+\mathfrak e_{pq}V
                         +\varepsilon\mathfrak e_{pn}W)\\
 &+\varepsilon Q(\varepsilon\mathfrak e_{qp}U+\mathfrak e_{qq}V
                         +\varepsilon\mathfrak e_{qn}W)\\
 &+N(\varepsilon\mathfrak e_{np}U+\mathfrak e_{nq}V
                         +\varepsilon\mathfrak e_{nn}W).
\end{align*}
Using $|P|+|Q|+|N|\le C\Theta^2$ and the bound for $W$, every
term is at most $Ce_*\Theta^4|Y|$. Thus, with
$J_0=(P_0+\beta)V+Q_0U$,
\begin{align*}
 J-J_0={}&(b_{pq}-1)PV+r_hQU+(b_{nq}-\beta)NV
           +\rho_B+\rho_E\\
         &+(P-P_0)V+(Q-Q_0)U+\beta(N-1)V.
\end{align*}
The first line is bounded by $Ce_*\Theta^4|Y|$ and the second by
$Ce_*\Theta^5|Y|$, by \eqref{eq:actual-ray-comparison}. Hence
\begin{equation}
 |J-J_0|\le Ce_*\Theta^5|Y|,\qquad |J|+|J_0|\le C\Theta^2|Y|.
 \label{eq:all-numerator-errors}
\end{equation}
This includes the pressure contribution of the shear error, $r_hQU$,
as well as both parts of $\mathfrak E$. Initial-data errors enter here
through the ray comparison; they need no additional matrix term.

\textit{3. The denominator and the assembled matrix.}
Set $D_0=1+P_0^2$. By \eqref{eq:actual-rational-errors},
$|D_a-D_0|\le Ce_*\Theta^7$. Also $D_a\ge1/4$,
$|P|/D_a\le1/(2|N|)\le1$, and $|P_0|/D_0\le1/2$.
Therefore
\[
 \left|\frac P{D_a}-\frac{P_0}{D_0}\right|
 \le\frac{|P-P_0|}{D_a}
  +\frac{|P_0|}{D_0}\frac{|D_a-D_0|}{D_a}
 \le Ce_*\Theta^7.
\]
Subtracting the ideal projected term gives
\[
 \frac{2PJ}{D_a}-\frac{2P_0J_0}{D_0}
 =\frac{2P}{D_a}(J-J_0)
  +2\left(\frac P{D_a}-\frac{P_0}{D_0}\right)J_0,
\]
whose absolute value is at most $Ce_*\Theta^9|Y|$.
The projection omitted from the ideal $V'$ equation satisfies
\[
 \left|\frac{2\varepsilon^2QJ}{D_a}\right|
 \le C\varepsilon^2\Theta^4|Y|\le Ce_*\Theta^4|Y|.
\]
Here $\varepsilon^2\lesssim e_*$ follows from
$e_*\gtrsim\varepsilon\Theta G_*^2$, $G_*\ge1$, and
$\varepsilon\le1$.

Combining these estimates with \eqref{eq:all-nonprojected-errors},
for every vector $Y$ we obtain
\[
 |\mathcal E Y|\le
 C e_*(\Theta^2+\Theta^9+\Theta^4)|Y|.
\]
All expressions are linear in $Y$ after eliminating $W$, so this is
an operator-norm estimate, not merely a bound along one solution.
In particular,
\begin{equation}
 \|\mathcal E\|\le Ce_*\Theta^9,
 \label{eq:velocity-error-operator-bound}
\end{equation}
which verifies the stated bound with every coefficient
error included. The velocity activation-data discrepancy is separate:
it affects the subsequent solution comparison, not $\mathcal E$ itself.

\subsection{The ideal scalar solution and its propagator}

The velocity comparison needs a different estimate from the triangular
ray comparison. We first isolate the scalar properties needed for it.

\refstepcounter{equation}
\label{lem:ideal-positive}
\textbf{Lemma \theequation\ (Positivity and bounds for the ideal solution).}
Let $0<\beta\le1$, and let $V_0$ solve the ideal scalar amplification
equation \eqref{eq:scalar-amplification} with
$V_0(0)=1$ and $V_0'(0)=0$.
Then $V_0(\tau)>0$ for all $\tau\ge0$. It is nondecreasing on
$[0,1]$, and $\tau V_0(\tau)$ is strictly increasing for $\tau\ge1$.
Moreover, for every $\Theta\ge1$,
\[
 \frac{V_0(s)}{V_0(r)}\le\Theta
 \quad(0\le s\le r\le\Theta),\qquad
 \left|\frac{V_0'(\tau)}{V_0(\tau)}\right|\le\sqrt2
 \quad(0\le\tau\le\Theta).
\]

\textit{Proof.}
Write $D_0=1+\beta^2\tau^4$. The scalar equation is a linear ODE
with smooth coefficients and $D_0>0$, so its solution exists on every
finite interval. It remains to prove the stated bounds.

\textit{Positivity.}
On $0\le\tau\le1$, the coefficient $2(1-\beta^2\tau^2)$ is
nonnegative. As long as $V_0>0$, integration of the scalar equation gives
\[
 D_0(\tau)V_0'(\tau)
 =\int_0^\tau2(1-\beta^2r^2)V_0(r)\,dr\ge0.
\]
Starting from $V_0(0)=1$, this prevents a first zero and proves
$V_0\ge1$ and $V_0'\ge0$ on this interval.
For $\tau\ge1$, put $w(\tau)=\tau V_0(\tau)$. Substituting
$V_0=w/\tau$ into the scalar equation gives
\[
 \left(\frac{D_0}{\tau^2}w'\right)'
 =\frac{2(1-\tau^{-2})}{\tau^2}w.
\]
At $\tau=1$, both $w$ and $w'$ are positive. While $w>0$,
the right side is nonnegative, so $w'$ stays positive. A first zero
is therefore impossible: $w>0$ and $w'>0$ for every later time.
In particular, $V_0$ remains positive throughout the interval.

\textit{The growth ratio.}
For $1\le s\le r$, monotonicity of $w$ gives
$V_0(s)/V_0(r)\le r/s\le\Theta$.
For $s\le r\le1$, monotonicity of $V_0$ gives a ratio at most $1$.
If $s\le1\le r$, combine $V_0(s)\le V_0(1)$ with
$V_0(1)/V_0(r)\le r$. Hence, in all cases,
\[
 \frac{V_0(s)}{V_0(r)}\le\Theta,\qquad 0\le s\le r\le\Theta.
\]

\textit{The logarithmic derivative.}
Since $V_0>0$, $l_0=V_0'/V_0$ is well defined and satisfies
\[
 l_0'=q-p l_0-l_0^2,\qquad
 p=\frac{4\beta^2\tau^3}{1+\beta^2\tau^4}\ge0,\qquad
 q=\frac{2(1-\beta^2\tau^2)}{1+\beta^2\tau^4}\le2.
\]
The constant $\sqrt2$ is an upper barrier: at $l_0=\sqrt2$,
the right side is nonpositive. Since $l_0(0)=0$, scalar comparison
gives $l_0\le\sqrt2$. On $[0,1]$, $V_0'\ge0$ gives $l_0\ge0$.
For $\tau\ge1$, $w'=V_0+\tau V_0'>0$ gives
$l_0>-1/\tau\ge-1$. Thus
\[
 |l_0(\tau)|\le\sqrt2\qquad(0\le\tau\le\Theta).
\]
This proves the lemma.\hfill$\square$

\textbf{Deriving the weighted ideal propagator bound.}
Let $z$ be any solution of the ideal scalar equation, with arbitrary
values $z(s),z'(s)$ at scaled time $s$. We use a lowercase letter to
distinguish this scalar solution from the vector $Y=(U,V)^T$.
Both $z$ and $V_0$ solve $(D_0z')'=2(1-\beta^2\tau^2)z$, where
$D_0=1+\beta^2\tau^4$. Consequently their weighted Wronskian is constant:
\[
 \bigl[D_0(V_0z'-V_0'z)\bigr]'=0,
 \qquad
 D_0V_0^2(z/V_0)'=D_0(V_0z'-V_0'z).
\]
Set $l_0=V_0'/V_0$ and $c_s=z'(s)-l_0(s)z(s)$.
Evaluating that constant at $s$ and integrating gives
\begin{equation}
 z(\tau)=\frac{V_0(\tau)}{V_0(s)}\mathcal B(\tau,s),\qquad
 \mathcal B(\tau,s)=z(s)+D_0(s)c_s I_s(\tau),
 \label{eq:scalar-propagator-formula}
\end{equation}
where $\mathcal B$ is the scalar bracket and
\[
 I_s(\tau)=\int_s^\tau
       \frac{(V_0(s)/V_0(r))^2}{D_0(r)}\,dr.
\]
Apply Lemma~\ref{lem:ideal-positive} to bound the formula above.
Since $D_0(r)\ge1$, $D_0(s)\le C\Theta^4$, and $\tau-s\le\Theta$,
\[
 |I_s(\tau)|\le C\Theta^3,\qquad
 |c_s|\le C(|z(s)|+|z'(s)|).
\]
Therefore the bracket in \eqref{eq:scalar-propagator-formula} obeys
\[
 |\mathcal B(\tau,s)|\le C\Theta^7(|z(s)|+|z'(s)|).
\]
The propagator must control the derivative as well as the scalar value.
Differentiating the same formula gives
\[
 z'(\tau)=\frac{V_0(\tau)}{V_0(s)}
 \left[l_0(\tau)\mathcal B(\tau,s)
 +\frac{D_0(s)}{D_0(\tau)}c_s
                   \left(\frac{V_0(s)}{V_0(\tau)}\right)^2\right].
\]
The second term in brackets is bounded by
$C\Theta^6(|z(s)|+|z'(s)|)$; the first is bounded by
$C\Theta^7(|z(s)|+|z'(s)|)$. Thus both $z$ and $z'$ are controlled,
and since the ideal velocity vector is $(U,V)=(-z',z)$, we obtain
\[
 |(U,V)(\tau)|\le C\Theta^7\frac{V_0(\tau)}{V_0(s)}|(U,V)(s)|.
\]
This holds for arbitrary initial vectors and therefore gives the
operator-norm bound:
\begin{equation}
 \|\Phi_0(\tau,s)\|\le C\Theta^7\frac{V_0(\tau)}{V_0(s)}.
 \label{eq:actual-weighted-propagator}
\end{equation}
The exponential growth is contained in the displayed ratio.

\refstepcounter{equation}
\label{lem:ideal-nonnegative-slope}
\textbf{Lemma \theequation\ (Ideal solutions with nonnegative initial slope).}
Let $0<\beta\le1$, $\Theta\ge1$, and $\lambda\ge0$. Let $V_\lambda$
solve \eqref{eq:scalar-amplification} with
$V_\lambda(0)=1$, $V_\lambda'(0)=\lambda$, and set
$Y_\lambda=(-V_\lambda',V_\lambda)^T$ and $D_0=1+\beta^2\tau^4$.
Write $V_0$ for the solution with zero initial slope. Then
\begin{equation}
 V_\lambda(\tau)=V_0(\tau)(1+\lambda I(\tau)),\qquad
 I(\tau)=\int_0^\tau\frac{dr}{D_0(r)V_0(r)^2}.
 \label{eq:slope-solution-formula}
\end{equation}
In particular, $V_\lambda>0$ on $[0,\Theta]$, and
\begin{equation}
 V_\lambda(\tau)\ge c(1+\lambda)V_0(\tau),\qquad 1\le\tau\le\Theta.
 \label{eq:slope-lower-bound}
\end{equation}
Both components satisfy
\begin{equation}
 |Y_\lambda(\tau)|\le C\Theta^7(1+\lambda)V_0(\tau),
 \qquad 0\le\tau\le\Theta.
 \label{eq:ideal-Y-lambda-bound}
\end{equation}
Moreover,
\begin{equation}
 \frac{V_0(\tau)}{V_0(s)}
 \le\frac{V_\lambda(\tau)}{V_\lambda(s)},
 \qquad 0\le s\le\tau\le\Theta.
 \label{eq:ideal-slope-growth-ratio}
\end{equation}
The constants $c,C>0$ are independent of $\beta,\Theta,\lambda$.

\emph{Proof.} Apply \eqref{eq:scalar-propagator-formula} with
$z=V_\lambda$ and $s=0$. Since $D_0(0)=V_0(0)=1$ and
$l_0(0)=0$, the coefficient $c_s$ at $s=0$ equals $\lambda$, giving
\eqref{eq:slope-solution-formula}. The integral $I$ is nonnegative
and increasing, proving positivity and \eqref{eq:ideal-slope-growth-ratio}.

On $[0,1]$, Lemma~\ref{lem:ideal-positive} gives
$V_0(r)\le e^{\sqrt2 r}$, while $D_0(r)\le2$. Hence
\[
 I(1)\ge\frac12\int_0^1e^{-2\sqrt2 r}\,dr
 =\frac{1-e^{-2\sqrt2}}{4\sqrt2}=:c_0>0.
\]
For $\tau\ge1$, $1+\lambda I(\tau)\ge\min(1,c_0)(1+\lambda)$,
which proves \eqref{eq:slope-lower-bound}.
Finally, $Y_\lambda(\tau)=\Phi_0(\tau,0)(-\lambda,1)^T$.
Applying \eqref{eq:actual-weighted-propagator}, using $V_0(0)=1$
and $|(-\lambda,1)|\le1+\lambda$, proves
\eqref{eq:ideal-Y-lambda-bound}.\hfill$\square$

\subsection{The actual propagator and the slope-dependent comparison}

\emph{Proof of Proposition~\ref{prop:conditional-wave-control}(ii).}
We now combine the preceding estimates to prove the remaining conclusions.
Let $\Phi(\tau,s)$ be the actual velocity propagator. Since its
coefficient matrix is $A_0+\mathcal E$, while the ideal one is $A_0$,
variation of constants gives
\[
 \Phi(\tau,s)=\Phi_0(\tau,s)
 +\int_s^\tau\Phi_0(\tau,r)\mathcal E(r)\Phi(r,s)\,dr.
\]
In the integrand, $\Phi(r,s)$ evolves the data to time $r$,
$\mathcal E(r)$ applies the coefficient error, and $\Phi_0(\tau,r)$
transports its effect to time $\tau$. Thus $\mathcal E$ is the source
of the difference between the two evolutions.

Normalize the actual and ideal propagators by
\begin{equation}
\begin{aligned}
 \widetilde\Phi(\tau,s)&=\frac{V_0(s)}{V_0(\tau)}\Phi(\tau,s),\\
 \widetilde\Phi_0(\tau,s)&=\frac{V_0(s)}{V_0(\tau)}\Phi_0(\tau,s),
 \qquad \|\widetilde\Phi_0(\tau,s)\|\le C\Theta^7.
\end{aligned}
\label{eq:normalized-propagators}
\end{equation}
The bound follows from \eqref{eq:actual-weighted-propagator}.
Multiplying the variation-of-constants identity by
\[
 \frac{V_0(s)}{V_0(\tau)}
 =\frac{V_0(r)}{V_0(\tau)}\frac{V_0(s)}{V_0(r)}
\]
gives
\[
 \widetilde\Phi(\tau,s)=\widetilde\Phi_0(\tau,s)
 +\int_s^\tau\widetilde\Phi_0(\tau,r)\mathcal E(r)
                         \widetilde\Phi(r,s)\,dr.
\]
Its integral operator has supremum norm at most
$Ce_*\Theta^{7+9+1}=Ce_*\Theta^{17}$.
If this is less than $1/2$, the normalized equation is solved by a
convergent series. Using \eqref{eq:normalized-propagators}, we obtain
\begin{equation}
 \|\widetilde\Phi(\tau,s)\|\le2C\Theta^7,
 \qquad 0\le s\le\tau\le\Theta.
 \label{eq:actual-normalized-bound}
\end{equation}
With the definition \eqref{eq:normalized-propagators},
this gives \eqref{eq:conditional-velocity-propagator}.

\textbf{The comparison with a possibly large activation slope.}
The activation data at the origin are $V(0)=1$, $U(0)=-\lambda$, with
$\lambda\ge0$. Use the ideal reference $Y_\lambda$ from
Lemma~\ref{lem:ideal-nonnegative-slope}, which supplies its positivity,
amplitude lower bound, and two-component upper bound.

To compare with the actual solution at a spatial label $y$, let
$F_\lambda=Y-Y_\lambda$. Then the second bound in
\eqref{eq:budget-activation-data} gives $|F_\lambda(0)|\le Ce_*$,
uniformly over the new layer's support.
Subtracting the equations gives
\[
 F_\lambda'=(A_0+\mathcal E)F_\lambda+\mathcal E Y_\lambda.
\]
Using the actual propagator therefore gives the exact formula
\[
 F_\lambda(\tau)=\Phi(\tau,0)F_\lambda(0)
 +\int_0^\tau\Phi(\tau,r)\mathcal E(r)Y_\lambda(r)\,dr.
\]
Combining the bound \eqref{eq:conditional-velocity-propagator} for $\|\Phi(\tau,s)\|$
with $\|\mathcal E\|\le Ce_*\Theta^9$ from \eqref{eq:velocity-error-operator-bound} and the bound \eqref{eq:ideal-Y-lambda-bound} for $|Y_\lambda(r)|$ gives
\begin{align*}
 |F_\lambda(\tau)|
 &\le Ce_*\Theta^7V_0(\tau)\\
 &\quad+C\int_0^\tau
 \Theta^7\frac{V_0(\tau)}{V_0(r)}\,
 e_*\Theta^9\,\Theta^7(1+\lambda)V_0(r)\,dr\\
 &\le Ce_*\Theta^7V_0(\tau)
       +Ce_*\Theta^{7+9+7+1}(1+\lambda)V_0(\tau)\\
 &\le Ce_*\Theta^{24}(1+\lambda)V_0(\tau).
\end{align*}
The factors $V_0(r)$ cancel inside the integral, whose length supplies
the final power of $\Theta$, giving \eqref{eq:conditional-Y-absolute}.
Finally, dividing by \eqref{eq:slope-lower-bound} yields the first bound in
\eqref{eq:conditional-Y-relative}. The second bound follows from the first one,
the component bound $|V-V_\lambda|\le|Y-Y_\lambda|$ and the smallness of $e_*\Theta^{24}$.

To recover the full wave, the decomposition
\eqref{eq:scaled-components} and \eqref{eq:actual-ray-comparison}
give the stated bounds for $m_j$. For the velocity, transversality gives
\[
 W-W_\lambda=-\frac PN(U+V_\lambda')-\frac QN(V-V_\lambda)
 -\left(\frac PN-P_0\right)(-V_\lambda')
 -\left(\frac QN-Q_0\right)V_\lambda.
\]
Using $N\ge1/2$, $|Z|\le C\Theta^2$, and the normal comparison,
\[
 |W-W_\lambda|\le C\Theta^2|Y-Y_\lambda|
                     +Ce_*\Theta^7|Y_\lambda|.
\]
For $\tau\ge1$, \eqref{eq:ideal-Y-lambda-bound} and
\eqref{eq:slope-lower-bound} give $|Y_\lambda|\le C\Theta^7V_\lambda$.
Together with \eqref{eq:conditional-Y-relative} and $\varepsilon\le1$,
this proves $|v_j-v_{j,0}|\le Ce_*\Theta^{26}V_\lambda$.
The nonvanishing normal and the bounded linear velocity coefficients
also give continuation throughout $I_j$, completing the proposition.
\hfill$\square$

\textbf{Remark.} The relative bound \eqref{eq:conditional-Y-relative}
is independent of the possibly large slope $\lambda$: both the absolute error
\eqref{eq:conditional-Y-absolute} and the reference amplitude
\eqref{eq:slope-lower-bound} carry $1+\lambda$. On $0\le\tau<1$,
the absolute bound \eqref{eq:conditional-Y-absolute} still holds;
the uniform relative conclusion uses $I(\tau)\ge I(1)$ and is asserted
only for $\tau\ge1$.

For the stage iteration, Section~\ref{sec:stage-parameters} uses the
stronger common condition
\begin{equation}
 {e_*\Theta^{60}\le x_{j-1}^{-10},}
 \label{eq:actual-final-smallness}
\end{equation}
with a sufficiently small fixed constant understood. It provides margins
for the ray comparison, velocity comparison, pressure sign, and subsequent
frame transfer. Since $\Theta\gg1$, it requires extremely small errors.
The three choices are now visible: a large parent shear
shortens physical time, a large preceding frequency reduces $E_*$, and
a small new support width reduces the scaled neighboring-data error.

\section{Quantitative amplification and the next frame}
\label{sec:quantitative-amplification}
\fancyhead[R]{\small Amplification and the next frame}

Proposition~\ref{prop:conditional-wave-control} compares the actual wave
with its ideal reference. We now determine what that reference achieves.
The wave must gain enough amplitude to produce a large target shear
from a small prefactor, and its target directions must be suitable for
amplifying the next wave. We now prove the ideal growth and direction
estimates, then transfer them using Section~\ref{sec:comparison-proofs}.

\subsection{Exponential amplification before the turn}
\label{subsec:initial-exponential-gain}

Throughout this section $0<\beta\le\beta_0$, where $\beta_0>0$ is a
sufficiently small absolute constant, and $\lambda\ge0$.
Use the turning clock $x=\sqrt\beta\,\tau$ from
\eqref{intro:eq:stage-turning-clock}, and set $\tau_1=\beta^{-1/2}$.
The ideal solution $V_\lambda$ is defined in
Lemma~\ref{lem:ideal-nonnegative-slope}; it solves
\eqref{eq:scalar-amplification} and is positive. Write
$D_0=1+\beta^2\tau^4$.
The point here is the size of the gain: the interval $0\le x\le1$
lasts for a scaled time of order $\beta^{-1/2}$.

\refstepcounter{equation}
\label{lem:exponential-gain}
\textbf{Lemma \theequation\ (Initial exponential gain).}
On $0\le\tau\le\tau_1$, $V_\lambda$ is nondecreasing and
\begin{equation}
 V_\lambda(\tau)\ge
 \cosh(\tau/\sqrt2)+\frac{\lambda}{\sqrt2}\sinh(\tau/\sqrt2).
 \label{eq:initial-gain}
\end{equation}
In particular, for an absolute $c>0$,
\begin{equation}
 V_\lambda(\tau_1)\ge
 c(1+\lambda)\exp\!\left(\frac1{\sqrt{2\beta}}\right).
 \label{eq:exponential-gain}
\end{equation}
The logarithmic derivative $l=V_\lambda'/V_\lambda$ satisfies
\begin{equation}
 0\le l(\tau)\le\sqrt2\coth(\sqrt2\tau),
 \qquad 0<\tau\le\tau_1.
 \label{eq:initial-slope-bound}
\end{equation}

\emph{Proof.} On this interval $D_0\le2$ and
$2(1-\beta^2\tau^2)\ge1$, after taking $\beta_0\le1/2$.
Integrating the scalar equation once gives
\[
 D_0(\tau)V_\lambda'(\tau)
 =\lambda+\int_0^\tau2(1-\beta^2s^2)V_\lambda(s)\,ds\ge0.
\]
Dividing by $D_0\le2$ and integrating again yields
\[
 V_\lambda(\tau)\ge1+\tfrac\lambda2\tau
 +\tfrac12\int_0^\tau(\tau-s)V_\lambda(s)\,ds.
\]
Iterating this positive integral operator bounds $V_\lambda$ below by
the solution of $w''=w/2$, $w(0)=1$, $w'(0)=\lambda/2$.
This is \eqref{eq:initial-gain}, and evaluation at $\tau_1\ge1$
gives \eqref{eq:exponential-gain}.
Finally, $D_0\ge1$, $2(1-\beta^2\tau^2)\le2$,
$D_0'=4\beta^2\tau^3\ge0$, and $l=V_\lambda'/V_\lambda\ge0$, so
\[
 l'=\frac{2(1-\beta^2\tau^2)}{D_0}
       -\frac{D_0'}{D_0}l-l^2\le2-l^2.
\]
The function $L(\tau)=\sqrt2\coth(\sqrt2\tau)$ satisfies
$L'=2-L^2$ and tends to infinity at zero. For each finite $\lambda$,
$l<L$ sufficiently near zero; scalar comparison then proves
\eqref{eq:initial-slope-bound}, with a bound independent of $\lambda$.
\hfill$\square$

\subsection{Direction and amplitude beyond the turn}

The target prescribed in \eqref{eq:actual-target-time} has
$x=x_j\ge2$, later than the initial growth interval.
We therefore need to follow the wave after $x=1$. To distinguish the
two clocks, set $\mathcal V(x)=V_\lambda(x/\sqrt\beta)$.

\refstepcounter{equation}
\label{lem:target-direction}
\textbf{Lemma \theequation\ (Size and direction beyond the turn).}
For $x\ge1$, $x\mathcal V(x)$ is increasing. In particular,
\begin{equation}
 \mathcal V(x)\ge\frac{V_\lambda(\tau_1)}x.
 \label{eq:gain-retention}
\end{equation}
There is a bounded positive function $z(\rho)$ such that
\begin{equation}
 r_\lambda:=\frac{-V_\lambda'}{V_\lambda}
 =\frac{\sqrt\beta}{x}-\frac{z(1/x)}{x^2},\qquad x\ge1,
 \label{eq:target-direction-ratio}
\end{equation}
and, uniformly for $0\le\rho\le1/2$,
\begin{equation}
 z(\rho)^2=\frac2{1+\rho^4}+O(\sqrt\beta).
 \label{eq:target-root-estimate}
\end{equation}
In particular, $|r_\lambda|\le C$ for $\tau\ge1$, uniformly in $\lambda$.

\emph{Proof.} The slope formula \eqref{eq:slope-solution-formula}
writes $\tau V_\lambda=(\tau V_0)(1+\lambda I)$.
For $\tau\ge\tau_1\ge1$, the first factor is strictly increasing by
Lemma~\ref{lem:ideal-positive}; the second is positive and nondecreasing.
Thus $x\mathcal V(x)$ increases for $x\ge1$, proving
\eqref{eq:gain-retention}.

To compute the direction ratio, put $\rho=1/x$ and define
$f(\rho)=\mathcal V(1/\rho)/\rho=x\mathcal V(x)$. Then
\[
 \mathcal V(x)=\rho f(\rho),\qquad
 \mathcal V_x=-\rho^2f-\rho^3f_\rho.
\]
Since $d/d\tau=\sqrt\beta\,d/dx$, the left side of
\eqref{eq:target-direction-ratio} is
\[
 -\frac{V_\lambda'}{V_\lambda}
 =-\sqrt\beta\,\frac{\mathcal V_x}{\mathcal V}
 =\sqrt\beta\,\rho+\sqrt\beta\,\rho^2\frac{f_\rho}{f}
 =\frac{\sqrt\beta}{x}-\frac{z(1/x)}{x^2},
 \qquad z(\rho):=-\sqrt\beta\,\frac{f_\rho(\rho)}{f(\rho)}.
\]
This proves the representation and identifies the quantity to estimate.
The preceding monotonicity gives $f>0$ and $f_\rho<0$ on $(0,1]$,
so $z>0$ there.

To bound $z$, rewrite \eqref{eq:scalar-amplification} in the $x$ clock:
\[
 \bigl[(1+x^4)\mathcal V_x\bigr]_x
       =(2/\beta-2x^2)\mathcal V.
\]
The right-hand coefficient changes sign for large $x$. The reciprocal
variable converts this into an equation with a positive coefficient
on $0\le\rho\le1$. Indeed,
\[
 (1+x^4)\mathcal V_x
 =-(\rho^2+\rho^{-2})f-(\rho^3+\rho^{-1})f_\rho.
\]
Differentiating with $d/dx=-\rho^2d/d\rho$ and substituting the scalar
equation gives
\begin{equation}
 \bigl[(1+\rho^4)f_\rho\bigr]_\rho=(2/\beta-2\rho^2)f.
 \label{eq:reciprocal-scalar}
\end{equation}
This linear equation has smooth coefficients through $\rho=0$, so $f$
extends there; monotonicity gives $f(0)\ge f(1)>0$.
Set
\[
 \mu(\rho)=\sqrt{\frac{2-2\beta\rho^2}{1+\rho^4}},
 \qquad \sigma=1-\rho.
\]
Logarithmic differentiation of \eqref{eq:reciprocal-scalar} yields
\[
 \sqrt\beta\,z_\sigma
 =\mu^2-z^2+\sqrt\beta\frac{4\rho^3}{1+\rho^4}z.
\]
At $\sigma=0$, $z=\sqrt\beta+l(\tau_1)$ is uniformly bounded
by \eqref{eq:initial-slope-bound}. The right side is positive at $z=0$
and negative at a sufficiently large fixed value of $z$. Hence
$0<z\le C$. Also $\mu\ge c>0$ and $|\mu_\sigma|\le C$.
For $w=z-\mu$ the equation reads
\[
 \sqrt\beta\,w_\sigma
 =-(z+\mu)w+\sqrt\beta
       \left(\frac{4\rho^3}{1+\rho^4}z-\mu_\sigma\right).
\]
Since $0\le\rho\le1$, we have $4\rho^3/(1+\rho^4)\le2$, and hence
\[
 \left|\frac{4\rho^3}{1+\rho^4}z-\mu_\sigma\right|
 \le\frac{4\rho^3}{1+\rho^4}z+|\mu_\sigma|
 \le2z+O(1)=O(1).
\]
The integrating-factor formula, using this bound and $z+\mu\ge c$, gives
\[
 w(\sigma)=O\!\left(e^{-c\sigma/\sqrt\beta}\right)+O(\sqrt\beta).
\]
For $\sigma\ge1/2$, the exponential is $O(\sqrt\beta)$, so
$w=O(\sqrt\beta)$ and $z=\mu+O(\sqrt\beta)$;
squaring this last relation proves
\eqref{eq:target-root-estimate}. The representation
\eqref{eq:target-direction-ratio} and $z=O(1)$ give $r_\lambda=O(1)$
when $x\ge1$; \eqref{eq:initial-slope-bound} handles
$1\le\tau\le\tau_1$.\hfill$\square$

\subsection{Actual direction ratios and the pressure sign}

For $\tau\ge1$, Proposition~\ref{prop:conditional-wave-control} gives
$V>0$. Set $r=U/V$ and $w=W/V$. The decomposition
\eqref{eq:scaled-components} then gives
\begin{equation}
 \begin{aligned}
 \zeta:=m_j/s_0&=Pp+\varepsilon Qq+Nn,\qquad |\zeta|^2=D_a,\\
 \nu:=v_j/V&=\varepsilon rp+q+\varepsilon wn,\qquad
 E_v:=|\nu|^2=1+\varepsilon^2(r^2+w^2).
 \end{aligned}
 \label{eq:normalized-wave-vectors}
\end{equation}
Thus $|m_j|=s_0\sqrt{D_a}$, as in \eqref{eq:scaled-pressure-quantities},
and $|v_j|=V\sqrt{E_v}$. Below we identify these vectors with their
component columns $(P,\varepsilon Q,N)^T$ and
$(\varepsilon r,1,\varepsilon w)^T$ in the orthonormal frame at the origin.

\refstepcounter{equation}
\label{lem:actual-direction-pressure}
\textbf{Lemma \theequation\ (Actual direction ratios and pressure sign).}
Assume the hypotheses of Proposition~\ref{prop:conditional-wave-control},
with $0<\beta\le\beta_0$ as above, and strengthen its error condition to
\begin{equation}
 e_*\Theta^{28}\le c\beta,
 \label{eq:geometry-smallness}
\end{equation}
with $c$ sufficiently small. Then the following estimates hold uniformly
for $y\in\Omega$ and $1\le\tau\le\Theta$:
\begin{equation}
 r:=U/V=r_\lambda+O(e_*\Theta^{24}),\qquad
 w:=W/V=-P_0r_\lambda-Q_0+O(e_*\Theta^{26}).
 \label{eq:actual-direction-ratios}
\end{equation}
In particular, $|r|\le C$, $|w|\le C\Theta^2$, and
\begin{equation}
 \frac JV=P_0+\beta+Q_0r_\lambda+O(e_*\Theta^{26}).
 \label{eq:positive-numerator}
\end{equation}
Consequently, recalling $J=(m_j\cdot Mv_j)/(s_0a)$ from
\eqref{eq:scaled-pressure-quantities},
\begin{equation}
 m_j\cdot Mv_j=s_0aJ>0.
 \label{eq:forward-pressure-sign}
\end{equation}

\textbf{Remark.} Condition \eqref{eq:geometry-smallness} implies the
smallness condition of Proposition~\ref{prop:conditional-wave-control}.
It follows, for example, from \eqref{eq:actual-final-smallness} when
$\beta x_{j-1}^2\in[1/2,2]$, as in
\eqref{intro:eq:activation-ranges}, and $x_{j-1}\ge1$.

\emph{Proof.} Proposition~\ref{prop:conditional-wave-control} gives
$V\ge V_\lambda/2>0$. Its relative comparison bound
\eqref{eq:conditional-Y-relative} and
Lemma~\ref{lem:target-direction} give
\[
 r-r_\lambda=
 \frac{(U-U_\lambda)-r_\lambda(V-V_\lambda)}{V}
 =O(e_*\Theta^{24}).
\]
Using $w=-(Pr+Q)/N$, $|Z|\le C\Theta^2$ and $|Z-Z_0|\le Ce_*\Theta^5$ proves
the second estimate in \eqref{eq:actual-direction-ratios} and the
stated size bounds. Divide \eqref{eq:all-numerator-errors} by $V$;
since $|Y|/V\le C$ and $|Q_0|\le C\Theta^2$, replacing $r$ by
$r_\lambda$ then gives \eqref{eq:positive-numerator}.

For $x\le1$, $P_0=x^2\ge0$, $Q_0=-2\sqrt\beta x\le0$, and
$r_\lambda=-l\le0$ by \eqref{eq:initial-slope-bound}. Thus
$Q_0r_\lambda\ge0$, so the leading expression in
\eqref{eq:positive-numerator} is at least $\beta$.
For $x\ge1$, write $z=z(1/x)$ here and in the calculations below.
Then \eqref{eq:target-direction-ratio} gives
\[
 P_0+\beta+Q_0r_\lambda
 =x^2-\beta+\frac{2\sqrt\beta z}{x}\ge\tfrac12x^2.
\]
Here $z>0$ by Lemma~\ref{lem:target-direction}, and
$\beta\le\beta_0\le1/2$ as chosen in
Subsection~\ref{subsec:initial-exponential-gain}; since $x\ge1$,
$x^2-\beta\ge x^2/2$.
Condition \eqref{eq:geometry-smallness} makes the error smaller than
half these lower bounds. Thus $J/V>0$; since $V,s_0,a>0$,
\eqref{eq:forward-pressure-sign} follows.\hfill$\square$

In the leading pressure Hessian \eqref{eq:leading-pressure-hessian},
this makes the coefficient
$-2\alpha_j\chi_1(m_j\cdot Mv_j)m_jm_j^T/|m_j|^2$
negative semidefinite. The sign of the full leading Hessian still
includes $f_{\delta_j}'$: its large positive part is favorable, while
its bounded negative part must be included in the later pressure budget.

\subsection{The frame supplied at the target}

Let $\tau_j=x_j/\sqrt\beta\le\Theta$, $x_j\ge2$, and evaluate the
following quantities at $(t_j,0)$. Recall the new frame
\eqref{eq:next-layer-frame} and the next-stage coefficient definitions
of Subsection~\ref{intro:subsec:stage-parameters}:
\[
 p_j=m_j/|m_j|,\qquad q_j=v_j/|v_j|,\qquad n_j=p_j\times q_j,
 \qquad a_{\rm new}=p_j\cdot Mq_j,\quad
 \beta_{\rm new}=\frac{n_j\cdot Mq_j}{a_{\rm new}}.
\]
Here $M=\nabla U^{j-1}(t_j,0)$ will be the older background for
the next stage. Transversality ensures that the new frame is orthonormal.

\refstepcounter{equation}
\label{prop:target-frame}
\textbf{Proposition \theequation\ (Conditional target-frame estimates).}
Under the hypotheses of Lemma~\ref{lem:actual-direction-pressure}
and the target conditions above,
\begin{align}
 \frac{a_{\rm new}}a
 &=1+O\!\left(x_j^{-4}+\frac\beta{x_j^2}
       +\frac{\sqrt\beta}{x_j^3}+e_*\Theta^{28}\right),
 \label{eq:target-a-transfer}\\
 \beta_{\rm new}x_j^2
 &=1+O\!\left(\sqrt\beta+x_j^{-4}+e_*\Theta^{28}\right).
 \label{eq:target-beta-transfer}
\end{align}
After decreasing $\beta_0,c$ if necessary, both new coefficients are
positive. The new normal also satisfies the compression estimate
\begin{equation}
 p_j\cdot Mp_j\le
 -c\frac{\sqrt{a h_*}\sqrt\beta}{x_j}+C(G_*+E_*).
 \label{eq:target-compression}
\end{equation}

\textbf{Remark.} The precise asymptotic in \eqref{eq:target-beta-transfer}
is stronger than the later construction needs: in
\eqref{eq:induction-parameter-transfer} it is used only to restore
$1/2\le\beta_{j+1}x_j^2\le2$. Subsequent estimates use this fixed
two-sided range, not convergence to $1$ or its rate. In contrast,
the relative errors in $a_{j+1}/a_j$ must be summable to keep $a_j$
in its prescribed range. The cancellation in the proof remains
essential to obtain the positive scale $\beta_{j+1}\asymp x_j^{-2}$;
estimating the large terms separately would lose that scale.

\emph{Proof.} Write $x=x_j$, use $\zeta,\nu,E_v$ from
\eqref{eq:normalized-wave-vectors}, and set $T=M\nu/a$.
Direct normalization and the definition of $J$ in
\eqref{eq:scaled-pressure-quantities} give
\begin{equation}
 \begin{aligned}
 \frac{a_{\rm new}}a
 &=\frac{m_j\cdot Mv_j}{a|m_j||v_j|}
 =\frac{s_0aJ}{a\,s_0\sqrt{D_a}\,V\sqrt{E_v}}
 =\frac{J/V}{\sqrt{D_a}\sqrt{E_v}},\\
 \beta_{\rm new}
 &=\frac{n_j\cdot Mq_j}{a_{\rm new}}
 =\frac{(\zeta\times\nu)\cdot(aT)}
       {a_{\rm new}\sqrt{D_a}\sqrt{E_v}\sqrt{E_v}}
 =\frac{(\zeta\times\nu)\cdot T}{(J/V)\sqrt{E_v}}.
 \end{aligned}
 \label{eq:exact-frame-transfer}
\end{equation}
We first prove \eqref{eq:target-a-transfer}. For $J/V$, substituting
$P_0=x^2$, $Q_0=-2\sqrt\beta x$ and
\eqref{eq:target-direction-ratio} into \eqref{eq:positive-numerator}
gives the first estimate below. The estimate for $D_a$ is
\eqref{eq:actual-rational-errors} with $P_0=x^2$; that for $E_v$
follows from \eqref{eq:normalized-wave-vectors} and the bounds
$r=O(1)$, $w=O(\Theta^2)$ in \eqref{eq:actual-direction-ratios}:
\begin{equation}\label{eq:target-denominator-estimates}
 J/V=x^2-\beta+2\sqrt\beta z/x+O(e_*\Theta^{26}),\quad
 D_a=1+x^4+O(e_*\Theta^7),\quad E_v=1+O(\varepsilon^2\Theta^4).
\end{equation}
In particular,
\begin{equation}\label{eq:target-normalized-denominators}
 \frac{J/V}{x^2}
 =1+O\!\left(\frac\beta{x^2}+\frac{\sqrt\beta}{x^3}
                         +e_*\Theta^{26}\right),\qquad
 \frac{D_a}{1+x^4}=1+O(e_*\Theta^7).
\end{equation}
These normalized quantities and $E_v$ are bounded away from zero by
\eqref{eq:geometry-smallness} and $x\ge2$. Thus
\eqref{eq:exact-frame-transfer} gives
\[
 \frac{a_{\rm new}}a
 =\frac{(J/V)/x^2}{\sqrt{D_a/(1+x^4)}\sqrt{E_v}}
       \frac{x^2}{\sqrt{1+x^4}}
 =1+O\!\left(x^{-4}+\frac\beta{x^2}
       +\frac{\sqrt\beta}{x^3}+e_*\Theta^{26}\right).
\]
Here $x^2/\sqrt{1+x^4}=1+O(x^{-4})$, and
$\varepsilon^2\Theta^4\le Ce_*\Theta^{26}$ since $\varepsilon\le Ce_*$
and $e_*\le1$. This proves \eqref{eq:target-a-transfer}, with its
common exponent $28$; also $a_{\rm new}>0$ by
\eqref{eq:forward-pressure-sign}.

We next prove \eqref{eq:target-beta-transfer}.
Write $b_{ik}=B_{ik}/a$. The coefficient bounds
\eqref{eq:budget-principal-coefficients}--\eqref{eq:budget-other-coefficients},
with the budget \eqref{eq:actual-error-budget}, give
\[
 b_{pq}=1+O(e_*),\qquad b_{nq}=\beta+O(e_*),\qquad
 h/h_*=1+O(e_*),\qquad
 \varepsilon|b_{ik}|+|E|/a=O(e_*).
\]
Since $|r|\le C$ and $|w|\le C\Theta^2$ by
\eqref{eq:actual-direction-ratios}, the decomposition $M=B+hqp^T+E$
and $\nu=\varepsilon rp+q+\varepsilon wn$ yield
\[
 \begin{aligned}
 T_p
 &=b_{pq}+\varepsilon b_{pp}r+\varepsilon b_{pn}w
       +\frac{p\cdot E\nu}{a}
 =1+O(e_*\Theta^2),\\
 T_n
 &=b_{nq}+\varepsilon b_{np}r+\varepsilon b_{nn}w
       +\frac{n\cdot E\nu}{a}
 =\beta+O(e_*\Theta^2),\\
 \varepsilon T_q
 &=\frac h{h_*}r+\varepsilon^2b_{qp}r+\varepsilon b_{qq}
       +\varepsilon^2b_{qn}w+\frac{\varepsilon q\cdot E\nu}{a}\\
 &=\frac h{h_*}r+O(e_*\Theta^2)
 =r_\lambda+O(e_*\Theta^{24}).
 \end{aligned}
\]
Here the shear contributes $(h/h_*)r$ because $h_*\varepsilon^2/a=1$;
the last equality uses $r-r_\lambda=O(e_*\Theta^{24})$ from
\eqref{eq:actual-direction-ratios}.
The cross product is exactly
\[
 \zeta\times\nu=
 (-N+\varepsilon^2Qw,\ \varepsilon(Nr-Pw),\ P-\varepsilon^2Qr).
\]
Set $\widetilde T=T-T_q\nu$. Since $\nu_q=1$ and
$(\zeta\times\nu)\cdot\nu=0$,
\[
 \widetilde T_q=0,\qquad
 (\zeta\times\nu)\cdot T=(\zeta\times\nu)\cdot\widetilde T.
\]
Set $w_0=-P_0r_\lambda-Q_0$. The component estimates above and
\eqref{eq:actual-direction-ratios} give
\[
 \begin{aligned}
 \widetilde T_p
 &=T_p-(\varepsilon T_q)r
 =1-r_\lambda^2+O(e_*\Theta^{24}),\\
 \widetilde T_n
 &=T_n-(\varepsilon T_q)w
 =\beta-r_\lambda w_0+O(e_*\Theta^{26}).
 \end{aligned}
\]
Only $\varepsilon T_q$ occurs, so no inverse power of $\varepsilon$
is lost. At the target, \eqref{eq:target-direction-ratio} and
$P_0=x^2$, $Q_0=-2\sqrt\beta x$ give the exact leading identities
\[
 r_\lambda=\frac{\sqrt\beta x-z}{x^2},\qquad
 w_0=\sqrt\beta x+z,\qquad
 \beta-r_\lambda w_0
 =\beta-\frac{(\sqrt\beta x-z)(\sqrt\beta x+z)}{x^2}
 =\frac{z^2}{x^2}.
\]
Thus $\widetilde T_n=z^2/x^2+O(e_*\Theta^{26})$:
subtracting the velocity-parallel part exposes the cancellation.
By \eqref{eq:actual-ray-comparison}, the first and third cross-product
components satisfy
\[
 (\zeta\times\nu)_p=-1+O(e_*\Theta^5),\qquad
 (\zeta\times\nu)_n=x^2+O(e_*\Theta^5),
\]
where the terms with $\varepsilon^2$ are absorbed using
$\varepsilon\le Ce_*$, $|P|,|Q|,|w|\le C\Theta^2$ and $|r|\le C$.
Since $r_\lambda,z=O(1)$ and $x\le\Theta$, we obtain
\[
 \begin{aligned}
 (\zeta\times\nu)\cdot T
 &=-1+r_\lambda^2+z^2
   +O(e_*\Theta^{24}+x^2e_*\Theta^{26}+e_*\Theta^5)\\
 &=-1+r_\lambda^2+z^2+O(e_*\Theta^{28}).
 \end{aligned}
\]
For the remaining terms, \eqref{eq:target-direction-ratio} and
\eqref{eq:target-root-estimate} give
\[
 r_\lambda^2=O(\beta/x^2+x^{-4}),\qquad
 z^2=2+O(\sqrt\beta+x^{-4}).
\]
Since $\beta/x^2<\beta<\sqrt\beta$, the error bound for $z^2$
also absorbs the $r_\lambda^2$ contribution, giving
\[
 (\zeta\times\nu)\cdot T
 =1+O(\sqrt\beta+x^{-4}+e_*\Theta^{28}).
\]
Substituting \eqref{eq:target-normalized-denominators} and the estimate for
$E_v$ from \eqref{eq:target-denominator-estimates} into
\eqref{eq:exact-frame-transfer}, we obtain
\[
 \beta_{\rm new}x^2
 =\frac{(\zeta\times\nu)\cdot T}{[(J/V)/x^2]\sqrt{E_v}}
 =1+O(\sqrt\beta+x^{-4}+e_*\Theta^{28}),
\]
which proves \eqref{eq:target-beta-transfer}; division introduces
no additional power of $\Theta$. Positivity follows from $J/V>0$
and the earlier expression
$-1+r_\lambda^2+z^2+O(e_*\Theta^{28})$: indeed $r_\lambda^2\ge0$
and \eqref{eq:target-root-estimate} gives
$z^2\ge32/17-O(\sqrt\beta)$ for $x\ge2$.

Finally, the decomposition \eqref{eq:parent-shear} yields
\[
 p_j\cdot Mp_j=\frac{h\varepsilon PQ}{D_a}+O(G_*+E_*).
\]
At the target, the phase comparison and
\eqref{eq:geometry-smallness} give $P\ge cx^2$,
$Q\le-c\sqrt\beta x$, and $D_a\le Cx^4$.
Since $h\asymp h_*$ and $h_*\varepsilon=\sqrt{a h_*}$,
\eqref{eq:target-compression} follows.\hfill$\square$

These estimates explain the next stage's scales:
$a_{\rm new}$ stays close to $a$ (the relative errors are summable,
keeping $a_j$ in its prescribed interval), while
$\beta_{\rm new}\asymp x_j^{-2}$. In the next layer's scaled clock,
velocity amplification occurs on an order-one time scale, whereas
substantial normal turning occurs only at time
$\beta_{\rm new}^{-1/2}\asymp x_j\gg1$. This restores the long
amplification interval before the next turn.

\subsection{Amplitude ratios and normalization}

\refstepcounter{equation}
\label{lem:amplitude-ratios}
\textbf{Lemma \theequation\ (Target gain and amplitude ratios).}
Assume the hypotheses of Lemma~\ref{lem:actual-direction-pressure},
and let $x_j\ge2$ with $\tau_j=x_j/\sqrt\beta\le\Theta$.
Set $A(\tau,y)=|m_j|\,|v_j|$ and $A_j=A(\tau_j,0)$.
Then, uniformly for $y\in\Omega$ and $1\le\tau\le\Theta$,
\begin{equation}
 A(\tau,y)\asymp s_0\sqrt{1+x^4}\,V_\lambda(\tau).
 \label{eq:actual-amplitude-size}
\end{equation}
At the target $\tau=\tau_j$,
\begin{equation}
 A_j\ge c s_0 x_j(1+\lambda)
                  \exp\!\left(\frac1{\sqrt{2\beta}}\right).
 \label{eq:target-amplitude-gain}
\end{equation}
The amplitude ratios satisfy
\begin{equation}
 \sup_{1\le\tau\le\min(\Theta,\tau_j+1),\ y\in\Omega}
 \frac{A(\tau,y)}{A_j}\le C,
 \label{eq:forward-amplitude-ratio}
\end{equation}
\begin{equation}
 \sup_{0\le\tau\le1,\ y\in\Omega}\frac{A(\tau,y)}{A_j}
 \le C\Theta^{11}x_j^{-1}
                  \exp\!\left(-\frac1{\sqrt{2\beta}}\right).
 \label{eq:early-amplitude-ratio}
\end{equation}
After the budget normalization in \eqref{eq:actual-error-budget},
the constants are universal: in particular, they are independent of
the stage, $\lambda$, $y$, and the parent size constants $C_M,C_H$.
The latter affect the error budget required for the hypotheses,
not the amplitude-ratio constant.

\emph{Proof.} By \eqref{eq:normalized-wave-vectors},
$|m_j|=s_0\sqrt{D_a}$ and $|v_j|=V\sqrt{E_v}$.
The denominator estimate \eqref{eq:actual-rational-errors} gives
$D_a\asymp1+x^4$. The direction bounds
\eqref{eq:actual-direction-ratios}, together with
\eqref{eq:normalized-wave-vectors} and \eqref{eq:geometry-smallness},
give $E_v=1+O(\varepsilon^2\Theta^4)\asymp1$.
Finally, \eqref{eq:conditional-Y-relative} gives $V\asymp V_\lambda$.
These prove \eqref{eq:actual-amplitude-size}. More explicitly,
the relative comparisons may be made to lie between $1/2$ and $2$
by a universal smallness threshold on $e_*\Theta^{28}/\beta$.
The resulting comparison constants depend only on the ideal scalar
estimates, which are uniform in $\lambda$ and $0<\beta\le\beta_0$;
no parent norm enters them. Combining this at the target
with \eqref{eq:exponential-gain} and \eqref{eq:gain-retention} proves
\eqref{eq:target-amplitude-gain}.

For $x\le1$, $V_\lambda$ increases by Lemma~\ref{lem:exponential-gain};
for $x\ge1$, $x^2\mathcal V(x)=x\cdot x\mathcal V(x)$ increases by
Lemma~\ref{lem:target-direction} and is comparable to
$\sqrt{1+x^4}\mathcal V(x)$. Thus the amplitude up to the target
is at most $CA_j$. Beyond the target, use $x'=\sqrt\beta$ and
$V_\lambda'/V_\lambda=-r_\lambda$ from \eqref{eq:target-direction-ratio}:
\[
 \frac{d}{d\tau}\log(x^2\mathcal V(x))
 =\frac{2\sqrt\beta}{x}-r_\lambda
 =\frac{\sqrt\beta}{x}+\frac{z}{x^2}\le C.
\]
One extra unit of scaled time costs only a constant factor, giving
\eqref{eq:forward-amplitude-ratio} on the stated interval.

For $0\le\tau\le1$, Lemma~\ref{lem:ideal-positive} gives
$V_0(\tau)\le V_0(1)\le e^{\sqrt2}$. The ideal bound \eqref{eq:ideal-Y-lambda-bound}
and the absolute comparison \eqref{eq:conditional-Y-absolute} give
$|Y|\le C\Theta^7(1+\lambda)$.
Transversality gives $|W|\le C\Theta^2|Y|$, while
$|m_j|\le Cs_0\Theta^2$. Hence $A\le Cs_0\Theta^{11}(1+\lambda)$.
Dividing by \eqref{eq:target-amplitude-gain} proves
\eqref{eq:early-amplitude-ratio}; the factor $1+\lambda$ cancels.
\hfill$\square$

The normalization \eqref{intro:eq:amplitude-normalization}
and the gain \eqref{eq:target-amplitude-gain} give
\begin{equation}
 \alpha_j=\frac{\delta_jh_j}{A_j}\le
 \frac{C\delta_jh_j}{s_0x_j(1+\lambda)}
 \exp\!\left(-\frac1{\sqrt{2\beta}}\right).
 \label{eq:quantitative-normalization}
\end{equation}
Moreover, the leading gradient \eqref{eq:gradient} has wave-amplitude
factor
\[
 \frac{\alpha_j}{\delta_j}|m_j|\,|v_j|
 =h_j\frac{A(\tau,y)}{A_j}.
\]
The cutoff $\chi_1$ and normalized profile derivative $\delta_jf_{\delta_j}'$
multiply this factor. Thus \eqref{eq:forward-amplitude-ratio} controls
its size on the stated interval relative to the prescribed target, while \eqref{eq:early-amplitude-ratio}
gives its small early-to-target ratio. Extending that estimate back to
physical time zero requires the history construction in
Section~\ref{sec:history-activation}.

\section{The wave history and activation data}
\label{sec:history-activation}
\fancyhead[R]{\small History and activation}

Sections~\ref{sec:comparison-proofs} and~\ref{sec:quantitative-amplification}
start from activation data at $t_a=t_{j-1}$. In an unforced Euler
construction, the new wave must already have a history from physical
time zero. We now construct that history and derive the activation
and size estimates used above. The hypotheses concern the existing
parent flow; their verification at every stage remains part of the
pressure and parameter induction. The construction supplies the
activation inputs \eqref{eq:budget-activation-data} and the history
part of the amplitude estimate \eqref{eq:history-target-ratio}.

\subsection{The parent inputs and the transported normal}
\label{sec:history-parent-inputs}

Let $0<t_a\le1$ and use the parent particle map $X(t,b)$ in physical
labels, with $X(0,b)=b$. Write $F=\partial_bX$,
$M=(\nabla U^{j-1})\circ X$, and $H=(\nabla^2p^{j-1})\circ X$.
The parent identities \eqref{intro:eq:parent} hold in these physical
label coordinates as well.
All spatial estimates below concern $|b|\le\ell/2$.
Assume the following inputs throughout $0\le t\le t_a$.

\emph{Regularity and coercivity.} The matrices $F,F^{-1},F_t,M,H$
and their first physical-label derivatives are bounded by $CK_h^{\kappa}$,
and $t_a^{-1}\le CK_h^{\kappa}$, with $K_h\ge2$. Assume also the Hessian bound
\begin{equation}
 H(t,b)\le t_a^{-2}I.
 \label{eq:history-coercivity-input}
\end{equation}
The matrix inequality is an upper quadratic-form bound. Large negative
eigenvalues do not weaken it. The particle-map bounds described in
\eqref{eq:parent-exponent-budget} supply the stated regularity; when
higher label derivatives are used, assume the corresponding bounds
from that same hierarchy.

\emph{Activation geometry at the origin and history sizes.} At $(t_a,0)$,
use the inherited orthonormal frame $(p,q,n)$ and write
\begin{equation}
 M=h_*qp^T+\mathcal B,\qquad \mathcal B=B+E,
 \qquad \mathcal B_{pp}<0,\qquad \|\mathcal B\|\le\zeta h_*.
 \label{eq:history-activation-geometry}
\end{equation}
Here $h_*\ge1$ and $\zeta>0$ will be sufficiently small. Assume also
\begin{equation}
 \sup|M|\le C_Mh_*,\qquad \sup|H|\le C_Hh_*h_{\rm old},\qquad
 1\le h_{\rm old}\le h_*,\qquad t_a^{-1}\le h_*.
 \label{eq:history-size-input}
\end{equation}
The suprema cover the history interval and label ball. Constants below
may depend on $C_M,C_H$, but not on the stage.

Choose the fixed unit normal and its transported normal by
\begin{equation}
 m_0=\frac{F(t_a,0)^Tn}{|F(t_a,0)^Tn|},\qquad
 m(t,b)=F(t,b)^{-T}m_0.
 \label{eq:history-normal-choice}
\end{equation}
Then $m(t_a,0)=s_0n$, with
$s_0=|F(t_a,0)^Tn|^{-1}$ and $s_0+s_0^{-1}\le CK_h^{\kappa}$.

\subsection{The history problem as a variational problem}

We want to choose, at activation time, the wave's velocity whose
history also solves the wave equations from time zero. This suggests
a boundary-value problem amenable to variational methods. Indeed, from
\eqref{eq:leading-velocity} the leading increment is
$w_{\lead}=(\ell/k)\alpha\chi_1v f_\delta(\theta)$, with $v$ solving the linear
velocity-amplitude equation \eqref{intro:eq:wave}. Hence
$(1+s)w_{\lead}=(\ell/k)\alpha\chi_1(1+s)v f_\delta(\theta)$
could also serve as the leading increment. This motivates viewing $v$
as the amplitude of a velocity variation, with the scalar oscillatory
factor separated out. Thus we consider a smooth family of velocities $u_s$ and their
particle maps $X_s$, with $u_0=U^{j-1}$ and $X_0=X$:
\[
 \partial_tX_s(t,b)=u_s(t,X_s(t,b)),\qquad X_s(0,b)=b.
\]
Write $\eta=\partial_sX_s|_{s=0}$ and
$\widehat v=(\partial_su_s|_{s=0})(t,X(t,b))$.
Differentiating the flow equation in $s$ gives
\begin{equation}
 \eta_t=\widehat v+M\eta,\qquad
 \widehat v=\eta_t-M\eta,\qquad \eta(0,b)=0.
 \label{eq:history-kinematic-variation}
\end{equation}
This is the standard relation between particle displacement and Eulerian
velocity variation; see \cite[Theorem~6.1, equation~(6.8)]{holm-marsden-ratiu1998}.

Motivated by \eqref{eq:history-kinematic-variation}, we construct the
wave coefficient as $v=\eta_t-M\eta$, where $\eta$ now denotes the
displacement (distinct from the small coefficient parameter in
\eqref{eq:short-eta}),
with zero initial value $\eta(0)=0$ and a terminal value $\eta(t_a)$ to be chosen.
We impose $m\cdot\eta=0$ to obtain the wave's
transversality. Indeed, differentiating this
constraint using $m_t=-M^Tm$ gives $m\cdot v=0$.
Moreover, \eqref{intro:eq:parent} and invertibility of $F$
give $M_t+M^2=-H$. Thus $v_t+Mv=\eta_{tt}+H\eta$, and the desired
velocity equation requires its transverse component to vanish:
\begin{equation}
 v_t+Mv=\eta_{tt}+H\eta,\qquad
 \operatorname{proj}_{m^\perp}(\eta_{tt}+H\eta)=0.
 \label{eq:history-displacement-equation}
\end{equation}
The second equation is precisely the transverse part of the desired
velocity equation: its remaining component is parallel to $m$ and
will be fixed by $m\cdot v=0$.

This transverse displacement equation is the Euler--Lagrange equation
of the quadratic action below, with transverse variations vanishing
at both endpoints. To impose transversality, choose a fixed $3\times2$
matrix $R_\perp$ whose columns form an orthonormal basis of $m_0^\perp$:
\begin{equation}
 R_\perp^TR_\perp=I_2,\qquad
 \operatorname{ran}R_\perp=m_0^\perp.
 \label{eq:history-transverse-basis}
\end{equation}
Then write
$\eta=FR_\perp\xi$: indeed,
$m\cdot FR_\perp\xi=m_0\cdot R_\perp\xi=0$, and invertibility of $F$
shows that every transverse displacement has this form.

For a coordinate endpoint $\xi_{\mathrm{end}}\in\mathbb R^2$, the history problem
is therefore to minimize
\begin{equation}
 \xi(0,b)=0,\qquad \xi(t_a,b)=\xi_{\mathrm{end}},
 \qquad
 \mathcal Q_b(\eta)=\int_0^{t_a}(|\eta_t|^2-\eta\cdot H\eta)\,dt.
 \label{eq:history-action}
\end{equation}
This quadratic action is the second variation, with the parent pressure
held fixed, of the classical particle action
$\int_0^{t_a}(\tfrac12|\dot\gamma|^2-P^{j-1}(t,\gamma))\,dt$
at $\gamma=X(t,b)$, restricted here to $\eta\perp m$.
For the pressure-augmented least-action principle and the use of an
upper pressure-Hessian bound to obtain short-time minimization, see
\cite[\S II]{brenier2008}.

The same coordinate endpoint will be used at every label. Its physical
endpoint $F(t_a,b)R_\perp\xi_{\mathrm{end}}$ varies with $b$.
Zero initial displacement does not mean zero initial velocity.
The endpoint will be chosen in the next subsection to obtain the
required activation signs. Minimization first supplies a unique
history for each endpoint; its positive quadratic action will also
control the endpoint-to-velocity map used in that choice.

\refstepcounter{equation}
\label{prop:history-existence}
\textbf{Proposition \theequation\ (Existence, uniqueness, and wave dynamics).}
Assume the smooth parent identities
\eqref{intro:eq:parent} and the coercivity condition
\eqref{eq:history-coercivity-input}, together with the quantitative
regularity bounds stated under \emph{Regularity and coercivity} in
Section~\ref{sec:history-parent-inputs}.
Let $m$ and $R_\perp$ be defined by
\eqref{eq:history-normal-choice} and \eqref{eq:history-transverse-basis}.
For each fixed coordinate endpoint $\xi_{\mathrm{end}}\in\mathbb R^2$ and label $b$,
the action in \eqref{eq:history-action} has a unique stationary path
among $\xi\in H^1((0,t_a);\mathbb R^2)$ with the stated endpoints.
This path is the unique minimizer. It is smooth in time and in the
labels to the regularity of the parent, and its velocity
\begin{equation}
 v=\eta_t-M\eta=FR_\perp\xi_t
 \label{eq:history-velocity}
\end{equation}
satisfies $m\cdot v=0$ and the wave equations
\eqref{intro:eq:normal} and \eqref{intro:eq:wave}.

\textbf{Remark.} Existence and uniqueness require only smooth coefficients,
invertibility of $F$, and the stated coercivity condition. The polynomial
bounds above are retained to obtain the uniform history and label-derivative
estimates in the later subsections.

\emph{Proof.} For any $H^1$ path with $\eta(0)=0$, the identity
$\eta(t)=\int_0^t\eta_t(s)\,ds$ and Cauchy--Schwarz give
\begin{equation}
 |\eta(t)|^2\le t\int_0^t|\eta_t(s)|^2\,ds,
 \qquad
 \int_0^{t_a}|\eta|^2\,dt\le\frac{t_a^2}{2}
                              \int_0^{t_a}|\eta_t|^2\,dt.
 \label{eq:history-endpoint-poincare}
\end{equation}
For the second bound, integrate the first and interchange the order
of integration: the weight on $|\eta_t(s)|^2$ is
$(t_a^2-s^2)/2\le t_a^2/2$. These estimates also hold for
Hilbert-space-valued paths, with norms in place of absolute values.
They apply on any interval $[0,T]$; for a path vanishing at $T$,
reverse time to obtain
$\|\eta\|_{L_t^2}^2\le(T^2/2)\|\eta_t\|_{L_t^2}^2$ and
$\|\eta(0)\|^2\le T\|\eta_t\|_{L_t^2}^2$.
Consequently, the Hessian bound \eqref{eq:history-coercivity-input} implies
\begin{equation}
 \mathcal Q_b(\eta)
 \ge\|\eta_t\|_{L^2_t}^2-t_a^{-2}\|\eta\|_{L^2_t}^2
 \ge\tfrac12\|\eta_t\|_{L^2_t}^2.
 \label{eq:history-action-coercivity}
\end{equation}
This also controls $\xi_t$: differentiating
at a fixed label, with coefficient suprema over the history interval
and label ball as in Subsection~\ref{sec:history-parent-inputs},
$\xi=R_\perp^TF^{-1}\eta$ gives
\[
 \xi_t=R_\perp^TF^{-1}(\eta_t-M\eta),\qquad
 \|\xi_t\|_{L^2_t}
 \le \sup|F^{-1}|
       (1+t_a\sup|M|/\sqrt2)\|\eta_t\|_{L^2_t}
 \le CK_h^{2\kappa}\|\eta_t\|_{L^2_t}.
\]
Put $L(t)=t\xi_{\mathrm{end}}/t_a$. Write $\xi=L(t)+\widetilde\xi$, where
$\widetilde\xi\in\mathcal H:=H_0^1((0,t_a);\mathbb R^2)$, equipped with the norm
$\|z\|_{\mathcal H}=\|z_t\|_{L^2_t}$. This is a Hilbert norm by
\eqref{eq:history-endpoint-poincare}. The symmetric bilinear form obtained by
polarization is
\begin{equation}
 \mathfrak a_b(z,w)=\int_0^{t_a}
 \left((FR_\perp z)_t\cdot(FR_\perp w)_t
       -(FR_\perp z)\cdot H(FR_\perp w)\right)\,dt.
 \label{eq:history-bilinear-form}
\end{equation}
Thus $\mathcal Q_b(FR_\perp z)=\mathfrak a_b(z,z)$.
The coefficient bounds and $\|z\|_{L_t^2}\le t_a\|z_t\|_{L_t^2}/\sqrt2$ from
\eqref{eq:history-endpoint-poincare}
give boundedness, while \eqref{eq:history-action-coercivity} and the
preceding bound on $\xi_t$ give coercivity:
\begin{equation}
 |\mathfrak a_b(z,w)|\le CK_h^{3\kappa}
       \|z\|_{\mathcal H}\|w\|_{\mathcal H},\qquad
 \mathfrak a_b(z,z)\ge C^{-1}K_h^{-4\kappa}\|z\|_{\mathcal H}^2.
 \label{eq:history-form-bounds}
\end{equation}
The boundedness estimate also applies when one argument is $L$, since
$L(0)=0$. Consequently $w\mapsto-\mathfrak a_b(L,w)$ is a bounded
linear functional on $\mathcal H$. The Lax--Milgram theorem gives a
unique $\widetilde\xi\in\mathcal H$ satisfying
\[
 \mathfrak a_b(\widetilde\xi,w)=-\mathfrak a_b(L,w)
 \quad\text{for every }w\in\mathcal H.
\]
Hence $\xi=L+\widetilde\xi$ has the prescribed endpoints and
$\mathfrak a_b(\xi,w)=0$ for every zero-endpoint variation $w$.
This is exactly stationarity, since the first variation of the action
in direction $w$ is $2\mathfrak a_b(\xi,w)$.
Every other admissible path is $\xi+w$ for some $w\in\mathcal H$,
and symmetry gives
\[
 \mathcal Q_b(FR_\perp(\xi+w))-\mathcal Q_b(FR_\perp\xi)
 =2\mathfrak a_b(\xi,w)+\mathfrak a_b(w,w)
 \ge C^{-1}K_h^{-4\kappa}\|w\|_{\mathcal H}^2.
\]
The right side is strictly positive unless $w=0$. Thus $\xi$ is
the unique minimizer as well as the unique stationary path.

For a zero-endpoint variation $\psi=FR_\perp b_0$, stationarity means
\[
 \int_0^{t_a}(\eta_t\cdot\psi_t-H\eta\cdot\psi)\,dt=0.
\]
Integrating by parts gives $(FR_\perp)^T(\eta_{tt}+H\eta)=0$,
initially in distributions. Since the columns of $FR_\perp$ span
$m^\perp$, this is the second equation in
\eqref{eq:history-displacement-equation}. The first identity there
then implies $v_t+Mv=\alpha m$ for a scalar $\alpha$.
Moreover, $F_t=MF$ gives $v=\eta_t-M\eta=FR_\perp\xi_t$, so
$m\cdot v=0$. The constraint fixes the scalar component exactly as in the derivation
of \eqref{intro:eq:wave}, giving
$v_t=-Mv+2m(m\cdot Mv)/|m|^2$.
Smoothness in time follows from this
linear ODE and $\eta_t=M\eta+v$; label regularity is established below.
\hfill$\square$

\textbf{Remark.} For the later quantitative estimates, we record
the coordinate form. Substituting $\eta=FR_\perp\xi$ and
$F_{tt}=-HF$ into the stationarity equation $(FR_\perp)^T(\eta_{tt}+H\eta)=0$,
which follows from \eqref{eq:history-displacement-equation}, gives
\begin{equation}
 R_\perp^TF^T(\eta_{tt}+H\eta)=K_R\xi_{tt}+2R_\perp^TF^TF_tR_\perp\xi_t=0,
 \qquad K_R=R_\perp^TF^TFR_\perp.
 \label{eq:history-coordinate-ode}
\end{equation}

\subsection{Choosing the endpoint to obtain the activation signs}

The endpoint must produce $v_q=1$ and $v_p\le0$ at activation.
We first work only at the origin, with unnormalized velocities.

For a physical displacement endpoint $d\in n^\perp$, let $\eta^d$
be the minimizing history with $\eta^d(0,0)=0$ and
$\eta^d(t_a,0)=d$. Define
\[
 \Lambda d=\operatorname{proj}_{n^\perp}\eta_t^d(t_a).
\]
\refstepcounter{equation}
\label{lem:history-endpoint-map}
\textbf{Lemma \theequation\ (The displacement endpoint map).}
Assume the hypotheses of Proposition~\ref{prop:history-existence}
and the history size bounds \eqref{eq:history-size-input}.
Then $\Lambda:n^\perp\to n^\perp$ is linear and symmetric, and
\begin{equation}
 \frac{1}{2t_a}I\le\Lambda\le C_0h_*I.
 \label{eq:history-endpoint-map}
\end{equation}
Here $C_0$ depends only on the constants in
\eqref{eq:history-size-input}, not on the stage.

\emph{Proof.} Linearity follows from the linear variational equation
and uniqueness. Integration by parts and stationarity give
\[
 d\cdot\Lambda e
 =\int_0^{t_a}(\eta_t^d\cdot\eta_t^e-H\eta^d\cdot\eta^e)\,dt
 =e\cdot\Lambda d,
 \qquad d\cdot\Lambda d=\mathcal Q_0(\eta^d)\ge0.
\]
Thus $\Lambda$ is symmetric. Moreover, the endpoint conditions give
$d=\eta^d(t_a)$. By \eqref{eq:history-endpoint-poincare} and
\eqref{eq:history-action-coercivity},
\[
 d\cdot\Lambda d=\mathcal Q_0(\eta^d)
 \ge\frac12\int_0^{t_a}|\eta_t^d|^2\,dt
 \ge\frac{|d|^2}{2t_a}.
\]
This proves the lower bound in \eqref{eq:history-endpoint-map};
in particular, $\Lambda$ is positive definite on $n^\perp$.

It also has a uniform upper bound in units of $h_*$. Set
$L=t_a^{-1}+\sup|M|+\sup|H|^{1/2}\le Ch_*$.
As a comparison path, take zero displacement until $t_a-L^{-1}$,
then use
\[
 \eta^{\rm cmp}(t)=L(t-t_a+L^{-1})\,
                      \operatorname{proj}_{m(t)^\perp}d.
\]
This path is admissible and has endpoint $d$. The normal equation
$m_t=-M^Tm$ in \eqref{intro:eq:normal} gives
$|\partial_t\operatorname{proj}_{m^\perp}|\le C\sup|M|$.
To estimate its action explicitly, put $t_*=t_a-L^{-1}$ and
$\Pi(t)=\operatorname{proj}_{m(t)^\perp}$. On $[t_*,t_a]$,
\[
 \eta^{\rm cmp}=L(t-t_*)\Pi d,\qquad
 \eta_t^{\rm cmp}=L\Pi d+L(t-t_*)\Pi_t d.
\]
Since $0\le L(t-t_*)\le1$, $\|\Pi\|\le1$, and
$\|\Pi_t\|\le C\sup|M|$, we have
\[
 |\eta^{\rm cmp}|\le |d|,\qquad
 |\eta_t^{\rm cmp}|^2
 \le C(L^2+\sup|M|^2)|d|^2.
\]
Both the path and its weak derivative vanish before $t_*$.
The path is continuous at $t_*$, so its derivative has no singular
term there. Integrating over an interval of length $L^{-1}$ gives
\[
 \int_0^{t_a}|\eta_t^{\rm cmp}|^2\,dt
 \le C\left(L+\frac{\sup|M|^2}{L}\right)|d|^2,
 \qquad
 \left|\int_0^{t_a}\eta^{\rm cmp}\cdot H\eta^{\rm cmp}\,dt\right|
 \le\frac{\sup|H|}{L}|d|^2.
\]
Consequently
\[
 \mathcal Q_0(\eta^{\rm cmp})\le
 C\left(L+\frac{\sup|M|^2}{L}
            +\frac{\sup|H|}{L}\right)|d|^2
 \le C_0h_*|d|^2.
\]
Here $\sup|M|\le L$ and $\sup|H|\le L^2$, so each
term in parentheses is at most $L\le Ch_*$. Finally, minimality gives
$d\cdot\Lambda d=\mathcal Q_0(\eta^d)
\le\mathcal Q_0(\eta^{\rm cmp})\le C_0h_*|d|^2$ for every
$d\in n^\perp$, proving the upper bound in
\eqref{eq:history-endpoint-map}.
\hfill$\square$

\refstepcounter{equation}
\label{lem:history-endpoint-choice}
\textbf{Lemma \theequation\ (Activation endpoint).}
Assume the hypotheses of Lemma~\ref{lem:history-endpoint-map}
and \eqref{eq:history-activation-geometry}, with $\zeta$ sufficiently small.
Then there is a physical displacement endpoint $d$ with $|d|\le C/h_*$ whose history
velocity satisfies
\begin{equation}
 v_q(t_a,0)=1,\qquad -C\le v_p(t_a,0)\le0,\qquad v_n(t_a,0)=0.
 \label{eq:history-activation-signs}
\end{equation}

\emph{Proof.} By Lemma~\ref{lem:history-endpoint-map}, all entries of
$\Lambda$ in the basis $(p,q)$ are bounded in absolute value by $C_0h_*$,
and $\Lambda$ is positive definite. Put $b_*=\Lambda_{pq}$,
$d_*=\Lambda_{qq}$, and $u_* =\Lambda_{pp}-\mathcal B_{pp}$.
The lower bound in \eqref{eq:history-endpoint-map} and the assumed
sign $\mathcal B_{pp}<0$ in \eqref{eq:history-activation-geometry}
give $d_*\ge(2t_a)^{-1}>0$ and
$u_*>(2t_a)^{-1}>0$, so the divisions below are well defined.
For an unnormalized endpoint $d=d_pp+d_qq$, the terminal velocity
$w=\eta_t-M\eta$ has components
\begin{equation}\label{eq:history-endpoint-velocity-matrix}
 \begin{pmatrix}w_p\\w_q\end{pmatrix}
 =\begin{pmatrix}
 u_*&b_*-\mathcal B_{pq}\\
 b_*-h_*-\mathcal B_{qp}&d_*-\mathcal B_{qq}
 \end{pmatrix}
 \begin{pmatrix}d_p\\d_q\end{pmatrix}.
\end{equation}
If $b_*\le h_*/2$, choose $d=-p$. Then
$w_p=-u_*\le0$ and
$w_q=h_*+\mathcal B_{qp}-b_*\ge h_*/3$.
Also $|w_p|\le(C_0+\zeta)h_*$.

If $b_*>h_*/2$, positivity and the upper bound in
\eqref{eq:history-endpoint-map} give
$u_*>\Lambda_{pp}\ge b_*^2/d_*\ge h_*/(4C_0)$.
Choose $d=q-(b_*-\mathcal B_{pq})p/u_*$, whose norm is bounded
by a stage-independent constant. This makes $w_p=0$, while
\[
 w_q=\left(d_*-\frac{b_*^2}{u_*}\right)-\mathcal B_{qq}
 +\frac{b_*h_*+b_*(\mathcal B_{pq}+\mathcal B_{qp})
                 -h_*\mathcal B_{pq}-\mathcal B_{qp}\mathcal B_{pq}}{u_*}.
\]
The bracket is nonnegative because $u_*>\Lambda_{pp}$ and
$\Lambda_{pp}d_*\ge b_*^2$ by \eqref{eq:history-endpoint-map}.
The numerator is at least
$h_*^2/2-C(C_0+1)\zeta h_*^2$, whereas
$u_*\le(C_0+\zeta)h_*$. Thus $w_q\ge ch_*$ after fixing $\zeta$.

In either case divide the endpoint, displacement, and velocity by
the positive terminal value $w_q$. This proves the stated size and
component bounds; $v_n=0$ follows from transversality $m\cdot v=0$ and
$m(t_a,0)=s_0n$.\hfill$\square$

\textbf{Remark.} The signs supplied by positivity of $\Lambda$,
$\mathcal B_{pp}<0$, and $h_*>0$ ensure that the endpoint-to-velocity
map represented by the matrix in \eqref{eq:history-endpoint-velocity-matrix}
attains a velocity satisfying the sign and magnitude constraints in
\eqref{eq:history-activation-signs}, with the displacement endpoint bound
$|d|\le C/h_*$, even if the matrix is singular.

Use this normalized terminal displacement to fix
\begin{equation}
 \xi_{\mathrm{end}}=R_\perp^TF(t_a,0)^{-1}d
 \label{eq:history-common-endpoint}
\end{equation}
at every label. Since $m_0\cdot F(t_a,0)^{-1}d=m(t_a,0)\cdot d=0$,
and $R_\perp R_\perp^T$ is the projection on $m_0^\perp$, this choice
recovers the terminal displacement
$\eta(t_a,0)=F(t_a,0)R_\perp \xi_{\mathrm{end}}=d$
at the origin. It has $|\xi_{\mathrm{end}}|\le CK_h^{\kappa}$, using $h_*\ge1$.
For the scaled variables of Section~\ref{sec:comparison-proofs}, put
$\lambda=-v_p(t_a,0)/\varepsilon$. Then
\[
 Z(0,0)=(0,0,1)^T,\qquad Y(0,0)=(-\lambda,1)^T,
 \qquad 0\le\lambda\le C/\varepsilon.
\]
The forward (which means from activation onward, here and below)
comparison in Proposition~\ref{prop:conditional-wave-control}
only needs $\lambda\ge0$; the upper bound is
an additional output of the history construction.

\subsection{Uniform label estimates and the history size}

All constants in this subsection are independent of the stage.

\refstepcounter{equation}
\label{lem:history-label-bounds}
\textbf{Lemma \theequation\ (Spatial activation accuracy and history bounds).}
Assume the parent identities, coercivity, and quantitative regularity
hypotheses of Proposition~\ref{prop:history-existence} on
$0\le t\le t_a$, $|b|\le\ell/2$.
Use the activation endpoint supplied by
Lemma~\ref{lem:history-endpoint-choice}, with the same coordinate endpoint
\eqref{eq:history-common-endpoint} at every label.
Then
\begin{equation}
 \begin{aligned}
 \sup_{0\le t\le t_a,\,b}(|m|+|\nabla_bm|)&\le CK_h^\kappa,\\
 \sup_{0\le t\le t_a,\,b}|v|&\le CK_h^{12\kappa},\qquad
 \sup_{0\le t\le t_a,\,b}|\nabla_bv|\le CK_h^{19\kappa}.
 \end{aligned}
 \label{eq:history-uniform-bounds}
\end{equation}
If, in addition, the scaling parameter
$\varepsilon=\sqrt{a/h_*}$ defined in \eqref{eq:amplification-scales}
satisfies $0<\varepsilon\le1$, then the scaled activation data obey
\begin{equation}
 \sup_{y\in\Omega}|Z(0,y)-(0,0,1)^T|
 +\sup_{y\in\Omega}|Y(0,y)-(-\lambda,1)^T|
 \le C\ell K_h^{19\kappa}/\varepsilon,
 \label{eq:history-scaled-data}
\end{equation}
which supplies \eqref{eq:budget-activation-data}. The history amplitude
satisfies
\begin{equation}
 \sup_{0\le t\le t_a,\ y\in\Omega}|m(t,\ell y)|\,|v(t,\ell y)|
 \le CK_h^{13\kappa}.
 \label{eq:history-amplitude-bound}
\end{equation}

\emph{Proof.} Put $A=K_h^\kappa\ge1$ for this proof.
All $L_t^2$ and $L_t^\infty$ norms below are over $[0,t_a]$ at
fixed particle labels, with bounds uniform for $|b|\le\ell/2$.
For label differences, both labels lie in this ball. Unsubscripted
matrix norms are operator norms; the coefficient bounds hold uniformly
in both time and labels.

Since $m=F^{-T}m_0$ with $m_0$ a fixed unit vector,
$|m|+|\nabla_bm|\le\|F^{-1}\|+|\nabla_bF^{-1}|\le CA$
by the parent regularity bounds, proving the first line of
\eqref{eq:history-uniform-bounds}.

For paths vanishing at
zero, $\|\xi\|_{L_t^2}\le t_a\|\xi_t\|_{L_t^2}/\sqrt2$ and $t_a\le1$.
Let $\mathfrak a_b$ be the polarization of the action defined in
\eqref{eq:history-bilinear-form}. The boundedness and coercivity
estimates \eqref{eq:history-form-bounds}, proved in
Proposition~\ref{prop:history-existence}, give
\[
 |\mathfrak a_b(\xi,\psi)|\le CA^3\|\xi_t\|_{L_t^2}\|\psi_t\|_{L_t^2},
 \qquad
 \mathfrak a_b(\psi,\psi)\ge C^{-1}A^{-4}\|\psi_t\|_{L_t^2}^2.
\]
Since $|\xi_{\mathrm{end}}|\le CA$ and
$t_a^{-1}\le CA$, the linear interpolation $L(t)=t\xi_{\mathrm{end}}/t_a$ satisfies
$\|L_t\|_{L_t^2}\le CAt_a^{-1}t_a^{1/2}\le CA^{3/2}$.

Put $z=\xi-L$, which has zero endpoints.
Stationarity of $\xi$ gives $\mathfrak a_b(\xi,z)=0$, so
$\mathfrak a_b(z,z)=-\mathfrak a_b(L,z)$, hence
\[
 cA^{-4}\|z_t\|_{L_t^2}^2\le|\mathfrak a_b(z,z)|
 =|\mathfrak a_b(L,z)|
 \le CA^3\|L_t\|_{L_t^2}\|z_t\|_{L_t^2}.
\]
Here we used coercivity on the left and boundedness on the right.
Dividing by $\|z_t\|_{L_t^2}$ when it is nonzero gives
$\|z_t\|_{L_t^2}\le CA^{4+3}\|L_t\|_{L_t^2}=CA^7\|L_t\|_{L_t^2}$;
the same bound is immediate if $z_t=0$.
Finally, $\xi_t=L_t+z_t$ and the triangle inequality give
\[
 \|\xi_t\|_{L_t^2}\le C(1+A^7)\|L_t\|_{L_t^2}
 \le CA^{17/2}\le CA^9.
\]
For two labels, $\Delta\xi=\xi(\cdot,b_1)-\xi(\cdot,b_2)$
has zero endpoints. Testing the difference equation with $\Delta\xi$ gives
\[
 \mathfrak a_{b_1}(\Delta\xi,\Delta\xi)
 =-(\mathfrak a_{b_1}-\mathfrak a_{b_2})(\xi(\cdot,b_2),\Delta\xi).
\]
Differentiating the products in the form $\mathfrak a_b$ costs no extra power because
each first coefficient derivative is also bounded by $CA$. Thus
\[
 |(\mathfrak a_{b_1}-\mathfrak a_{b_2})(\xi(\cdot,b_2),\Delta\xi)|
 \le CA^3|b_1-b_2|\|\xi_t(\cdot,b_2)\|_{L_t^2}\|\Delta\xi_t\|_{L_t^2}.
\]
Coercivity and this coefficient-difference bound give
\[
 \begin{aligned}
 cA^{-4}\|\Delta\xi_t\|_{L_t^2}^2
 &\le\mathfrak a_{b_1}(\Delta\xi,\Delta\xi)\\
 &=\left|(\mathfrak a_{b_1}-\mathfrak a_{b_2})
                (\xi(\cdot,b_2),\Delta\xi)\right|\\
 &\le CA^3|b_1-b_2|\,\|\xi_t(\cdot,b_2)\|_{L_t^2}
                         \|\Delta\xi_t\|_{L_t^2}\\
 &\le CA^{12}|b_1-b_2|\,\|\Delta\xi_t\|_{L_t^2}.
 \end{aligned}
\]
In the last step we used $\|\xi_t(\cdot,b_2)\|_{L_t^2}\le CA^9$.
Dividing by $\|\Delta\xi_t\|_{L_t^2}$ when it is nonzero yields
\[
 \|\Delta\xi_t\|_{L_t^2}\le CA^{4+3+9}|b_1-b_2|
 =CA^{16}|b_1-b_2|.
\]
The same estimate is immediate if $\Delta\xi_t=0$.
In \eqref{eq:history-coordinate-ode}, write
$\xi_{tt}=-C_b\xi_t$, where
$C_b=2K_R^{-1}R_\perp^TF^TF_tR_\perp$.
The smallest singular value of $FR_\perp$ is at least
$\|F^{-1}\|^{-1}$, so
\[
 \|K_R^{-1}\|\le CA^2,\quad
 |\nabla_bK_R|\le CA^2,\quad
 |\nabla_bK_R^{-1}|\le CA^6,\quad
 |C_b|\le CA^4,\quad |\nabla_bC_b|\le CA^8.
\]
Consequently
\[
 \|\xi_{tt}\|_{L_t^2}\le CA^{13},\qquad
 \|\Delta\xi_{tt}\|_{L_t^2}
 \le C(A^4A^{16}+A^8A^9)|b_1-b_2|
 \le CA^{20}|b_1-b_2|.
\]
For $g\in H^1(0,t_a)$, the one-dimensional interpolation estimate is
\[
 \|g\|_{L_t^\infty}\le C\left(t_a^{-1/2}\|g\|_{L_t^2}
                  +\|g\|_{L_t^2}^{1/2}\|g_t\|_{L_t^2}^{1/2}\right).
\]
Indeed, choose $t_0$ with $|g(t_0)|^2\le t_a^{-1}\|g\|_{L_t^2}^2$
and integrate $(|g|^2)'=2g\cdot g_t$ to get
$\|g\|_{L_t^\infty}^2\le t_a^{-1}\|g\|_{L_t^2}^2+2\|g\|_{L_t^2}\|g_t\|_{L_t^2}$.
Applying this estimate and $t_a^{-1}\le CA$ gives
\[
 \begin{aligned}
 \|\xi_t\|_{L_t^\infty}
 &\le C\left(A^{1/2+9}+A^{(9+13)/2}\right)\le CA^{11},\\
 \|\Delta\xi_t\|_{L_t^\infty}
 &\le C\left(A^{1/2+16}+A^{(16+20)/2}\right)|b_1-b_2|
 \le CA^{18}|b_1-b_2|.
 \end{aligned}
\]
Recalling $v=FR_\perp\xi_t$ from \eqref{eq:history-velocity},
multiplication by $FR_\perp$, including its label derivative, gives
$|v|\le CA^{12}$ and $|\nabla_bv|\le CA^{19}$.
Difference quotients converge to derivatives by uniqueness of the
differentiated variational problem.

For completeness, at fixed label order $r$ choose a nondecreasing
input exponent $\kappa_r=\max\{7,r+6\}$, as justified by
\eqref{eq:parent-exponent-budget}, bounding all parent coefficient
derivatives through order $r$.
Repeated differentiation of the form gives the conservative bounds
\[
 \begin{gathered}
 \|\partial_b^r\xi_t\|_{L_t^2}\le C_rK_h^{(7r+9)\kappa_r},\qquad
 |\partial_b^r C_b|\le C_rK_h^{(4r+4)\kappa_r},\\
 \|\partial_b^rv\|_{L_t^\infty}\le C_rK_h^{(7r+12)\kappa_r}.
 \end{gathered}
\]
Indeed, each differentiated variational equation costs at most
$K_h^{4\kappa_r}K_h^{3\kappa_r}$ times the lower derivative bounds;
the inverse-matrix differentiation formula gives the middle bound.
The time equation gives an $L^2_t$ bound with exponent $(7r+13)\kappa_r$.
Interpolation with the $(7r+9)\kappa_r$ bound, followed by multiplication
by $F$, then
gives the last bound. Also
$|\partial_b^rm|\le C_rK_h^{\kappa_r}$.
These establish smoothness at every supplied order without an
exponential-in-$\sup|M| t_a$ estimate.

For the activation data, use the conversion in
Subsection~\ref{subsec:wave-activation-budget}: the frame at the origin,
$s_0$, and $\varepsilon$ are independent of the label, so only $m,v$
are differentiated. At activation, the mean-value theorem therefore gives
\[
 |Z(0,y)-Z(0,0)|\le C\ell A^2/\varepsilon,\qquad
 |Y(0,y)-Y(0,0)|\le C\ell A^{19}/\varepsilon,
\]
where the first bound includes $s_0^{-1}\le CA$.
This proves \eqref{eq:history-scaled-data}. Finally
$|m||v|\le CA^{13}$ proves \eqref{eq:history-amplitude-bound}.
\hfill$\square$

\textbf{Remark.} With the fixed input choice $\kappa=7$, the resulting
budget is explicit:
\[
 \begin{array}{c|c}
 \text{quantity} & \text{bound, up to a stage-independent constant}\\ \hline
 |m|+|\nabla_bm| & K_h^7\\
 |v| & K_h^{84}\\
 |\nabla_bv| & K_h^{133}\\
 |Z(0,y)-Z(0,0)| & \ell K_h^{14}/\varepsilon\\
 |Y(0,y)-Y(0,0)| & \ell K_h^{133}/\varepsilon\\
 |m||v| & K_h^{91}
 \end{array}
\]
Thus the common localization term in \eqref{eq:actual-error-budget}
is $\ell K_h^{133}/\varepsilon$. To make its contribution to
$e_*\Theta^N$ at most a prescribed $\delta>0$, it suffices to choose
$\ell\le c\delta\varepsilon K_h^{-133}\Theta^{-N}$.
These are conservative upper bounds, not optimal exponents; no further
enlargement of $\kappa$ is implicit in this budget.

Thus the pre-activation amplitude is bounded by
$H_{\rm hist}=CK_h^{13\kappa}$ on $0\le t\le t_a=t_{j-1}$.
With the forward hypotheses of Lemma~\ref{lem:amplitude-ratios}
as well, \eqref{eq:target-amplitude-gain} gives
\begin{equation}
 \sup_{0\le t\le t_a,\ y\in\Omega}
 \frac{|m(t,\ell y)|\,|v(t,\ell y)|}{A_j}
 \le\frac{CK_h^{13\kappa}}{s_0x_j(1+\lambda)}
                     \exp\!\left(-\frac1{\sqrt{2\beta}}\right).
 \label{eq:history-target-ratio}
\end{equation}
Estimate \eqref{eq:history-target-ratio} controls the pre-activation
interval $0\le t\le t_{j-1}$, while \eqref{eq:early-amplitude-ratio}
controls the early post-activation interval. Together they bound the
early-to-target amplitude ratio (i.e., the quotient on the left side
of \eqref{eq:history-target-ratio}) from physical time zero.

\subsection{Localization and the first stage}

Use the target normalization \eqref{eq:quantitative-normalization}
with the prescribed $h_j$ from
Subsection~\ref{intro:subsec:stage-parameters}. Since
$f_{\delta_j}'(0)=\delta_j^{-1}$ and $\chi_1(0)=1$,
\eqref{eq:gradient} makes the gradient of the leading increment at $(t_j,0)$
exactly $h_jq_jp_j^T$.
During the history interval, its amplitude is bounded by
\[
 \frac{\alpha_j}{\delta_j}|m_j||v_j|
 \le h_j\frac{H_{\rm hist}}{A_j},
\]
whose ratio to $h_j$ is controlled by \eqref{eq:history-target-ratio}.
Since $\beta^{-1/2}\asymp x_{j-1}$, the gain is exponential in the
preceding stage parameter. The later parameter choices must show that
it dominates the frequency and localization costs in the initial data.

The history equations are ordinary differential equations at each label;
they contain no spatial or angular derivatives. Multiplying the
displacement by the time-independent factor
$\alpha_j\chi_1(y)f_{\delta_j}(\theta)$ therefore multiplies its wave
velocity by the same factor. This produces the localized leading
coefficient $A_1$ used in the increment construction, supported in
$\operatorname{supp}\chi_1\subset\Omega$.
The physical leading velocity includes the additional prefactor
$\ell_j/k_j$, as in \eqref{eq:leading-velocity}.
The full coefficient recursion and initial-support bounds are treated in
Sections~\ref{sec:packet-expansion} and~\ref{sec:initial-packet}.

If $t_a=0$, there is no history interval or endpoint problem. Since
$F(0,b)=I$, choose $m_0=n(0)$ and the constant initial velocity
$v(0,b)=q(0)$, then solve the forward wave ODEs. This gives
$s_0=1$, $Z(0,y)=(0,0,1)^T$, and $Y(0,y)=(0,1)^T$,
so $\lambda=0$ and the neighboring activation errors vanish.

The history and forward arguments now supply a wave defined from the
common physical initial time, with the activation data, spatial
accuracy, and history size used in Sections~\ref{sec:comparison-proofs}
and~\ref{sec:quantitative-amplification}. What remains conditional is
the parent input: the iteration must preserve the pressure upper bound,
activation geometry at the origin, and regularity hypotheses. The construction
below then uses this wave to produce an exact new layer.

\section{The forced transverse equation}
\label{sec:forced-transverse}
\fancyhead[R]{\small The forced transverse equation}

Section~\ref{sec:history-activation} constructs the leading increment
from an unforced wave with a chosen terminal displacement. Here we use the same history
operator to construct forced corrections, with zero displacement at
both history endpoints, and then continue them forward after activation.

Use the independent-angle coordinates and chain rule
\eqref{intro:eq:chain}. For general periodic coefficients $A,\pi$,
write the physical pair in the form of \eqref{eq:leading-velocity}:
\[
 w_{\lead}(t,X(t,\ell y))=\mu A(t,y,km_0\cdot y),\qquad
 \pi_{\lead}(t,X(t,\ell y))=\frac{\mu\ell}{k}
                         \pi(t,y,km_0\cdot y),
\]
where $\mu$ is a constant prefactor. For the leading increment,
$\mu=\ell/k$ and $A=\alpha\chi_1v f_\delta$.
The chain rule gives, after restriction to the physical phase,
\[
 D_tw_{\lead}+Mw_{\lead}+\nabla_x\pi_{\lead}
 =\mu\left(A_t+MA+m\partial_\theta\pi+k^{-1}d\pi\right),
 \qquad d=F^{-T}\nabla_y.
\]
Thus cancellation of an oscillatory residual $\mu r$ requires
$A_t+MA+m\partial_\theta\pi=f$, with $f=-r$.
This is the forced transverse equation derived in
\eqref{intro:eq:forced-transverse}; the spatial pressure term $k^{-1}d\pi$
belongs to the next order. The coefficients are functions on the
lifted $(y,\theta)$-space, with $F,M,H,m$ evaluated at physical label $\ell y$.

We impose $m\cdot A=0$ and zero angle mean. The remaining divergence
is removed by the curl correction of
Subsection~\ref{intro:subsec:divergence}, whose quantitative estimates
are proved below. The angle-independent residual is handled separately
in Section~\ref{sec:mean-inverse}.

Fix the unit vector $m_0$
and the constant matrix $R_\perp$ from
\eqref{eq:history-normal-choice} and \eqref{eq:history-transverse-basis}.
Put
\begin{equation}
 G=FR_\perp,\qquad K_R=G^TG,\qquad
 C_\perp=2K_R^{-1}G^TF_tR_\perp,\qquad D_\perp=K_R^{-1}G^T.
 \label{eq:forced-coordinate-coefficients}
\end{equation}
Here $G$ maps $\mathbb R^2$ isomorphically onto $m^\perp$;
$K_R$ is positive definite. All constants below are independent of the
stage. Fixed powers denoted by $c$ in this section summarize the inverse
estimates; they do not change the fixed exponent $\kappa=7$ of Section~\ref{sec:history-activation}.

\subsection{The forced history problem}

Let $0<t_a\le1$. We seek a velocity $A$ and a zero-angle-mean
pressure coefficient $\pi$ such that
\begin{equation}
 A_t+MA+m\partial_\theta\pi=f,\qquad m\cdot A=0.
 \label{eq:forced-transverse-system}
\end{equation}
The letter $A$ here denotes a vector correction, not the scalar
amplitude product in Section~\ref{sec:quantitative-amplification}.
On $[0,t_a]$, prescribe zero displacement at both endpoints:
\begin{equation}
 \eta=G\xi,\qquad \xi(0)=\xi(t_a)=0,\qquad
 A=\eta_t-M\eta=G\xi_t.
 \label{eq:forced-history-endpoints}
\end{equation}
Neither endpoint velocity is prescribed. In particular, zero initial
displacement does not impose zero initial correction velocity.

\refstepcounter{equation}
\label{lem:forced-history}
\textbf{Lemma \theequation\ (Forced history and its pressure).}
Assume the smooth parent identities \eqref{intro:eq:parent}
and $H\le t_a^{-2}I$ on $[0,t_a]\times\Omega$, where
$\Omega=\{|y|\le1/2\}$. Suppose
\[
 \sup(\|F\|+\|F^{-1}\|+\|F_t\|+|M|+|H|)+t_a^{-1}
 \le\mathcal K,\qquad \mathcal K\ge2.
\]
For every smooth force $f$ with zero angle mean, there is a unique
history \eqref{eq:forced-history-endpoints} solving
\eqref{eq:forced-transverse-system}. At each fixed label, with norms in
$H^s(\mathbb T)$ for any fixed integer $s\ge0$,
\begin{equation}
 \|A\|_{L_t^\infty H_\theta^s}
 +\|A_t\|_{L_t^\infty H_\theta^s}
 +\|\pi\|_{L_t^\infty H_\theta^s}
 \le C\mathcal K^{12}\|f\|_{L_t^\infty H_\theta^s}.
 \label{eq:forced-history-bound}
\end{equation}
The norms cover $[0,t_a]$, uniformly in $y$. Both $A$ and $\pi$
retain zero angle mean. If a fixed closed spatial set contains the
support of $f$ at every time, it also contains their supports.

\emph{Proof.} On $H_0^1((0,t_a);\mathbb R^2)$ use the form
\eqref{eq:history-bilinear-form}. Solve
\begin{equation}
 \mathfrak a_y(\xi,b)
 =-\int_0^{t_a} f\cdot Gb\,dt
 \quad\hbox{for every }b\in H_0^1((0,t_a);\mathbb R^2).
 \label{eq:forced-weak-equation}
\end{equation}
By \eqref{eq:history-form-bounds}, with $K_h^\kappa$ replaced by the
present coefficient bound $\mathcal K$, the form is bounded and satisfies
\[
 \mathfrak a_y(\xi,\xi)\ge c\mathcal K^{-4}\|\xi_t\|_{L_t^2}^2.
\]
By \eqref{eq:history-endpoint-poincare} and $t_a\le1$, the right side
of \eqref{eq:forced-weak-equation} is at most
$\mathcal K\|f\|_{L_t^2}\|b_t\|_{L_t^2}$. The Lax--Milgram theorem
therefore gives a unique solution $\xi$ and
\[
 \|\xi_t\|_{L_t^2}\le C\mathcal K^5\|f\|_{L_t^2}.
\]
These arguments also hold for $H^s(\mathbb T)$-valued functions:
angle derivatives commute with every coefficient.

Integration by parts, using the zero endpoint values of the test function and $F_{tt}=-HF$,
gives the forced version of \eqref{eq:history-coordinate-ode}, namely,
\begin{equation}
G^T(\eta_{tt}+H\eta)=K_R\xi_{tt}+2G^TF_tR_\perp\xi_t=G^Tf.
 \label{eq:forced-projected-displacement}
\end{equation}
Inverting $K_R$ gives
\begin{equation}
 \xi_{tt}+C_\perp\xi_t=D_\perp f,
 \qquad A=G\xi_t.
 \label{eq:forced-history-strong}
\end{equation}
The coefficient bounds give $\|K_R^{-1}\|\le\mathcal K^2$,
$|C_\perp|\le2\mathcal K^4$ and $|D_\perp|\le\mathcal K^3$.
Consequently $\|\xi_{tt}\|_{L_t^2}\le
C\mathcal K^9\|f\|_{L_t^2}$.
The interpolation inequality used in Lemma~\ref{lem:history-label-bounds}
now yields
\begin{equation}
 \|\xi_t\|_{L_t^\infty}
 \le C\bigl(t_a^{-1/2}\|\xi_t\|_{L_t^2}
       +\|\xi_t\|_{L_t^2}^{1/2}\|\xi_{tt}\|_{L_t^2}^{1/2}\bigr)
 \le C\mathcal K^7\|f\|_{L_t^2}.
 \label{eq:forced-history-trace}
\end{equation}
Multiplication by $G$ bounds $A$ by $C\mathcal K^8\|f\|_{L_t^2}$.

Equation \eqref{eq:forced-projected-displacement}, together with
$A_t+MA=\eta_{tt}+H\eta$ from
\eqref{eq:history-displacement-equation}, says that
$f-A_t-MA$ is perpendicular to the columns of $G$, hence parallel
to $m$. The constraint calculation \eqref{intro:eq:forced-transverse}
therefore gives
\begin{equation}
 \begin{aligned}
 A_t&=-MA+2m\frac{m\cdot MA}{|m|^2}
                +\left(I-\frac{mm^T}{|m|^2}\right)f,\\
 \pi&=\partial_\theta^{-1}
       \left(\frac{m\cdot f-2m\cdot MA}{|m|^2}\right).
 \end{aligned}
 \label{eq:forced-pressure-formula}
\end{equation}
The inverse angle derivative has Fourier multiplier $(in)^{-1}$
for $n\ne0$ and zero multiplier at $n=0$, so its $H^s$ norm is at
most one on zero-mean functions. Also $|m|^{-1}\le\|F\|\le\mathcal K$.
These formulas bound $A_t$ and $\pi$ by fixed further powers of
$\mathcal K$; the common exponent $12$ in
\eqref{eq:forced-history-bound} safely covers all terms.

Uniqueness at each label implies $\xi=0$ wherever the force vanishes
for the entire history. Averaging \eqref{eq:forced-weak-equation} in
$\theta$ gives the homogeneous zero-endpoint problem, so
$\langle\xi\rangle_\theta=0$. This proves support and mean preservation
and makes the pressure formula well defined. Smooth dependence on
labels follows by difference quotients in the coercive equation;
the derivative estimates are made quantitative below.
\hfill$\square$

\textbf{Remark.} The activation geometry of Lemma~\ref{lem:history-endpoint-choice}
is not needed here: the displacement endpoints are zero. Coercivity
controls the forced boundary-value problem directly. It would not
justify prescribing an arbitrary terminal velocity and integrating it
backward.

\subsection{The forward propagator in transverse coordinates}

Work on $I_j=[t_a,t_a+\varepsilon\Theta/a]$, where $t_a=t_{j-1}$
and $\tau(t)=a(t-t_a)/\varepsilon$. Retain the fixed $m_0,R_\perp$
and the matrices $K_R$ and $C_\perp=C_\perp(t,y)$ from
\eqref{eq:forced-coordinate-coefficients}.
For the forward construction, write an arbitrary transverse velocity
as $A=FR_\perp a_\perp$, with $a_\perp\in\mathbb R^2$.
After activation, the forced equation in these coordinates is
$(a_\perp)_t+C_\perp a_\perp=D_\perp f$, with
$a_\perp(t_a)=\xi_t(t_a)$ supplied by the history problem.
For $t_a\le s\le t$ in $I_j$, define the homogeneous propagator
$\mathcal P(t,s;y)$ by
\begin{equation}
 \partial_t a_\perp+C_\perp(t,y)a_\perp=0,
 \qquad a_\perp(t)=\mathcal P(t,s;y)a_\perp(s).
 \label{eq:packet-propagator-definition}
\end{equation}
Equivalently, $\partial_t\mathcal P=-C_\perp\mathcal P$ and
$\mathcal P(s,s;y)=I_2$. The forced solution is given by the variation-of-constants formula
\eqref{eq:forced-duhamel} below.
Thus a bound on $\mathcal P$ controls both the history data and the
accumulated forcing. The following lemma obtains this bound from
Proposition~\ref{prop:conditional-wave-control}, avoiding an exponential
Gr\"onwall estimate in the coefficient size. Proposition~\ref{prop:forced-forward}
then applies it to the forced continuation.
For comparison with the growing wave, put $g(t)=V_\lambda(\tau(t))$,
with the same $\lambda\ge0$ as in
Proposition~\ref{prop:conditional-wave-control}.

\refstepcounter{equation}
\label{lem:packet-coordinate-propagator}
\textbf{Lemma \theequation\ (Propagation in $a_\perp$ coordinates).}
Assume all hypotheses of Proposition~\ref{prop:conditional-wave-control}
on $I_j$. In addition, suppose that, at each
label $y\in\Omega$, the invertible parent deformation matrix $F(t,y)$
is continuously differentiable in time and satisfies
\begin{equation}
 F_t=MF,\qquad m_j=F^{-T}m_0,\qquad
 \sup_{t\in I_j,y\in\Omega}(\|F\|+\|F^{-1}\|)\le C_FK_h^{c_F},
 \label{eq:packet-coordinate-inputs}
\end{equation}
where $K_h\ge2$ and $C_F,c_F$ are independent of the stage.
Then $g>0$, $g(t_a)=1$, and, uniformly in $y$,
\begin{equation}
 \|\mathcal P(t,s;y)\|
 \le CK_h^{2c_F}\varepsilon^{-1}\Theta^9
       \frac{V_0(\tau(t))}{V_0(\tau(s))}
 \le CK_h^{2c_F}\varepsilon^{-1}\Theta^9\frac{g(t)}{g(s)}.
 \label{eq:packet-coordinate-propagator}
\end{equation}
The constants depend only on the constants in the stated hypotheses,
not on $\lambda$, the stage, or the label.

\emph{Proof.} Fix $y$ and suppress it in the formulas. The change of variables $A=FR_\perp a_\perp$ identifies
$\mathbb R^2$ with $m_j^\perp$. The projected equation
\eqref{eq:forced-projected-displacement}, with $f=0$ and
$a_\perp=\xi_t$, gives \eqref{eq:packet-propagator-definition}.
Conversely, the pressure-recovery identity \eqref{eq:forced-pressure-formula}
shows that this coordinate equation is the homogeneous wave equation.
The matrix $K_R$ is positive definite and its coefficients are
continuous, so the linear initial-value problem defining $\mathcal P$
has a unique solution throughout $I_j$.

The propagator $\mathcal P$ is conjugate to $\Phi$ as follows:
write the scaled components of an arbitrary transverse $A$ as $\mathbf y=(u,v)^T$ and define
\[
 \mathcal L_t\mathbf y
 =R_\perp^TF^{-1}
 \left(\varepsilon up+vq-\varepsilon\frac{Pu+Qv}{N}n\right).
\]
Here $P,Q,N$ are the time-dependent phase components at the fixed label $y$;
$N\ge1/2$ by Proposition~\ref{prop:conditional-wave-control}.
The vector in parentheses is transverse because its dot product
with $m_j=s_0(Pp+\varepsilon Qq+Nn)$ is zero.
The inverse map is
\[
 \mathcal L_t^{-1}a_\perp
 =\begin{pmatrix}
 \varepsilon^{-1}p\cdot FR_\perp a_\perp\\
 q\cdot FR_\perp a_\perp
 \end{pmatrix}.
\]
Indeed, the first two components recover $u,v$, and transversality
uniquely fixes the third component. These formulas also show directly
that the maps are inverses. Orthonormality of the frame, $N\ge1/2$,
$|P|+|Q|\le C\Theta^2$, and $\varepsilon\le1$ give
\[
 \|\mathcal L_t\|\le CK_h^{c_F}(1+\varepsilon\Theta^2)
                         \le CK_h^{c_F}\Theta^2,
 \qquad
 \|\mathcal L_t^{-1}\|\le CK_h^{c_F}\varepsilon^{-1}.
\]
The scaled two-component system of Section~\ref{sec:comparison-proofs}
is exactly the wave equation in these components, for arbitrary
transverse data. Uniqueness of the two linear evolutions therefore gives
\begin{equation}
 \mathcal P(t,s;y)
 =\mathcal L_t\Phi(\tau(t),\tau(s);y)\mathcal L_s^{-1}.
 \label{eq:packet-propagator-conjugacy}
\end{equation}
Apply \eqref{eq:conditional-velocity-propagator} and the two endpoint
map bounds. Their powers of $\Theta$ add to $2+7=9$, proving the
first inequality of \eqref{eq:packet-coordinate-propagator}, with an
explicit exponent $2c_F$ on $K_h$.
Lemma~\ref{lem:ideal-nonnegative-slope} gives $g>0$, $g(t_a)=1$, and
\eqref{eq:ideal-slope-growth-ratio}; this proves the second inequality.
\hfill$\square$

\textbf{Amplitude normalization.} Assume additionally the hypotheses
of Lemma~\ref{lem:amplitude-ratios}, choose $\alpha_j$ by
\eqref{eq:quantitative-normalization}, and restrict to
$0\le\tau(t)\le\min(\Theta,\tau_j+1)$.
For $\tau\ge1$, $s_0g(t)\le CA(\tau,y)\le CA_j$ by
\eqref{eq:actual-amplitude-size} and
\eqref{eq:forward-amplitude-ratio}.
For $0\le\tau\le1$, monotonicity of $V_\lambda$ from
Lemma~\ref{lem:exponential-gain} gives $g(t)\le V_\lambda(1)$,
so the same bound follows from the case $\tau=1$. In both cases,
\begin{equation}\label{eq:packet-normalized-amplitude}
 \sup\alpha_jg(t)\le C\delta_jh_j/s_0.
\end{equation}
Together with \eqref{eq:packet-coordinate-propagator}, this supplies
the forward propagation and amplitude inputs assembled in
Proposition~\ref{prop:wave-packet-inputs}.
The history is supplied by Section~\ref{sec:history-activation}; the iteration
must still verify the coefficient regularity and simultaneous parameter bounds.

\subsection{Continue the forced correction beyond activation}

\refstepcounter{equation}
\label{prop:forced-forward}
\textbf{Proposition \theequation\ (The inverse on the full interval).}
Let $0\le t_a\le S\le1$, where $S$ is the physical endpoint time.
Assume the parent identities \eqref{intro:eq:parent} and
the coefficient supremum bounds of Lemma~\ref{lem:forced-history}
throughout $[0,S]\times\Omega$, with a common bound $\mathcal K$.
When $t_a>0$, retain its history coercivity assumption on $[0,t_a]$
and its bound on $t_a^{-1}$. Assume also a positive function
$g$, with $g(t_a)=1$, such that
\begin{equation}
 \|\mathcal P(t,s;y)\|\le L\frac{g(t)}{g(s)},
 \qquad t_a\le s\le t\le S\le1.
 \label{eq:forced-propagator-input}
\end{equation}
Lemma~\ref{lem:packet-coordinate-propagator} supplies this input for the
growing wave when $[t_a,S]\subseteq I_j$, that is,
$S\le t_a+\varepsilon\Theta/a$.
Enlarge $\mathcal K$ if necessary so that $L\le\mathcal K$.
Set $\rho(t)=1$ on the history $[0,t_a]$ and $\rho(t)=g(t)$ on the forward interval
$[t_a,S]$. Then the forced history has a unique continuation satisfying
\eqref{eq:forced-transverse-system}, with
\begin{equation}
 \sup_{t,y}\frac{\|A(t,y)\|_{H_\theta^s}
          +\|A_t(t,y)\|_{H_\theta^s}+\|\pi(t,y)\|_{H_\theta^s}}
                  {\rho(t)}
 \le C\mathcal K^{12}
       \sup_{t,y}\frac{\|f(t,y)\|_{H_\theta^s}}{\rho(t)}.
 \label{eq:forced-full-bound}
\end{equation}
It preserves the spatial support and zero angle mean of the force.
When $t_a=0$, omit the history assumptions and prescribe $A(0)=0$;
the stated conclusions then hold using only the coefficient bounds
of Lemma~\ref{lem:forced-history}, assumed throughout $[0,S]\times\Omega$,
and the forward propagator bound
\eqref{eq:forced-propagator-input}.

\emph{Proof.} Write $A=Ga_\perp$ and solve
\begin{equation}
 (a_\perp)_t+C_\perp a_\perp=D_\perp f,
 \qquad a_\perp(t_a)=\xi_t(t_a)\quad(t_a>0).
 \label{eq:forced-forward-equation}
\end{equation}
If $t_a=0$, take $a_\perp(0)=0$. Variation of constants gives
\begin{equation}
 a_\perp(t)=\mathcal P(t,t_a)a_\perp(t_a)
       +\int_{t_a}^t\mathcal P(t,s)D_\perp(s)f(s)\,ds.
 \label{eq:forced-duhamel}
\end{equation}
Dividing by $g(t)$ and using \eqref{eq:forced-propagator-input} and $g(t_a)=1$ gives
\[
 \sup_{t\ge t_a}\frac{\|a_\perp(t)\|_{H_\theta^s}}{g(t)}
 \le L\|a_\perp(t_a)\|_{H_\theta^s}
 +L\sup|D_\perp|\int_{t_a}^S
                 \frac{\|f(s)\|_{H_\theta^s}}{g(s)}\,ds.
\]
Put
\[
 \mathcal F=\sup_{0\le t\le S,\,y\in\Omega}
                 \frac{\|f(t,y)\|_{H_\theta^s}}{\rho(t)}.
\]
For $t_a>0$, the history trace estimate
\eqref{eq:forced-history-trace}, applied to
$a_\perp(t_a)=\xi_t(t_a)$, gives
\[
 \|a_\perp(t_a)\|_{H_\theta^s}
 \le C\mathcal K^7\|f\|_{L^2((0,t_a);H_\theta^s)}
 \le C\mathcal K^7t_a^{1/2}\mathcal F
 \le C\mathcal K^7\mathcal F.
\]
Here $\rho=1$ on the history interval and $t_a\le1$.
Since $L\le\mathcal K$, the first term in the preceding Duhamel
estimate is at most $C\mathcal K^8\mathcal F$.
For $t_a=0$, this term is zero.

For the second term, the definitions
\eqref{eq:forced-coordinate-coefficients} and the coefficient estimates
following \eqref{eq:forced-history-strong} give
$\sup|D_\perp|\le\mathcal K^3$, now on the full interval by assumption.
Since $\rho=g$ on $[t_a,S]$,
\[
 \int_{t_a}^S\frac{\|f(s)\|_{H_\theta^s}}{g(s)}\,ds
 \le(S-t_a)\mathcal F\le\mathcal F.
\]
Thus the second term is at most $\mathcal K^4\mathcal F$, and
\[
 \sup_{t_a\le t\le S}
       \frac{\|a_\perp(t)\|_{H_\theta^s}}{g(t)}
 \le C(\mathcal K^8+\mathcal K^4)\mathcal F
 \le C\mathcal K^8\mathcal F.
\]
All estimates are uniform in $y$. Multiplication by $G=FR_\perp$,
with $\|G\|\le\mathcal K$, yields
\[
 \sup_{t_a\le t\le S,\,y\in\Omega}
       \frac{\|A(t,y)\|_{H_\theta^s}}{g(t)}
 \le C\mathcal K^9\mathcal F.
\]
Finally, \eqref{eq:forced-pressure-formula}, together with
$|M|\le\mathcal K$, $|m|^{-1}\le\mathcal K$ and the
$H_\theta^s$ bound for $\partial_\theta^{-1}$ used in
Lemma~\ref{lem:forced-history}, gives
\[
 \begin{aligned}
 \|A_t\|_{H_\theta^s}
 &\le3\mathcal K\|A\|_{H_\theta^s}+\|f\|_{H_\theta^s},\\
 \|\pi\|_{H_\theta^s}
 &\le\mathcal K\|f\|_{H_\theta^s}
       +2\mathcal K^2\|A\|_{H_\theta^s}.
 \end{aligned}
\]
After division by $g$, these are bounded by
$C\mathcal K^{10}\mathcal F$ and $C\mathcal K^{11}\mathcal F$,
respectively. Combining them with the history estimate
\eqref{eq:forced-history-bound} proves \eqref{eq:forced-full-bound}
with the common exponent $12$.

The coordinate equation agrees with \eqref{eq:forced-history-strong}
at activation, so the velocities join smoothly for smooth data.
Uniqueness of the labelwise initial-value problem and the pressure
formula preserve support and zero angle mean. No estimate differentiates
$g$, and no exponential in $\sup|C_\perp|$ is introduced.
\hfill$\square$

\subsection{Gevrey-2 bounds with a fixed derivative scale}

The finite expansion will apply this inverse repeatedly. Estimates
with arbitrary constants depending on the derivative order would not
suffice. We therefore record a Gevrey-2 bound with a single derivative
scale that can be kept fixed at every application.
These are Gevrey class $2$ estimates in the spatial labels $y$ and,
for oscillatory fields, the angle $\theta$; they make no assertion
of Gevrey regularity in time.

Use ordered derivative words in $(y,\theta)$. For a fixed integer $s\ge0$, write
\[
 |u(t)|_n=\sum_{|I|=n}\|\partial^Iu(t)\|_{H^s(\mathbb R^3\times\mathbb T)},
 \qquad
 |B(t)|_{n,\infty}=\sum_{|I|=n}
       \|\partial^IB(t)\|_{W^{s,\infty}(\Omega\times\mathbb T)}.
\]
Coefficients have zero angle derivatives. On the full interval $[0,S]$
define the time norms, with each supremum inside the derivative sum, by
\begin{equation}\label{eq:transverse-wordwise-time-norms}
 \begin{aligned}
 \mathcal N_{n,\rho}(u)
 &=\sum_{|I|=n}\sup_{0\le t\le S}
       \frac{\|\partial^Iu(t)\|_{H^s}}{\rho(t)},\\
 \mathcal C_n(B)
 &=\sum_{|I|=n}\sup_{0\le t\le S}
       \|\partial^IB(t)\|_{W^{s,\infty}}.
 \end{aligned}
\end{equation}
For a tuple, sum the norms of its entries. A restriction to the history
or forward interval restricts each supremum to that interval. These
norms are distinct from the supremum of the instantaneous derivative
sum. Assume coefficients are smooth on a
neighborhood of $\Omega$; forces are smooth and supported in a fixed
closed subset of its interior, and are extended by zero outside.

\refstepcounter{equation}
\label{prop:forced-factorial}
\textbf{Proposition \theequation\ (Gevrey-2 estimates for the transverse inverse).}
Assume Proposition~\ref{prop:forced-forward}, including its coercivity
and propagator inputs. Suppose, for some $R_c\ge1$, that the parent
coefficients satisfy, for each $B\in\{F,F^{-1},F_t,F_{tt},M,H\}$,
\begin{equation}
 \mathcal C_n(B)
 \le\mathcal K R_c^n(n!)^2\qquad(n\ge0).
 \label{eq:forced-factorial-input}
\end{equation}
Put $\gamma_s=4s^2+24s+22$. There is a constant $C=C(s)$, depending
only on the fixed Sobolev order $s$ and independent of $n$, the force,
and the number of inverse applications, such that, whenever
$R\ge C\mathcal K^{\gamma_s}R_c$, the bounds
\begin{equation}
 \mathcal N_{n,\rho}(f)\le hR^n(n!)^2
 \quad(n\ge0)
 \label{eq:forced-force-factorial}
\end{equation}
imply
\begin{equation}
 \mathcal N_{n,\rho}(A,A_t,\pi)
 \le C\mathcal K^{\gamma_s} hR^n(n!)^2.
 \label{eq:forced-output-factorial}
\end{equation}
The same derivative scale $R$ works for every force satisfying
\eqref{eq:forced-force-factorial}.

The proof follows the history--trace--forward argument of
Proposition~\ref{prop:forced-forward}, applied to differentiated equations;
the additional step is a Gevrey-2 induction controlling the coefficient
commutators with one fixed derivative scale. An explicit admissible
choice of the auxiliary constants and scale is
\[
 R_0=C_s'\mathcal K^{4s+4}R_c,\qquad
 B_0=C_s\mathcal K^{4s^2+20s+16},\qquad
 R\ge4(B_0+1)R_0,
\]
where $C_s,C_s'$ are sufficiently large constants depending only on
the fixed Sobolev order $s$. The resulting output prefactor in
\eqref{eq:forced-output-factorial} is
$C_s\mathcal K^{\gamma_s}$, with the same exponent $\gamma_s$ as
in the condition on $R$.
The details are given in the
\hyperref[app:forced-factorial-proof]{appendix to this section}.

\subsection{The divergence correction needed in the expansion}

Apply the curl construction of Subsection~\ref{intro:subsec:divergence}
to the transverse amplitude supplied by
Proposition~\ref{prop:forced-factorial}. For
$\widehat A(t,X(t,\ell y))=A(t,y,km_0\cdot y)$, the chain rule
\eqref{intro:eq:chain} gives
\begin{equation}
 (\nabla_x\cdot\widehat A)(t,X(t,\ell y))
 =\ell^{-1}\left[d\cdot A+k\partial_\theta(m\cdot A)\right]
                      _{\theta=km_0\cdot y}.
 \label{eq:forced-physical-divergence}
\end{equation}
Transversality removes the angle-derivative term; the spatial residual
$d\cdot A$ remains. Recall the potential and correction
\eqref{intro:eq:curl-correction}, now for a general transverse $A$:
Define
\begin{equation}
 Q_A=-\partial_\theta^{-1}\frac{m\times A}{|m|^2},
 \qquad C_A=d\times Q_A.
 \label{eq:forced-curl-correction}
\end{equation}
The identities proved in Subsection~\ref{intro:subsec:divergence} give
\begin{equation}
 k^{-1}\tilde d\times Q_A
 =k^{-1}d\times Q_A+m\times\partial_\theta Q_A=A+k^{-1}C_A,
 \qquad
 \tilde d\cdot(A+k^{-1}C_A)=0.
 \label{eq:forced-exact-divergence}
\end{equation}
Here $\tilde d=d+km\partial_\theta$ as in
\eqref{intro:eq:lifted-derivative}. The second identity follows from
$[\tilde d_i,\tilde d_j]=0$, proved in
Subsection~\ref{intro:subsec:divergence}; it holds before and after
restriction to the physical phase. The new issue is the derivative
cost of this correction.

We also bound $C_A$, $(C_A)_t$, and $d\pi$, which enter the momentum
residual at the next order in $k^{-1}$. The bounds display the cost of
one spatial derivative while preserving the Gevrey-2 derivative scale $R$.
These bounds supply the correction estimates used in
Section~\ref{sec:packet-expansion}.

\refstepcounter{equation}
\label{prop:forced-divergence-estimates}
\textbf{Proposition \theequation\ (Bounds for the divergence correction).}
Assume the hypotheses and parameter choices of
Proposition~\ref{prop:forced-factorial}, and let $(A,\pi)$ be its
forced solution. Define $Q_A,C_A$ by
\eqref{eq:forced-curl-correction}, and use $\gamma_s$ from that proposition.
For every integer $n\ge0$, the wordwise time norms satisfy
\begin{equation}
 \begin{aligned}
 \mathcal N_{n,\rho}(Q_A)
 &\le C_s\mathcal K^{\gamma_s+4s+3}hR^n(n!)^2,\\
 \mathcal N_{n,\rho}((Q_A)_t)
 &\le C_s\mathcal K^{\gamma_s+8s+8}hR^n(n!)^2.
 \end{aligned}
 \label{eq:forced-potential-estimates}
\end{equation}
Moreover,
\begin{equation}
 \mathcal N_{n,\rho}(C_A,(C_A)_t,d\pi)
 \le C_s\mathcal K^{4s^2+32s+31}hR^{n+1}((n+1)!)^2.
 \label{eq:forced-one-derivative-cost}
\end{equation}
The constants $C_s$ depend only on $s$, and are independent of the
stage, $n$, and the force. These fields have zero angle mean and
remain supported in the fixed spatial support specified for the
force. All estimates use the same scale $R$.

\emph{Proof.} Set $b_m=m/|m|^2$, so
$Q_A=-\partial_\theta^{-1}(b_m\times A)$.
The identity $m_t=-M^Tm$ gives
\[
 (b_m)_t=-\frac{M^Tm}{|m|^2}
          +\frac{2m(m\cdot M^Tm)}{|m|^4}.
\]
The input bounds for $m=F^{-T}m_0$ and $M$ have prefactor
$C_s\mathcal K$. The reciprocal bound
\eqref{eq:forced-inverse-coefficient-bounds} in the appendix has
prefactor $C_s\mathcal K^{4s+2}$ and scale $R_0$.
The product estimate \eqref{eq:forced-gevrey-product-sum},
whose normalized Leibniz sum is at most $3$,
therefore gives
\[
 \begin{aligned}
 \mathcal C_n(b_m)
 &\le C_s\mathcal K^{4s+3}R_0^n(n!)^2,\\
 \mathcal C_n((b_m)_t)
 &\le C_s\left(\mathcal K^{4s+4}+\mathcal K^{8s+8}\right)
                    R_0^n(n!)^2
 \le C_s\mathcal K^{8s+8}R_0^n(n!)^2.
 \end{aligned}
\]
In the second term of $(b_m)_t$, the four factors $m,m,M,m$
cost $\mathcal K^4$ and the two reciprocal factors cost
$\mathcal K^{8s+4}$. This explains the exponent $8s+8$.

For clarity, if a coefficient $B$ and an amplitude $u$ satisfy
\[
 \mathcal C_n(B)\le bR_0^n(n!)^2,\qquad
 \mathcal N_{n,\rho}(u)\le u_*R^n(n!)^2,
\]
the product rule and $R_0/R\le1/4$ give
\[
 \begin{aligned}
 \mathcal N_{n,\rho}(Bu)
 &\le C_sbu_*\sum_{l=0}^n\binom nl
           R_0^l(l!)^2R^{n-l}((n-l)!)^2\\
 &\le \frac{C_sbu_*}{1-R_0/R}R^n(n!)^2
 \le C_sbu_*R^n(n!)^2.
 \end{aligned}
\]
The same estimate holds for cross products. Since
$\partial_\theta^{-1}$ commutes with time and spatial derivatives
and is bounded on zero-mean $H^s$, the identities
\[
 Q_A=-\partial_\theta^{-1}(b_m\times A),\qquad
 (Q_A)_t=-\partial_\theta^{-1}
       \bigl((b_m)_t\times A+b_m\times A_t\bigr)
\]
and \eqref{eq:forced-output-factorial} prove \eqref{eq:forced-potential-estimates}.
Here the angle means vanish because $b_m$ and $(b_m)_t$ are
independent of $\theta$ and $A,A_t$ have zero angle mean.
No time derivative of $\rho$ is taken.

Next consider a product $B\partial_{y_r}u$. The same assumptions
give a convolution with $R^{n+1-l}((n+1-l)!)^2$ in place of
$R^{n-l}((n-l)!)^2$. The normalized factorial factor is
\[
 \binom nl\frac{(l!)^2((n+1-l)!)^2}{((n+1)!)^2}
 =\binom nl^{-1}\left(\frac{n+1-l}{n+1}\right)^2\le1.
\]
Thus, also after summing over the three spatial directions,
\[
 \mathcal N_{n,\rho}(B\nabla_yu)
 \le C_sbu_*R^{n+1}((n+1)!)^2.
\]
This is the precise one-derivative cost, with the same $R$.

Finally, $F_t=MF$ implies
\[
 (F^{-T})_t=-M^TF^{-T},\qquad
 d_t=-M^TF^{-T}\nabla_y,\qquad
 (C_A)_t=d_t\times Q_A+d\times(Q_A)_t.
\]
Here $d_t$ differentiates only the coefficients of $d$.
The coefficients of $d$ and $d_t$ have Gevrey-2 bounds with prefactors
$C_s\mathcal K$ and $C_s\mathcal K^2$, respectively, and scale
at most $R_0$, by \eqref{eq:forced-factorial-input} and the product
estimate. Applying the preceding one-derivative estimate to
$C_A=d\times Q_A$, to both terms of $(C_A)_t$, and to $d\pi$ gives
\[
 \begin{aligned}
 \mathcal N_{n,\rho}(C_A)
 &\le C_s\mathcal K^{\gamma_s+4s+4}hR^{n+1}((n+1)!)^2,\\
 \mathcal N_{n,\rho}((C_A)_t)
 &\le C_s\left(\mathcal K^{\gamma_s+4s+5}
               +\mathcal K^{\gamma_s+8s+9}\right)
                 hR^{n+1}((n+1)!)^2,\\
 \mathcal N_{n,\rho}(d\pi)
 &\le C_s\mathcal K^{\gamma_s+1}hR^{n+1}((n+1)!)^2.
 \end{aligned}
\]
Combining these bounds proves \eqref{eq:forced-one-derivative-cost},
since $\gamma_s+8s+9=4s^2+32s+31$. Multiplication by smooth
coefficients and spatial or time differentiation preserve the fixed
spatial support; $\partial_\theta^{-1}$ acts only in the angle variable.
The same operations preserve zero angle mean because all coefficients
are independent of $\theta$. This proves the remaining assertions.
\hfill$\square$

\begingroup
\renewcommand{\theequation}{\thesection.A.\arabic{equation}}
\renewcommand{\theHequation}{forced-appendix.\arabic{equation}}
\setcounter{equation}{0}
\section*{Appendix to Section \ref{sec:forced-transverse}: Gevrey-2 estimates}
\phantomsection
\label{app:forced-factorial-proof}
\fancyhead[R]{\small Gevrey-2 estimates: proof}

\emph{Proof of Proposition~\ref{prop:forced-factorial}.}
We extend the history energy estimate and forward propagator estimate
of Proposition~\ref{prop:forced-forward} to label and angle derivatives.

The coefficient bounds come first, followed by the fixed Sobolev-order
estimate and the Gevrey-2 induction. We use $G$, $K_R$, $C_\perp$ and
$D_\perp$ from \eqref{eq:forced-coordinate-coefficients} throughout.
On the history interval let $\mathcal T=\partial_t$.
Expanding the bilinear form \eqref{eq:history-bilinear-form}, used on
the left side of \eqref{eq:forced-weak-equation}, gives
\[
 \int (\mathcal T\xi)^T B_{11}\mathcal T b
      +(\mathcal T\xi)^TB_{10}b
      +\xi^TB_{01}\mathcal T b+\xi^TB_{00}b\,dt,
\]
where $B_{11}=G^TG=K_R$, $B_{10}=G^TG_t$,
$B_{01}=G_t^TG$, and $B_{00}=G_t^TG_t-G^THG$.
The product rule and \eqref{eq:forced-factorial-input} give, for $n\ge0$,
\begin{equation}
 \begin{aligned}
 \mathcal C_n(B_{11},B_{10},B_{01})
 &\le C_s\mathcal K^2 R_c^n(n!)^2,\\
 \mathcal C_n(B_{00})
 &\le C_s\mathcal K^3 R_c^n(n!)^2.
 \end{aligned}
 \label{eq:forced-form-coefficient-bounds}
\end{equation}
Indeed, $G=FR_\perp$ and $G_t=F_tR_\perp$ inherit the input bounds.
For a product of two factors and $n\ge2$, the normalized Leibniz sum is
\begin{equation}
 \sum_{l=0}^n {n\choose l}
       \frac{(l!)^2((n-l)!)^2}{(n!)^2}
 =\sum_{l=0}^n{n\choose l}^{-1}\le2+\frac{n-1}{n}<3.
 \label{eq:forced-gevrey-product-sum}
\end{equation}
Here the interior binomial coefficients are at least $n$.
For $n=0,1$, the sum is $1,2$, respectively.
The $W^{s,\infty}$ product constant depends only on the fixed $s$.
Applying this estimate twice controls the three factors in $G^THG$;
the quadratic term $G_t^TG_t$ is absorbed since $\mathcal K\ge2$.

For the inverse coefficients, take $J=K_R$ or the scalar $J=|m|^2$.
Both satisfy
\[
 \mathcal C_n(J)\le C_s\mathcal K^2R_c^n(n!)^2,
 \qquad \sup_{t,y}|J^{-1}(t,y)|\le\mathcal K^2.
\]
For $K_R$, the first bound is in \eqref{eq:forced-form-coefficient-bounds},
and the second bound follows from
$|Gz|\ge\|F^{-1}\|^{-1}|z|$, so $\mathcal K^2|z||K_Rz|\ge\mathcal K^2|Gz|^2\ge|z|^2$;
for $|m|^2$, the first bound follows from \eqref{intro:eq:normal}, and the second bound
follows from $1=|m_0|=|F^Tm|\le\|F\||m|$.
In each case, differentiating $JJ^{-1}=I$ gives,
for a nonempty derivative word $I$ of length $n$,
\begin{equation}
 \partial^I J^{-1}
 =-J^{-1}\sum_{\varnothing\ne E\subset\{1,\ldots,n\}}
       (\partial^{I_E}J)(\partial^{I_{E^c}}J^{-1}).
 \label{eq:forced-inverse-identity}
\end{equation}
First differentiate only through the fixed order $s$. Each resulting
term has at most $s$ derivatives of $J$ and $s+1$ inverse factors.
Their supremum bounds therefore give
$\sup_t\|J^{-1}(t)\|_{W^{s,\infty}}
\le C_s\mathcal K^{4s+2}$.

For the additional derivative words set
$U_n=\mathcal C_n(J^{-1})$, with the norms \eqref{eq:transverse-wordwise-time-norms}.
Applying the $W^{s,\infty}$ product estimate to
\eqref{eq:forced-inverse-identity} gives
\[
 U_0\le C_s\mathcal K^{4s+2},\qquad
 U_n\le C_s\mathcal K^{4s+4}
       \sum_{l=1}^n{n\choose l}R_c^l(l!)^2U_{n-l}.
\]
Choose $R_0=C_s'\mathcal K^{4s+4}R_c$ with $C_s'$ sufficiently large.
After division by $R_0^n(n!)^2$, the induction sum is bounded by
\[
 C_s\mathcal K^{4s+4}\sum_{l=1}^n
          \frac{(R_c/R_0)^l}{\binom nl}
 \le C_s\mathcal K^{4s+4}\frac{R_c/R_0}{1-R_c/R_0}
 \le\tfrac12.
\]
Starting from $U_0$, induction proves the explicit common bound
\begin{equation}
 \mathcal C_n(K_R^{-1},|m|^{-2})
 \le C_s\mathcal K^{4s+2}R_0^n(n!)^2,
 \qquad R_0=C_s'\mathcal K^{4s+4}R_c.
 \label{eq:forced-inverse-coefficient-bounds}
\end{equation}
The constants and exponents are independent of $n$.

To apply this bound to the forced system, write
\[
 \Pi_m=I-\frac{mm^T}{|m|^2},\qquad
 T_m=-M+\frac{2mm^TM}{|m|^2},\qquad
 p_m=\frac{m^T}{|m|^2},\qquad q_m=\frac{2m^TM}{|m|^2}.
\]
Then \eqref{eq:forced-pressure-formula} reads
$A_t=T_mA+\Pi_m f$ and
$\pi=\partial_\theta^{-1}(p_m f-q_m A)$.
Since $G,G_t,m,M$ have prefactor at most $C_s\mathcal K$ in the
input Gevrey-2 bounds, the product estimate
\eqref{eq:forced-gevrey-product-sum} gives
\begin{equation}
 \begin{aligned}
 \mathcal C_n(D_\perp)
       +\mathcal C_n(p_m)
 &\le C_s\mathcal K^{4s+3}R_0^n(n!)^2,\\
 \mathcal C_n(C_\perp)
       +\mathcal C_n(\Pi_m)
       +\mathcal C_n(q_m)
 &\le C_s\mathcal K^{4s+4}R_0^n(n!)^2,\\
 \mathcal C_n(T_m)
 &\le C_s\mathcal K^{4s+5}R_0^n(n!)^2.
 \end{aligned}
 \label{eq:forced-derived-coefficient-bounds}
\end{equation}
Thus all weak-form and forced-system coefficients are bounded by
$C_s\mathcal K^{4s+5}R_0^n(n!)^2$. The recurrence prefactor $B_0$
will also include the costs of the history and forward inverses below.

We first prove the history estimate at the fixed Sobolev order $s$,
when $t_a>0$. In this paragraph $H^j$ means
$H^j(\mathbb R^3_y\times\mathbb T)$ and all time norms cover $[0,t_a]$.
Write $\mathcal T^0=I$, $\mathcal T^1=\partial_t$, so the expanded
form is $\sum_{i,k=0}^1\int(\mathcal T^i\xi)^T
B_{ik}\mathcal T^k b\,dt$. For a multi-index $\alpha$ in $(y,\theta)$,
differentiating \eqref{eq:forced-weak-equation} gives
\begin{equation}
 \begin{aligned}
 \mathfrak a_y(\partial^\alpha\xi,b)
 ={}&-\int_0^{t_a}\partial^\alpha(G^Tf)\cdot b\,dt\\
 &-\sum_{i,k=0}^1\sum_{0<\gamma\le\alpha}
       {\alpha\choose\gamma}\int_0^{t_a}
       (\mathcal T^i\partial^{\alpha-\gamma}\xi)^T
       (\partial^\gamma B_{ik})\mathcal T^k b\,dt.
 \end{aligned}
 \label{eq:forced-history-differentiated-form}
\end{equation}
Coefficient angle derivatives vanish. Every remaining commutator has
fewer than $|\alpha|$ derivatives on $\xi$.

Set $E_j=\|\xi_t\|_{L_t^2H^j}$ and
$\mathcal F_j=\|f\|_{L_t^2H^j}$. All label and angle derivatives
of $\xi$ still vanish at the time endpoints. Thus \eqref{eq:history-endpoint-poincare},
with $t_a\le1$, controls an undifferentiated-in-time path by its time
derivative. The $n=0$ case of \eqref{eq:forced-form-coefficient-bounds}
bounds every derivative of $B_{ik}$ through order $s$ by
$C_s\mathcal K^3$, while $\|G\|_{W^{s,\infty}}\le C_s\mathcal K$.
In \eqref{eq:forced-history-differentiated-form}, test with
$b=\partial^\alpha\xi$, integrate also over $(y,\theta)$, and sum
over $|\alpha|\le j$. Coercivity from
\eqref{eq:history-form-bounds} and Cauchy--Schwarz give, for $1\le j\le s$,
\[
 c\mathcal K^{-4}E_j^2
 \le C_s\bigl(\mathcal K\mathcal F_j
                    +\mathcal K^3E_{j-1}\bigr)E_j.
\]
The same argument at $j=0$ has no commutator. Dividing by $E_j$ when
it is nonzero gives
\begin{equation}
 E_0\le C\mathcal K^5\mathcal F_0,\qquad
 E_j\le C_s\bigl(\mathcal K^5\mathcal F_j
                     +\mathcal K^7E_{j-1}\bigr),\qquad
 E_s\le C_s\mathcal K^{7s+5}\mathcal F_s.
 \label{eq:forced-fixed-order-energy}
\end{equation}
The last bound follows by induction, since $\mathcal F_j\le\mathcal F_s$
and $\mathcal K\ge2$. If $E_j=0$, its bound is immediate.
To justify the differentiation, first use spatial difference quotients
in the weak equation. The same estimates are uniform in the difference
step; weak compactness and uniqueness identify the derivative limit.
Angle derivatives commute exactly. Repeating this through order $s$
establishes the regularity used in the test above.

The strong equation \eqref{eq:forced-history-strong} and the $n=0$
coefficient estimates \eqref{eq:forced-derived-coefficient-bounds} now give
\[
 \|\xi_{tt}\|_{L_t^2H^s}
 \le C_s\bigl(\mathcal K^{4s+4}E_s
                    +\mathcal K^{4s+3}\mathcal F_s\bigr)
 \le C_s\mathcal K^{11s+9}\mathcal F_s.
\]
Apply the same Hilbert-space interpolation inequality as in
\eqref{eq:forced-history-trace}, using $t_a^{-1}\le\mathcal K$:
\begin{equation}
 \begin{aligned}
 \|\xi_t\|_{L_t^\infty H^s}
 &\le C\left(t_a^{-1/2}E_s
        +E_s^{1/2}\|\xi_{tt}\|_{L_t^2H^s}^{1/2}\right)\\
 &\le C_s\left(\mathcal K^{7s+11/2}
                   +\mathcal K^{9s+7}\right)\mathcal F_s
 \le C_s\mathcal K^{9s+7}\mathcal F_s.
 \end{aligned}
 \label{eq:forced-fixed-order-trace}
\end{equation}
This controls the activation trace $\xi_t(t_a)$ as well as the entire
history. Each exponent depends only on the fixed order $s$.

On the forward interval, let $\alpha$ be a multi-index in $(y,\theta)$.
Differentiating \eqref{eq:forced-forward-equation} gives
\begin{equation}
 (\partial^\alpha a_\perp)_t+C_\perp\partial^\alpha a_\perp
 =\partial^\alpha(D_\perp f)
 -\sum_{0<\gamma\le\alpha}{\alpha\choose\gamma}
      (\partial^\gamma C_\perp)
      \partial^{\alpha-\gamma}a_\perp.
 \label{eq:forced-differentiated-forward}
\end{equation}
Only spatial derivatives of $C_\perp$ occur, since it is independent
of $\theta$. For $0\le j\le s$, set
\[
 Q_j=\sup_{t_a\le t\le S}\frac{\|a_\perp(t)\|_{H^j}}{g(t)},
 \qquad \mathcal I_j=\|a_\perp(t_a)\|_{H^j},\qquad
 \mathcal N_s=\sup_{0\le t\le S}
                          \frac{\|f(t)\|_{H^s}}{\rho(t)}.
\]
Here $H^j=H^j(\mathbb R^3_y\times\mathbb T)$. For $t_a>0$,
\eqref{eq:forced-fixed-order-trace} and $t_a\le1$ give
$\mathcal I_j\le \mathcal I_s\le C_s\mathcal K^{9s+7}\mathcal N_s$;
for $t_a=0$, every $\mathcal I_j$ is zero.

Write $\mathcal R_\alpha$ for the right side of
\eqref{eq:forced-differentiated-forward}. Apply the Duhamel formula
\eqref{eq:forced-duhamel} to $\partial^\alpha a_\perp$, with forcing
$\mathcal R_\alpha$ and initial value $\partial^\alpha a_\perp(t_a)$.
This uses the original propagator without differentiating it.
The $n=0$ case of
\eqref{eq:forced-derived-coefficient-bounds} yields, for $1\le j\le s$,
\[
 \left(\sum_{|\alpha|\le j}
           \|\mathcal R_\alpha(t)\|_{L^2}^2\right)^{1/2}
 \le C_s\left(\mathcal K^{4s+3}\|f(t)\|_{H^j}
        +\mathcal K^{4s+4}\|a_\perp(t)\|_{H^{j-1}}\right).
\]
The second term is absent at $j=0$. Apply
\eqref{eq:forced-propagator-input}, divide by $g(t)$, and use Minkowski's
inequality in $L^2$ and in the finite derivative sum. Since the
propagator coefficients are independent of $\theta$, the same bound
acts on $L^2(\Omega\times\mathbb T)$. We obtain
\begin{equation}
 \begin{aligned}
 Q_0&\le C\mathcal K \mathcal I_0+C_s\mathcal K^{4s+4}\mathcal N_s,\\
 Q_j&\le C_s\left(\mathcal K \mathcal I_j
          +\mathcal K^{4s+4}\mathcal N_s
          +\mathcal K^{4s+5}Q_{j-1}\right)
       \qquad(1\le j\le s).
 \end{aligned}
 \label{eq:forced-forward-fixed-recurrence}
\end{equation}
Indeed, each forcing integral costs at most $S-t_a\le1$, and the
factor $L$ is at most $\mathcal K$. Inserting the history bound for
$\mathcal I_j$ gives $Q_0\le C_s\mathcal K^{9s+8}\mathcal N_s$ and, by induction,
\begin{equation}
 Q_j\le C_s\mathcal K^{(4s+5)j+9s+8}\mathcal N_s
 \quad(0\le j\le s),\qquad
 Q_s\le C_s\mathcal K^{4s^2+14s+8}\mathcal N_s.
 \label{eq:forced-forward-fixed-bound}
\end{equation}
There are only $s$ induction steps, so their constants are absorbed
into $C_s$. These bounds also cover $t_a=0$, when the history term is absent.

Multiplication by $G$, whose $W^{s,\infty}$ norm is at most
$C_s\mathcal K$, bounds $A$ by one further power of $\mathcal K$.
Using $A_t=T_mA+\Pi_m f$ and
$\pi=\partial_\theta^{-1}(p_m f-q_mA)$, with the coefficient bounds
\eqref{eq:forced-derived-coefficient-bounds}, costs at most another
$4s+5$ powers. Thus a common forward estimate is
\begin{equation}
 \sup_{t_a\le t\le S}
 \frac{\|A(t)\|_{H^s}+\|A_t(t)\|_{H^s}+\|\pi(t)\|_{H^s}}{g(t)}
 \le C_s\mathcal K^{4s^2+18s+14}\mathcal N_s.
 \label{eq:forced-forward-fixed-outputs}
\end{equation}
The same exponent covers the history outputs by
\eqref{eq:forced-fixed-order-trace} and the same multiplication estimates.
This supplies the fixed-$s$ estimate needed to start the Gevrey-2
induction, without an exponential in $\sup|C_\perp|$.

For the additional $n$ derivative words, denote by $Z_n$ the sum of:
the history $L_t^2H^s$ norms of $\partial^I\xi_t$ and
$\partial^I\xi_{tt}$; the history time suprema of
$\partial^I\xi_t$; and the forward suprema of
$\partial^Ia_\perp/g$, summed over $|I|=n$.
Omit the history terms if $t_a=0$. Put
$F_n=\mathcal N_{n,\rho}(f)$, with the norms \eqref{eq:transverse-wordwise-time-norms}.
Write $Z_n=\mathcal H_n+\mathcal Q_n$, where $\mathcal H_n$ is the sum of the three history
quantities just listed and $\mathcal Q_n$ is the forward quantity. Define the two convolutions
\[
 U_n=\sum_{l=0}^n{n\choose l}R_0^l(l!)^2F_{n-l},
 \qquad
 V_n=\sum_{l=1}^n{n\choose l}R_0^l(l!)^2Z_{n-l},
 \qquad V_0=0.
\]
For a word $I$, differentiating the weak equation gives
\eqref{eq:forced-history-differentiated-form}, with the subsets of
the positions of $I$ replacing the multi-index splittings. Its force
term is a sum of products $(\partial^E G^T)
\partial^{I\setminus E}f$. Every other term has the form
\[
 -\int_0^{t_a}
 (\mathcal T^i\partial^{I\setminus E}\xi)^T
 (\partial^E B_{ik})\mathcal T^k b\,dt,
 \qquad E\ne\varnothing,
 \qquad i,k\in\{0,1\}.
\]
Estimate \eqref{eq:history-endpoint-poincare} controls $\mathcal T^0\partial^{I\setminus E}\xi$
by its time derivative. Thus the product bounds
\eqref{eq:forced-form-coefficient-bounds} and
\eqref{eq:forced-derived-coefficient-bounds} bound the sum of these
weak forcing norms by $C_s\mathcal K^{4s+5}(U_n+V_n)$.
The time $L^2$ norm of each force is bounded by its time supremum
because $t_a\le1$; all sums use the wordwise time-norm convention.

The energy induction \eqref{eq:forced-fixed-order-energy} applies
also to these weak forcing terms: testing against the differentiated
path bounds each term by its $L_t^2H^j$ coefficient norm times the
energy norm. Starting with the coercivity factor $\mathcal K^4$,
each of the $s$ base Sobolev steps costs at most $\mathcal K^7$.
Consequently, with $E_n$ denoting the word sum of
$\|\partial^I\xi_t\|_{L_t^2H^s}$,
\[
 E_n\le C_s\mathcal K^{11s+9}(U_n+V_n).
\]
In the differentiated strong equation
\eqref{eq:forced-history-strong}, the undifferentiated $C_\perp$
costs at most $\mathcal K^{4s+4}$, while every differentiated
coefficient contributes to $U_n$ or $V_n$. Hence
\[
 \sum_{|I|=n}\|\partial^I\xi_{tt}\|_{L_t^2H^s}
 \le C_s\mathcal K^{15s+13}(U_n+V_n).
\]
Apply the trace inequality used in
\eqref{eq:forced-fixed-order-trace} to each word and then
Cauchy--Schwarz to the sum of the square-root terms. Since
$t_a^{-1/2}\le\mathcal K^{1/2}$, this proves the convenient common bound
\begin{equation}
 \mathcal H_n\le C_s\mathcal K^{15s+14}(U_n+V_n).
 \label{eq:forced-factorial-history-step}
\end{equation}
The powers here are deliberately rounded upward; $\mathcal H_n=0$ when $t_a=0$.

On the forward interval, the equation for $\partial^I a_\perp$
is \eqref{eq:forced-differentiated-forward}. The force products are
bounded by $C_s\mathcal K^{4s+3}U_n$, and its nonempty coefficient
splittings by $C_s\mathcal K^{4s+4}V_n$.
Its initial value is bounded by $\mathcal H_n$.
Applying the base-order induction
\eqref{eq:forced-forward-fixed-recurrence} to this equation gives
\begin{equation}
 \mathcal Q_n\le C_s\mathcal K^{(4s+5)s}
 \left(\mathcal K \mathcal H_n+
       \mathcal K^{4s+5}(U_n+V_n)\right).
 \label{eq:forced-factorial-forward-step}
\end{equation}
The $s$ steps here concern only the fixed $H^s$ norm, so their
constants do not depend on the additional order $n$. Combining
\eqref{eq:forced-factorial-history-step} and
\eqref{eq:forced-factorial-forward-step}, we may take
$B_0=C_s\mathcal K^{4s^2+20s+16}$, increasing $C_s$ if necessary,
in the recurrence
\begin{equation}
 Z_n\le B_0\sum_{l=0}^n{n\choose l}R_0^l(l!)^2F_{n-l}
       +B_0\sum_{l=1}^n{n\choose l}R_0^l(l!)^2Z_{n-l},
 \qquad B_0,R_0/R_c\le C_s\mathcal K^{c_s}.
 \label{eq:forced-factorial-recurrence}
\end{equation}
In the second sum at least one derivative lands on a coefficient;
this is why no $Z_n$ occurs on its right. The history and forward
estimates are applied successively, with the history trace inserted
as forward data. Their finitely many prefactors are included in $B_0$.
The binomial coefficient counts which positions of an ordered word
land on the coefficient.

Use the choice $R\ge4(B_0+1)R_0$ stated after
Proposition~\ref{prop:forced-factorial}, and set $q=R_0/R$.
The normalized Leibniz identity in \eqref{eq:forced-gevrey-product-sum}
gives, under the induction hypothesis
$Z_j\le4B_0hR^j(j!)^2$ for $j<n$,
\[
 \frac{Z_n}{hR^n(n!)^2}
 \le B_0\sum_{l=0}^n\frac{q^l}{\binom nl}
       +4B_0^2\sum_{l=1}^n\frac{q^l}{\binom nl}
 \le\frac{B_0}{1-q}
       +4B_0^2\frac{q}{1-q}
 \le4B_0.
\]
Indeed, $q\le[4(B_0+1)]^{-1}\le1/4$ and $4B_0q\le1$,
so the penultimate expression is at most $8B_0/3$.
The case $n=0$ follows directly from
\eqref{eq:forced-factorial-recurrence}. This closes the induction
with one $R$, rather than enlarging $R$ at every derivative order.

Finally, put $\mathcal A_n=\mathcal N_{n,\rho}(A)$, and define
$\mathcal A_{t,n}$ and $\mathcal P_n$ analogously for $A_t$ and $\pi$.
The products $A=Ga_\perp$ (or $G\xi_t$ on the history) and
$Z_n\le4B_0hR^n(n!)^2$ give
\[
 \mathcal A_n\le C_s\mathcal K
 \sum_{l=0}^n{n\choose l}R_0^l(l!)^2Z_{n-l}
 \le 4C_s\mathcal KB_0hR^n(n!)^2\sum_{l=0}^nq^l
 \le \frac{4C_s\mathcal K B_0}{1-q}\,hR^n(n!)^2.
\]
Using $A_t=T_mA+\Pi_m f$ and
$\pi=\partial_\theta^{-1}(p_m f-q_mA)$ now gives
\[
 \mathcal A_{t,n}+\mathcal P_n
 \le C_s\mathcal K^{4s+5}
 \sum_{l=0}^n{n\choose l}R_0^l(l!)^2
             (\mathcal A_{n-l}+F_{n-l})
 \le C_s\mathcal K^{4s+6}B_0 hR^n(n!)^2.
\]
Recalling $B_0=C_s\mathcal K^{4s^2+20s+16}$, a common output prefactor is therefore
$C_s\mathcal K^{4s^2+24s+22}$.
In both lines the same geometric sum is bounded by $(1-q)^{-1}$;
there is no further increase of $R$. The operator
$\partial_\theta^{-1}$ commutes with derivatives and is bounded on
zero-mean $H^s$. This proves \eqref{eq:forced-output-factorial}.
\hfill$\square$

Together with Proposition~\ref{prop:forced-divergence-estimates} and the
mean inverse, this gives the common shift bounds of
Corollary~\ref{cor:mean-shared-inverses}. The fixed derivative scale,
polynomial prefactor and one-derivative loss then enter the coefficient
accounting of Section~\ref{sec:packet-expansion} and the exact correction
of Section~\ref{sec:exact-correction}.

\endgroup

\section{The mean equation and initial support}
\label{sec:mean-inverse}
\fancyhead[R]{\small The mean equation}

The mean step \eqref{intro:eq:mean-step} must be solved before the
oscillatory step treated in Section~\ref{sec:forced-transverse}.
Here we construct its inverse for a prescribed angle-independent force:
\begin{equation}
 B_t+MB+d\bar q=f,\qquad d\cdot B=0,
 \qquad d=F^{-T}\nabla_y.
 \label{eq:mean-averaged-equation}
\end{equation}
The mean pressure $\bar q$ enforces $d\cdot B=0$. It differs from
the oscillatory pressure coefficient $\pi$ in
Section~\ref{sec:forced-transverse}: the term
$m\partial_\theta\pi$ has zero angle average and cannot supply this
mean pressure gradient.

At order $p$, take $(B,\bar q)=(B_p,\bar q_p)$ and
$f=\langle f_p^{\rm old}\rangle_\theta$, with the known force
\eqref{intro:eq:old-force}. Section~\ref{sec:packet-expansion} uses
this inverse in the quantitative coefficient recursion.

The pressure in \eqref{eq:mean-averaged-equation} is spatial and can
be nonlocal. Accordingly, we require compact support only of $B(0)$,
not of $B(t)$ at positive times. We select the initial velocity through
a displacement boundary condition. The common estimates for this
inverse and the transverse inverse are collected in
Corollary~\ref{cor:mean-shared-inverses}.

\subsection{Global parent inputs and the localized boundary operator}

Fix $0<S\le1$ and $0<\ell\le1$. All parent assumptions below hold on
$[0,S]\times\mathbb R^3_y$ unless stated otherwise. Assume the parent identities
\eqref{intro:eq:parent}, which also imply
$F_t(0)=M(0)$. Fix $K_+,B_e,B_c\ge0$ and $0<r<1/4$.
Let $\mathcal K\ge2$ satisfy
\begin{equation}
 \begin{gathered}
 C_1B_c+1\le\mathcal K,\qquad S^{-1}\le\mathcal K,\\
 \sup_{t,y}\bigl(|F|+|F^{-1}|+|F_t|+|M|+|H|\bigr)
 \le\mathcal K.
 \end{gathered}
 \label{eq:mean-basic-inputs}
\end{equation}
Here $C_1$ is the constant of Lemma~\ref{lem:mean-boundary-operator}
below; enlarging $\mathcal K$ to meet its first constraint is included
in the polynomial input budget. Assume $H$ is symmetric and
\begin{equation}
 \begin{aligned}
 H(t,y)&\le K_+I,\\
 \operatorname{sym}M(0,y)&\ge-B_eI &&(|\ell y|\ge r),\\
 \operatorname{sym}M(0,y)&\ge-B_cI &&(|\ell y|<r).
 \end{aligned}
 \label{eq:mean-quadratic-inputs}
\end{equation}
The large negative part of the initial strain is thus confined to a
small ball in the physical label $\ell y$.

Fix a smooth cutoff $0\le\chi_o\le1$, equal to one on the unit ball
and supported in the ball of radius two, with Gevrey-2 derivative
bounds $C^{n+1}(n!)^2$. Put
\begin{equation}
 \chi(y)=\chi_o(\ell y),\qquad
 N=(-\Delta_y)^{-1},\qquad
 \mathcal A_\chi=\operatorname{curl}_y\,\chi N\chi\,\operatorname{curl}_y.
 \label{eq:mean-boundary-operator}
\end{equation}
Multipliers are composed from right to left. The outer curl makes
the output divergence free; the outer cutoff confines its support
to $\{|\ell y|\le2\}$. The Newton operator is understood through
its Fourier multiplier or the homogeneous Sobolev pairing below.

\refstepcounter{equation}
\label{lem:mean-boundary-operator}
\textbf{Lemma \theequation\ (The boundary operator and the interior mass).}
The operator $\mathcal A_\chi$ is bounded, self-adjoint, and
nonnegative on $L^2(\mathbb R^3;\mathbb R^3)$, with norm bounded
independently of $\ell$. For every divergence-free $v\in L^2$,
\begin{equation}
 \|v\|_{L^2(|\ell y|<r)}^2
 \le C_1\langle\mathcal A_\chi v,v\rangle
        +C_2r^3\|v\|_{L^2}^2,
 \label{eq:mean-interior-mass}
\end{equation}
where $C_1,C_2$ are fixed constants depending only on the chosen cutoff
and the dimension.

\emph{Proof.} If $\|\psi\|_\infty+\|\nabla\psi\|_{L^3}<\infty$,
integration by parts, Holder's inequality, and the Sobolev inequality
$\|\phi\|_{L^6}\le C\|\nabla\phi\|_{L^2}$ give
\[
 \begin{aligned}
 |\langle\psi\operatorname{curl}v,\phi\rangle|
 &=|\langle v,\psi\operatorname{curl}\phi+
                       \nabla\psi\times\phi\rangle|\\
 &\le C(\|\psi\|_\infty+\|\nabla\psi\|_{L^3})
                          \|v\|_{L^2}\|\nabla\phi\|_{L^2}.
 \end{aligned}
\]
Thus $\|\psi\operatorname{curl}v\|_{\dot H^{-1}}
\le C(\|\psi\|_\infty+\|\nabla\psi\|_{L^3})\|v\|_{L^2}$.
For $\psi=\chi$, both multiplier norms are independent of $\ell$:
the factor $\ell$ in $\nabla\chi$ cancels the $L^3$ scaling factor
$\ell^{-1}$. Consequently
\[
 \langle\mathcal A_\chi v,w\rangle
 =\langle N^{1/2}\chi\operatorname{curl}v,
          N^{1/2}\chi\operatorname{curl}w\rangle
\]
defines a bounded symmetric form, and proves nonnegativity and the
claimed operator bound. The support and divergence assertions follow
from the outer cutoff and curl, also distributionally for $L^2$ data.

Set $Z=\|N^{1/2}\chi\operatorname{curl}v\|_{L^2}$ and
$w=N\operatorname{curl}(\chi\operatorname{curl}v)$.
The Fourier multiplier $N^{1/2}\operatorname{curl}$ has norm at most
one, so $\|w\|_{L^2}\le Z$. In $|y|<\ell^{-1}$, the cutoff is one
and $\operatorname{div}v=0$, giving
\[
 -\Delta(v-w)=-\Delta v-\operatorname{curl}\operatorname{curl}v=0.
\]
The mean-value estimate for this harmonic field gives
\[
 \sup_{|y|<(2\ell)^{-1}}|v-w|^2
 \le C\ell^3\|v-w\|_{L^2(|y|<\ell^{-1})}^2.
\]
Integrating over $|y|<r/\ell$, and using
$|v|^2\le2|w|^2+2|v-w|^2$, yields
\[
 \|v\|_{L^2(|\ell y|<r)}^2
 \le2Z^2+Cr^3(\|v\|_{L^2}^2+Z^2).
\]
Since $r<1/4$, this implies \eqref{eq:mean-interior-mass}.
\hfill$\square$

Choose the boundary coefficient $\lambda_b$ such that
\begin{equation}\label{eq:mean-boundary-choice}
 0\le C_1B_c\le\lambda_b\le\mathcal K.
\end{equation}
The interval is nonempty by \eqref{eq:mean-basic-inputs}.
Separately, assume
\begin{equation}\label{eq:mean-coercivity-inputs}
 \frac{K_+S^2}{2}+B_eS+C_2B_cr^3S\le\frac12.
\end{equation}
For the stage choices, Lemma~\ref{lem:stage-coercivity-margin}
verifies this inequality with right side $1/4$: the short time controls
the first two terms, and the small volume factor $r^3$ compensates
for the large interior strain $B_c$.
If the exterior lower bound holds everywhere, one may take $B_c=0$
and $\lambda_b=0$.

\subsection{The variational inverse and pressure recovery}

Write $L^2_\sigma$ for divergence-free fields in $L^2(\mathbb R^3)$.
For a displacement $\eta=Fz$, the relation used in
\eqref{eq:history-velocity} becomes $B=\eta_t-M\eta=Fz_t$.
The divergence identity \eqref{intro:eq:piola}, proved geometrically
in Subsection~\ref{intro:subsec:order-expansion}, then gives
$d\cdot B=\operatorname{div}_y z_t=0$ whenever
$\operatorname{div}_y z=0$. Define the Hilbert space
\[
 \mathcal V=\{z\in H^1((0,S);L^2_\sigma):z(S)=0\},\qquad
 \|z\|_{\mathcal V}=\|z_t\|_{L_t^2L_y^2}.
\]
The terminal condition selects a displacement representative. Its
initial value is free, allowing a natural boundary condition to select
the initial velocity. With $\langle\cdot,\cdot\rangle$ denoting the
$L_y^2$ pairing, set
\begin{equation}
 \begin{aligned}
 \mathfrak a_m(z,b)
 ={}&\int_0^S\bigl(\langle(Fz)_t,(Fb)_t\rangle
                         -\langle HFz,Fb\rangle\bigr)\,dt\\
 &+\langle M(0)z(0),b(0)\rangle
       +\lambda_b\langle\mathcal A_\chi z(0),b(0)\rangle.
 \end{aligned}
 \label{eq:mean-bilinear-form}
\end{equation}

\refstepcounter{equation}
\label{prop:mean-variational-inverse}
\textbf{Proposition \theequation\ (Mean inverse with localized initial velocity).}
Assume \eqref{eq:mean-basic-inputs}, \eqref{eq:mean-quadratic-inputs},
and \eqref{eq:mean-coercivity-inputs}, with $\lambda_b$ chosen by
\eqref{eq:mean-boundary-choice}
and the parent identities
\eqref{intro:eq:parent}. For every angle-independent
$f\in L_t^2L_y^2$, there is a unique $z\in\mathcal V$ satisfying
\begin{equation}
 \mathfrak a_m(z,b)=-\int_0^S\langle f,Fb\rangle\,dt
 \qquad(b\in\mathcal V).
 \label{eq:mean-weak-equation}
\end{equation}
It has $z_t\in H^1((0,S);L^2_\sigma)$. The fields
$B=Fz_t$ and a pressure $\bar q$, determined up to a function of time,
satisfy
\begin{equation}
 \begin{gathered}
 B_t+MB+d\bar q=f,\qquad d\cdot B=0,\\
 B(0)=z_t(0)=\lambda_b\mathcal A_\chi z(0),\qquad
 \operatorname{supp}B(0)\subset\{|\ell y|\le2\}.
 \end{gathered}
 \label{eq:mean-inverse-conclusions}
\end{equation}
In particular, $B(0)=0$ if $\lambda_b=0$.
The weak problem \eqref{eq:mean-weak-equation} obeys
\begin{equation}
 \mathfrak a_m(z,z)\ge c\mathcal K^{-4}\|z\|_{\mathcal V}^2,
 \qquad
 \|z_t\|_{L_t^2L_y^2}\le C\mathcal K^5\|f\|_{L_t^2L_y^2}.
 \label{eq:mean-energy-bound}
\end{equation}

\textbf{Remark.} The form need not be symmetric, because $M(0)$
need not be symmetric. We use a coercive bilinear problem, rather
than claim a minimization principle. The support conclusion concerns
only time zero; the pressure and positive-time mean velocity can
extend beyond the support of $f$.

\emph{Proof.} For $\eta=Fz$, put
$E=\int_0^S\|\eta_t\|_{L^2}^2\,dt$ and
$Z^2=\langle\mathcal A_\chi z(0),z(0)\rangle$.
Since $\eta(S)=0$, the time-reversed Hilbert-space version of
\eqref{eq:history-endpoint-poincare}, with $T=S$, gives
\[
 \int_0^S\|\eta\|_{L^2}^2\,dt\le\frac{S^2}{2}E,
 \qquad \|z(0)\|_{L^2}^2=\|\eta(0)\|_{L^2}^2\le SE.
\]
The coefficient bounds \eqref{eq:mean-quadratic-inputs} and
\eqref{eq:mean-interior-mass}, together with
\eqref{eq:mean-coercivity-inputs} and the choice
\eqref{eq:mean-boundary-choice}, imply
\[
 \begin{aligned}
 \mathfrak a_m(z,z)
 &\ge\left(1-\frac{K_+S^2}{2}-B_eS-C_2B_cr^3S\right)E
                     +(\lambda_b-C_1B_c)Z^2
 \ge\tfrac12E.
 \end{aligned}
\]
Also $z_t=F^{-1}(\eta_t-M\eta)$, so
$\|z_t\|_{L_t^2L_y^2}\le C\mathcal K^2E^{1/2}$.
This proves coercivity. The coefficient bounds, the trace estimate from
\eqref{eq:history-endpoint-poincare},
$\|b(0)\|_{L^2}\le\sqrt S\|b\|_{\mathcal V}$, and
Lemma~\ref{lem:mean-boundary-operator} show
$|\mathfrak a_m(z,b)|\le C\mathcal K^3
\|z\|_{\mathcal V}\|b\|_{\mathcal V}$.
The right side of \eqref{eq:mean-weak-equation} has norm at most
$\mathcal K\|f\|_{L_t^2L_y^2}$ on $\mathcal V$.
Lax--Milgram therefore gives the unique solution and
\eqref{eq:mean-energy-bound}.

To recover the strong equation, let $\mathbb P_\sigma$ be the
orthogonal projection onto $L^2_\sigma$, and define, on that space,
\[
 K=F^TF,\qquad T(t)v=\mathbb P_\sigma(K(t)v).
\]
The operator $T$ is positive and invertible:
$\langle Tv,v\rangle=\|Fv\|_{L^2}^2
\ge\mathcal K^{-2}\|v\|_{L^2}^2$.
It and its inverse are continuously differentiable in time as bounded
operators on $L^2_\sigma$; indeed
$(T^{-1})_t=-T^{-1}T_tT^{-1}$.
Testing \eqref{eq:mean-weak-equation} away from the time endpoints,
and using $F_{tt}=-HF$ to cancel the terms containing $z$, gives
\[
 (Tz_t)_t=\mathbb P_\sigma\bigl(F^Tf+(K_t-2F^TF_t)z_t\bigr).
\]
The right side lies in $L_t^2L_y^2$. Thus $Tz_t$ and then
$z_t=T^{-1}(Tz_t)$ belong to $H_t^1L_y^2$, and
\begin{equation}
 Tz_{tt}=\mathbb P_\sigma(F^Tf-2F^TF_tz_t).
 \label{eq:mean-strong-equation}
\end{equation}

Now integrate the weak equation by parts in time. The interior term
vanishes by \eqref{eq:mean-strong-equation}. At time zero, the remaining
pairing is
\[
 \langle-(Fz)_t(0)+M(0)z(0)+
          \lambda_b\mathcal A_\chi z(0),b(0)\rangle
 =\langle-z_t(0)+\lambda_b\mathcal A_\chi z(0),b(0)\rangle=0.
\]
Here $F(0)=I$ and $F_t(0)=M(0)$ cancel the initial strain terms.
Every element of $L^2_\sigma$ occurs as $b(0)$, by taking
$b(t)=(1-t/S)b(0)$. Both vectors in the last pairing are
divergence free. Hence the claimed initial boundary condition holds.

Finally, $B_t+MB=Fz_{tt}+2F_tz_t$, so
\eqref{eq:mean-strong-equation} says that
$F^T(f-B_t-MB)$ is orthogonal to $L^2_\sigma$.
The Helmholtz decomposition supplies $\bar q$ with
\[
 \nabla_y\bar q=F^T(f-B_t-MB),\qquad
 d\bar q=f-B_t-MB.
\]
This determines its gradient in $L_t^2L_y^2$. Applying
\eqref{intro:eq:piola} to $B=Fz_t$ proves incompressibility,
and the initial support follows from the boundary operator.
\hfill$\square$

\subsection{Gevrey-2 bounds with one fixed derivative scale}

All spatial norms in this subsection are on $\mathbb R^3_y$.
Use the ordered-word convention of Section~\ref{sec:forced-transverse},
with no angle derivatives. For a fixed integer $s\ge0$, assume
\begin{equation}
 \sum_{|I|=n}\sup_{0\le t\le S}
       \|\partial_y^I D(t)\|_{W^{s,\infty}}
 \le\mathcal K R_c^n(n!)^2\quad(n\ge0),
 \quad D\in\{F,F^{-1},F_t,M,H\}.
 \label{eq:mean-factorial-inputs}
\end{equation}
The parent identities still supply $F_{tt}=-HF$.
Take $R_c\ge1$ large enough to include the fixed cutoff constants
below. This requirement depends only on $s$ and the chosen cutoff.

\refstepcounter{equation}
\label{prop:mean-factorial-inverse}
\textbf{Proposition \theequation\ (Gevrey-2 estimates for the mean inverse).}
Assume Proposition~\ref{prop:mean-variational-inverse} and
\eqref{eq:mean-factorial-inputs}. There is $C_s$, independent of the
stage and derivative order, such that, if
$R\ge C_s\mathcal K^{15s+19}R_c$ and
\begin{equation}
 F_n:=\sum_{|I|=n}\sup_{0\le t\le S}
               \|\partial_y^If(t)\|_{H^s}
 \le h_0R^n(n!)^2\qquad(n\ge0),
 \label{eq:mean-force-factorial}
\end{equation}
then the mean inverse satisfies
\begin{equation}
 \sum_{|I|=n}\sup_{0\le t\le S}
  \bigl(\|\partial_y^IB(t)\|_{H^s}
       +\|\partial_y^IB_t(t)\|_{H^s}
       +\|\partial_y^I(d\bar q)(t)\|_{H^s}\bigr)
 \le C_s\mathcal K^{15s+19}h_0R^n(n!)^2.
 \label{eq:mean-output-factorial}
\end{equation}
For smooth data the constructed fields are smooth, and their initial
velocity has the support stated in \eqref{eq:mean-inverse-conclusions}.
The scale $R$ is independent of the number of inverse applications.

\emph{Proof.} We give the boundary commutator estimates explicitly,
since they are the new ingredient relative to the transverse inverse.
For an ordered word $J$ of length $j$, scaling the cutoff gives
\[
 \|\partial^J\chi\|_\infty+
       \|\nabla\partial^J\chi\|_{L^3}
 \le C^{j+1}(j!)^2.
\]
In the second norm the scale is $\ell^{j+1}\ell^{-1}=\ell^j\le1$.
The pairing argument in Lemma~\ref{lem:mean-boundary-operator}
applied to two differentiated cutoffs bounds
\[
 \mathcal A_{J,K}
 =\operatorname{curl}(\partial^J\chi)N
                  (\partial^K\chi)\operatorname{curl}
\]
on $L^2$. Commuting at most $s$ further derivatives through it gives
\begin{equation}
 \|\mathcal A_{J,K}\|_{H^s\to H^s}
 \le C_s C_s^{j+k}((j+k)!)^2,
 \qquad j=|J|,\quad k=|K|.
 \label{eq:mean-cutoff-commutators}
\end{equation}
Indeed, each extra derivative either falls on the input or one of
the cutoffs; for fixed $s$, the factors
$(j+k+s)!/(j+k)!$ are polynomial in $j+k$ and are absorbed by
$C_s^{j+k}$, with the case $j+k=0$ included in $C_s$.
Since curl and $N$ commute with translations and derivatives,
$[\partial^I,\mathcal A_\chi]z$ is the sum over nonempty sets of
positions assigned to the two cutoffs, with the remaining derivatives
on $z$. Grouping by the total number $l\ge1$ assigned to the cutoffs
and absorbing the at most $2^l$ assignments in $R_c$ yields
\begin{equation}
 \sum_{|I|=n}\|[\partial^I,\mathcal A_\chi]v\|_{H^s}
 \le C_s\sum_{l=1}^n\binom nl R_c^l(l!)^2
               \sum_{|J|=n-l}\|\partial^Jv\|_{H^s}.
 \label{eq:mean-boundary-factorial-commutator}
\end{equation}

First obtain the estimate at the fixed Sobolev order. Allow general
weak data in the form
\[
 \mathfrak a_m(z,b)=\int_0^S
       (\langle g_1,b_t\rangle+\langle g_0,b\rangle)\,dt
                              +\langle g_b,b(0)\rangle.
\]
Coercivity and the time-reversed estimates
\eqref{eq:history-endpoint-poincare} give
$\|z_t\|_{L_t^2L_y^2}\le C\mathcal K^4
(\|g_1\|_{L_t^2L_y^2}+\|g_0\|_{L_t^2L_y^2}+\|g_b\|_{L_y^2})$.
Spatial difference quotients preserve $L^2_\sigma$ and $z(S)=0$.
In the translated weak equation, every commutator has a coefficient
difference multiplying a lower derivative of $z$ or $z_t$.
The four interior coefficients are those in
\eqref{eq:forced-form-coefficient-bounds}, with $G=F$ in place of
$FR_\perp$: $F^TF,F^TF_t,F_t^TF$, and $F_t^TF_t-F^THF$.
The same product estimate \eqref{eq:forced-gevrey-product-sum}
bounds their $W^{s,\infty}$ norms by $C_s\mathcal K^3$.
The boundary terms use derivatives of $M(0)$
and \eqref{eq:mean-cutoff-commutators}, multiplied by
$\lambda_b\le\mathcal K$. The time-reversed estimates
\eqref{eq:history-endpoint-poincare} control both
$\partial^Iz$ in $L_t^2H^j$ and $\partial^Iz(0)$ by the corresponding
norm of $\partial^Iz_t$.

Writing $E_j=\|z_t\|_{L_t^2H^j}$ and
$D_j=\|g_1\|_{L_t^2H^j}+\|g_0\|_{L_t^2H^j}+\|g_b\|_{H^j}$,
testing the differentiated equation gives
\[
 E_0\le C\mathcal K^4D_0,\qquad
 E_j\le C_s(\mathcal K^4D_j+\mathcal K^7E_{j-1}),\qquad
 E_s\le C_s\mathcal K^{7s+4}D_s.
\]
Uniform difference-quotient bounds and weak compactness justify
each derivative before proceeding to the next one. For the original
force, $g_0=-F^Tf$ and $g_1=g_b=0$, giving
$E_s\le C_s\mathcal K^{7s+5}\|f\|_{L_t^2H^s}$.

We also need the spatial inverse of $T$. For $Tv=h$ in
$L^2_\sigma$, the projection commutes with derivatives, and
\begin{equation}
 T\partial^Iv=\partial^Ih-
  \mathbb P_\sigma\sum_{\varnothing\ne E\subset\{1,\ldots,n\}}
       (\partial^{I_E}K)\partial^{I_{E^c}}v.
 \label{eq:mean-projected-inverse-derivatives}
\end{equation}
The $L^2$ bound $\|T^{-1}\|\le\mathcal K^2$ starts a spatial
difference-quotient induction. Each fixed Sobolev step costs at most
$\mathcal K^4$, since $K=F^TF$ has coefficient bound
$C_s\mathcal K^2$. Hence
\[
 \|T^{-1}h\|_{H^s}\le C_s\mathcal K^{4s+2}\|h\|_{H^s}.
\]
Equation \eqref{eq:mean-strong-equation} now gives $z_{tt}$ in
$L_t^2H^s$. In particular, the traces used below are well defined.

For the additional derivative words define
\[
 \begin{aligned}
 E_n^*&=\sum_{|I|=n}\|\partial^Iz_t\|_{L_t^2H^s},&
 J_n^*&=\sum_{|I|=n}\|\partial^Iz_{tt}\|_{L_t^2H^s},\\
 U_n^*&=\sum_{|I|=n}\sup_t\|\partial^Iz_t(t)\|_{H^s},&
 V_n^*&=\sum_{|I|=n}\sup_t\|\partial^Iz_{tt}(t)\|_{H^s},
 \end{aligned}
\]
and $Z_n=E_n^*+J_n^*+U_n^*+V_n^*$. Put
\[
 \mathcal F_n=\sum_{l=0}^n\binom nl R_c^l(l!)^2F_{n-l},\qquad
 \mathcal Z_n=\sum_{l=1}^n\binom nl R_c^l(l!)^2Z_{n-l}.
\]
The differentiated weak problem has force products and nonempty
coefficient splittings, including the boundary terms in
\eqref{eq:mean-boundary-factorial-commutator}. Its weak data norms
are at most $C_s\mathcal K^3(\mathcal F_n+\mathcal Z_n)$.
Apply the fixed-$s$ estimate to each word to obtain
\[
 E_n^*\le C_s\mathcal K^{7s+7}(\mathcal F_n+\mathcal Z_n).
\]
Next differentiate \eqref{eq:mean-strong-equation}. The new terms
are derivatives of $F^T$, $2F^TF_t$, or $K$ multiplying lower
derivatives of $f,z_t,z_{tt}$; the undifferentiated $z_t$ term
is bounded by $E_n^*$. Applying the fixed-$s$ bound for $T^{-1}$ gives
\[
 J_n^*\le C_s\mathcal K^{4s+4}
              (E_n^*+\mathcal F_n+\mathcal Z_n)
 \le C_s\mathcal K^{11s+11}(\mathcal F_n+\mathcal Z_n).
\]
For $w\in H^1((0,S);H^s)$, averaging its integral identity gives
\[
 \sup_t\|w(t)\|_{H^s}
 \le S^{-1/2}\|w\|_{L_t^2H^s}
                  +S^{1/2}\|w_t\|_{L_t^2H^s}.
\]
Since $S^{-1}\le\mathcal K$ and $S\le1$, this gives
$U_n^*\le C_s\mathcal K^{11s+12}(\mathcal F_n+\mathcal Z_n)$.
Applying the differentiated strong equation once more, now in the
time supremum, gives
\[
 V_n^*\le C_s\mathcal K^{4s+4}
              (U_n^*+\mathcal F_n+\mathcal Z_n)
 \le C_s\mathcal K^{15s+16}(\mathcal F_n+\mathcal Z_n).
\]
All these arguments first apply to finite smooth approximations or
difference quotients; the resulting bounds justify their limits.
There is no time-growth exponential in these estimates.

Consequently $Z_n\le D_s(\mathcal F_n+\mathcal Z_n)$ with
$D_s=C_s\mathcal K^{15s+17}$. This is the recurrence
\eqref{eq:forced-factorial-recurrence}, with
$(B_0,R_0,h)=(D_s,R_c,h_0)$. Its induction therefore gives
\[
 Z_n\le4D_sh_0R^n(n!)^2\qquad(n\ge0),
 \qquad R\ge4(D_s+1)R_c.
\]
The same $R$ works at every derivative order. Finally use
\[
 B=Fz_t,\qquad B_t=F_tz_t+Fz_{tt},\qquad
 d\bar q=f-B_t-MB.
\]
Their product convolutions cost at most two further powers of
$\mathcal K$, giving \eqref{eq:mean-output-factorial}.
The stated threshold $C_s\mathcal K^{15s+19}R_c$ includes both
this prefactor and the induction threshold. The boundary identity
from Proposition~\ref{prop:mean-variational-inverse} supplies the
initial support. Spatial regularity of every order follows from
the derivative estimates, and time regularity for smooth data follows
successively from \eqref{eq:mean-strong-equation}.
\hfill$\square$

\subsection{A common estimate for the coefficient expansion}

The mean inverse uses a constant time profile, while the transverse
inverse uses the history/forward profile $\rho(t)$ from
Section~\ref{sec:forced-transverse}. Both keep a single Gevrey-2 derivative
scale. For the coefficient accounting in
Section~\ref{sec:packet-expansion}, say that a field has integer shift
$\nu\ge0$ and profile $h(t)$ when
\begin{equation}
 \sum_{|I|=n}\sup_t
       \frac{\|\partial^Iu(t)\|_{H^s}}{h(t)}
 \le R^{n+\nu}((n+\nu)!)^2\qquad(n\ge0).
 \label{eq:mean-shared-shift}
\end{equation}
The derivatives are spatial for mean fields and include the angle
for oscillatory fields.

The preceding inductions are uniform in $\nu$: the factorial ratio
used whenever $l$ derivatives fall on a coefficient is
\[
 \binom nl\frac{(l!)^2((n-l+\nu)!)^2}{((n+\nu)!)^2}
 \le1.
\]
Indeed, $\binom{n+\nu}{l}\ge\binom nl$ and the left side equals
$\binom nl/\binom{n+\nu}{l}^2$.
Thus the same proofs give a fixed polynomial prefactor at the same
shift for the mean outputs and transverse potentials; the transverse
curl and pressure gradient cost one additional shift, as in
Proposition~\ref{prop:forced-divergence-estimates}.
Enlarging the common $R$ once to dominate the finitely many polynomial
prefactors absorbs each such prefactor into one further shift.

\refstepcounter{equation}
\label{cor:mean-shared-inverses}
\textbf{Corollary \theequation\ (The two inverses in a common convention).}
Assume the inputs of the mean inverse above and of
Propositions~\ref{prop:forced-factorial} and
\ref{prop:forced-divergence-estimates}, using a common bound
$\mathcal K$ and a sufficiently large common polynomial choice of $R$.
For every integer $\nu\ge0$, a mean force of shift $\nu$ and
constant profile $h_0$ gives $B,B_t,d\bar q$ of shift at most $\nu+1$
with profile $h_0$. A zero-mean transverse force of shift $\nu$
and profile $h_0\rho(t)$ gives $A,A_t,\pi,Q_A,C_A,(C_A)_t,d\pi$
of shift at most $\nu+2$ with that profile. In particular, a common
two-shift allowance suffices for both inverses. The support conclusions
are those of \eqref{eq:mean-inverse-conclusions} for the mean initial
velocity and Proposition~\ref{prop:forced-divergence-estimates} for
the transverse fields. The choice of $R$ is independent of $\nu$ and
the number of coefficients in the expansion.

\textbf{Remark.} For a fixed parent, choose $R$ once, large enough for
both inverses and the subsequent product estimates. This same $R$ must be kept throughout the coefficient recursion in
Section~\ref{sec:packet-expansion}; it cannot be enlarged at successive
orders or depend on the truncation length. The accumulating derivative
cost is recorded in the shifts, rather than in a changing Gevrey scale.

\emph{Proof.} The shifted mean estimate is bounded by
$C_s\mathcal K^a h_0R^{n+\nu}((n+\nu)!)^2$, for a fixed exponent $a$.
Choose $R\ge C_s\mathcal K^a$ to absorb this prefactor into one shift.

For the transverse outputs with a final spatial derivative, the bound
has the form
\[
 C_s\mathcal K^b h_0\rho(t)
 R^{n+\nu+1}((n+\nu+1)!)^2
 \le h_0\rho(t)R^{n+\nu+2}((n+\nu+2)!)^2,
\]
provided also $R\ge C_s\mathcal K^b$. The other transverse outputs
require only one extra shift and satisfy the stated weaker bound.
The exponents $a,b$ and the required lower bound on $R$ are independent
of $\nu$. Each inverse supplies its respective support properties.
\hfill$\square$

Section~\ref{sec:packet-expansion} now uses these inverses in the ordered
mean and oscillatory recursion and proves the residual estimates required
by Section~\ref{sec:exact-correction}.

\section{The finite order approximation}
\label{sec:packet-expansion}
\fancyhead[R]{\small The finite order approximation}

Sections~\ref{sec:forced-transverse} and \ref{sec:mean-inverse} provide
the two linear inverses. We now use them to cancel the nonlinear Euler
equation through the coefficient recursion of
Subsection~\ref{intro:subsec:order-expansion}. The point of stopping after finitely
many terms is quantitative: the coefficients lose spatial derivatives,
but a sufficiently large frequency makes the retained corrections small
and the uncancelled residual much smaller still.

\subsection{Inputs and the divergence-free ansatz}
\label{subsec:packet-inputs}

Fix $0\le t_a\le S\le1$ and the parent data of
Corollary~\ref{cor:mean-shared-inverses}, globally for the mean inverse
and on the wave support for the transverse inverse. All coefficient,
coercivity, and propagator hypotheses of that corollary are assumed,
with the Sobolev order fixed in Section~\ref{subsec:packet-coefficient-bounds}.
Let $P\ge2$ bound their constants and their initial Gevrey radii by
$P^{c_0}$, with a fixed exponent $c_0$.
These are conditional parent inputs; their compatibility at every stage
will be checked in the iteration.

Use $d,m,\tilde d$ from \eqref{intro:eq:normal} and
\eqref{intro:eq:lifted-derivative}, and the normalized physical
increments \eqref{intro:eq:normalized-increment}. The equation to
approximate is \eqref{intro:eq:lifted-W}.
Choose a fixed Gevrey-2 cutoff $\chi_1$ compactly supported in
$\{|y|<1/2\}$ and the mean-zero profile $f_\delta$.
The leading coefficient $A_1$ and its pressure are
\eqref{intro:eq:leading-coefficients} and \eqref{intro:eq:wave}, with
$B_1=\bar q_1=0$ as in Subsection~\ref{intro:subsec:order-expansion}.
Here $v$ is the history-selected wave velocity of
Section~\ref{sec:history-activation}, continued
by its homogeneous wave equation. At $t_a=0$ it is selected by prescribed
transverse initial data instead. The cutoff $\chi_1$ and the profile
$f_\delta$ are assumed to have Gevrey-2 constants bounded by $P^{c_0}$.
The selected coordinate endpoint $\xi_{\mathrm{end}}$ satisfies
$|\xi_{\mathrm{end}}|\le P^{c_0}$; when $t_a=0$, the prescribed
initial velocity amplitude $v(0,y)$ has Gevrey-2 bounds with constants
at most $P^{c_0}$.
Let $g$ be the positive amplitude profile in the propagator bound
\eqref{eq:forced-propagator-input}, normalized by $g(t_a)=1$.
Take $\alpha>0$ and set
\begin{equation}\label{eq:packet-amplitude-profile}
 \gamma(t)=\alpha\rho(t)=
 \begin{cases}
  \alpha,&0\le t\le t_a,\\
  \alpha g(t),&t_a\le t\le S.
 \end{cases}
\end{equation}
Here $\rho$ is the history/forward profile used by the transverse inverse.
Assume
\[
 H_0:=\max\{1,\sup_{[0,S]}\gamma\}\le P^{c_0}.
\]

For every order, use the transverse $A_p$, angle-independent
$B_p$, and curl corrections $(Q_p,C_p)$ of
\eqref{intro:eq:curl-correction}. The finite sums are
\eqref{intro:eq:finite-packet}. Their divergence vanishes exactly,
as proved in Subsection~\ref{intro:subsec:divergence}; only the
momentum residual requires a subsequent exact correction.

\subsection{Constructing the coefficients with the two inverses}\label{subsec:packet-construction}

At step $p\ge2$, the lower-order coefficients and $C_{p-1}$ are
known. Recall that $V_p=A_p+B_p+C_{p-1}$ in
\eqref{intro:eq:total-velocity-coefficient} collects the entire velocity
coefficient at order $k^{-p}$. The calculation
\eqref{intro:eq:pth-equation}--\eqref{intro:eq:old-force} separates the
known force $f_p^{\rm old}$ from the only current-order term,
$-(B_p\cdot m)\partial_\theta A_1$, whose angle average is zero.
This gives the following construction.

First solve the mean equation \eqref{intro:eq:mean-step} by
Proposition~\ref{prop:mean-variational-inverse}, which selects the
initial velocity as well as $B_p$ and $d\bar q_p$.
Then solve the oscillatory equation \eqref{intro:eq:wave-step}, whose
force now includes the known $B_p$, by
Proposition~\ref{prop:forced-forward}. For $t_a>0$, its history
problem is Lemma~\ref{lem:forced-history} with zero displacement at
both endpoints; for $t_a=0$, it uses $A_p(0)=0$.
Finally form $Q_p,C_p$ by \eqref{intro:eq:curl-correction}.
Thus the formal matching in Section~\ref{sec:wave-derivation} becomes
a construction to every finite order, with no simultaneous nonlinear
solve at a given order.

The oscillatory coefficients $A_p,\pi_p,Q_p,C_p$ remain supported in
$\operatorname{supp}\chi_1$. Indeed, terms containing only angle-independent
coefficients disappear on subtracting the average, and every other term
contains a previously constructed oscillatory coefficient or its derivative.
The transverse inverse preserves this support by
Proposition~\ref{prop:forced-forward}, and so do the curl corrections
by Proposition~\ref{prop:forced-divergence-estimates}. The mean velocities
$B_p$ need not be supported there at positive times. Their initial support
is supplied by \eqref{eq:mean-inverse-conclusions}, namely $|\ell y|\le2$.

\subsection{Uniform control of the coefficient recursion}
\label{subsec:packet-coefficient-bounds}

Fix an integer $s\ge3$. This ensures that
$H^s(\mathbb R^3\times\mathbb T)$ is an algebra for the nonlinear
product estimates below. Constants may depend on $s,c_0$ and the fixed
cutoffs, but not on the stage or truncation order.

Use the shifted Gevrey convention \eqref{eq:mean-shared-shift},
with time interval $[0,S]$. Derivative words use $(y,\theta)$;
angle-independent fields have the corresponding spatial norm.
For a family of fields, sum their norms in that definition.

\refstepcounter{equation}
\label{prop:packet-coefficient-bounds}
\textbf{Proposition \theequation\ (Bounds for the finite coefficient sequence).}
Under the preceding inputs, a common $R=P^c$ can be chosen independently
of $N$ so that the recursion is defined to every finite order, with
\begin{align}
 &(A_p,\partial_t A_{p},\pi_p,Q_p,C_p,\partial_t C_{p},d\pi_p):
 &&\text{shift }a_p=9p-8,
 &&\text{profile }\gamma H_0^{2p-2},\label{eq:packet-wave-shifts}\\
 &(B_p,\partial_t B_{p},d\bar q_p):
 &&\text{shift }b_p=9p-13,
 &&\text{profile }H_0^{2p-2}\quad(p\ge2).
 \label{eq:packet-mean-shifts}
\end{align}
The residual of \eqref{intro:eq:finite-packet} has no powers
$k^{-p}$ for $p\le N$.

\textbf{Proof.}
First recall the product accounting. If two fields have shifts $a,b\ge0$,
their product has shift $a+b$, up to a constant depending only on $s$.
The relevant convolution is bounded independently of the shifts because
\[
 \sum_{l=0}^n\binom nl
 \frac{((l+a)!)^2((n-l+b)!)^2}{((n+a+b)!)^2}
 =\sum_{l=0}^n\frac{\binom nl}{\binom{n+a+b}{l+a}^2}
 \le\sum_{l=0}^n\binom nl^{-1}\le2+\frac{n-1}{n}<3.
\]
The last inequality is \eqref{eq:forced-gevrey-product-sum};
for $n=0,1$, the corresponding upper bounds are $1,2$.

Multiplication by a parent coefficient preserves the shift, with
prefactor $3C_sK_0\le3C_sP^{c_0}$, where $C_s$ is the Sobolev
product constant. Indeed, if its derivative bounds are
$K_0R_c^l(l!)^2$ and $R\ge R_c$, the same convolution, with one shift
set to zero, bounds multiplication on any shifted class by $3C_sK_0$,
uniformly in the shift. The parent bounds
\eqref{eq:forced-factorial-input} and \eqref{eq:mean-factorial-inputs},
imported in Section~\ref{subsec:packet-inputs}, give
$K_0=\mathcal K\le P^{c_0}$ for $F^{-T},M$.
Transposition preserves the Euclidean matrix norm, and contraction with
the fixed unit vector $m_0$ gives the same bound for $m=F^{-T}m_0$.
The factor $3C_s$ comes from the product estimate, not the parent inputs.
Each forcing term in the table below contains at most one such parent
multiplier. We therefore retain a prefactor $C_sP^{c_0}$ until the
summation step, where one spare shift absorbs it together with the
number of terms. Sobolev multiplication is valid since $s\ge3$
on the four-dimensional lifted $(y,\theta)$-space.

For $p=1$, use the amplitude profile $\gamma=\alpha\rho$ from
\eqref{eq:packet-amplitude-profile}, which includes the prefactor $\alpha$
in $A_1$. The homogeneous history problem has
polynomial Gevrey bounds by the argument for
Proposition~\ref{prop:forced-factorial}, using linear interpolation between the endpoints.
Explicitly, if its coordinate endpoint is $h(y,\theta)$, write
$\xi=t h/t_a+\xi_0$. The first term contributes velocity
$FR_\perp h/t_a$; its transverse equation has forcing
$2F_tR_\perp h/t_a$. Thus $\xi_0$, which vanishes at both ends,
is estimated by the same zero-endpoint inverse with the negative of
this forcing. All constants in this linear interpolation argument are polynomial in the assumed data,
including $t_a^{-1}$ when $t_a>0$. Differentiating the forward equation
and using \eqref{eq:forced-propagator-input} gives the same profile $g$
and the Gevrey convolution of that proposition. For $t_a=0$, the
prescribed Gevrey initial value replaces the history trace in this argument.
Proposition~\ref{prop:forced-divergence-estimates} then bounds $Q_1,C_1$
and the indicated derivatives. Before fixing the common $R$, these
estimates give polynomial constants $C_0,R_0\le P^c$ such that, for
the whole family $\mathcal U_1=(A_1,\partial_t A_{1},\pi_1,Q_1,C_1,\partial_t C_{1},d\pi_1)$,
\[
 \sum_{u\in\mathcal U_1}\sum_{|I|=n}\sup_t
 \frac{\|\partial^Iu(t)\|_{H^s}}{\gamma(t)}
 \le C_0R_0^n(n!)^2.
\]
Here the one additional spatial derivative is absorbed into $C_0,R_0$:
$R_*^{n+1}((n+1)!)^2
\le C R_*(2R_*)^n(n!)^2$. This enlargement is made only for the
data at $p=1$, not at successive expansion orders. Taking
$R\ge\max\{C_0,R_0\}$ gives
\[
 C_0R_0^n(n!)^2\le R^{n+1}((n+1)!)^2,
\]
which proves the bound at $p=1$ with $a_1=1$.

Suppose the bounds hold below order $p$. The following table records
upper bounds on the shifts before applying either inverse. A term
involving $B_p$ is used only after the mean equation has been solved.
\[
\begin{array}{c|c|c}
 \text{term}&\text{shift bound}&\substack{\text{nonzero}\\\text{mean?}}\\\hline
 V_i\cdot dV_j,\ i+j=p
       &a_i+a_j+1=9p-15&\text{yes}\\
 (B_i\cdot m)\partial_\theta(A_j+C_{j-1}),\ i+j=p+1
       &b_i+a_j+1=9p-11&\text{no}\\
 (C_{i-1}\cdot m)\partial_\theta(A_j+C_{j-1}),\ i+j=p+1
       &a_{i-1}+a_j+1=9p-15&\text{yes}\\
 \partial_t C_{p-1},\ MC_{p-1},\ d\pi_{p-1}
       &a_{p-1}=9p-17&\text{no}
\end{array}
\]
Here $A_i\cdot m=0$ and $\partial_\theta B_j=0$ eliminate the other
phase terms. A spatial or angle derivative costs one shift, accounting
for the $+1$ in the first three rows; the derivatives in the last row are already
included in the coefficient-family bounds. Replacing $A_j$ by $B_j$ or
$C_{j-1}$ lowers the shift whenever that term is present. The second row
has zero mean because $B_i,m$ are angle independent.

The mean forcing has profile $H_0^{2p-2}$. The oscillatory forcing has
profile $\gamma H_0^{2p-2}$: each of its terms contains an oscillatory
factor, and any additional $\gamma$ can be bounded by $H_0$.
For example, a label product of two oscillatory factors contributes
$\gamma^2H_0^{2p-4}\le\gamma H_0^{2p-2}$ and also
$\le H_0^{2p-2}$; a phase product $B_iA_j$ contributes exactly
$\gamma H_0^{2p-2}$ when $i+j=p+1$.

The largest mean-force shift in the table is $9p-15=b_p-2$;
the largest oscillatory-force shift is $9p-11=a_p-3$.
There are at most $Cp$ terms, each with prefactor at most
$C_sP^{c_0}$. To absorb both costs, increase the forcing shift by one:
\[
 \nu_m=b_p-1=9p-14,\qquad \nu_o=a_p-2=9p-10.
\]
Indeed, for either $\nu=\nu_m$ or $\nu_o$,
\[
 C_spP^{c_0}R^{n+\nu-1}((n+\nu-1)!)^2
 \le R^{n+\nu}((n+\nu)!)^2
 \quad\text{if}\quad C_spP^{c_0}\le R(n+\nu)^2.
\]
For $p\ge2$ both target shifts are at least $p$, so
$R\ge C_sP^{c_0}$ suffices for this absorption, uniformly in $n,p$.
Choose the common $R$ to satisfy this bound as well as the inverse
and $p=1$ requirements already stated. In the oscillatory force, use the mean bound at order $p$ only
after solving the mean equation. Thus no unknown prefactor enters this
choice of $R$.
Corollary~\ref{cor:mean-shared-inverses} now gives
\eqref{eq:packet-mean-shifts} using its one-shift mean allowance in
\eqref{intro:eq:mean-step}, then \eqref{eq:packet-wave-shifts}
using its two-shift transverse allowance in
\eqref{intro:eq:wave-step}. Finally,
\eqref{intro:eq:pth-equation} cancels every coefficient through
order $N$; the order-one equation is the homogeneous equation for $A_1$.
\hfill$\square$

\subsection{Truncation and the input for the exact correction}\label{subsec:packet-truncation}

For a radius $\varrho>0$ write
\[
 \|u\|_{\varrho,s}
 =\sum_{n\ge0}\frac{\varrho^n}{(n!)^2}
       \sum_{|I|=n}\|\partial^Iu\|_{H^s}.
\]
Time suprema below are understood term by term in this sum.

\refstepcounter{equation}
\label{prop:packet-truncation-bounds}
\textbf{Proposition \theequation\ (Small transport and residual).}
There are fixed constants $c,k_0$ depending only on $s$, the exponent
$c_0$ and the fixed cutoff choices in Section~\ref{subsec:packet-inputs}, independently of
$P$, the stage, $k$ and $N$, such that, if
\begin{equation}\label{eq:packet-frequency-choice}
 \vartheta=10^{-6},\qquad N=\lfloor k^\vartheta\rfloor,
 \qquad P^c\le k^\vartheta,\qquad k\ge k_0,
\end{equation}
the approximation \eqref{intro:eq:finite-packet} has, at a common radius
$\varrho=P^{-c}$, the bounds
\[
 z^a=kF^{-1}W^a,\qquad
 b^a=(k^{-1} z^a,m_0\cdot z^a),\qquad
 r^a=kF^{-1}\mathcal R^a,
\]
where $\mathcal R^a=W_t^a+MW^a+W^a\cdot\tilde dW^a+\tilde dq^a$,
with
\begin{align}
 \|z^a\|_{\varrho,s}+\|z_t^a\|_{\varrho,s}
       +\|\nabla_{y,\theta}z^a\|_{\varrho,s}&\le P^c,
       \label{eq:packet-scaled-velocity}\\
 \|b^a\|_{\varrho,s}&\le k^{-1}P^c+O(k^{-1.9})\le k^{-1/2},
       \label{eq:packet-small-transport}\\
 \|r^a\|_{\varrho,s}&\le\exp(-0.7k^\vartheta\log k).
       \label{eq:packet-small-residual}
\end{align}
Moreover,
\begin{equation}\label{eq:packet-leading-remainders}
 \|W^a-k^{-1} A_1\|_{\varrho,s}\le k^{-2}P^c,\qquad
 \left\|\sum_{p=2}^Nk^{-(p+1)}\pi_p\right\|_{\varrho,s}
       \le k^{-3}P^c,\qquad
 \left\|\sum_{p=2}^Nk^{-p} d\bar q_p\right\|_{\varrho,s}
       \le k^{-2}P^c.
\end{equation}
These conclusions also hold with one additional fixed label or angle
derivative after decreasing the radius by a fixed factor.

\textbf{Proof.}
The shifts in Proposition~\ref{prop:packet-coefficient-bounds}
must be tracked uniformly up to $N$. We use their slope $9$ below,
while retaining the stated frequency condition and residual exponent.
For $n,d\ge0$,
\[
 R^{n+d}((n+d)!)^2=R^{n+d}\binom{n+d}d^2(n!)^2(d!)^2
 \le (4R)^n(n!)^2(4R)^d(d!)^2.
\]
The quantities $V_p,\partial_t V_p$ entering the estimates for $z^a$
and $z_t^a$ have shifts at most $9p-8$, and $\nabla_{y,\theta}V_p$
entering the estimate for $\nabla_{y,\theta}z^a$ has shift at most $9p-7$. These bounds also cover $V_{N+1}=C_N$.
The residual identity \eqref{intro:eq:residual} gives
\[
 \mathcal R^a=\sum_{p=N+1}^{2N+2}k^{-p}\mathcal R_p,
 \qquad
 r^a=kF^{-1}\mathcal R^a
     =\sum_{p=N+1}^{2N+2}k^{1-p}F^{-1}\mathcal R_p.
\]
For a coefficient $\mathcal R_p$ of the unnormalized momentum residual
$\mathcal R^a$, label products have shift at most $9p-15$.
The phase products come from the $km\partial_\theta$ part of
$W^a\cdot\tilde dW^a$. At order $k^{-p}$ they are
\[
 \sum_{\substack{i+j=p+1\\1\le i,j\le N+1}}
 (V_i\cdot m)\partial_\theta V_j
 =\sum_{\substack{i+j=p+1\\1\le i,j\le N+1}}
 \bigl((B_i+C_{i-1})\cdot m\bigr)
 \partial_\theta(A_j+C_{j-1}),
\]
where we set $A_{N+1}=B_{N+1}=0$ for this formula, consistent with
$V_{N+1}=C_N$. The equality uses $A_i\cdot m=0$ and
$\partial_\theta B_j=0$. The largest remaining shift is
\[
 b_i+a_j+1=9p-11,\qquad i+j=p+1,
\]
coming from $(B_i\cdot m)\partial_\theta A_j$, because $C_{i-1}$ has a smaller shift than both $B_i$ and $A_i$.

The contribution left by the last retained divergence correction and
oscillatory pressure is a linear tail:
\[
 k^{-(N+1)}\bigl(\partial_t C_N+MC_N+d\pi_N\bigr).
\]
All lower orders have been cancelled by the recursion. This tail occurs
only at $p=N+1$; its coefficient has shift $a_N=9N-8=9p-17$,
because $\partial_t C_N$ and $d\pi_N$ are already included in
\eqref{eq:packet-wave-shifts}.

Thus $\mathcal R_p$ has shift at most $9p-11$.
Multiplication by $F^{-1}$ adds a polynomial prefactor without changing
this shift; the normalization contributes the factor $k$, so that the
corresponding term in $r^a$ has power $k^{1-p}$.

Thus $d_p=9p-7$ bounds the shifts of all the quantities being estimated.
Since $p\le2N+2$, we have $d_p\le CN$ and
\[
 (d_p!)^2\le(CN)^{2d_p}=(CN)^{18p-14}.
\]
For $\mathcal R_p$, the sharper exponent is $18p-22$.
The profiles in \eqref{eq:packet-wave-shifts} and
\eqref{eq:packet-mean-shifts} are $\gamma H_0^{2p-2}$ and
$H_0^{2p-2}$. Since $\gamma\le H_0\le P^{c_0}$, they contribute
at most $P^{c_0(2p-1)}$ and $P^{c_0(2p-2)}$, respectively.
Their products at residual order $p$ also contribute only a power of
$P$ whose exponent grows linearly with $p$. Multiplication by the
parent coefficients $M,F^{-T},m$ in the residual, and by $F^{-1}$
in its normalization, contributes fixed polynomial prefactors as
estimated in Section~\ref{subsec:packet-coefficient-bounds}.
The factor $(4R)^{d_p}$ from the factorial estimate must also be included:
writing the already fixed scale as $R=P^{c_R}$, we have
\[
 (4R)^{d_p}\le P^{(c_R+2)(9p-7)},\qquad P\ge2.
\]
Together these factors are bounded by $P^{c(p+1)}$ for a fixed $c$.
There are at most $Cp\le CN$ products at each order. Absorbing all
these factors into a common allowance, at a radius
$\varrho=P^{-c}$ every such coefficient is bounded by
\begin{equation}\label{eq:packet-coefficient-majorant}
 \mathcal B_k^{p+1},\qquad
 \mathcal B_k=P^c(CN)^{18}\le k^{0.01}.
\end{equation}
The spare powers between $18p-14$ (or $18p-22$) and $18(p+1)$
absorb the number of terms. Choose the fixed $c$ to dominate all
polynomial exponents in these estimates and the radius requirement.
Then \eqref{eq:packet-frequency-choice} gives
\[
 \mathcal B_k\le C^{18}k^{19\vartheta}\le k^{0.01}
\]
for sufficiently large $k_0$, since $19\vartheta<0.01$.
The radius can be fixed independently of $N$, since all $n$ dependence
above is confined to $(4R)^n(n!)^2$. An additional fixed derivative
is absorbed by shrinking that radius. Products with $F^{-1}$ and its
time derivative are controlled at the same smaller radius.

The first two coefficients have fixed shifts, and hence bounds $P^c$
independent of $N$. For the remaining velocity terms, the normalized
tail is bounded by
\[
 \sum_{p=3}^{N+1}k^{1-p}\mathcal B_k^{p+1}
 \le\sum_{p=3}^{N+1}k^{1-p+0.01(p+1)}
 =O(k^{-1.9}).
\]
This proves \eqref{eq:packet-scaled-velocity} and the velocity part of
\eqref{eq:packet-leading-remainders}; the same argument proves the
two pressure bounds. Only the gradient of the mean pressure is asserted,
as required by the mean inverse.

For \eqref{eq:packet-small-transport}, the first component has the
explicit factor $k^{-1}$. In the second component the leading term
vanishes: $m_0\cdot F^{-1}A_1=m\cdot A_1=0$.
The first possible contribution is thus the order-two coefficient,
which is $O(k^{-1} P^c)$ after normalization; the remaining tail is
the one just estimated. Since $P^c\le k^\vartheta$, this gives
$k^{-1}P^c+O(k^{-1.9})\le k^{-1/2}$ for sufficiently large $k_0$.
This explains why the lifted transport is small
despite the factor $k$ in $\tilde d$.

Finally, all residual orders through $N$ vanish. Equations
\eqref{eq:packet-coefficient-majorant} and
\eqref{eq:packet-frequency-choice} give
\[
 \|r^a\|_{\varrho,s}
 \le P^c\sum_{p=N+1}^{2N+2}k^{1-p+0.01(p+1)}
 \le C P^c k^{-0.99N+0.03}
 \le \exp(-0.7k^\vartheta\log k).
\]
In the last step $N=\lfloor k^\vartheta\rfloor$ and the fixed
lower threshold $k_0$ absorbs the floor and the polynomial factor,
using $P^c\le k^\vartheta$.
\hfill$\square$

Taking $s=6$, Proposition~\ref{prop:packet-truncation-bounds} supplies
the finite order estimates used in Section~\ref{sec:exact-correction},
where the residual is removed under the additional frequency condition
\eqref{eq:corr-frequency}. The verification of the parent inputs at
each stage, and the full child-flow estimates needed to repeat the
construction, remain separate steps.

\section{From finite order approximation to an exact Euler solution}
\label{sec:exact-correction}
\fancyhead[R]{\small From approximation to exact solution}

Proposition~\ref{prop:packet-truncation-bounds} supplies the finite order
estimates, including a small momentum residual. We now remove that
residual on the whole interval $[0,S]$, with zero initial velocity
correction. We solve the correction equation \eqref{intro:eq:correction}
derived in Subsection~\ref{intro:subsec:exact-identities}.
The pressure inverse and weighted energy estimates below lead to the
exact-correction proposition; the last corollary recovers the leading
velocity gradient and pressure Hessian.

\subsection{The input estimates and the conclusion}
\label{subsec:corr-inputs}

Use the smooth parent and pulled-back matrices from
\eqref{intro:eq:parent} on $[0,S]$, with $0<S\le1$.
The data bound $P\ge2$ is the one used in Section~\ref{sec:packet-expansion}.
All constants and fixed exponents below depend only on the fixed input
parameters specified there, independently of the stage, frequency,
derivative cutoff and auxiliary viscosity. As before, $P^c$ denotes
a fixed polynomial bound, enlarged finitely many times when necessary.
Fix $s=6$ and work on the lifted $(y,\theta)$-space, with
$\theta\in\mathbb T$ independent of $y$. The operators and normalized
physical increments are those of \eqref{intro:eq:lifted-derivative}
and \eqref{intro:eq:normalized-increment}; their equation is
\eqref{intro:eq:lifted-W}.

For an ordered word $I$ of derivatives in $(y,\theta)$, define
\[
 |f|_{n,r}=\sum_{|I|=n}\|\partial^If\|_{H^r(\R^3\times\mathbb T)},
 \qquad
 \|f\|_\varrho:=\|f\|_{\varrho,s}=\sum_{n\ge0}\frac{\varrho^n}{(n!)^2}|f|_{n,s}.
\]
For coefficients independent of $\theta$, use $W^{r,\infty}$ in place of
$H^r$; such coefficients need not be square integrable. Put
$G=F^{-1}F^{-T}$ and $K=G^{-1}=F^TF$. Here we assume the parent bounds
\begin{equation}\label{eq:corr-parent-bounds}
 \begin{gathered}
 \sum_{|I|=n}\sup_{0\le t\le S}
 \|\partial_y^I(F,F^{-1},M,H,m,G,K)(t,\cdot)\|_{W^{s,\infty}(\R^3)}
 \le P^c(P^c)^n(n!)^2\qquad(n\ge0),\\
 P^{-c}I\le G(t,y),K(t,y)\le P^c I
 \qquad((t,y)\in[0,S]\times\R^3).
 \end{gathered}
\end{equation}
The tuple norm is the sum of its component norms. Any fixed additional
label derivatives used below obey the same form of bound after a fixed
enlargement of $c$. The bounds for $m,G,K$ follow from those for $F,F^{-1}$
and $m=F^{-T}m_0$, with the same convention on $c$.

Let $(W^a,q^a)$ be the finite order approximation. It is exactly divergence free,
and we recall its lifted velocity, transport field, and residual from
Proposition~\ref{prop:packet-truncation-bounds}:
\begin{equation}
 \begin{aligned}
 z^a&=kF^{-1}W^a,
 &b^a&=(k^{-1} z^a,m_0\cdot z^a),\\
 r^a&=kF^{-1}
  (W^a_t+MW^a+W^a\cdot\tilde dW^a+\tilde dq^a).
 \end{aligned}
 \label{eq:corr-input-defs}
\end{equation}
Here $b^a$ has four components: three for transport in $y$ and one for
transport in $\theta$. At a radius $\varrho_a=P^{-c}$, the inputs are
\begin{equation}
 \begin{aligned}
 \|z^a\|_{\varrho_a}+\|z^a_t\|_{\varrho_a}
       +\|\nabla_{y,\theta}z^a\|_{\varrho_a}&\le P^c,\\
 \|b^a\|_{\varrho_a}&\le B_0\le k^{-1/2},\\
 \|r^a\|_{\varrho_a}&\le R_k:=\exp(-0.7k^\vartheta\log k),
 \qquad \vartheta=10^{-6}.
 \end{aligned}
 \label{eq:corr-input-bounds}
\end{equation}
These estimates hold uniformly on $[0,S]$ by
\eqref{eq:packet-scaled-velocity}--\eqref{eq:packet-small-residual}.
The sharper bound \eqref{eq:packet-small-transport} gives
$\|b^a\|_{\varrho_a}\le k^{-1}P^c+O(k^{-1.9})$. Its smallness comes from
transversality: the leading part of $z^a$ has $m_0\cdot z^a=0$,
so the leading angle transport vanishes. This is where the wave geometry
enters the nonlinear correction.

\refstepcounter{equation}
\label{prop:exact-packet-correction}
\textbf{Proposition \theequation\ (Exact correction of the finite order approximation).}
Assume the parent bounds \eqref{eq:corr-parent-bounds} and the finite order
estimates \eqref{eq:corr-input-bounds}, with their polynomial exponents fixed.
There are sufficiently large fixed constants $Q,k_0$, with the
dependencies stated above, such that the following conclusion holds whenever
\begin{equation}
 P^Q\le k^\vartheta,\qquad k\ge k_0.
 \label{eq:corr-frequency}
\end{equation}
Then there is a smooth correction $e$, with $e(0)=0$, and a vector pressure
correction $\mathfrak p^e$, such that, at a fixed radius $\varrho_*=P^{-c}$,
with $c$ enlarged as needed under the convention above,
\begin{equation}
 \sup_{0\le t\le S}
 \bigl(\|e(t)\|_{\varrho_*}+\|e_t(t)\|_{\varrho_*}
                     +\|\mathfrak p^e(t)\|_{\varrho_*}\bigr)\le P^c\Delta_k,
 \qquad \Delta_k:=\exp(-k^\vartheta/2).
 \label{eq:corr-conclusion}
\end{equation}
The pair $(e,\mathfrak p^e)$ solves \eqref{eq:corr-e-equation} with $\nu=0$,
where $\mathfrak p^e$ lies in the gradient space $\mathcal G$ defined below.
The fields \eqref{intro:eq:physical-child} are a smooth exact Euler
solution on $[0,S]$, and their velocity increment at time zero is
\eqref{eq:corr-initial-preserved}. If the parent and approximate velocity
have the prescribed odd parity, the velocity correction has that parity.

\textbf{Remark.} The variable $\mathfrak p^e$ is a three-component pressure
variable; the scalar physical pressure is recovered in
Section~\ref{subsec:corr-physical}. The condition
\eqref{eq:corr-frequency} is stronger than the frequency condition of
Section~\ref{sec:packet-expansion}: it also pays for the energy growth
in the correction argument. The next four lemmas give the pressure inverse, product and pressure
estimates, energy inequality, and bootstrap used in the proof of the proposition.

\subsection{The lifted correction equation}

Use $D$, $z=kF^{-1}W$ and $\mathfrak p=k^2Dq$ from
\eqref{intro:eq:fixed-div}. The identities
\eqref{intro:eq:fixed-div-identity} turn incompressibility into the
fixed constraint $\operatorname{div}_Dz=0$. The momentum equation and
its approximate counterpart are \eqref{intro:eq:fixed-momentum} and
\eqref{intro:eq:approximate-fixed-momentum}.

For the existence proof, add an auxiliary viscosity to their difference
\eqref{intro:eq:correction}. With $z=z^a+e$ and
$\mathfrak p=\mathfrak p^a+\mathfrak p^e$, solve
\begin{equation}
 e_t+b\cdot\nabla_{y,\theta}e+Z(e)+G\mathfrak p^e
       =-r^a+\nu\Delta_{y,\theta}e,
 \quad \operatorname{div}_De=0,\quad e(0)=0,
 \qquad 0\le\nu\le1,
 \label{eq:corr-e-equation}
\end{equation}
where $b,Z(e)$ are defined in \eqref{intro:eq:b-Z}.
We construct solutions for $\nu>0$ and pass to $\nu=0$.
All derivatives of the unknown $e$ lie in transport or viscosity:
$Z(e)$ contains no derivative of $e$. Moreover,
$\operatorname{div}_{y,\theta}b=\operatorname{div}_D(z^a+e)=0$.
These two facts are the starting point of the energy estimate.

\subsection{The pressure inverse}
\label{subsec:corr-pressure}

Let $\mathcal G$ be the $L^2(\R^3\times\mathbb T)$ closure of fields
$D\phi$ with smooth periodic $\phi$ compactly supported in $y$.
Integration by parts shows
\[
 \mathcal G^\perp=\{v:\operatorname{div}_Dv=0\}.
\]
The $L^2$-orthogonal projection $\Pi$ onto $\mathcal G$ has Fourier symbol
\[
 \widehat\Pi(\xi,j)
  =\frac{(k^{-1}\xi+jm_0)\otimes(k^{-1}\xi+jm_0)}
         {|k^{-1}\xi+jm_0|^2},\qquad (\xi,j)\in\R^3\times\mathbb Z.
\]
The value on the measure-zero singular set is immaterial. This is a bounded
matrix projection: its norm is at most one, uniformly in $k$, and it
commutes with every ordinary derivative. No lower bound on
$|k^{-1}\xi+jm_0|$ is needed.

\refstepcounter{equation}
\label{lem:corr-pressure-estimate}
\textbf{Lemma \theequation\ (Pressure inversion without frequency loss).}
Under the coefficient bounds of Section~\ref{subsec:corr-inputs}, for
each $f\in L^2$ there is a unique $p\in\mathcal G$ with
$f-Gp\in\mathcal G^\perp$. Write $p=\mathcal T f$. There is a fixed
exponent $c$, large enough for both polynomial factors in
\eqref{eq:corr-pressure-recursion}, such that for $r=s-1,s$ and every
fixed radius $0<\varrho\le\tfrac14P^{-2c}$,
\begin{equation}\label{eq:corr-pressure-weighted-bound}
 \sum_{n=0}^J\frac{\varrho^n}{(n!)^2}|p|_{n,r}
 \le 2P^c\sum_{n=0}^J\frac{\varrho^n}{(n!)^2}|f|_{n,r}
 \qquad(J\ge0).
\end{equation}
The constants are uniform in time, $k$ and $J$.

\emph{Proof.} Given $f$, seek $p\in\mathcal G$ such that $f-Gp\in\mathcal G^\perp$.
Equivalently, $\Pi Gp=\Pi f$. On $\mathcal G$,
\[
 \langle\Pi Gp,p\rangle=\langle Gp,p\rangle=\|F^{-T}p\|_2^2
       \ge P^{-c}\|p\|_2^2.
\]
The operator is bounded and self-adjoint, so coercivity gives a unique
solution and the bound $\|p\|_2\le P^c\|f\|_2$. Write
\begin{equation}
 p=\mathcal T f=(\Pi G|_{\mathcal G})^{-1}\Pi f.
 \label{eq:corr-pressure-inverse}
\end{equation}

Here is the derivative estimate needed later. Differentiate
$\Pi Gp=\Pi f$ by a word $I$, move the terms with a derivative on $G$
to the right, and use the same inverse. Through the fixed base orders
$r=s-1,s$, this costs a fixed power of $P$. For the additional derivatives,
Leibniz's rule gives
\begin{equation}
 |p|_{n,r}\le P^c\left(
 |f|_{n,r}+\sum_{l=1}^n\binom nl P^{cl}(l!)^2|p|_{n-l,r}\right).
 \label{eq:corr-pressure-recursion}
\end{equation}
To justify differentiating a solution initially known only in $L^2$, first
use spatial difference quotients. Translations preserve $\mathcal G$, and
the translated equation has the same coercivity. Passing to weak limits
establishes the next derivative before it is used in the induction.
Angle derivatives commute with $G$.

For $w_n=\varrho^n/(n!)^2$, the normalized convolution coefficient is
\[
 \frac{w_n\binom nl P^{cl}(l!)^2}{w_{n-l}}
       =\frac{(\varrho P^c)^l}{\binom nl}
       \le(\varrho P^c)^l.
\]
For the stated radius bound, $\varrho P^c\le\tfrac14P^{-c}\le1/4$, and
\[
 P^c\sum_{l\ge1}(\varrho P^c)^l
 =\frac{\varrho P^{2c}}{1-\varrho P^c}\le\frac13<\frac12.
\]
Multiplying
\eqref{eq:corr-pressure-recursion} by $w_n$ and summing then absorbs
the commutator terms, with a factor at most $2$ on the right.
This works with any finite derivative cutoff and
with constants independent of that cutoff. This proves
\eqref{eq:corr-pressure-weighted-bound}.
\hfill$\square$

Apply $\Pi$ to \eqref{eq:corr-e-equation}. Since $D$ commutes with the
Laplacian, viscosity has zero projection. Thus
\begin{equation}
 \mathfrak p^e=\mathfrak p^0+\mathfrak p^1,\qquad
 \mathfrak p^0=-\mathcal T(Z(e)+r^a),\qquad
 \mathfrak p^1=-\mathcal T(b\cdot\nabla e).
 \label{eq:corr-pressure-split}
\end{equation}
The split distinguishes the part with no derivative of $e$ from the
transport pressure. The latter needs one special weighted estimate.

\subsection{The weighted energy and its derivative accounting}
\label{subsec:corr-energy}

The correct energy uses $K=G^{-1}$ so that its highest pressure term cancels.
For a word $I$, put
\[
 E_I^2=\sum_{|\gamma|\le s}
     \int_{\R^3\times\mathbb T}
       (K\partial^\gamma\partial^Ie)\cdot
          \partial^\gamma\partial^Ie,
 \qquad E_n=\sum_{|I|=n}E_I.
\]
Then $P^{-c}|e|_{n,s}\le E_n\le P^c|e|_{n,s}$.
Here $J$ denotes a derivative cutoff, independently of the
truncation order $N$ in Section~\ref{sec:packet-expansion}. Define
\begin{equation}
 X_J=\sum_{n=0}^Jw_nE_n,\qquad
 Y_J=\sum_{n=0}^Jnw_nE_n,
 \qquad w_n=\frac{\varrho(t)^n}{(n!)^2}.
 \label{eq:corr-energies}
\end{equation}
$X_J$ measures the correction; $Y_J$ records the derivative cost that a
decreasing radius can absorb. The letter $X_J$ here is an energy, not a
particle map.

\refstepcounter{equation}
\label{lem:corr-product-pressure}
\textbf{Lemma \theequation\ (Preliminary product and pressure estimates).}
Assume the parent bounds and finite order estimates of
Section~\ref{subsec:corr-inputs}. Fix a time, a finite cutoff $J\ge0$,
and $0<\varrho\le\min\{\varrho_a,\tfrac14P^{-2c}\}$, with $c$
at least the exponent of Lemma~\ref{lem:corr-pressure-estimate}.
For $r=s-1,s$, weighted products satisfy
\[
 \sum_{n=0}^Jw_n|fg|_{n,r}
 \le C\left(\sum_{n=0}^Jw_n|f|_{n,r}\right)
       \left(\sum_{n=0}^Jw_n|g|_{n,r}\right).
\]
The same estimate holds with the first factor measured in
$W^{r,\infty}$ instead of $H^r$.
For a smooth correction $e$, define $b,Z(e)$ by \eqref{intro:eq:b-Z},
$\mathfrak p^0,\mathfrak p^1$ by \eqref{eq:corr-pressure-split}, and $X_J,Y_J$ by
\eqref{eq:corr-energies}. Then
\begin{align}
 \sum_{n=0}^Jw_n|\mathfrak p^0|_{n,s}
   &\le P^c(X_J+X_J^2+R_k),\label{eq:corr-p0-est}\\
 \sum_{n=0}^Jw_n|\mathfrak p^1|_{n,s-1}
   &\le P^c(B_0+X_J)X_J.\label{eq:corr-p1-low}
\end{align}
\begin{equation}
 \sum_{j=0}^{J-1}(j+1)w_{j+1}|\mathfrak p^1|_{j,s}
       \le P^c(B_0+X_J)Y_J.
 \label{eq:corr-p1-shift}
\end{equation}
The sum in \eqref{eq:corr-p1-shift} is zero when $J=0$.
All constants are independent of time, $k$, and $J$.

\emph{Proof.} Both $H^s$ and $H^{s-1}$ are algebras on the
four-dimensional product space. The weighted Leibniz coefficient is
\[
 \frac{w_n\binom nl}{w_lw_{n-l}}=\binom nl^{-1}\le1.
\]
Summing the Sobolev product estimate proves the asserted product bound,
including its coefficient version. In particular,
\[
 \sum_{n=0}^Jw_n|b|_{n,s}\le P^c(B_0+X_J),\qquad
 \sum_{n=0}^Jw_n|Z(e)|_{n,s}\le P^c(X_J+X_J^2).
\]
Applying Lemma~\ref{lem:corr-pressure-estimate} to
\eqref{eq:corr-pressure-split} proves
\eqref{eq:corr-p0-est}--\eqref{eq:corr-p1-low}.
For the latter, the derivative in $\nabla e$ is paid for by the base
$H^s$ norm of $e$, since pressure is measured in $H^{s-1}$.

To prove \eqref{eq:corr-p1-shift}, consider
the term with $l$ derivatives on $b$ and $j-l$ on $\nabla e$.
Its weight relative to $w_l|b|_{l,s}$ and
$(j-l+1)w_{j-l+1}|e|_{j-l+1,s}$ is
\[
 \frac{(j+1)w_{j+1}\binom jl}
      {w_l(j-l+1)w_{j-l+1}}
        =\binom{j+1}{l}^{-1}\le1.
\]
The largest derivative order on $e$ is $j+1\le J$. Applying
\eqref{eq:corr-pressure-recursion} with these shifted weights introduces
only the same absorbable geometric series in $\varrho P^c$.
This proves \eqref{eq:corr-p1-shift} without using derivatives beyond $J$.

\hfill$\square$

\refstepcounter{equation}
\label{lem:corr-energy-estimate}
\textbf{Lemma \theequation\ (Weighted energy inequality).}
Assume the parent bounds and finite order estimates of
Section~\ref{subsec:corr-inputs}. Let $e$ be a smooth solution of
\eqref{eq:corr-e-equation} with $0\le\nu\le1$, and let $\varrho(t)>0$
be differentiable, with
$\varrho(t)\le\min\{\varrho_a,\tfrac14P^{-2c}\}$ throughout the interval.
Here $c$ is a fixed exponent sufficiently large for the estimate below
and at least the exponent of Lemma~\ref{lem:corr-pressure-estimate}.
For every finite derivative cutoff $J$, the energies
\eqref{eq:corr-energies} satisfy
\begin{equation}
 {\begin{aligned}
 X_J'\le{}&P^c(X_J+X_J^2+R_k)\\
 &+\left[\frac{\varrho'}\varrho
       +P^c(\varrho^{-1}+P^c)(B_0+X_J)\right]Y_J.
 \end{aligned}}
 \label{eq:corr-main-energy}
\end{equation}
with constants independent of $J$ and $\nu$.

\emph{Proof.} We use the product and pressure estimates of
Lemma~\ref{lem:corr-product-pressure} to estimate the contributions to
$X_J'$ from the correction equation.

\textbf{Pressure cancellation.}
Apply $\partial^\gamma\partial^I$ to the correction equation and test
against $K\partial^\gamma\partial^Ie$. If
$h=\partial^\gamma\partial^Ie$, the highest pressure term is
\[
 \langle Kh,G\partial^\gamma\partial^I\mathfrak p^e\rangle
   =\langle h,\partial^\gamma\partial^I\mathfrak p^e\rangle=0.
\]
Indeed $K^T=K$, $KG=I$, $\operatorname{div}_Dh=0$, and the pressure
derivative belongs to $\mathcal G$. Only terms with a derivative on $G$ remain.

If at least one base derivative falls on $G$ and all $n$ additional
derivatives fall on pressure, at most $s-1$ base derivatives remain on
pressure. Use \eqref{eq:corr-p1-low}. If $l\ge1$ additional derivatives
fall on $G$, put $j=n-l$. The weight relative to the shifted pressure
sum is
\begin{equation}
 \frac{w_{j+l}\binom{j+l}{l}}{w_l(j+1)w_{j+1}}
       =\varrho^{-1}\frac{j+1}{\binom{j+l}{l}}\le\varrho^{-1}.
 \label{eq:corr-shift-ratio}
\end{equation}
The weighted derivatives of $G$ with $l\ge1$ sum to at most
$\varrho P^{2c}$. Combined with \eqref{eq:corr-p1-shift}, their contribution
is bounded by $P^{2c}(B_0+X_J)Y_J$. The $\mathfrak p^0$ contribution uses
\eqref{eq:corr-p0-est} and has no derivative loss.

\textbf{Transport.}
The term with no derivative on $b$ satisfies
\[
 \langle Kh,b\cdot\nabla h\rangle
       =-\frac12\int ((b\cdot\nabla K)h)\cdot h,
\]
because $\operatorname{div}b=0$. Base commutators are bounded by
$C\|\nabla b\|_{H^{s-1}}\|\nabla\partial^Ie\|_{H^{s-1}}$;
they use no extra derivative beyond $|e|_{n,s}$.
For $l\ge1$ additional derivatives on $b$, the remaining factor is
$|e|_{j+1,s}$, where $j=n-l$. The weight ratio
\eqref{eq:corr-shift-ratio} now bounds the total by
$P^c\varrho^{-1}(B_0+X_J)Y_J$.

\textbf{Lower-order terms, viscosity, and radius.}
The coefficient bounds give a contribution $P^cX_J$ from $K_t$.
Lemma~\ref{lem:corr-product-pressure} bounds $Z(e)$ by
$P^c(X_J+X_J^2)$ and the residual
by $P^cR_k$. For each tested field $h$, integration by parts and Young's
inequality give
\[
 \nu\langle Kh,\Delta h\rangle
 \le-\tfrac12\nu P^{-c}\|\nabla h\|_2^2+\nu P^c\|h\|_2^2.
\]
Thus viscosity contributes at most $P^cX_J$, uniformly for $\nu\le1$.
Finally $w_n'=(n\varrho'/\varrho)w_n$, so differentiation of the weights
contributes $(\varrho'/\varrho)Y_J$.

Combining these estimates proves \eqref{eq:corr-main-energy}.

At a zero of $E_I$, use $(E_I^2+\eta^2)^{1/2}$ first and pass
to $\eta\downarrow0$ at fixed cutoff. Equivalently the inequality can
be read in the integrated or upper-derivative sense. All its constants
are independent of $J$ and $\nu$.
\hfill$\square$

\subsection{The radius and bootstrap estimates}
\label{subsec:corr-bootstrap}

\refstepcounter{equation}
\label{lem:corr-bootstrap-estimate}
\textbf{Lemma \theequation\ (Positive radius and small correction).}
Let $c$ be the fixed exponent in \eqref{eq:corr-main-energy}. Fix
$A\ge2c$ and $0<c_1\le1/4$ such that
$\varrho_i:=c_1P^{-A}\le\varrho_a$.
Assume \eqref{eq:corr-input-bounds} and \eqref{eq:corr-frequency}, with
$Q\ge A+c$ and $k_0$ sufficiently large for these fixed choices.
Then there is a decreasing radius $\varrho(t)$, independent of the derivative
cutoff and viscosity, with $\varrho(t)\ge\varrho_i/2>0$ on $[0,S]$.
Every solution covered by Lemma~\ref{lem:corr-energy-estimate} with
$e(0)=0$, hence $X_J(0)=0$, satisfies
\[
 X_J(t)\le P^cS e^{2P^cS}R_k<\tfrac12\Delta_k
\]
on its existence interval in $[0,S]$, uniformly in $J$ and $\nu$.
Recall $\Delta_k=\exp(-k^\vartheta/2)$ from \eqref{eq:corr-frequency}.

\textbf{Remark.} There \emph{is} a Gronwall exponential in this proof, but it is
$e^{P^cS}$, paid for by the residual
$e^{-0.7k^\vartheta\log k}$. We do not require $e^{P^c}\ll k$.
The chosen frequency condition handles both this residual comparison and
the much slower loss of radius caused by the small transport field.

\emph{Proof.} The stated choices give
$\varrho_i\le\min\{\varrho_a,\tfrac14P^{-2c}\}$, so the pressure absorption condition of
Lemma~\ref{lem:corr-pressure-estimate} holds for every
$0<\varrho\le\varrho_i$, as required by
Lemmas~\ref{lem:corr-product-pressure} and~\ref{lem:corr-energy-estimate}.
Take
\begin{equation}
 \varrho(t)=\varrho_i-Lt,\qquad
 L=2P^c(B_0+\Delta_k),
 \label{eq:corr-radius-choice}
\end{equation}
Since $\varrho P^c\le\varrho_iP^c\le1$, we have $P^c\le\varrho^{-1}$, so
under the bootstrap $X_J\le\Delta_k$ the coefficient of $Y_J$ in
\eqref{eq:corr-main-energy} satisfies
\[
 \frac{\varrho'}{\varrho}
 +P^c(\varrho^{-1}+P^c)(B_0+X_J)
 \le\frac{-L+2P^c(B_0+\Delta_k)}{\varrho}\le0.
\]

We must also check that this radius survives until $S$. The condition
\eqref{eq:corr-frequency} implies $P^a\le k^{a\vartheta/Q}$
for every fixed exponent $a$. Since $Q\ge A+c$, we have
$P^{A+c}\le k^\vartheta$.
Using $S\le1$ and $B_0\le k^{-1/2}$ gives
\[
 \frac{LS}{\varrho_i}
 \le C P^{A+c}(k^{-1/2}+e^{-k^\vartheta/2})\longrightarrow0.
\]
Thus, after increasing $k_0$, $\varrho(t)\ge\varrho_i/2$ on $[0,S]$.
This check is essential: an estimate at a radius that reaches zero would
not give the claimed Gevrey control.

Now bootstrap $X_J\le\Delta_k<1$. With the same fixed exponent $c$,
\eqref{eq:corr-main-energy} reduces to
\[
 X_J'\le P^c(2X_J+R_k),\qquad X_J(0)=0.
\]
Integration yields
\begin{equation}
 X_J(t)\le P^cS e^{2P^cS}R_k<\tfrac12\Delta_k.
 \label{eq:corr-bootstrap-bound}
\end{equation}
For clarity, the last inequality follows by taking logarithms. Its left
bound has logarithm at most
\[
 c\log P+2P^c-0.7k^\vartheta\log k.
\]
Since $Q\ge A+c\ge3c$, we have $P^c\le k^{\vartheta/3}
\le k^{\vartheta/2}$.
This expression is then smaller than $-k^\vartheta/2-\log2$.
Continuity excludes a first time at which $X_J=\Delta_k$.
\hfill$\square$

\subsection{Proof of the exact-correction proposition}

\emph{Proof of Proposition~\ref{prop:exact-packet-correction}.}
Lemmas~\ref{lem:corr-pressure-estimate}--\ref{lem:corr-bootstrap-estimate}
supply estimates uniform in viscosity. To construct a solution,
define the bounded projection
\[
 \mathcal P(t)=I-G\mathcal T.
\]
Its range is the fixed space $\mathcal G^\perp$; it fixes every field in
that space and annihilates $Gp$ for $p\in\mathcal G$. Apply it to
the non-heat terms of \eqref{eq:corr-e-equation} to get
\begin{equation}
 e_t-\nu\Delta e
   =-\mathcal P(t)(b\cdot\nabla e+Z(e)+r^a),\qquad e(0)=0.
 \label{eq:corr-parabolic}
\end{equation}
There is no $\mathcal P_t$ term: we are projecting the equation, and
$e_t$ already lies in its fixed range.

For each fixed $q=s+J$, the right side is locally Lipschitz from $H^q$
to $H^{q-1}$. The heat estimate
\[
 \|e^{\nu t\Delta}f\|_{H^q}
    \le C\bigl(1+(\nu t)^{-1/2}\bigr)\|f\|_{H^{q-1}}
\]
has an integrable time singularity. In the Duhamel formula it gives a
contraction on a small ball in $C([0,t_1];H^q)$. This constructs a local
solution, extendible while its $H^q$ norm remains bounded. The fixed
divergence constraint is preserved by both the heat semigroup and the
right side of \eqref{eq:corr-parabolic}.

The energy computation is valid for these solutions. One justification
is heat maximal regularity: on a compact existence interval it gives
$e\in L^2_tH^{q+1}$ and $e_t\in L^2_tH^{q-1}$. After differentiating
through order $q$, the equation can therefore be tested by the
$H^{-1}$--$H^1$ pairing. Multiplication by $K$ preserves $H^1$.
This justifies the same integrations by parts as for smooth functions;
the resulting bound is independent of $\nu$.

For every finite $J$, \eqref{eq:corr-bootstrap-bound} bounds $H^{s+J}$
on $[0,S]$ and hence prevents loss of existence there. Uniqueness at
the base index identifies the solutions obtained for different $J$.
We obtain smooth viscous solutions $e^\nu$ with the uniform estimates
above. For every fixed integer $j\ge0$, their equations and the pressure
inverse imply
\[
 \sup_{0<\nu\le1}
 \left(\|e^\nu\|_{L^\infty_tH^{j+2}}
       +\|e_t^\nu\|_{L^\infty_tH^j}
       +\|\mathfrak p^{e,\nu}\|_{L^\infty_tH^j}\right)<\infty.
\]
These bounds may depend on $j$ and the fixed data of
Subsection~\ref{subsec:corr-inputs}, including the parent bounds
\eqref{eq:corr-parent-bounds} and the finite order estimates
\eqref{eq:corr-input-bounds}, but are uniform in $\nu$, which suffices
for compactness. The two extra spatial derivatives bound
$\nu\Delta e^\nu$.

Local Sobolev compactness and time equicontinuity now give, along a
diagonal sequence $\nu\downarrow0$,
\[
 e^\nu\to e\quad\hbox{in }C([0,S];H^j_{\rm loc})
 \quad\hbox{for every finite }j.
\]
Pressure has a weakly convergent subsequence in each corresponding
global Sobolev space. Also $\nu\Delta e^\nu\to0$ in $L^\infty_tH^j$,
using the uniform bound two orders higher. Local strong convergence
passes the nonlinear terms to the limit; the weak pressure limit remains
in $\mathcal G$. Thus \eqref{eq:corr-e-equation} holds with $\nu=0$,
and $e(0)=0$.

At every fixed time, weak lower semicontinuity preserves the finite sums
$X_J$. Monotone convergence in $J$ then proves the full weighted bound
at $\varrho_{\min}=\varrho_i/2$. To estimate $e_t$ and $\mathfrak p^e$ we need one additional
derivative of $e$. Shrink once more to $\varrho_*=\varrho_{\min}/2$; the cost is
\begin{equation}
 \sup_{n\ge0}
 \frac{\varrho_*^n((n+1)!)^2}{\varrho_{\min}^{n+1}(n!)^2}
   =\varrho_{\min}^{-1}\sup_{n\ge0}(n+1)^2 2^{-n}
   \le P^c.
 \label{eq:corr-radius-derivative}
\end{equation}
The pressure formula and the inviscid correction equation now prove
\eqref{eq:corr-conclusion}. They also give continuity in each spatial
Sobolev norm. Differentiating these equations in time, including
$\Pi Gp=\Pi f$ for pressure, gives smoothness in time by induction,
since the parent and approximate fields are smooth.

If the parent and approximate velocity have the prescribed odd parity
under $(y,\theta)\mapsto(-y,-\theta)$, the viscous equation has the same
symmetry. Uniqueness gives odd $e^\nu$, and oddness passes to $e$.
This is the symmetry that keeps the origin stationary
for the following layer; see Subsection~\ref{subsec:parity-origin}.

\subsection{The physical solution and initial data}
\label{subsec:corr-physical}

The smooth vector pressure $\mathfrak p^e\in\mathcal G$ supplies a
scalar $q^e$ through \eqref{intro:eq:scalar-pressure-recovery}:
\[
 \nabla_yq^e=k^{-1}(\mathfrak p^e)^\diag.
\]
The fields \eqref{intro:eq:physical-child} therefore solve Euler:
adding the approximate equation to the inviscid correction equation
cancels the residual, and \eqref{intro:eq:fixed-div-identity} gives
incompressibility. This holds at every physical point.

Passing from Sobolev estimates on the product space to global Sobolev
estimates on the graph is elementary:
one-dimensional Sobolev embedding in $\theta$ gives
\[
 \int_{\R^3}|f(y,km_0\cdot y)|^2\,dy
   \le C\int_{\R^3}\|f(y,\cdot)\|_{H^1(\mathbb T)}^2\,dy.
\]
For $r$ spatial derivatives, the chain rule gives a further finite factor
$C_rk^r$ and requires derivatives of $f$ through order $r+1$ before
restriction. The Gevrey bounds provide all these derivatives. The
bounded derivatives of $F^{\pm1}$ and the volume-preserving physical
coordinate change transfer the estimates to physical space.

Finally $e(0)=0$, so
\begin{equation}
 u^{\rm new}(0,\ell y)-u(0,\ell y)
                  =\ell(W^a)^{\rm diag}(0,y).
 \label{eq:corr-initial-preserved}
\end{equation}
The exact velocity increment $u^{\rm new}-u$ therefore inherits the initial
support and initial Sobolev estimates of the approximate increment
$\ell(W^a)^{\rm diag}$. The pressure
and the positive-time velocity correction need not be spatially localized.
This completes the proof of Proposition~\ref{prop:exact-packet-correction}.
\hfill$\square$

\subsection{The leading gradient and pressure Hessian}

\refstepcounter{equation}
\label{cor:corr-physical-expansions}
\textbf{Corollary \theequation\ (Leading physical derivatives).}
Assume Proposition~\ref{prop:exact-packet-correction}, with the finite order
approximation of Section~\ref{sec:packet-expansion}, with the fixed constants
$Q,k_0$ enlarged if necessary. The physical fields
\eqref{intro:eq:physical-child} satisfy, uniformly in space and on $[0,S]$,
\begin{equation}
 \begin{aligned}
 \nabla u^{\rm new}-\nabla u
   &=\alpha\chi_1v\otimes m\,f_\delta'(\theta)+O(k^{-1/2}),\\
 \nabla^2p^{\rm new}-\nabla^2p
   &=-2\alpha\chi_1(m\cdot Mv)
       \frac{mm^T}{|m|^2}f_\delta'(\theta)+O(k^{-1/2}).
 \end{aligned}
 \label{eq:corr-final-expansions}
\end{equation}
where $x=X(t,\ell y)$ and $\theta=km_0\cdot y$.

\emph{Proof.} We can now quantify the contribution of the exact correction to the two
main physical errors. At a smaller fixed radius, every fixed number of
ordinary derivatives costs only $P^c$, by
\eqref{eq:corr-radius-derivative}. One physical derivative of the velocity
correction $\ell k^{-1} Fe$ gives
\[
 \nabla_x(\ell k^{-1} Fe)^{\rm diag}
       =\bigl(\tilde d(k^{-1} Fe)\bigr)^{\rm diag}.
\]
The largest phase factor is $k$, canceled by $k^{-1}$; derivatives of
$F$ are controlled coefficients. Hence this gradient is $O(P^c\Delta_k)$
globally and throughout $[0,S]$.

For pressure, write $p^e(t,X(t,\ell y))=\ell^2q^e(t,y)$. Then
\eqref{intro:eq:scalar-pressure-recovery} gives
\[
 (\nabla_xp^e)(t,X(t,\ell y))
 =\ell F^{-T}\nabla_yq^e
 =\ell k^{-1}F^{-T}(\mathfrak p^e)^{\rm diag}.
\]
Applying $\nabla_x$ once more
again cancels the factor $\ell$ and costs at most $kP^c$. Thus the
pressure Hessian correction is also $O(P^c\Delta_k)$.
Under \eqref{eq:corr-frequency}, both are smaller than $k^{-1/2}$
for sufficiently large $k$.

To recover the \emph{complete} leading expansions, also use
\eqref{eq:packet-leading-remainders}: its higher velocity terms begin at $k^{-2}$,
and its higher oscillatory pressure terms at $k^{-3}$. After one and
two graph derivatives, respectively, their errors are $O(k^{-1}P^c)$.
The leading phase derivatives are already computed in
\eqref{eq:gradient} and \eqref{eq:leading-pressure-hessian}; they give
the main terms on the right side of \eqref{eq:corr-final-expansions}.
The remainder from the leading velocity has no phase derivative, so its
contribution to the velocity gradient is $O(k^{-1}P^c)$.
In the remaining terms from the leading oscillatory pressure, at most one
derivative falls on the phase, so their contribution to the pressure
Hessian is also $O(k^{-1}P^c)$.
The mean pressure terms are estimated through
their spatial gradients; they have no phase derivative and begin at
order $k^{-2}$, so their contribution to the pressure Hessian is
$O(k^{-2}P^c)$.

Combining all these contributions, the total remainder in each of the
velocity-gradient and pressure-Hessian expansions is
$O(k^{-1}P^c+P^c\Delta_k)$, for a fixed exponent $c$.
Choose $Q\ge2c\vartheta$ in \eqref{eq:corr-frequency}. Then
\[
 k^{-1}P^c\le k^{-1+c\vartheta/Q}\le k^{-1/2},\qquad
 P^c\Delta_k\le k^{c\vartheta/Q}e^{-k^\vartheta/2}
 \le k^{-1/2}
\]
for sufficiently large $k_0$. This proves the stated $O(k^{-1/2})$
remainders. These are the global error estimates needed to preserve
the next shear geometry and the upper pressure-Hessian bound.
\hfill$\square$

\textbf{Remark.}
This proves the exact-correction step under the stated inputs. The
quantitative child particle-map bound used by the next stage is an
additional result, proved in Section~\ref{sec:child-map} by the
lifted-flow composition argument; mere smoothness
of $u^{\rm new}$ is not a substitute for it. Section~\ref{sec:packet-expansion} supplies the small lifted transport,
polynomial coefficient bounds and exponentially small residual under its
parent inputs. Those inputs, the amplification and frame-transfer hypotheses,
and the common choice of scales must still be verified at every stage.

\section{The child particle map}
\label{sec:child-map}
\fancyhead[R]{\small The child particle map}

Section~\ref{sec:exact-correction} produces an exact child velocity
$u^{\rm new}$. To use it as the parent of another layer, we also need
quantitative bounds for its particle map. Directly exponentiating a
physical Lipschitz bound would obscure the required dependence on $k$.
Instead, the leading wave has small label transport and zero angle
transport by transversality. We estimate its flow on the lifted
$(y,\theta)$-space $\R^3\times\mathbb T$, then restrict to the physical phase graph and
compose with the parent flow, as derived in
Subsection~\ref{intro:subsec:physical-child}. The new task is to quantify
that construction in a Gevrey-2 norm.

\subsection{The additional input and the child-map bound}

Retain $s=6$, $0<S\le1$, and all hypotheses and conclusions of
Proposition~\ref{prop:exact-packet-correction}. In addition assume
$0<\ell\le1$, $\ell^{-1}\le P^{c_0}$, and the following bound in the
physical initial label $a$:
\begin{equation}\label{eq:child-parent-map}
 \sum_{|I|=n}\sup_{0\le t\le S}
 \|\partial_a^I(X-\mathrm{id},X_t,X_{tt})(t)\|_{H^s(\R^3)}
 \le K_h^{n+1}(n!)^2,\qquad 1\le K_h\le P^{c_0},\quad n\ge0.
\end{equation}
The norm of a tuple means the sum of its component norms. This is an
additional induction input, not a consequence of the correction theorem.
All constants below are independent of the stage and $k$. The fixed
exponents may depend on the input budget, including $c_0$.

Use the child particle map $X^{\rm new}=X\circ\Psi$ from
\eqref{intro:eq:relative-flow}; its flow equation and initial condition
were verified in Subsection~\ref{intro:subsec:physical-child}.

\refstepcounter{equation}\label{prop:child-map}
\textbf{Proposition \theequation\ (Gevrey bounds for the child particle map).}
Under these assumptions, after a fixed enlargement of $Q$ and $k_0$ in
\eqref{eq:corr-frequency}, the particle map $X^{\rm new}$ of
$u^{\rm new}$ satisfies
\begin{equation}\label{eq:child-map-output}
 \sum_{|I|=n}\sup_{0\le t\le S}
 \|\partial_a^I(X^{\rm new}-\mathrm{id},X_t^{\rm new},X_{tt}^{\rm new})(t)\|_{H^s}
 \le (k^{C_*})^{n+1}(n!)^2,\qquad C_*=10(s+2),\quad n\ge0.
\end{equation}
The map is a smooth volume-preserving diffeomorphism for every
$t\in[0,S]$. In particular, the bound covers the child's whole retained interval $[0,S]$,
including its history, rather than only the activation-to-target interval.

\textbf{Remark.} Spatial derivatives of $X^{\rm new}$ and the inverse
matrix $(\nabla_aX^{\rm new})^{-1}$ consequently have Gevrey-2 bounds
with size and derivative-growth constants bounded by fixed powers of $k$.
For the inverse matrix, use $\det\nabla_aX^{\rm new}=1$ and the cofactor
formula, so only products of first derivatives of the map are needed.
This is a statement about the inverse deformation matrix at label $a$.

\subsection{Composition and a small lifted flow}

For this subsection let
$|f|_{n,\infty}=\sum_{|I|=n}\|\partial^If\|_{L^\infty}$ and
$|f|_{n,2}=\sum_{|I|=n}\|\partial^If\|_{L^2}$, with component norms
summed. These are derivative word sums, not the $H^r$-based notation
of Section~\ref{subsec:corr-inputs}. On a product with $\mathbb T$,
map displacements are defined using the angular lift to $\R$ and are
periodic in that variable.

\refstepcounter{equation}\label{lem:child-composition}
\textbf{Lemma \theequation\ (Composition at a fixed Gevrey scale).}
Let $g=\mathrm{id}+h$ be a smooth map in fixed dimension, with
\[
 |h|_{n,\infty}\le A_iR_i^n(n!)^2,\qquad
 |f|_{n,\infty}\le A_oR_o^n(n!)^2\quad(n\ge0),
 \qquad R_i,R_o\ge1.
\]
There is a dimensional constant $d$ such that, if
\begin{equation}\label{eq:child-composition-radius}
 2R_i\zeta\le1,\qquad R_o(d+2A_iR_i)\zeta\le\tfrac12,
\end{equation}
then
\[
 |f\circ g|_{n,\infty}\le C A_o\zeta^{-n}(n!)^2.
\]
If $g$ is a volume-preserving diffeomorphism and the bounds on $f$ hold
also in $L^2$, the conclusion holds in $L^2$ as well.

\emph{Proof.} In the ordered chain rule, group $n$ derivatives into
$q$ nonempty blocks of sizes $j_1,\ldots,j_q$. Each block groups the
differentiations that act on one factor $g$: the resulting term has
$q$ derivatives of $f$, evaluated at $g$, applied to derivatives of $g$
of orders $j_1,\ldots,j_q$, with $j_1+\cdots+j_q=n$.
Summing over ordered block sizes uses the coefficient $n!/(q!\prod_i j_i!)$. After dividing
by $(n!)^2$ and inserting the outer Gevrey factor $(q!)^2$, the
coefficient for the normalized inner derivatives is
\begin{equation}\label{eq:child-composition-factor}
 \frac{q!\prod_{i=1}^q j_i!}{n!}\le1,
 \qquad j_i\ge1,\quad \sum_i j_i=n.
\end{equation}
It equals one when every $j_i=1$. Increasing any $j_i$ multiplies the
ratio by $(j_i+1)/(n+1)\le1$, which proves the inequality.
Component contractions are bounded by the component and word sums.
If $U_g(\zeta)=\sum_{j\ge1}|g|_{j,\infty}\zeta^j/(j!)^2$, the
chain rule is therefore bounded coefficientwise by
$A_o\sum_{q\ge1}(R_oU_g)^q$. The identity contributes at most $d\zeta$,
and
\[
 U_g(\zeta)\le d\zeta+
 A_i\frac{R_i\zeta}{1-R_i\zeta}
 \le(d+2A_iR_i)\zeta.
\]
Condition \eqref{eq:child-composition-radius} bounds the geometric
series. The order-zero bound is direct. For $L^2$, put the outer
factor $(\partial^q f)\circ g$ in $L^2$ and every derivative of $g$
in $L^\infty$. Volume preservation gives
$\|(\partial^q f)\circ g\|_2=\|\partial^q f\|_2$, so the same
calculation applies. All estimates can first be truncated at a finite
order, avoiding any assumption that a derivative series converges.
\hfill$\square$

\refstepcounter{equation}\label{lem:child-small-flow}
\textbf{Lemma \theequation\ (Flow of a small Gevrey transport field).}
Let $b$ be a smooth divergence-free field on $\R^3\times\mathbb T$,
with, uniformly on $[0,S]$,
\[
 |b|_{n,\infty}+|b|_{n,2}\le B R_b^n(n!)^2,
 \qquad R_b\ge1,\qquad B R_b S\le\tfrac14.
\]
Define its flow by
\[
 \partial_t\Phi(t,y_0,\theta_0)
   =b\bigl(t,\Phi(t,y_0,\theta_0)\bigr),\qquad
 \Phi(0,y_0,\theta_0)=(y_0,\theta_0).
\]
Then $\Phi(t)$ is a volume-preserving diffeomorphism, and for a dimensional $C$,
\begin{equation}\label{eq:child-lifted-flow-bound}
 |\Phi-\mathrm{id}|_{n,\infty}+|\Phi-\mathrm{id}|_{n,2}
 \le C BS(CR_b)^n(n!)^2\quad(n\ge0).
\end{equation}
If also $|b_t|_{n,\infty}+|b_t|_{n,2}\le A_tR_t^n(n!)^2$,
then $\Phi_t,\Phi_{tt}$ have bounds of the same Gevrey form, with
size and growth constants polynomial in $1+B+R_b+A_t+R_t$.

\emph{Proof.} Boundedness and the Lipschitz bound for $b$ give a global
flow on $[0,S]$; solving backward gives its inverse. The Jacobian
satisfies $\partial_t\det D\Phi=(\operatorname{div}b)\circ\Phi\det D\Phi=0$.
Fix a derivative cutoff $J\ge1$ and put
\[
 U_J(t,\zeta)=d\zeta+
 \sum_{n=1}^J\frac{|\Phi(t)-\mathrm{id}|_{n,\infty}}{(n!)^2}\zeta^n,
 \qquad \zeta=(4dR_b)^{-1}.
\]
Using \eqref{eq:child-composition-factor} in
$\Phi-\mathrm{id}=\int_0^t b(t',\Phi(t'))\,dt'$ gives
\begin{equation}\label{eq:child-flow-series}
 U_J(t,\zeta)\le d\zeta+
 \int_0^t B\frac{R_bU_J(t',\zeta)}{1-R_bU_J(t',\zeta)}\,dt'
\end{equation}
while $R_bU_J<1$. The inequality follows by retaining terms only
through degree $J$, then bounding by the nonnegative full geometric
series. While $R_bU_J\le1/2$, Gronwall applied to
\eqref{eq:child-flow-series} gives
$R_bU_J\le\tfrac14e^{2BR_bt}\le\tfrac14e^{1/2}<1/2$.
Continuity closes this bootstrap on $[0,S]$.
Returning to the integral equation bounds its displacement series by
$BS$. This proves \eqref{eq:child-lifted-flow-bound} in $L^\infty$.
For $L^2$, put the outer derivatives of $b$ in $L^2$ and use volume
preservation, exactly as in Lemma~\ref{lem:child-composition}. The same
series is bounded by $BS$. At order zero, direct integration gives
both bounds. Constants are independent of $J$, so every order is covered.

Finally,
\begin{equation}\label{eq:child-lifted-time}
 \Phi_t=b\circ\Phi,\qquad
 \Phi_{tt}=(b_t+(b\cdot\nabla)b)\circ\Phi.
\end{equation}
One derivative is absorbed by replacing a Gevrey growth constant $R$
by $2R$, since $(n+1)^2\le C2^n$. The product estimate of
Lemma~\ref{lem:corr-product-pressure}, with $L^\infty$ factors and one
$L^2$ factor when needed, then bounds $(b\cdot\nabla)b$.
Lemma~\ref{lem:child-composition} applies to both compositions in
\eqref{eq:child-lifted-time}. Its admissible $\zeta^{-1}$ is polynomial
in the stated constants, which proves the last assertion.
\hfill$\square$

\subsection{The transport field and the physical phase graph}

Use $z=z^a+e$ and the divergence-free transport field
$b=(k^{-1}z,m_0\cdot z)$ from \eqref{intro:eq:b-Z}.
The leading contribution to its angle component vanishes because
$m_0\cdot F^{-1}A_1=m\cdot A_1=0$. Equations
\eqref{eq:packet-small-transport} and \eqref{eq:corr-conclusion}, together
with Sobolev embedding, give at the common radius $\varrho_*$ of
\eqref{eq:corr-conclusion}, with no further radius loss,
\begin{equation}\label{eq:child-transport-bounds}
 \begin{aligned}
 |b|_{n,\infty}+|b|_{n,2}&\le B R_b^n(n!)^2,
 &B&\le Ck^{-1/2},& R_b&=\varrho_*^{-1}=P^c,\\
 |b_t|_{n,\infty}+|b_t|_{n,2}&\le P^c R_b^n(n!)^2.
 \end{aligned}
\end{equation}
The second line uses the bounds for $z_t^a$ and $e_t$ in
\eqref{eq:corr-input-bounds} and \eqref{eq:corr-conclusion}.
The exponentially small $P^c\Delta_k$ is absorbed into the first line.
Enlarging $Q,k_0$ in \eqref{eq:corr-frequency} makes $BR_bS\le1/4$.
Lemma~\ref{lem:child-small-flow} now gives bounds of size and growth
constant $P^c$ for $\Phi-\mathrm{id},\Phi_t,\Phi_{tt}$ in both norms,
uniformly on $[0,S]$.

The graph restriction and rescaling in \eqref{intro:eq:relative-flow}
turn this lifted flow into the relative physical flow $\Psi$.
The following lemma quantifies their derivative cost.

\refstepcounter{equation}\label{lem:child-graph-map}
\textbf{Lemma \theequation\ (Relative flow on the physical graph).}
Let $\Phi$ be the flow \eqref{intro:eq:lifted-flow} of the field
just estimated, and define $\Psi$ by \eqref{intro:eq:relative-flow}.
Then $\Psi$ is a volume-preserving diffeomorphism of $\R^3$,
$\Psi(0,a)=a$, and, for every $n\ge0$,
\begin{equation}\label{eq:child-relative-bounds}
 \sum_{|I|=n}\sup_{0\le t\le S}
 \|\partial_a^I(\Psi-\mathrm{id},\Psi_t,\Psi_{tt})\|_{L^2\cap L^\infty}
 \le P^c(kP^c)^n(n!)^2.
\end{equation}
Here the intersection norm is the sum of the two norms.
The child particle map is $X^{\rm new}=X\circ\Psi$, as in
\eqref{intro:eq:relative-flow}.

\emph{Proof.}
Subsection~\ref{intro:subsec:physical-child} proves that $\Phi$ and its
inverse preserve $\theta=km_0\cdot y$, that the restricted label
velocity is divergence free, and that $X\circ\Psi$ solves the child
flow equation. Thus $\Psi$ is a volume-preserving diffeomorphism
with $\Psi(0)=\mathrm{id}$.

To prove \eqref{eq:child-relative-bounds}, apply the graph trace bound
in Section~\ref{subsec:corr-physical} to the first three components of
$\Phi-\mathrm{id},\Phi_t,\Phi_{tt}$. Each graph derivative costs at
most $Ck$, and the $L^2$ trace uses one further angle derivative.
For any of these fields $f$, the rescaling $a=\ell y$ therefore gives
\[
 \begin{aligned}
 |\partial_a^n[\ell f(a/\ell,km_0\cdot a/\ell)]|_{\infty}
 &\le \ell^{1-n}(Ck)^n P^c(P^c)^n(n!)^2,\\
 |\partial_a^n[\ell f(a/\ell,km_0\cdot a/\ell)]|_{2}
 &\le \ell^{5/2-n}(Ck)^n P^c(P^c)^{n+1}((n+1)!)^2.
 \end{aligned}
\]
Here $\partial_a^n$ denotes the word sum. Use $\ell\le1$,
$\ell^{-1}\le P^{c_0}$ and absorb $(n+1)^2$ into a fixed enlargement
of the growth constant. To place the time supremum inside this sum,
note that there are $3^n$ derivative words in the physical labels $a$.
For any field $u$ and either spatial norm $\mathcal X=L^2,L^\infty$,
\begin{equation}\label{eq:child-wordwise-time}
 \sum_{|I|=n}\sup_{0\le t\le S}\|\partial_a^Iu(t)\|_{\mathcal X}
 \le 3^n\sup_{0\le t\le S}
          \sum_{|I|=n}\|\partial_a^Iu(t)\|_{\mathcal X}.
\end{equation}
Indeed, each summand on the left is bounded by the supremum on the
right. The factor $3^n$ is absorbed by multiplying the derivative-growth
constant by three. The same argument applies to tuples and to the sum
of the two norms, proving \eqref{eq:child-relative-bounds}.
\hfill$\square$

\subsection{Transfer of the parent bounds}

\emph{Proof of Proposition~\ref{prop:child-map}.}
The preceding lemmas construct the map on the full interval $[0,S]$.
For its displacement and time derivatives, use
\begin{equation}\label{eq:child-time-composition}
 \begin{aligned}
 X^{\rm new}-\mathrm{id}
   &=(\Psi-\mathrm{id})+(X-\mathrm{id})\circ\Psi,\\
 X_t^{\rm new}&=X_t\circ\Psi+(\nabla_aX\circ\Psi)\Psi_t,\\
 X_{tt}^{\rm new}&=X_{tt}\circ\Psi
   +2(\nabla_aX_t\circ\Psi)\Psi_t
   +(\nabla_a^2X\circ\Psi)[\Psi_t,\Psi_t]
   +(\nabla_aX\circ\Psi)\Psi_{tt}.
 \end{aligned}
\end{equation}
In the factors containing $\nabla_aX$, write
$\nabla_aX=I+\nabla_a(X-\mathrm{id})$; the constant matrix is not
placed in $L^2$. The remaining outer functions are derivatives of
$X-\mathrm{id},X_t,X_{tt}$ of order at most two. Hypothesis
\eqref{eq:child-parent-map} and Sobolev embedding bound them in
$L^2$ and $L^\infty$ with size and growth constants $P^c$.
Absorbing these two fixed derivative shifts only enlarges those constants.

Apply Lemma~\ref{lem:child-composition} with $g=\Psi$,
$A_i\le P^c$, $R_i\le kP^c$, and $A_o,R_o\le P^c$.
Both requirements in \eqref{eq:child-composition-radius} hold with
$\zeta^{-1}=kP^c$ after fixing a sufficiently large exponent.
The $L^2$ conclusion applies because $\Psi$ preserves volume.
Products in \eqref{eq:child-time-composition} use one $L^2$ factor
and the others in $L^\infty$; the weighted product calculation from
Lemma~\ref{lem:corr-product-pressure} preserves a common enlarged
Gevrey scale. These estimates first bound the derivative-word sum
uniformly in time. Applying \eqref{eq:child-wordwise-time} and absorbing
its factor $3^n$ into the growth constant gives
\[
 \sum_{|I|=n}\sup_t
 \|\partial_a^I(X^{\rm new}-\mathrm{id},X_t^{\rm new},X_{tt}^{\rm new})\|_{L^2\cap L^\infty}
 \le P^c(kP^c)^n(n!)^2.
\]
Passing to $H^s$ uses at most $s$ additional derivatives. Since
$((n+s)!/n!)^2\le C_s^{n+1}$ for fixed $s$, the result is bounded by
\[
 C_s^{n+1}P^c(kP^c)^{n+s}(n!)^2.
\]
Choose $Q$ large enough that every fixed $P$-power here is at most $k$,
and increase $k_0$ so that $C_s\le k$. This is possible under
$P^Q\le k^\vartheta$. The last bound is then at most
$k^{3n+2s+2}(n!)^2$, which is bounded by
$(k^{10(s+2)})^{n+1}(n!)^2$. This proves
\eqref{eq:child-map-output}. Finally, both $X$ and $\Psi$ preserve
volume and are diffeomorphisms, so their composition has the same
properties. No exponential in a positive power of $k$ enters the bound.
\hfill$\square$

The exact child solution now has the particle-map estimate needed by the next
stage, conditional on \eqref{eq:child-parent-map} and the inputs of
Subsection~\ref{subsec:corr-inputs}. The full initial support and Sobolev estimate, including all terms of
the finite approximation, is proved next in Section~\ref{sec:initial-packet}.

\section{Support and Sobolev bounds at time zero}
\label{sec:initial-packet}
\fancyhead[R]{\small Bounds at time zero}

The exact correction $e$ satisfies $e(0)=0$ by
\eqref{eq:corr-e-equation}, so its construction does
not alter the new layer's initial data. We now record its support and Sobolev
bounds, retaining the factor $\alpha$ in the oscillatory part.
Equation \eqref{eq:corr-initial-preserved} reduces this task to the
finite approximation \eqref{intro:eq:finite-packet}. The bounds will
be used to sum the initial increments in the stage construction;
that summability still requires the stage parameters.

\subsection{The initial-data proposition}

Assume the hypotheses of Proposition~\ref{prop:exact-packet-correction},
with the approximation of Section~\ref{sec:packet-expansion},
$0<\ell\le1$, and the frequency condition \eqref{eq:corr-frequency}.
All constants have the fixed-input dependencies of those sections.
No additional parent particle-map bound is needed here, because
$X(0,a)=a$ and $F(0,y)=I$.
Set $y_a=a/\ell$, $\theta_a=km_0\cdot a/\ell$, and define
\begin{equation}\label{eq:initial-split}
 \begin{aligned}
 u_{\mathrm{osc},0}(a)
 &=\ell\sum_{p=1}^N
       \bigl(k^{-p}A_p+k^{-(p+1)}C_p\bigr)(0,y_a,\theta_a),\\
 u_{\mathrm{mean},0}(a)
 &=\ell\sum_{p=2}^N k^{-p}B_p(0,y_a).
 \end{aligned}
\end{equation}
The symbols denote two parts of the physical initial velocity increment,
not two separate Euler solutions.

\refstepcounter{equation}\label{prop:initial-packet}
\textbf{Proposition \theequation\ (Initial support and derivative bounds).}
After a fixed enlargement of $Q,k_0$, independent of the Sobolev order
below, the exact child satisfies
\begin{equation}\label{eq:initial-packet-support}
 \begin{gathered}
 u^{\rm new}(0)-u(0)=u_{\mathrm{osc},0}+u_{\mathrm{mean},0},\\
 \operatorname{div}_a u_{\mathrm{osc},0}
 =\operatorname{div}_a u_{\mathrm{mean},0}=0,\\
 \operatorname{supp}u_{\mathrm{osc},0}\subseteq\{|a|\le\ell/2\},
 \qquad
 \operatorname{supp}u_{\mathrm{mean},0}\subseteq\{|a|\le2\}.
 \end{gathered}
\end{equation}
For every fixed integer $r\ge0$, there is an exponent $c_r$, independent
of the stage and frequency, such that
\begin{equation}\label{eq:initial-packet-sobolev}
 \|u_{\mathrm{osc},0}\|_{H^r(\R^3)}
       \le\ell^{-r}\alpha k^rP^{c_r},\qquad
 \|u_{\mathrm{mean},0}\|_{H^r(\R^3)}
       \le\ell^{-r}k^{-2}P^{c_r}.
\end{equation}
If $t_a=0$ and $\lambda_b=0$ in the mean boundary condition
\eqref{eq:mean-inverse-conclusions}, then $u_{\mathrm{mean},0}=0$ and
\begin{equation}\label{eq:initial-curl-formula}
 u^{\rm new}(0,a)-u(0,a)
 =-\frac{\ell^2\alpha}{k^2}\operatorname{curl}_a
 \left[\chi_1(a/\ell)\bigl(m_0\times v(0,a/\ell)\bigr)
                  \varphi_\delta(km_0\cdot a/\ell)\right],
 \qquad \varphi_\delta=\partial_\theta^{-1}f_\delta.
\end{equation}

\textbf{Remark.} The same $Q,k_0$ work for every fixed $r$; only $c_r$
changes. No smallness of $\alpha$ is assumed in this proposition.
The factor $\alpha$ must remain visible for the later initial-data
summability argument.

\subsection{Sum the initial coefficients before taking the physical trace}

Let $\|\cdot\|_{\varrho,s}$ be the product-space Gevrey norm of
Section~\ref{subsec:packet-truncation}. For an angle-independent field use the corresponding
$H^s(\R^3_y)$ norm. Define the unscaled initial profiles
\[
 \mathcal W_{\mathrm{osc},0}
 =\sum_{p=1}^N k^{-p}(A_p+k^{-1}C_p)(0),\qquad
 \mathcal W_{\mathrm{mean},0}
 =\sum_{p=2}^N k^{-p}B_p(0).
\]

\refstepcounter{equation}\label{lem:initial-profile-sum}
\textbf{Lemma \theequation\ (Initial profile sums with the amplitude retained).}
At a radius $\varrho=P^{-c}$ independent of $N$, the initial profiles obey
\begin{equation}\label{eq:initial-profile-sums}
 \|\mathcal W_{\mathrm{osc},0}\|_{\varrho,s}
       \le\alpha k^{-1}P^c,\qquad
 \|\mathcal W_{\mathrm{mean},0}\|_{\varrho,s}
       \le k^{-2}P^c.
\end{equation}
For every fixed integer $r\ge0$, consequently
\begin{equation}\label{eq:initial-profile-sobolev}
 \|\mathcal W_{\mathrm{osc},0}\|_{H^{r+1}(\R^3\times\mathbb T)}
       \le\alpha k^{-1}P^{c_r},\qquad
 \|\mathcal W_{\mathrm{mean},0}\|_{H^r(\R^3)}
       \le k^{-2}P^{c_r}.
\end{equation}

\emph{Proof.} By \eqref{eq:packet-amplitude-profile}, $\gamma(0)=\alpha$,
including when $t_a=0$. Thus the profiles in
\eqref{eq:packet-wave-shifts}--\eqref{eq:packet-mean-shifts} give
\[
 \begin{aligned}
 |(A_p,C_p)(0)|_{n,s}
 &\le\alpha H_0^{2p-2}R^{n+a_p}((n+a_p)!)^2,
 &&a_p=9p-8,\\
 |B_p(0)|_{n,s}
 &\le H_0^{2p-2}R^{n+b_p}((n+b_p)!)^2,
 &&b_p=9p-13\quad(p\ge2).
 \end{aligned}
\]
The tuple norm is the sum of its component norms; fixed prefactors can
be absorbed into the common scale. This retains $\alpha$ for all
oscillatory corrections, including those selected through a positive
history interval.

Apply the derivative-order separation and summation used to prove
\eqref{eq:packet-coefficient-majorant}, choosing $4R\varrho\le1/2$.
The derivative sum contributes at most $2$, uniformly in $p,N$;
the remaining factors are precisely the expansion-order factors
estimated there. Keeping the outside factor $\alpha$ in the oscillatory case, the same
fixed choices give
\[
 \alpha^{-1}\|(A_p,C_p)(0)\|_{\varrho,s}\le\mathcal B_k^{p+1},
 \qquad \|B_p(0)\|_{\varrho,s}\le\mathcal B_k^{p+1},
 \qquad \mathcal B_k\le k^{0.01}.
\]
At the fixed orders $p=1,2$ use instead the direct bounds $\alpha P^c$
for the oscillatory coefficients and $P^c$ for $B_2(0)$.
For the remaining orders,
\[
 \sum_{p=3}^N k^{-p}\mathcal B_k^{p+1}
 \le\sum_{p=3}^{\infty}k^{-0.99p+0.01}
 \le Ck^{-2.96}\le Ck^{-2}.
\]
The $C_p$ terms have another factor $k^{-1}$. These estimates prove
\eqref{eq:initial-profile-sums}, with the fixed numerical constants
absorbed into $P^c$.

For each fixed $r$, the finitely many derivative terms needed for
$H^{r+1}$ are bounded by the Gevrey norm times
$C_r\sum_{n=0}^{r+1}\varrho^{-n}(n!)^2\le P^{c_r}$, after enlarging
$c_r$ since $P\ge2$. This proves \eqref{eq:initial-profile-sobolev}.
Only a prefactor depends on $r$: the sum over expansion orders has
already been performed at a common radius. Thus $Q,k_0$ need not
depend on $r$.
\hfill$\square$

\subsection{Physical support, scaling, and the first-stage formula}

\emph{Proof of Proposition~\ref{prop:initial-packet}.}
Equation \eqref{eq:corr-initial-preserved}, the finite expansion
\eqref{intro:eq:finite-packet}, and $B_1=0$ give the exact splitting
\eqref{eq:initial-split}. Every $A_p,Q_p,C_p$ is supported in
$\operatorname{supp}\chi_1\subset\{|y|<1/2\}$, by the support argument
in Section~\ref{subsec:packet-construction}. Differentiating the curl potential
does not enlarge its support. The mean inverse gives
$\operatorname{supp}B_p(0)\subset\{|\ell y|\le2\}$ in
\eqref{eq:mean-inverse-conclusions}. Restriction to the graph and
$a=\ell y$ yield the two claimed physical supports.

Each oscillatory pair is a lifted curl by
\eqref{eq:forced-exact-divergence}, and each $B_p$ satisfies
$d\cdot B_p=0$. The physical chain rule therefore shows that both
initial increments are divergence free separately. The correction
$e(0)=0$ adds neither initial support nor initial derivatives.

For completeness, set $T_kf(y)=f(y,km_0\cdot y)$.
The chain rule and the one-dimensional Sobolev embedding in $\theta$
used in Section~\ref{subsec:corr-physical} give
\begin{equation}\label{eq:initial-trace-scaling}
 \|T_k f\|_{H^r(\R^3)}\le C_r k^r
                      \|f\|_{H^{r+1}(\R^3\times\mathbb T)},\qquad
 \|\ell h(\cdot/\ell)\|_{H^r(\R^3)}
       \le\ell^{5/2-r}\|h\|_{H^r(\R^3)}.
\end{equation}
The first inequality is the graph trace estimate from that subsection,
with its derivative cost made explicit. For a spatial multi-index
$\beta$, rescaling gives exactly
$\|\partial_a^\beta[\ell h(a/\ell)]\|_2
=\ell^{5/2-|\beta|}\|\partial_y^\beta h\|_2$.
Since $0<\ell\le1$, summing over $|\beta|\le r$ proves the second inequality.

Applying \eqref{eq:initial-profile-sobolev} now gives the intermediate bounds
\[
 \|u_{\mathrm{osc},0}\|_{H^r}
       \le\ell^{5/2-r}\alpha k^{r-1}P^{c_r},\qquad
 \|u_{\mathrm{mean},0}\|_{H^r}
       \le\ell^{5/2-r}k^{-2}P^{c_r}.
\]
The mean has no phase derivative and hence no factor $k^r$.
Using $\ell\le1$, $k\ge1$ yields the convenient bounds
\eqref{eq:initial-packet-sobolev}.

Finally suppose $t_a=0$ and $\lambda_b=0$. The prescribed initial data
for every forced transverse coefficient give $A_p(0)=0$ for $p\ge2$
by the construction in Section~\ref{subsec:packet-construction}; hence
$Q_p(0)=C_p(0)=0$. The mean boundary condition
\eqref{eq:mean-inverse-conclusions} gives $B_p(0)=0$ for every $p$.
Only the leading oscillation and its curl correction remain:
\[
 u^{\rm new}(0,a)-u(0,a)
 =\ell(k^{-1}A_1+k^{-2}C_1)(0,y_a,\theta_a)
 =\ell k^{-2}(\tilde d\times Q_1)(0,y_a,\theta_a).
\]
At time zero $F=I$, $m=m_0$, and
\[
 Q_1(0,y,\theta)
 =-\alpha\chi_1(y)(m_0\times v(0,y))\varphi_\delta(\theta).
\]
The physical graph chain rule is
$\operatorname{curl}_a[Q_1(0,y_a,\theta_a)]
=\ell^{-1}(\tilde d\times Q_1)(0,y_a,\theta_a)$.
Substitution proves \eqref{eq:initial-curl-formula}, also when the
prescribed initial amplitude depends on $y$.
\hfill$\square$

The three outputs of the new layer construction are now available as conditional results:
leading physical derivatives in Corollary~\ref{cor:corr-physical-expansions},
the child particle map in Proposition~\ref{prop:child-map}, and the
initial data in Proposition~\ref{prop:initial-packet}. Section~\ref{sec:complete-packet}
assembles them with a common input budget. The verification of that budget
for the seed flow and every subsequent stage remains part of the
parameter selection and induction in
Sections~\ref{sec:stage-parameters}--\ref{sec:stage-induction}.

\section{The complete conditional new-layer theorem}
\label{sec:complete-packet}
\fancyhead[R]{\small The conditional new-layer theorem}

To use the new layer in an iteration, its three outputs must hold for the
same exact solution and the same frequency choice. We now collect
Sections~\ref{sec:packet-expansion}--\ref{sec:initial-packet} into the
conditional theorem below, using Corollary~\ref{cor:corr-physical-expansions}
and Propositions~\ref{prop:child-map} and~\ref{prop:initial-packet}.
The purpose is to make their common inputs explicit; verifying those
inputs from the growing wave and the stage parameters is the task of
Sections~\ref{sec:wave-packet-interface}--\ref{sec:stage-induction}.

\subsection{The common inputs and outputs}

Fix $s=6$, $0\le t_a\le S\le1$ with $S>0$, and a smooth exact parent
Euler solution $(u,p)$ with particle map $X(0,a)=a$. Use the normalized
coefficients $F,M,H,m$ and unit vector $m_0$ from
\eqref{intro:eq:parent}--\eqref{intro:eq:normal}.
Fix $P\ge2$ and an input exponent $c_0$. Assume the following inputs
simultaneously, with their constants and reciprocal Gevrey radii bounded
by $P^{c_0}$:

\begin{enumerate}
\item The mean and transverse inverse hypotheses of
Corollary~\ref{cor:mean-shared-inverses}, on the respective domains
specified in Section~\ref{subsec:packet-inputs}. These include the
coercivity and forward propagator hypotheses, not merely coefficient
regularity.
\item The data for the leading increment in Section~\ref{subsec:packet-inputs}:
$\chi_1,f_\delta$, the selected endpoint (or prescribed initial
amplitude when $t_a=0$), and the amplitude profile
\eqref{eq:packet-amplitude-profile}, with $\alpha>0$ and
$H_0\le P^{c_0}$.
\item The parent coefficient and inverse bounds
\eqref{eq:corr-parent-bounds}, including the fixed additional
derivatives. The finite order estimates
\eqref{eq:corr-input-bounds} will be obtained from the construction,
rather than imposed as separate inputs here.
\item The physical parent-map bound \eqref{eq:child-parent-map}, with
$K_h\le P^{c_0}$, and $0<\ell\le1$, $\ell^{-1}\le P^{c_0}$.
\end{enumerate}
All hypotheses not expressed as upper bounds, such as positivity,
support, transversality and the parent identities, retain their stated
meaning. All constants below depend only on these fixed input exponents,
the Sobolev order and the fixed cutoff choices, independently of the
stage, $P$ and $k$.

\refstepcounter{equation}\label{prop:complete-packet}
\textbf{Proposition \theequation\ (The exact new layer and its three outputs).}
Under the common inputs above, there are fixed $Q,k_0$ such that whenever
\begin{equation}\label{eq:complete-packet-frequency}
 P^Q\le k^\vartheta,\qquad k\ge k_0,\qquad
 \vartheta=10^{-6},\qquad N=\lfloor k^\vartheta\rfloor,
\end{equation}
the coefficient construction through order $N$ and the correction
\eqref{intro:eq:physical-child} produce a smooth exact Euler solution
$(u^{\rm new},p^{\rm new})$ on $[0,S]\times\R^3$ with the following outputs.

\emph{Leading physical derivatives.} The velocity gradient and pressure
Hessian obey \eqref{eq:corr-final-expansions}, with uniform
$O(k^{-1/2})$ errors in space and throughout $[0,S]$.

\emph{Child particle map.} The volume-preserving particle map
$X^{\rm new}$ satisfies \eqref{eq:child-map-output} on the whole interval
$[0,S]$, with $C_*=10(s+2)=80$.

\emph{Initial support and size.} The initial increment has the
divergence-free decomposition \eqref{eq:initial-split}, with supports
\eqref{eq:initial-packet-support} and Sobolev bounds
\eqref{eq:initial-packet-sobolev} for every fixed integer $r\ge0$.
The same $Q,k_0$ work for all these fixed orders $r$; only $c_r$ depends
on $r$. If $t_a=0$ and $\lambda_b=0$, the initial increment is exactly
\eqref{eq:initial-curl-formula} and its mean part vanishes.

\textbf{Remark.} This proposition constructs one exact child under
specified parent inputs. It does not yet assert that the child satisfies
the next stage's coercivity, propagator and amplitude hypotheses.
In particular, the output map bound is polynomial in $k$; fitting it
into the next input budget is part of the stage parameter choice.

\subsection{Compatibility of the budgets and proof}

\emph{Proof of Proposition~\ref{prop:complete-packet}.}
First fix the input exponent $c_0$. Choose one derivative scale
$R=P^{c_R}$ large enough for Corollary~\ref{cor:mean-shared-inverses}
and the product bounds and bounds on the data for the leading increment in
Proposition~\ref{prop:packet-coefficient-bounds}, independently of $N$
and the coefficient order. That proposition
constructs all coefficients with the shifts
\eqref{eq:packet-wave-shifts}--\eqref{eq:packet-mean-shifts} at this same
scale. No change of $R$ during the recursion is needed.

Proposition~\ref{prop:packet-truncation-bounds} gives
\eqref{eq:corr-input-bounds} at a radius $\varrho_a=P^{-c_a}$, for fixed
$c_a$, once the frequency dominates its fixed polynomial budget.
Together with \eqref{eq:corr-parent-bounds}, these are precisely
the inputs to Proposition~\ref{prop:exact-packet-correction}.
It produces one correction $e$, with $e(0)=0$, and the exact physical
solution \eqref{intro:eq:physical-child}. Its estimates hold at
$\varrho_*=P^{-c_*}\le\varrho_a$, with $c_*$ fixed independently of $N$.

There is no competing radius requirement among the three outputs.
The physical derivative proof uses a fixed smaller radius to control
the finitely many derivatives in \eqref{eq:corr-final-expansions}.
The transport estimates \eqref{eq:child-transport-bounds} use
$\varrho_*$ itself, with $R_b=\varrho_*^{-1}$.
The initial coefficient sums in Lemma~\ref{lem:initial-profile-sum}
use another fixed radius of the form $P^{-c}$, before taking any
physical trace. If a common radius is desired for these intermediate
bounds, take their minimum. All the weighted norms decrease when the
radius decreases, and the reciprocal of this minimum is still a fixed
power of $P$. This choice is made once, not at successive coefficient
orders or derivative orders.

Choose $Q$ to dominate the finitely many frequency exponents required
by Proposition~\ref{prop:packet-truncation-bounds},
Proposition~\ref{prop:exact-packet-correction},
Corollary~\ref{cor:corr-physical-expansions}, and
Propositions~\ref{prop:child-map} and~\ref{prop:initial-packet}.
Choose $k_0$ to dominate their fixed lower thresholds. This is possible
because all input exponents were fixed first. Increasing $Q$ only
strengthens $P^Q\le k^\vartheta$; it does not change $R$, $N$'s defining
exponent, or any previously constructed coefficient. With this single
choice, Corollary~\ref{cor:corr-physical-expansions} proves
\eqref{eq:corr-final-expansions}, Proposition~\ref{prop:child-map}
proves the map assertion, and Proposition~\ref{prop:initial-packet}
proves the initial assertions for this same exact solution.
The latter proposition sums the coefficients at a fixed Gevrey radius
before estimating any fixed Sobolev order, so it imposes no
$r$-dependent enlargement of $Q$ or $k_0$.
\hfill$\square$

Section~\ref{sec:wave-packet-interface} identifies how the wave estimates
supply these inputs; Section~\ref{sec:stage-parameters} checks the proposed
stage scales. Applying this theorem at every stage still requires all
its common inputs to be verified for each successive parent;
Proposition~\ref{prop:stage-induction} performs this verification.

\subsection{Parity and the stationary trajectory at the origin}
\label{subsec:parity-origin}

\refstepcounter{equation}\label{lem:packet-parity}
\textbf{Lemma \theequation\ (Parity preservation).}
Under the hypotheses of Proposition~\ref{prop:complete-packet}, suppose
the parent velocity is odd in physical space, both cutoffs $\chi_1,\chi_o$
are even, and $f_\delta$ is odd. Choose the leading increment using a constant
coordinate endpoint when $t_a>0$, or an even initial amplitude when
$t_a=0$, and a label-independent $m_0$. Then the exact child velocity
is odd. Its particle map satisfies $X^{\rm new}(t,0)=0$ on $[0,S]$.

\emph{Proof.} Reflection invariance and uniqueness of the parent flow
give $X(t,-a)=-X(t,a)$. Hence $F,M,H$ are even in the label; the
pressure Hessian is even also from $F_{tt}=-HF$.
The transported normal is even. The homogeneous history problem has
even coefficients and the same coordinate endpoint at opposite labels,
so uniqueness makes its velocity amplitude even. The same conclusion
holds for the forward wave ODE with even initial data. Thus $v$ is even.

Let $\mathcal Sf(y,\theta)=f(-y,-\theta)$, acting componentwise.
Call a field even or odd according as $\mathcal Sf=f$ or $-f$.
The coefficients of $d$ and $m$ are even, so $d$, $\partial_\theta$,
and the zero-mean inverse $\partial_\theta^{-1}$ reverse parity.
Therefore $A_1=\alpha\chi_1vf_\delta$ is odd, while $\pi_1$ and
$Q_1$ are even and $C_1=d\times Q_1$ is odd.

Inductively assume every known velocity coefficient is odd and every
known pressure coefficient is even. In \eqref{intro:eq:old-force},
the time derivative and multiplication by $M$ preserve oddness,
a pressure gradient is odd, and each quadratic term is an odd velocity
times a derivative of an odd velocity, hence is odd. Angular averaging
commutes with reflection. Thus the mean force is odd in $y$.

The mean inverse preserves this parity. The Newton operator commutes
with reflection and each curl reverses it; the two curls and even
cutoffs in $\mathcal A_\chi$ make that operator commute with reflection.
The mean bilinear form \eqref{eq:mean-bilinear-form} is invariant under
simultaneous reflection of its arguments. For odd forcing, the path
$-z(t,-y)$ solves the same weak problem as $z$. Uniqueness gives odd
$z$, hence odd $B=Fz_t$ and odd $d\bar q=f-B_t-MB$.
Since $\nabla_y\bar q=F^T d\bar q$ is odd, its scalar potential
$\bar q$ is even; its additive time constant is immaterial.

After this mean solve, the remaining force in
\eqref{intro:eq:wave-step}, including $-(B_p\cdot m)\partial_\theta A_1$,
is odd. The transverse zero-endpoint history problem is invariant
under $A\mapsto-\mathcal SA$, as is its forward initial-value problem.
Uniqueness and the pressure formula give odd $A_p$ and even $\pi_p$;
the curl construction again gives even $Q_p$ and odd $C_p$.
This proves the induction, including oddness of $W^a,z^a,r^a$.

Finally the lifted correction equation is invariant under
$e\mapsto-\mathcal Se$ and
$\mathfrak p^e\mapsto-\mathcal S\mathfrak p^e$.
Indeed $D$ reverses parity, its gradient space is invariant, and its
orthogonal projection and the pressure inverse commute with reflection
because $G=F^{-1}F^{-T}$ is even. The transport field transforms as an
odd four-component vector field; the Laplacian also commutes with
reflection. Uniqueness of each viscous solution with zero initial data
therefore gives odd $e^\nu$, and this parity passes to the inviscid limit
in Proposition~\ref{prop:exact-packet-correction}.
The graph $\theta=km_0\cdot y$ is reflection invariant. Restriction to
it in \eqref{intro:eq:physical-child} gives an odd physical increment
over the odd parent. Thus $u^{\rm new}(t,0)=0$, and uniqueness of its
particle trajectories gives $X^{\rm new}(t,0)=0$.
\hfill$\square$

\section{From the growing wave to the new-layer inputs}
\label{sec:wave-packet-interface}
\fancyhead[R]{\small From the wave to new-layer inputs}

Proposition~\ref{prop:complete-packet} separates the construction of an
exact new layer from the verification of its inputs. We now check which
inputs are supplied by the wave analysis. In particular, the amplitude
bound must hold through the retained endpoint $S$, not just through the
target time $t_j$. We combine the wave-model comparison of
Proposition~\ref{prop:conditional-wave-control}, the amplification bounds
of Section~\ref{sec:quantitative-amplification}, and the history estimates
of Section~\ref{sec:history-activation} to supply the corresponding inputs
of Proposition~\ref{prop:complete-packet}. Global parent
regularity and coercivity remain inputs to be maintained in the stage
induction.

\subsection{The wave inputs on the retained interval}

Set $t_a=t_{j-1}$ and use the clock and target of
\eqref{eq:amplification-scales} and \eqref{eq:actual-target-time}:
\[
 \tau(t)=\frac a\varepsilon(t-t_a),\qquad
 \tau_j=\tau(t_j)=\frac{x_j}{\sqrt\beta}.
\]
Here $P$ denotes the input budget, not the component of the
normal in the moving frame. We allow a fixed exponent $d$ in the
intermediate input bounds $P^d$; the input exponent $c_0$ will be
chosen after these bounds have been assembled.

\refstepcounter{equation}\label{prop:wave-packet-inputs}
\textbf{Proposition \theequation\ (Wave inputs through the retained endpoint).}
Assume the hypotheses of Lemma~\ref{lem:packet-coordinate-propagator}
and Lemma~\ref{lem:amplitude-ratios}, on the same wave and support.
For $t_a>0$, use the history selected by
Lemma~\ref{lem:history-endpoint-choice} with the common endpoint
\eqref{eq:history-common-endpoint}, under the hypotheses of
Lemma~\ref{lem:history-label-bounds}. For $t_a=0$, use the prescribed
initial wave in Section~\ref{sec:history-activation}.
Suppose $t_j\le S\le1$ and
\begin{equation}\label{eq:interface-horizon}
 \tau(S)\le\Theta,\qquad \tau(S)-\tau_j\le1.
\end{equation}
Assume, for a fixed $d$, that
\begin{equation}\label{eq:interface-budget}
 K_h+C_F+\varepsilon^{-1}+\Theta+s_0^{-1}
       +h_j+\delta_j\le P^d,
\end{equation}
where $C_F$ is from \eqref{eq:packet-coordinate-inputs} and the
exponent $c_F$ there is fixed. Choose
$\alpha_j=\delta_jh_j/A_j$ as in
\eqref{eq:quantitative-normalization}. Then, with
$g(t)=V_\lambda(\tau(t))$, the forward propagator satisfies
\begin{equation}\label{eq:interface-propagator}
 \|\mathcal P(t,s;y)\|\le P^c\frac{g(t)}{g(s)},
 \qquad t_a\le s\le t\le S,\quad y\in\Omega,
 \qquad g(t_a)=1.
\end{equation}
For the history/forward profile \eqref{eq:packet-amplitude-profile},
\begin{equation}\label{eq:interface-amplitude}
 H_0=\max\{1,\sup_{[0,S]}\gamma(t)\}
 \le\max\{1,C\delta_jh_j/s_0\}\le P^c.
\end{equation}
At positive-history stages the activation signs and scaled accuracy are
\eqref{eq:history-activation-signs} and
\eqref{eq:history-scaled-data}; in particular their spatial error is
at most $C\ell K_h^{19\kappa}/\varepsilon$.
The selected coordinate endpoint is constant in the label and has
polynomial size in $P$.

\textbf{Remark.} The smallness of the activation error is not a
conclusion of a polynomial bound on $K_h$ alone. The localization width
$\ell$ must make this error small enough for
Proposition~\ref{prop:conditional-wave-control}. That comparison
hypothesis, and its other error bounds, are included in the hypotheses
above and must be verified in the stage construction.

\emph{Proof.} Lemma~\ref{lem:packet-coordinate-propagator} gives a
propagator prefactor $CK_h^{2c_F}\varepsilon^{-1}\Theta^9$.
It is polynomial in $P$ by \eqref{eq:interface-budget}, proving
\eqref{eq:interface-propagator}. The first inequality in
\eqref{eq:interface-horizon} ensures that this estimate covers all
of $[t_a,S]$.

The amplitude normalization estimate
\eqref{eq:packet-normalized-amplitude} applies because
\eqref{eq:interface-horizon} gives
$\tau(S)\le\min(\Theta,\tau_j+1)$. It bounds
$\alpha_jg(t)$ by $C\delta_jh_j/s_0$ on $[t_a,S]$.
Since $g(t_a)=1$, it also bounds $\alpha_j$, the constant history
value of $\gamma$. Together with \eqref{eq:interface-budget}, this
proves \eqref{eq:interface-amplitude} on all of $[0,S]$.

The activation conclusions are precisely those of
Lemmas~\ref{lem:history-endpoint-choice} and
\ref{lem:history-label-bounds}. The first gives a physical endpoint
of size $C/h_*$. Changing to its coordinate endpoint by
\eqref{eq:history-common-endpoint} costs the polynomial parent inverse
bound, and the same coordinate endpoint is used at every label.
Thus its higher label derivatives vanish. When $t_a=0$, the prescribed
constant initial velocity in Section~\ref{sec:history-activation}
replaces this endpoint construction and has zero activation variation.
\hfill$\square$

For the stage choices of Subsection~\ref{intro:subsec:stage-parameters},
take $S=S_j$. Under its stated geometry assumptions,
Lemma~\ref{intro:lem:clock-child} gives both inequalities in
\eqref{eq:interface-horizon}. Applying
Lemmas~\ref{intro:lem:clock-child}--\ref{intro:lem:clock-parent} at
stage $j+1$ also gives $S_{j+1}\le S_j$, so the child is available
through the next layer's retained interval. These timing conclusions
remain conditional on the corresponding stage geometry; the other
parent inputs are addressed below.

\subsection{The parent inputs still to be verified}

The proposition settles the forward propagator and amplitude parts of
the new-layer hypotheses. The remaining parts have distinct sources.

\emph{History and activation.} Section~\ref{sec:history-activation}
constructs the selected wave under the history upper Hessian bound
\eqref{eq:history-coercivity-input}, the geometry
\eqref{eq:history-activation-geometry}, and the size bounds
\eqref{eq:history-size-input}. These inequalities must hold for the
actual parent. The first-derivative estimates in
\eqref{eq:history-uniform-bounds} do not replace the all-order Gevrey
inputs needed for the expansion.

\emph{All-order coefficients and data for the leading increment.} The relevant hierarchies
are \eqref{eq:forced-factorial-input},
\eqref{eq:mean-factorial-inputs}, and \eqref{eq:corr-parent-bounds}.
Their time derivatives are tied to the parent identities
\eqref{intro:eq:parent}, in particular
$F_t=MF$ and $F_{tt}=-HF$; they are not new independent wave estimates.
The preceding child-map output \eqref{eq:child-map-output} provides a
candidate regularity budget for the next parent. Converting it to all
the required coefficient bounds, with the new label scale, and fitting
the resulting constants into the next $P$ are verified by
Lemma~\ref{lem:stage-map-coefficients} and
Proposition~\ref{prop:stage-frequency-time}.
The cutoff and profile Gevrey constants, $\delta_j^{-1}$ and
$\ell^{-1}$ must also fit that budget. Once these bounds hold, the
argument for the leading coefficient $A_1$ in Proposition~\ref{prop:packet-coefficient-bounds}
supplies its Gevrey estimates using the constant coordinate endpoint.

\emph{Global mean coercivity.} The mean inverse requires the global
one-sided bounds \eqref{eq:mean-quadratic-inputs} and the numerical
conditions \eqref{eq:mean-coercivity-inputs}.
These concern the pressure Hessian throughout space and the initial
strain inside and outside a physical ball. They do not follow from the
wave-model comparison at the origin. The stage induction must preserve
these bounds and satisfy \eqref{eq:mean-coercivity-inputs} by its choices of
time and localization. Likewise, $S^{-1}$ and, at positive-history
stages, $t_a^{-1}$ must fit the inverse budgets.

\emph{Common frequency and retained time.} After these inputs are
bounded by fixed powers of $P$, choose $c_0$ to cover those powers.
Proposition~\ref{prop:complete-packet} then fixes $Q$. Only afterwards
choose the stage frequency to satisfy \eqref{eq:complete-packet-frequency}.
The timing lemmas applied above supply \eqref{eq:interface-horizon}
once their geometry hypotheses hold for that stage.

\section{Choosing the stage parameters}
\label{sec:stage-parameters}
\fancyhead[R]{\small The stage parameters}

The preceding section lists the inequalities needed to apply the layer
theorem repeatedly. We now verify that the scales introduced in
Subsection~\ref{intro:subsec:stage-parameters} satisfy these numerical
requirements, with the common frequency threshold of
Proposition~\ref{prop:complete-packet}. This does not yet
construct the seed or prove that the parent has the required signs and
decomposition. The estimates below explain why, once those structural
properties are maintained, the sizes and time intervals are compatible.

\subsection{The scales and the order of choice}

Use the stage indexing, recurrence $x_j=j^2x_{j-1}$, and
scales $\ell_j,k_j,\delta_j,h_j$ of
Subsection~\ref{intro:subsec:stage-parameters}, including the seed
\eqref{intro:eq:seed-scales} with $D_0=1000$.
Write $x_0=x_{J-1}$ and set $r=\ell_{J-1}=x_0^{-D_0}$ for the
physical ball in the mean-coercivity hypothesis
\eqref{eq:mean-quadratic-inputs}. The wave amplitude is fixed by
\eqref{eq:quantitative-normalization}; the unit-amplitude seed has
$\alpha_{J-1}=\delta_{J-1}h_{J-1}=x_0^{-10}$ as in
Subsection~\ref{intro:subsec:stage-parameters}.

The order of choices is important. First fix the global size and history
geometry constants in the order justified by
Lemma~\ref{lem:induction-constant-order}; this does not use the stage
construction. Fix all exponents in the wave and
inverse estimates first, then the input exponent $c_0$ and the resulting
layer threshold $Q$ of Proposition~\ref{prop:complete-packet}.
With $\vartheta=10^{-6}$ and $C_*=80$, one convenient conservative choice is
\[
 D_k\ge1000\max\{1,(D_0+20)Q/\vartheta\}.
\]
Next choose $J$ sufficiently large for the estimates below, depending
only on these fixed constants. Finally choose $x_0$ sufficiently large.
Neither $Q$ nor $D_k$ is chosen again when $x_0$ is increased.

The candidate normal-stage input budget is
\begin{equation}\label{eq:stage-input-budget}
 P_j=C\bigl(2+k_{j-1}^{C_*}+\ell_j^{-1}+\delta_j^{-1}
       +h_j+h_{j-1}+h_{J-1}+r^{-1}+S_{\rm base}^{-1}
       +x_j+x_{j-1}\bigr),
\end{equation}
where $C$ is a sufficiently large fixed constant. For the seed we use
$P_{J-1}\le C(J)x_0^{D_0+20}$, conditional on a base flow with uniform
bounds; Lemma~\ref{lem:seed-inputs} verifies this budget. A fixed power of an
entry of \eqref{eq:stage-input-budget} is covered by choosing $c_0$
large enough. The frequency $k_j$ itself is not an input entry at stage
$j$; the inherited map bound is $K_{h,j}=k_{j-1}^{C_*}$.

\subsection{Regularity passed from the preceding particle map}

\refstepcounter{equation}\label{lem:stage-map-coefficients}
\textbf{Lemma \theequation\ (Parent coefficients from the preceding map).}
Suppose the parent is an exact incompressible flow and its particle map
has the bound \eqref{eq:child-map-output} with frequency $k_{j-1}$.
Then at the new scale $0<\ell_j\le1$ the coefficient hierarchies
\eqref{eq:forced-factorial-input}, \eqref{eq:mean-factorial-inputs}, and
\eqref{eq:corr-parent-bounds} have size and derivative-growth constants
bounded by fixed powers of $K_{h,j}$, after including the finitely many
additional derivatives required there.

\emph{Proof.} Write $\mathsf F=\nabla_aX$ in the physical label.
Volume preservation gives $\mathsf F^{-1}=\operatorname{cof}(\mathsf F)^T$.
The required physical-label identities are
\[
 \widehat M=(\nabla_aX_t)\mathsf F^{-1},\qquad
 \widehat H=-(\nabla_aX_{tt})\mathsf F^{-1},\qquad
 \mathsf F_t=\nabla_aX_t,\quad \mathsf F_{tt}=\nabla_aX_{tt}.
\]
The second identity follows by differentiating the material acceleration
$X_{tt}=-(\nabla p)\circ X$. Thus each coefficient is a finite product
of controlled derivatives; the inverse is a quadratic polynomial in
$\mathsf F$. Sobolev embedding and the Gevrey product estimate used in
Proposition~\ref{prop:packet-coefficient-bounds} give polynomial size
and growth constants. Fixed extra derivatives are absorbed by
$(n+q)!^2\le C_q^{n+1}(n!)^2$ for fixed $q$.
The identity part of $\mathsf F=I+\nabla_a(X-\mathrm{id})$ is bounded
in $L^\infty$ and has zero derivatives of positive order.
Finally $a=\ell_jy$ contributes $\ell_j^n\le1$ to $n$ label
derivatives. Products give the corresponding bounds for $m,G,K$ and
their required inverses, since $|F^{-T}m_0|\ge\|F^T\|^{-1}$.
\hfill$\square$

\subsection{Frequency and time compatibility}

Use the target times from \eqref{eq:actual-target-time} and
$W_j,S_j,\Theta_j$ from Subsection~\ref{intro:subsec:time-intervals}.
In particular, $t_{J-1}=0$ and the seed is retained through
\[
 S_{\rm base}:=S_{J-1}=2W_J=6J^2x_0^{2-D_0/2}.
\]
Unlike the scale sequences, the times depend on the activation geometry
of the actual parent; they are fixed successively as that parent is
constructed.

\refstepcounter{equation}\label{prop:stage-frequency-time}
\textbf{Proposition \theequation\ (Compatible frequencies and horizons).}
For sufficiently large $J$ and then $x_0$, the budgets
\eqref{eq:stage-input-budget} satisfy $P_j^Q\le k_j^\vartheta$,
uniformly in $j\ge J$, and the seed budget satisfies the same condition.
If the ranges \eqref{intro:eq:activation-ranges} hold at each constructed activation,
then
\begin{equation}\label{eq:stage-time-conclusions}
 \frac{W_j}{6}\le t_j-t_{j-1}\le\frac{2W_j}{3},\qquad
 S_j\le S_{j-1},\qquad
 \tau(S_j)\le\min(\Theta_j,\tau_j+1).
\end{equation}
The whole comparison interval $I_j$ of Proposition~\ref{prop:conditional-wave-control}
lies in $[t_{j-1},S_{j-1}]$. The increasing targets have a finite limit
$T\le S_j$ for every $j$. Moreover, at positive-history stages,
$t_{j-1}^{-1},S_j^{-1}\le12S_{\rm base}^{-1}$.

\emph{Proof.} Write $X_j=x_{j-1}$. For $j>J$ the exact identities
\begin{equation}\label{eq:stage-log-identities}
 \log k_{j-1}=\frac{X_j}{(j-1)^4},\qquad
 \log h_{j-1}=\frac{X_j}{(j-1)^7}
\end{equation}
follow from $x_{j-1}=(j-1)^2x_{j-2}$.
Taking logarithms of the finite sum in \eqref{eq:stage-input-budget}
and dividing by $\log k_j=X_j/j^2$ gives, for $j>J$,
\[
 \frac{\log P_j}{\log k_j}
 \le C\left(\frac1j+\frac{C_*}{j^2}
 +\frac{j^2[\log X_j+(D_0+1)\log x_0+\log J+1]}{X_j}\right).
\]
The first two terms become small by choosing $J$. The last tends to
zero uniformly by increasing $x_0$: indeed
$X_j=x_0\prod_{i=J}^{j-1}i^2$, which dominates every fixed power of
$j$ and its own logarithm. At $j=J$, the inherited frequency term is
instead $C_*D_k\log x_0$, whose ratio to $\log k_J=x_0/J^2$ also tends
to zero. Thus the ratios are at most $\vartheta/Q$.
For the seed, $(D_0+20)Q<D_k\vartheta$ leaves a strict margin to
absorb $C(J)$ by increasing $x_0$.

The activation geometry directly gives the first two-sided bound in
\eqref{eq:stage-time-conclusions}. Also
\[
 \frac{W_{j+1}}{W_j}=j^2(j+1)^2\sqrt{\frac{h_{j-1}}{h_j}}.
\]
For $j>J$ the logarithm of the square root is
$-X_j/(2j^5)+X_j/(2(j-1)^7)$; at $j=J$ it is
$-x_0/(2J^5)+(D_0/2)\log x_0$. The negative term dominates all
logarithmic factors. Hence, uniformly, $W_{j+1}\le W_j/4$ and
\[
 2\sqrt{a_jh_{j-1}}W_{j+1}
 =6\sqrt{a_j}\,x_{j+1}x_j\sqrt{h_{j-1}/h_j}
 \le\Theta_j^{-60}.
\]
In particular, the additional scaled time is at most $1$.
Lemma~\ref{intro:lem:clock-child} gives the full bound on $\tau(S_j)$,
and Lemma~\ref{intro:lem:clock-parent} places $I_j$ inside the parent
interval. Together they give $S_j\le S_{j-1}$, as recorded in
Subsection~\ref{intro:subsec:time-intervals}.
The geometric decay of $W_j$ implies convergence of the targets;
nesting gives $T\le S_j$. Finally $t_J\ge W_J/6=S_{\rm base}/12$,
which proves the inverse-time bounds. Increasing $x_0$ also makes
$S_{\rm base}\le1$ and places it inside any fixed base existence interval.
\hfill$\square$

\subsection{Wave errors and separation}

Set $h_{J-2}=1$, $G_{*,j}=1+h_{j-2}$ and
$E_{*,j}=k_{j-1}^{-1/2}$, consistent with the remainder in
\eqref{eq:corr-final-expansions}. Use $K_{h,j}=k_{j-1}^{C_*}$ and
$\kappa=7$. Constants multiplying these bounds can be absorbed below.

\refstepcounter{equation}\label{lem:stage-wave-smallness}
\textbf{Lemma \theequation\ (Small wave errors and separated shears).}
With the preceding choices, the numerical budget
\begin{equation}\label{eq:stage-error-choice}
 e_{*,j}=C\left(\varepsilon_j\Theta_jG_{*,j}^2
       +k_{j-1}^{-1/2}
       +\ell_jK_{h,j}^{19\kappa}/\varepsilon_j\right)
\end{equation}
satisfies $e_{*,j}\Theta_j^{60}\le x_{j-1}^{-10}$ uniformly, after
increasing $J,x_0$ as above. In addition the ratios
\begin{equation}\label{eq:stage-separations}
 \frac{h_j}{h_{j-1}^2},\qquad
 \frac{\sqrt{h_{j-1}}}{x_{j-1}x_j(G_{*,j}+1)},\qquad
 \frac r{\ell_j}
\end{equation}
can all be made uniformly larger than any prescribed fixed constant.

\emph{Proof.} For $j>J$, \eqref{eq:stage-log-identities} gives
\[
 \log\frac{\ell_jK_{h,j}^{19\kappa}}{\varepsilon_j}
 \le-\frac{X_j}{j^{7/2}}
 +\frac{19\kappa C_*X_j}{(j-1)^4}
 +\frac{X_j}{2(j-1)^7}+C
 \le-\frac{X_j}{2j^{7/2}}.
\]
Here the last inequality fixes how large $J$ must be; in particular
it can be chosen larger than a fixed multiple of $(19\kappa C_*)^2$.
The inherited error has logarithm $-X_j/(2(j-1)^4)$.
For $j\ge J+2$, the older-field term obeys
\[
 \log(\varepsilon_jG_{*,j}^2)
 \le-\frac{X_j}{2(j-1)^7}
 +\frac{2X_j}{(j-1)^2(j-2)^7}+C
 \le-\frac{2X_j}{5(j-1)^7}+C.
\]
At $j=J+1$ the positive term instead is $O(D_0\log x_0)$,
whereas the negative term is $-x_0/(2J^5)$.
These negative exponential terms dominate $\Theta_j^{61}X_j^{10}$,
uniformly by the same growth of $X_j$ used in the preceding proof.

At the first normal stage $\varepsilon_J\le Cx_0^{-500}$,
$G_{*,J}=2$, and $\Theta_J=2J^2x_0^2$. Thus
$\varepsilon_JG_{*,J}^2\Theta_J^{61}\le C(J)x_0^{-378}$.
The inherited error contributes at most
$C(J)x_0^{-D_k/2+120}$, and the spatial error contains
$\exp(-x_0/J^{7/2})$ times a fixed power of $x_0$.
This proves the claimed error bound also at $j=J$.

Finally, for $j>J$,
$\log(h_j/h_{j-1}^2)=X_j/j^5-2X_j/(j-1)^7$ tends uniformly to
infinity. The logarithm of the second ratio in
\eqref{eq:stage-separations} has positive leading term
$X_j/(2(j-1)^7)$, whereas $\log(G_{*,j}+1)$ has two additional inverse
powers of $j$ and $\log(x_{j-1}x_j)$ is logarithmic.
The stages $J,J+1$ follow directly from the seed formulas; at $J$ the
second ratio grows like $x_0^{498}/J^2$.
Lastly $\log(r/\ell_j)=X_j/j^{7/2}-D_0\log x_0$ tends uniformly to
infinity. This proves all three separations.
\hfill$\square$

This lemma verifies \eqref{eq:actual-final-smallness}, covering the
polynomial comparison losses of
Proposition~\ref{prop:conditional-wave-control}, including
$e_*\Theta^{26}$, and the activation variation in
\eqref{eq:history-scaled-data}. It estimates the budgets; it does not
assert the parent decomposition or activation signs without induction.

\subsection{Summable costs and the coercivity margin}

Let $b>0$ and $c$ be fixed constants in the early-to-target estimates
\eqref{eq:early-amplitude-ratio} and \eqref{eq:history-target-ratio}.
The numerical allowance for the early gradient and Hessian changes is
\[
 b_j=C h_jh_{j-1}K_{h,j}^c\Theta_j^c\varepsilon_j^{-c}
          e^{-b x_{j-1}}.
\]
For $j>J$, the logarithm of the prefactor is at most
$C(c,C_*)X_j/(j-1)^4+C\log(X_jx_0)$; at $j=J$ it is
$O(x_0/J^5)+O(\log x_0)$. Choose $J$ so that both are less than
$bX_j/2$, then increase $x_0$. Consequently $b_j\le e^{-bX_j/2}$.
Likewise,
\[
 \log(\delta_jh_jh_{j-1})
 =-\frac{X_j}{j^3}+\frac{X_j}{j^5}+\log h_{j-1}
 \le-\frac{X_j}{2j^3}.
\]
The seed case is $\delta_{J-1}h_{J-1}=x_0^{-10}$.
Since $X_j/j^A$ grows at least geometrically for every fixed $A$ once
$J$ is large, the sums of $b_j$, $\delta_jh_jh_{j-1}$,
$k_j^{-1/2}$, and $x_{j-1}^{-d}$ for any fixed $d>0$ tend to zero
as $x_0$ increases. These are the costs available for preserving the
one-sided pressure and initial-strain bounds; their application to the
actual solutions is part of the induction.

\refstepcounter{equation}\label{lem:stage-coercivity-margin}
\textbf{Lemma \theequation\ (Numerical coercivity margin).}
Fix base constants $K_B,B_e,C_M$. Put
$K_+=K_B+1$, $B_c=C_Mh_{J-1}+2$, and
$\lambda_b=C_1B_c+1$. Increasing
$x_0$ makes
\begin{equation}\label{eq:stage-coercivity-margin}
 \frac{K_+S_{\rm base}^2}{2}+B_eS_{\rm base}
       +C_2B_cr^3S_{\rm base}\le\frac14,
\end{equation}
which holds with $S_{\rm base}$ replaced by any $0<S\le S_{\rm base}$
by monotonicity.
Thus the numerical conditions \eqref{eq:mean-coercivity-inputs} have
a strict margin, once their actual parent quadratic-form bounds hold
and $\lambda_b$ is included in the polynomial input budget.
At every positive-history stage,
$t_a=t_{j-1}<S_{j-1}\le S_{\rm base}$ by
\eqref{eq:stage-time-conclusions}. Consequently, the history condition
\eqref{eq:history-coercivity-input} holds whenever the parent satisfies
$H\le K_+I$.

\emph{Proof.} The first two terms tend to zero because
$S_{\rm base}=6J^2x_0^{-498}$. The last is bounded by a constant
times $x_0^{D_0}x_0^{-3D_0}S_{\rm base}$, which also tends to zero.
Moreover $\lambda_b$ is polynomial in the budget entries
$h_{J-1},r^{-1}$. Finally, \eqref{eq:stage-coercivity-margin} gives
$K_+S_{\rm base}^2\le\tfrac12$, so $H\le K_+I\le t_a^{-2}I$
under the stated assumptions.
\hfill$\square$

The numerical scales are now fixed, with the target times determined
successively by the actual activation geometry.
Section~\ref{sec:base-seed} constructs the uniformly controlled base flow
and seed. The remaining induction must prove that every child preserves
the parent signs, decomposition and global quadratic-form bounds.
The estimates here provide its frequency, localization, time and
summability margins.

\section{The base flow and seed layer}
\label{sec:base-seed}
\fancyhead[R]{\small The base flow and seed}

We now supply an actual starting parent for the numerical choices of
Section~\ref{sec:stage-parameters}. First construct a uniformly controlled
base flow whose gradient at the origin fixes the first activation geometry.
Then apply Proposition~\ref{prop:complete-packet} once, without a history
interval, to insert the seed shear. All constants in this section are
independent of $x_0=x_{J-1}$ unless a dependence on the already fixed $J$
is displayed. Increasing $x_0$ does not change $c_0,Q,D_k$ or $J$.

\subsection{A compactly supported base datum}

Fix an oriented orthonormal basis $(p_0,q_0,n_0)$ and define
\begin{equation}\label{eq:base-linear-map}
 L_Bq_0=p_0+x_0^{-2}n_0,\qquad L_Bp_0=L_Bn_0=0.
\end{equation}
This map has trace zero and uniformly bounded norm for $x_0\ge1$.
Choose a fixed even compactly supported Gevrey-2 cutoff $\chi_B$,
equal to one on a ball $B(0,r_B)$, and set
\begin{equation}\label{eq:base-datum}
 u_{B,0}(a)=\operatorname{curl}_a
       \left[-\frac13\chi_B(a)\,a\times L_Ba\right].
\end{equation}
The cutoff can be constructed from the extension by zero of
$\exp(-1/t)$ for $t>0$: its derivative bound
$C^{n+1}(n!)^2$ follows from Cauchy's estimate on a circle of radius
proportional to $t$, followed by maximizing $t^{-n}e^{-c/t}$.
Products, integration and fixed rescalings give an even cutoff with
the stated support and plateau. We choose $\chi_1$ and $\chi_o$ by the
same construction, with $0\le\chi_1,\chi_o\le1$ and $\chi_1=1$ near zero.

\refstepcounter{equation}\label{lem:base-flow}
\textbf{Lemma \theequation\ (Uniform base flow).}
There are $T_B>0$ and $K_B,K_{h,B}\ge1$, independent of $x_0\ge1$,
such that \eqref{eq:base-datum} generates a smooth odd Euler solution
$U_B$ on $[0,T_B]$. It is divergence free, has compactly supported
initial datum, and $\nabla U_B(0,a)=L_B$ for $|a|<r_B$.
Writing $X_B$ for its particle map and
$H_B=\nabla_x^2p_B$, one has
\begin{equation}\label{eq:base-uniform-bounds}
 \begin{gathered}
 \sup_{[0,T_B]\times\R^3}
 (|\nabla U_B|+|H_B|)\le K_B,\\
 \sum_{|I|=n}\sup_{0\le t\le T_B}
 \|\partial_a^I(X_B-\mathrm{id},X_{B,t},X_{B,tt})(t)\|_{H^6}
 \le K_{h,B}^{n+1}(n!)^2\qquad(n\ge0).
 \end{gathered}
\end{equation}
In particular $H_B\le K_BI$ and $X_B(t,0)=0$.

\emph{Proof.} For a trace-free linear map,
$\operatorname{curl}(a\times L_Ba)=-3L_Ba$.
Thus \eqref{eq:base-datum} equals $L_Ba$ on the plateau.
Being a curl, it is divergence free; its vector potential is even,
so the velocity is odd. The compact support and the cutoff bounds give
\[
 \sum_{|I|=n}\|\partial^Iu_{B,0}\|_{H^6}
       \le C^{n+1}(n!)^2
\]
uniformly in $x_0$.

We recall the local existence estimate to make the uniform time explicit.
For integer $m\ge3$, the differentiated Euler equation gives
\begin{equation}\label{eq:base-sobolev-energy}
 \frac{d}{dt}\|u\|_{H^m}
       \le C_m\|\nabla u\|_{L^\infty}\|u\|_{H^m}.
\end{equation}
The undifferentiated transport term cancels by incompressibility,
and pressure cancels against divergence-free derivatives. The remaining
commutators are bounded by interpolation between $\nabla u$ in
$L^\infty$ and the top derivative in $L^2$, giving the right side.
Since $H^3(\R^3)\hookrightarrow W^{1,\infty}$, the uniformly bounded
initial $H^3$ norm supplies a common positive time. One construction is
to truncate Fourier frequencies in the projected equation. The same
energy estimate holds uniformly for the resulting ordinary differential
equations. Higher Sobolev bounds and the $L^2$ difference estimate give
convergence, first in $C_tL^2$ and then, by interpolation, in every fixed
Sobolev norm. Passing to the equation gives a smooth solution;
the same difference estimate gives uniqueness. Reflection invariance
and uniqueness preserve oddness.

To retain all derivative orders on a common interval, use the weighted
norms of Section~\ref{subsec:corr-energy}, now in three physical variables
and with the identity energy matrix. Put
\[
 \mathcal X_{\rho,J}=\sum_{n=0}^J\frac{\rho^n}{(n!)^2}
                    \sum_{|I|=n}\|\partial^I U_B\|_{H^6},\qquad
 \mathcal Y_{\rho,J}=\sum_{n=0}^J\frac{n\rho^n}{(n!)^2}
                    \sum_{|I|=n}\|\partial^I U_B\|_{H^6}.
\]
The product and transport weight calculations in
Lemmas~\ref{lem:corr-product-pressure}--\ref{lem:corr-energy-estimate}
give
\begin{equation}\label{eq:base-gevrey-energy}
 \mathcal X_{\rho,J}'\le C\mathcal X_{\rho,J}^2+
 \left(\frac{\rho'}\rho+
             \frac{C\mathcal X_{\rho,J}}\rho\right)\mathcal Y_{\rho,J}.
\end{equation}
Here pressure cancels at every order, so there are no variable-matrix
pressure commutators. Choose $\rho(0)>0$ below the uniform initial
radius, and let $A$ bound all initial sums. Taking $\rho'=-2CA$,
with a larger fixed $C$ if necessary, makes the last coefficient
nonpositive while $\mathcal X_{\rho,J}\le2A$.
The remaining scalar inequality preserves this bound on a fixed short
interval; shorten it also so that $\rho\ge\rho(0)/2$.
These choices are independent of $J,x_0$. The argument applies first
to the Fourier approximations; lower semicontinuity and increasing $J$
give the full Gevrey bound.

At a smaller fixed radius, the projected equation bounds $U_{B,t}$
and the material acceleration. The ordinary gradient projection is
bounded on every Sobolev space, so it introduces no derivative loss
beyond the one derivative in $U_B\cdot\nabla U_B$.
One more fixed derivative bounds $H_B$, and Sobolev embedding gives
the first line of \eqref{eq:base-uniform-bounds}.
The flow argument of Lemma~\ref{lem:child-small-flow}, in three
dimensions, applies after shortening the interval so that its fixed
size times growth constant times time is at most $1/4$.
It bounds $X_B-\mathrm{id},X_{B,t},X_{B,tt}$ in $L^2$ and $L^\infty$.
Absorbing six additional derivatives into the fixed growth constant
proves the second line. Oddness fixes the origin.
\hfill$\square$

\subsection{The seed inputs}

Use the seed scales \eqref{intro:eq:seed-scales}, set
$r=\ell_{J-1}$, and retain the seed through
$S_{\rm base}=6J^2x_0^{-498}$. For large $x_0$ this lies in
$[0,T_B]$ and $r/2<r_B$. Set $U^{J-2}=U_B$.
For this layer choose
\begin{equation}\label{eq:seed-wave-data}
 t_a=0,\qquad m(0,y)=p_0,\qquad v(0,y)=q_0,\qquad
 \alpha=\delta_{J-1}h_{J-1}=x_0^{-10},\qquad \lambda_b=0.
\end{equation}
These are the seed wave data; the first normal stage will use a different
normal, $n_0$.

We also make explicit the profile bounds needed for the pressure sign.
Choose an even nonnegative Gevrey-2 bump $g_B\le1$ supported inside
$(-\pi,\pi)$, with $g_B(0)=1$, and let
$c_B=\langle\delta^{-1}g_B(\theta/\delta)\rangle$, independent of small
$\delta$. Define $f_\delta$ as the odd periodic primitive of
\begin{equation}\label{eq:seed-profile-definition}
 f_\delta'(\theta)=
       \frac{\delta^{-1}g_B(\theta/\delta)-c_B}{1-c_B\delta}.
\end{equation}
The derivative has zero mean and its primitive has zero mean by oddness.
For sufficiently small $\delta$, this gives
\begin{equation}\label{eq:seed-profile-bounds}
 f_\delta'(0)=\delta^{-1},\qquad
 -C\le f_\delta'\le\delta^{-1},\qquad
 \|\partial_\theta^n f_\delta\|_{H^8(\mathbb T)}
       \le\delta^{-c}(C\delta^{-c})^n(n!)^2.
\end{equation}
Scaling the bump proves the last bound; the fixed eight derivatives
are absorbed into the polynomial prefactor and growth constant.
This realizes the profile required in Section~\ref{subsec:packet-inputs}.

\refstepcounter{equation}\label{lem:seed-inputs}
\textbf{Lemma \theequation\ (Verification of the seed inputs).}
The data \eqref{eq:seed-wave-data} over $U_B$ satisfy all inputs of
Proposition~\ref{prop:complete-packet} on $[0,S_{\rm base}]$, with
$g=1$ and a budget $P_{J-1}\le C(J)x_0^{D_0+20}$.
The frequency condition holds for the choices in
Section~\ref{sec:stage-parameters}. On the seed wave support,
\begin{equation}\label{eq:seed-pressure-sign}
 |m|+|v|+|m|^{-1}\le C,\qquad m\cdot M_Bv\ge\tfrac12,
 \qquad M_B=(\nabla U_B)\circ X_B.
\end{equation}

\emph{Proof.} Lemma~\ref{lem:base-flow} and the coefficient argument
of Lemma~\ref{lem:stage-map-coefficients} give uniform Gevrey bounds
for all parent coefficients, their inverses, and the parent map.
For the mean inverse use $K_+=K_B$, $B_e=K_B$, $B_c=0$ and
$\lambda_b=0$. The global initial lower-strain bound follows from
$|\nabla u_{B,0}|\le K_B$, and
$K_BS_{\rm base}^2/2+K_BS_{\rm base}\le1/2$ for large $x_0$.
Thus \eqref{eq:mean-coercivity-inputs} holds.
There is no history interval and hence no history coercivity or
$t_a^{-1}$ requirement.

The normal equation and transversality-preserving velocity equation
\eqref{intro:eq:normal}, \eqref{intro:eq:wave} have uniformly bounded
coefficients over this short interval. Gronwall gives uniform bounds
for $m,v$ and $|m|^{-1}$; the coordinate matrices of
\eqref{eq:forced-coordinate-coefficients} are consequently bounded.
Their propagator is bounded by $e^{CS_{\rm base}}\le C$, which verifies
\eqref{eq:forced-propagator-input} with $g=1$.
Proposition~\ref{prop:forced-forward} and its Gevrey estimates therefore
apply directly, without an amplification hypothesis for this seed.
The constant initial wave is transverse and has uniform derivative bounds.
Here $\gamma=\alpha$ and $H_0=1$.

The remaining sizes are $\ell_{J-1}^{-1}=x_0^{D_0}$,
$\delta_{J-1}^{-1}=x_0^{D_0+10}$ and
$S_{\rm base}^{-1}=(6J^2)^{-1}x_0^{498}$.
Together with \eqref{eq:seed-profile-bounds}, these fit fixed powers
of the stated budget. Choose the fixed input exponent $c_0$ to
include these powers before fixing $Q,D_k,J$, as prescribed in
Section~\ref{sec:stage-parameters}. The seed frequency verification
in Proposition~\ref{prop:stage-frequency-time} then applies.

Initially the whole seed support lies in the plateau of $\chi_B$,
so $m\cdot M_Bv=p_0\cdot L_Bq_0=1$ there.
Along a base trajectory,
$\partial_tM_B=-M_B^2-H_B\circ X_B$, by \eqref{intro:eq:parent}.
The uniform base and wave bounds therefore give
$|\partial_t(m\cdot M_Bv)|\le C$. Since $S_{\rm base}\to0$,
this proves \eqref{eq:seed-pressure-sign} on the whole retained interval.
\hfill$\square$

\subsection{The initialized parent and its first activation}

\refstepcounter{equation}\label{prop:seed-packet}
\textbf{Proposition \theequation\ (Seed for the first normal stage).}
For the fixed choices of Section~\ref{sec:stage-parameters} and
sufficiently large $x_0$, there is a smooth odd exact Euler solution
$U^{J-1}$ on $[0,S_{\rm base}]$, obtained from $U_B$ by constructing the seed layer.
Its initial increment is the compactly supported curl
\eqref{eq:initial-curl-formula}, with $m_0=p_0$ and $v(0)=q_0$,
and is supported in $|a|\le r/2$.
Write $M^{J-1}=\nabla U^{J-1}$ and
$H^{J-1}=\nabla^2p^{J-1}$. There are fixed constants $C_M,C_H$ such that
\begin{equation}\label{eq:seed-global-bounds}
 \begin{gathered}
 \|M^{J-1}(t)\|_\infty\le C_Mh_{J-1},\qquad
 \|H^{J-1}(t)\|_\infty\le C_Hh_{J-1},\\
 H^{J-1}(t,x)\le(K_B+1)I,\\
 \operatorname{sym}M^{J-1}(0,x)\ge
 \begin{cases}
  -B_eI,&|x|\ge r,\\
  -B_cI,&|x|<r,
 \end{cases}\\
 B_e=K_B+1,\quad B_c=C_Mh_{J-1}+2.
 \end{gathered}
\end{equation}
At the origin, for all $0\le t\le S_{\rm base}$,
\begin{equation}\label{eq:seed-central-shear}
 \begin{gathered}
 M^{J-1}(t,0)=M_B(t,0)+h(t)q(t)p(t)^T+E(t),\\
 p=m(t,0)/|m(t,0)|,\quad q=v(t,0)/|v(t,0)|,\quad
 h(t)=\frac{\alpha}{\delta_{J-1}}|m(t,0)|\,|v(t,0)|,\\
 h(0)=h_{J-1},\qquad |E(t)|\le Ck_{J-1}^{-1/2}
                                      \le k_{J-1}^{-1/4}.
 \end{gathered}
\end{equation}
The frame and shear satisfy \eqref{eq:inherited-frame-evolution} and
\eqref{eq:old-wave-norms} with older gradient $B=M_B(t,0)$.
At the first normal activation $t_{J-1}=0$,
\begin{equation}\label{eq:seed-first-geometry}
 a_J=1,\qquad \beta_J=x_0^{-2}.
\end{equation}
The particle map has \eqref{eq:child-map-output} with frequency
$k_{J-1}$, so the next parent budget is $K_{h,J}=k_{J-1}^{C_*}$.
For the stage-$J$ wave choose $m_J(0,y)=n_0$, $v_J(0,y)=q_0$.
Then the activation data \eqref{eq:budget-activation-data} are exact
with $s_0=1$, $\lambda=0$; the parent, frame and error hypotheses of
Proposition~\ref{prop:conditional-wave-control} hold on $I_J$.

\textbf{Remark.} This initializes the zero-history stage. The
positive-history endpoint geometry is not required at $j=J$;
it must be obtained when the next layer transfers its frame.
The global mean-coercivity bounds in \eqref{eq:seed-global-bounds}
are available for that next layer with
$\lambda_b=C_1B_c+1$, rather than the zero value used to build the seed.

\emph{Proof.} Lemma~\ref{lem:seed-inputs} permits an application of
Proposition~\ref{prop:complete-packet}. Choose even cutoffs and the odd
profile above. The constant initial wave and odd base satisfy
Lemma~\ref{lem:packet-parity}; hence $U^{J-1}$ is odd and its flow fixes
the origin.
Since $t_a=0$ and $\lambda_b=0$, Proposition~\ref{prop:initial-packet}
gives the asserted curl formula, zero initial mean increment and support.
The child-map assertion is Proposition~\ref{prop:child-map}.

Apply \eqref{eq:corr-final-expansions} to the seed. The bounds
\eqref{eq:seed-pressure-sign} and \eqref{eq:seed-profile-bounds},
together with $\alpha/\delta_{J-1}=h_{J-1}$, bound the full gradient
and Hessian increments by $Ch_{J-1}+Ck_{J-1}^{-1/2}$.
Absorbing the bounded base gives the first line of
\eqref{eq:seed-global-bounds}.
For the upper Hessian bound, its leading increment is
\[
 -2\alpha\chi_1(m\cdot M_Bv)
             \frac{mm^T}{|m|^2}f_{\delta_{J-1}}'.
\]
It is negative semidefinite wherever $f_{\delta_{J-1}}'\ge0$.
Where that derivative is negative, its magnitude is bounded by a
fixed constant, so its upper-eigenvalue cost is at most $C\alpha$.
Outside the wave support this leading term is zero. Consequently,
uniformly in physical space and time,
\[
 \lambda_{\max}(H^{J-1})
 \le K_B+C\alpha+Ck_{J-1}^{-1/2}
 =K_B+Cx_0^{-10}+Cx_0^{-D_k/2}<K_B+1.
\]
At time zero the increment vanishes outside $r/2$, so the exterior
strain bound is inherited unchanged from the base. Inside $r$ the
global gradient estimate gives the stated bound with $B_c$.
Lemma~\ref{lem:stage-coercivity-margin} now verifies the numerical
mean-coercivity conditions for the next layer.

At the stationary origin, $\chi_1(0)=1$ and
$f_{\delta_{J-1}}'(0)=\delta_{J-1}^{-1}$. The gradient expansion
therefore gives \eqref{eq:seed-central-shear}.
Transversality makes $p,q$ orthogonal, and their evolution and that
of $h$ are exactly the identities proved in Section~\ref{sec:amplification}.
At time zero $p=p_0$, $q=q_0$, $n=n_0$ and the older gradient is $L_B$.
Thus \eqref{eq:amplification-scales} and \eqref{eq:base-linear-map}
give $a_J=p_0\cdot L_Bq_0=1$ and
$\beta_J=(n_0\cdot L_Bq_0)/a_J=x_0^{-2}$.

Finally the base has $|B|+|\dot B|\le C$, so the older-field input
\eqref{eq:actual-Gstar} holds with $G_{*,J}=2$.
The remainder in \eqref{eq:seed-central-shear} holds uniformly in time,
and the child-map bound supplies the spatial coefficient estimates
of Lemma~\ref{lem:spatial-coefficients}.
With the constant first normal-stage data, $Z(0,y)=(0,0,1)$ and
$Y(0,y)=(0,1)$ at every label: there is no activation variation.
Lemma~\ref{lem:stage-wave-smallness} bounds the remaining error budget
and implies the smallness required by
Proposition~\ref{prop:conditional-wave-control}. Constants in the
$O(k_{J-1}^{-1/2})$ remainder are included in that budget.
Lemmas~\ref{intro:lem:clock-child}--\ref{intro:lem:clock-parent}
place the whole comparison interval inside $[0,S_{\rm base}]$.
This verifies the claimed first-stage hypotheses.
\hfill$\square$

The base and seed provide the starting pair $(U^{J-2},U^{J-1})$
for the induction in Section~\ref{sec:stage-induction}.

\section{Closing the stage induction}
\label{sec:stage-induction}
\fancyhead[R]{\small The stage induction}

At stage $j$, the parent $U^{j-1}$ supplies the large shear, while
$U^{j-2}$ supplies its older background. We construct $U^j$ and show
that $(U^{j-1},U^j)$ has the same properties at the next activation.
The numerical inequalities are already proved in
Section~\ref{sec:stage-parameters}; the point here is to apply them to
the actual fields, including the global pressure and initial-strain
bounds. Constants in this section are independent of the stage.

\subsection{The bounds maintained by the induction}

Write $M^i=\nabla U^i$ and $H^i=\nabla^2p^i$ in physical coordinates.
Use the scales, times and budgets of
Section~\ref{sec:stage-parameters}, with $h_{J-2}=1$.
We first specify the constants used in this state, to avoid a
circular choice of the global size bounds.

\refstepcounter{equation}\label{lem:induction-constant-order}
\textbf{Lemma \theequation\ (Order of the induction constants).}
The constants can be fixed in the order
\[
 C_{\rm amp},C_g,C_p,C_{\rm seed},K_B
 \ ;\ C_M,C_H\ ;\ \zeta
 \ ;\ c_0,Q,D_k,J,x_0,
\]
where the first group is independent of $C_M,C_H,\zeta$ and the
stage. All subsequent wave smallness requirements can be met by
choosing $J$ and then $x_0$ sufficiently large, without revising
the earlier constants.

\emph{Proof.} Normalize the raw coefficient and activation errors
as in \eqref{eq:actual-error-budget}. Proposition~\ref{prop:conditional-wave-control}
then has universal comparison constants and a universal smallness
threshold. The proof of Lemma~\ref{lem:amplitude-ratios} gives a
universal constant $C_{\rm amp}$ in \eqref{eq:forward-amplitude-ratio},
uniform also in the selected slope $\lambda$. Parent size constants
affect the prefactor in $e_*$, not $C_{\rm amp}$.

The fixed cutoff and profile bounds and
\eqref{eq:corr-final-expansions} consequently give leading increment
bounds $C_g h_j$ and $C_p C_M h_jh_{j-1}$, with $C_g,C_p$ universal
for the chosen cutoffs. The history and early-time contributions are
instead charged to $b_j$ as in \eqref{eq:induction-early-amplitude};
their possibly parent-dependent prefactors are fixed only after
$C_M,C_H$. They are made at most one by the later parameter choice.
The exact remainders are also at most one after increasing the fixed
frequency threshold. Thus neither contribution enlarges $C_g,C_p$
by a parent-dependent amount.

Fix the base cutoff first. The base lemma gives a uniform $K_B$;
on a sufficiently short fixed base interval the seed normal and
velocity have norms at most two. The proof of
Proposition~\ref{prop:seed-packet} therefore gives a fixed seed size
constant $C_{\rm seed}$ depending only on these base and profile data.
Choose, for example,
\[
 C_M\ge4(C_g+C_{\rm seed}+K_B+1),\qquad
 C_H\ge4(C_pC_M+C_{\rm seed}+K_B+1).
\]
The factor four leaves a fixed margin for the older fields and the
remainders in the preservation estimates below. Now the endpoint
map bound and Lemma~\ref{lem:history-endpoint-choice} fix a sufficiently
small $\zeta=\zeta(C_M,C_H)>0$.

Fix the exponents in all inverse and coefficient estimates and choose
$c_0$ to cover the resulting input powers; Proposition~\ref{prop:complete-packet}
then fixes $Q$. Choose $D_k$, then $J$, then $x_0$ as in
Section~\ref{sec:stage-parameters}. All parent-dependent prefactors
are now fixed. Lemma~\ref{lem:stage-wave-smallness} and the summable
cost estimates tend to zero uniformly in the stage under these choices,
so they meet the comparison thresholds, the chosen $\zeta$, and the
fixed size margins. No leading amplitude constant is chosen again.
\hfill$\square$

Use these constants in the maintained bounds below. The statements
about the seed and parameter choices in the preceding sections are
understood with this order of choice.

For bookkeeping, use $b_i$ from the summable-cost calculation in
Section~\ref{sec:stage-parameters}, with its fixed exponent enlarged
to cover both \eqref{eq:early-amplitude-ratio} and
\eqref{eq:history-target-ratio}. Define
\begin{equation}\label{eq:induction-cumulative-costs}
 \begin{aligned}
 \rho_{J-1}&=C\bigl(x_0^{-10}+k_{J-1}^{-1/2}\bigr),\\
 \rho_i&=\rho_{J-1}
       +C\sum_{\nu=J}^{i}
           \bigl(b_\nu+\delta_\nu h_\nu h_{\nu-1}+k_\nu^{-1/2}\bigr),\\
 \eta_j&=C_a\sum_{\nu=J}^{j-1}x_{\nu-1}^{-4},\qquad \eta_J=0.
 \end{aligned}
\end{equation}
Here $C,C_a$ are fixed to cover the estimates below. The summability
proved there permits the single choice
\begin{equation}\label{eq:induction-total-margins}
 \sup_i\rho_i<\tfrac12,\qquad \sup_j\eta_j<\tfrac12\log2.
\end{equation}
The cumulative bounds prevent successive stages from using the same
margin more than once.

At stage $j\ge J$ we maintain the following state.

\emph{Parent and older flow.} The pair $(U^{j-2},U^{j-1})$ consists of
smooth odd exact Euler solutions on $[0,S_{j-1}]$. At the origin,
for $t_{j-1}\le t\le S_{j-1}$,
\begin{equation}\label{eq:induction-parent-shear}
 \begin{gathered}
 M^{j-1}(t,0)=M^{j-2}(t,0)
       +h(t)q_{j-1}(t)p_{j-1}(t)^T+E_{j-1}(t),\\
 h(t_{j-1})=h_{j-1},\qquad
 |E_{j-1}(t)|\le C_E k_{j-1}^{-1/2}\le k_{j-1}^{-1/4}.
 \end{gathered}
\end{equation}
The orthonormal frame and $h$ satisfy
\eqref{eq:inherited-frame-evolution} and \eqref{eq:old-wave-norms}
with $B=M^{j-2}(t,0)$. The fixed constant $C_E$ is the uniform
remainder constant in \eqref{eq:corr-final-expansions}.

\emph{Activation geometry.} Define $a_j,\beta_j$ by
\eqref{eq:amplification-scales} at $t=t_{j-1}$. We require
\begin{equation}\label{eq:induction-activation}
 |\log a_j|\le\eta_j,\qquad
 \tfrac12\le\beta_jx_{j-1}^2\le2.
\end{equation}
In particular $a_j\in[1/2,2]$ by
\eqref{eq:induction-total-margins}. For $j>J$ we also require
\eqref{eq:history-activation-geometry}, with
$\mathcal B=M^{j-2}(t_{j-1},0)+E_{j-1}(t_{j-1})$ and $h_*=h_{j-1}$.
For $j=J$, use the prescribed zero-history data in
Proposition~\ref{prop:seed-packet} instead.

\emph{Global size and one-sided bounds.} On their retained intervals,
each constructed field $U^i$, $i\ge J-1$, satisfies
\begin{equation}\label{eq:induction-global-size}
 \|M^i(t)\|_{L_x^\infty}\le C_Mh_i,\qquad
 \|H^i(t)\|_{L_x^\infty}\le C_Hh_ih_{i-1}.
\end{equation}
In addition, throughout physical space,
\begin{equation}\label{eq:induction-quadratic-bounds}
 \begin{gathered}
 H^i(t,x)\le(K_B+\rho_i)I,\\
 \operatorname{sym}M^i(0,x)\ge
 \begin{cases}
  -(K_B+\rho_i)I,&|x|\ge r,\\
  -(C_Mh_{J-1}+\rho_i)I,&|x|<r.
 \end{cases}
 \end{gathered}
\end{equation}
These imply the bounds with $K_+=K_B+1$, $B_e=K_B+1$ and
$B_c=C_Mh_{J-1}+2$ used in
Lemma~\ref{lem:stage-coercivity-margin}. For $U^{J-2}=U_B$ use
\eqref{eq:base-uniform-bounds}.

\emph{Parent regularity.} Each constructed particle map has
\eqref{eq:child-map-output} at its own frequency $k_i$.
The current parent therefore has $K_{h,j}=k_{j-1}^{C_*}$,
$C_*=80$; the older base map uses \eqref{eq:base-uniform-bounds}.
All these estimates may be restricted to the common interval
$[0,S_{j-1}]$.

\subsection{The new layer and the next parent}

We first isolate the global estimates needed to preserve the state.
They explain why the large new shear need not consume a large part
of the upper pressure-Hessian bound.

\refstepcounter{equation}\label{lem:stage-increment-costs}
\textbf{Lemma \theequation\ (Global bounds for the amplified layer).}
Use the stage parameters with the activation ranges
\eqref{intro:eq:activation-ranges}.
Suppose the stage-$j$ wave has the conclusions of
Lemmas~\ref{lem:actual-direction-pressure} and~\ref{lem:amplitude-ratios},
the history estimate \eqref{eq:history-target-ratio} when $t_{j-1}>0$,
and the horizon bound \eqref{eq:interface-horizon} with $S=S_j$.
Normalize $\alpha_j$ by \eqref{eq:quantitative-normalization}.
Suppose the exact child has \eqref{eq:corr-final-expansions} and
the parent obeys \eqref{eq:induction-global-size}.
Then, uniformly on $[0,S_j]$,
\begin{equation}\label{eq:induction-absolute-increments}
 \begin{aligned}
 \|M^j-M^{j-1}\|_{L_x^\infty}&\le C_g h_j+Ck_j^{-1/2},\\
 \|H^j-H^{j-1}\|_{L_x^\infty}&\le C_p C_Mh_jh_{j-1}+Ck_j^{-1/2}.
 \end{aligned}
\end{equation}
The one-sided and initial estimates are smaller:
\begin{equation}\label{eq:induction-small-increments}
 \begin{aligned}
 H^j-H^{j-1}
   &\le C\bigl(b_j+\delta_jh_jh_{j-1}+k_j^{-1/2}\bigr)I,\\
 \|M^j(0)-M^{j-1}(0)\|_{L_x^\infty}
   &\le C\bigl(b_j+k_j^{-1/2}\bigr).
 \end{aligned}
\end{equation}

\emph{Proof.} Let $A(t,y)=|m_j(t,y)|\,|v_j(t,y)|$ and
$A_j=A(t_j,0)$. In the leading expressions below, the parent gradient
is evaluated at $X^{j-1}(t,\ell_jy)$ before taking the physical trace.
The normalization gives
$\alpha_j A/\delta_j=h_j A/A_j$.
For $\tau\ge1$, \eqref{eq:forward-amplitude-ratio} and the horizon
assumption bound this by $Ch_j$ on the support of $\chi_1$.
For the history and $0\le\tau\le1$, respectively,
\eqref{eq:history-target-ratio} and \eqref{eq:early-amplitude-ratio}
give, for a fixed exponent $c$,
\begin{equation}\label{eq:induction-early-amplitude}
 h_jh_{j-1}\frac{A(t,y)}{A_j}
 \le C h_jh_{j-1}K_{h,j}^c\Theta_j^c\varepsilon_j^{-c}
                     e^{-b x_{j-1}}\le b_j.
\end{equation}
Here $\beta_jx_{j-1}^2\in[1/2,2]$, by
\eqref{intro:eq:activation-ranges}, gives the fixed $b>0$,
and $s_0^{-1}\le CK_{h,j}^{\kappa}$ follows from
\eqref{eq:history-normal-choice}; for $j=J$, $s_0=1$ and there is
no history interval. The definition of $b_j$ includes the fixed
constant in this estimate.

In \eqref{eq:corr-final-expansions}, the leading gradient has norm
at most $C\alpha_j A/\delta_j$, and the leading Hessian has norm
at most $C\alpha_j A\|M^{j-1}\|/\delta_j$.
The profile bounds \eqref{eq:seed-profile-bounds} apply with
$\delta=\delta_j$. These observations, the parent size bound and
\eqref{eq:induction-early-amplitude} prove
\eqref{eq:induction-absolute-increments}. They also bound both early
increments by $Cb_j+Ck_j^{-1/2}$, including at physical time zero.

For the remaining times $\tau\ge1$, the leading Hessian is
\[
 -2\alpha_j\chi_1(m_j\cdot M^{j-1}v_j)
                   \frac{m_jm_j^T}{|m_j|^2}f_{\delta_j}'.
\]
By \eqref{eq:forward-pressure-sign} and $\chi_1\ge0$, it is negative
semidefinite where $f_{\delta_j}'\ge0$. Where $f_{\delta_j}'<0$,
the bound $|f_{\delta_j}'|\le C$ gives an upper-eigenvalue cost of
at most $C\alpha_j A\|M^{j-1}\|\le C\delta_jh_jh_{j-1}$.
Outside the wave support the leading expression vanishes.
The remainder is uniformly $Ck_j^{-1/2}$ throughout physical space.
Adding it proves \eqref{eq:induction-small-increments}.
In particular this is a global pressure estimate; it does not assume
that pressure is spatially localized.\hfill$\square$

The following ledger identifies the source of every group of inputs
used in the next proposition. Rows concerning the parent use the
maintained state; outputs of the new layer are used only to verify the
next state.
\begin{center}
\small
\begin{tabularx}{\linewidth}{@{}p{0.34\linewidth}X@{}}
\hline
Input or conclusion & Source \\
\hline
Shear, frame, and remainder at the origin & \eqref{eq:induction-parent-shear}; seed Proposition~\ref{prop:seed-packet}. \\
Wave-model comparison and pressure sign & Proposition~\ref{prop:conditional-wave-control}, Lemma~\ref{lem:actual-direction-pressure}, and smallness Lemma~\ref{lem:stage-wave-smallness}. \\
History coercivity and endpoint & \eqref{eq:induction-quadratic-bounds}, Lemmas~\ref{lem:stage-coercivity-margin} and~\ref{lem:history-endpoint-choice}. \\
Label-uniform activation data & Lemma~\ref{lem:history-label-bounds}; the localization term in \eqref{eq:stage-error-choice}. \\
All-order parent coefficients & Previous map output \eqref{eq:child-map-output} and Lemma~\ref{lem:stage-map-coefficients}. \\
Forward propagator and profile & Proposition~\ref{prop:wave-packet-inputs}. \\
Global mean coercivity & \eqref{eq:induction-quadratic-bounds}, Lemma~\ref{lem:stage-coercivity-margin}, and Section~\ref{sec:mean-inverse}. \\
Cutoff and profile data & The fixed cutoff construction and \eqref{eq:seed-profile-bounds}. \\
Frequency, horizon, inverse time & Proposition~\ref{prop:stage-frequency-time}. \\
Odd child and stationary origin & Lemma~\ref{lem:packet-parity}. \\
Initial support and all fixed Sobolev orders & Proposition~\ref{prop:initial-packet}, with one common frequency threshold; summation in Section~\ref{sec:limiting-initial-datum}. \\
\hline
\end{tabularx}
\end{center}

\refstepcounter{equation}\label{prop:stage-induction}
\textbf{Proposition \theequation\ (Closure of one stage).}
For the fixed choices above and sufficiently large $x_0$, suppose
the maintained state holds at stage $j\ge J$. Then the target time
$t_j$ and horizon $S_j$ are defined by
\eqref{eq:actual-target-time} and
Subsection~\ref{intro:subsec:time-intervals}.
There is a smooth odd exact solution $U^j$ on $[0,S_j]$ obtained by
Proposition~\ref{prop:complete-packet}, with
\begin{equation}\label{eq:induction-target-gradient}
 M^j(t_j,0)=M^{j-1}(t_j,0)+h_jq_jp_j^T+E_j(t_j),
 \qquad |E_j(t_j)|\le C_Ek_j^{-1/2}.
\end{equation}
The pair $(U^{j-1},U^j)$ satisfies the maintained state at stage $j+1$.
The initial support and Sobolev estimates
\eqref{eq:initial-packet-support}--\eqref{eq:initial-packet-sobolev}
hold for its increment, and its amplitude satisfies
\begin{equation}\label{eq:induction-amplitude-smallness}
 \alpha_j\le P_j^c e^{-b x_{j-1}}
\end{equation}
with fixed $c,b>0$.

\emph{Proof.}
\emph{Verification of the wave and new-layer inputs.}
Oddness fixes the origin. Differentiating Euler there gives
\[
 B=M^{j-2}(t,0),\qquad \dot B=-B^2-H^{j-2}(t,0).
\]
The size bounds \eqref{eq:induction-global-size}, with
$h_{j-3}\le h_{j-2}$ when that index occurs, give
$|B|\le CG_{*,j}$ and $|\dot B|\le CG_{*,j}^2$,
where $G_{*,j}=1+h_{j-2}$. At $j=J$ use the uniform base bounds.
Together with \eqref{eq:induction-parent-shear}, these are the
inputs of Lemma~\ref{lem:central-coefficients}.
The parent-map bound and Lemma~\ref{lem:stage-map-coefficients}
supply the spatial and all-order coefficient inputs.

For $j>J$, the inverse-time estimate in
Proposition~\ref{prop:stage-frequency-time} gives
$t_{j-1}^{-1}\le12S_{\rm base}^{-1}\le h_{j-1}$ and a fixed power
of $K_{h,j}$. The last inequalities follow from the seed scales and
the increase of $h_{j-1},K_{h,j}$.
The upper Hessian bound \eqref{eq:induction-quadratic-bounds} and
Lemma~\ref{lem:stage-coercivity-margin} give
\eqref{eq:history-coercivity-input}. The remaining history inputs
\eqref{eq:history-activation-geometry}--\eqref{eq:history-size-input}
are precisely the maintained geometry and size bounds.
Thus Lemmas~\ref{lem:history-endpoint-choice} and
\ref{lem:history-label-bounds} select a transverse wave with the
activation data \eqref{eq:history-scaled-data}. At $j=J$ use the
constant initial wave of Proposition~\ref{prop:seed-packet}.

Lemma~\ref{lem:stage-wave-smallness} bounds all errors by
\eqref{eq:stage-error-choice} and gives
$e_{*,j}\Theta_j^{60}\le x_{j-1}^{-10}$.
Consequently Proposition~\ref{prop:conditional-wave-control},
Lemma~\ref{lem:actual-direction-pressure}, and
Lemma~\ref{lem:amplitude-ratios} apply.
The wave may be continued on the parent interval to cover the whole
comparison interval $I_j$. Proposition~\ref{prop:stage-frequency-time}
then supplies the required horizon through $S_j$.
Normalize $\alpha_j$ by \eqref{eq:quantitative-normalization}.
The target gain \eqref{eq:target-amplitude-gain},
$s_0^{-1}\le CK_{h,j}^{\kappa}$ and the budget
\eqref{eq:stage-input-budget} give
\eqref{eq:induction-amplitude-smallness}.

Proposition~\ref{prop:wave-packet-inputs} supplies the forward
propagator, amplitude profile and selected endpoint inputs.
Its budget condition \eqref{eq:interface-budget} follows from
\eqref{eq:stage-input-budget}, the parent-map bounds and
\eqref{eq:packet-coordinate-inputs}.
The global mean inverse uses the fixed
$\lambda_b=C_1B_c+1$, the parent bounds
\eqref{eq:induction-quadratic-bounds}, and
Lemma~\ref{lem:stage-coercivity-margin}; its coefficient and
inverse-time bounds fit fixed powers of $P_j$.
Lemma~\ref{lem:stage-map-coefficients} supplies the remaining
coefficient hierarchies, while \eqref{eq:seed-profile-bounds}
and the fixed cutoffs supply the bounds on the data for the leading increment.
Finally Proposition~\ref{prop:stage-frequency-time} gives
$P_j^Q\le k_j^\vartheta$.
Thus all common inputs of Proposition~\ref{prop:complete-packet}
hold with the already fixed input exponent.
It constructs $U^j$ and gives its map and initial-data outputs.
Lemma~\ref{lem:packet-parity} proves oddness of this child: the
cutoffs are even, the profile is odd, and the selected coordinate
endpoint is constant (the first normal stage instead has constant
initial wave data). In particular, the origin is stationary.

\emph{Preservation of the global bounds.}
Lemma~\ref{lem:stage-increment-costs} applies to this exact child.
Adding \eqref{eq:induction-small-increments} to the parent bounds
and using the definition of $\rho_j$ proves
\eqref{eq:induction-quadratic-bounds} for $i=j$, both inside and
outside $|x|<r$. In particular, the total positive Hessian cost
and total initial-strain cost remain below their fixed margins.
For the absolute bounds, \eqref{eq:induction-absolute-increments}
gives
\[
 \begin{aligned}
 \|M^j\|_{L_x^\infty}
   &\le C_Mh_{j-1}+C_g h_j+Ck_j^{-1/2},\\
 \|H^j\|_{L_x^\infty}
   &\le C_Hh_{j-1}h_{j-2}+C_p C_Mh_jh_{j-1}+Ck_j^{-1/2}.
 \end{aligned}
\]
The choices in Lemma~\ref{lem:induction-constant-order} give
$C_g\le C_M/4$ and $C_pC_M\le C_H/4$, leaving a fixed margin
for the older fields and the remainders.
The separation \eqref{eq:stage-separations} absorbs the older
terms and remainders, proving \eqref{eq:induction-global-size}
with the same constants.

\emph{The shear at the origin and next activation.}
At the stationary origin the phase is zero. Using
$\chi_1(0)=1$, $f_{\delta_j}'(0)=\delta_j^{-1}$ and
\eqref{eq:corr-final-expansions}, define on $[t_j,S_j]$
\begin{equation}\label{eq:induction-child-frame}
 \begin{gathered}
 p_j(t)=\frac{m_j(t,0)}{|m_j(t,0)|},\qquad
 q_j(t)=\frac{v_j(t,0)}{|v_j(t,0)|},\qquad
 h(t)=\frac{\alpha_j}{\delta_j}|m_j(t,0)|\,|v_j(t,0)|,\\
 M^j(t,0)=M^{j-1}(t,0)+h(t)q_j(t)p_j(t)^T+E_j(t),\qquad
 |E_j(t)|\le C_Ek_j^{-1/2}\le k_j^{-1/4}.
 \end{gathered}
\end{equation}
The amplitude normalization makes $h(t_j)=h_j$, proving
\eqref{eq:induction-target-gradient}.
The wave equations and transversality imply the frame and norm
identities \eqref{eq:inherited-frame-evolution} and
\eqref{eq:old-wave-norms}, now with older gradient $M^{j-1}$.
Thus the full time-dependent decomposition, rather than only its
value at the target, is inherited by the next stage.

For the new coefficients, Proposition~\ref{prop:target-frame} gives
\begin{equation}\label{eq:induction-parameter-transfer}
 \begin{aligned}
 \left|\frac{a_{j+1}}{a_j}-1\right|
 &\le C\left(x_j^{-4}+\frac{\beta_j}{x_j^2}
       +\frac{\sqrt{\beta_j}}{x_j^3}+e_{*,j}\Theta_j^{28}\right)
 \le Cx_{j-1}^{-4},\\
 |\beta_{j+1}x_j^2-1|
 &\le C\left(\sqrt{\beta_j}+x_j^{-4}
                         +e_{*,j}\Theta_j^{28}\right)\le Cx_{j-1}^{-1}.
 \end{aligned}
\end{equation}
Here $x_j\ge x_{j-1}$ and the previously verified error bound
justify the last inequalities. Make the first relative error at
most $1/2$. Then $|\log(a_{j+1}/a_j)|\le C_a x_{j-1}^{-4}$,
so \eqref{eq:induction-activation} propagates by the definition
of $\eta_{j+1}$. The second line places $\beta_{j+1}x_j^2$ in
$[1/2,2]$ for sufficiently large $x_0$, uniformly in $j$; no product
of these second errors is required.

The positive-history geometry uses
$\mathcal B_{j+1}=M^{j-1}(t_j,0)+E_j(t_j)$, not just its older
part. By \eqref{eq:target-compression} and
\eqref{eq:stage-separations},
\begin{equation}\label{eq:induction-next-history-geometry}
 \begin{aligned}
 p_j\cdot M^{j-1}(t_j,0)p_j
   &\le-\frac{c\sqrt{h_{j-1}}}{x_{j-1}x_j}
             +C(G_{*,j}+E_{*,j})<-2,\\
 (\mathcal B_{j+1})_{p_jp_j}&<0,\qquad
 \frac{\|\mathcal B_{j+1}\|}{h_j}
       \le\frac{C_Mh_{j-1}+1}{h_j}<\zeta.
 \end{aligned}
\end{equation}
In the second line we used $|E_j(t_j)|<1$.
This is exactly \eqref{eq:history-activation-geometry} at the next
activation. The new global bounds hold on $[0,t_j]$, and
$t_j^{-1}\le12S_{\rm base}^{-1}\le h_j$, so they also provide
\eqref{eq:history-size-input} for the next history.
The child-map output supplies its regularity budget
$K_{h,j+1}=k_j^{C_*}$. Restricting the preceding parent to $[0,S_j]$
therefore gives every part of the maintained state at stage $j+1$.
\hfill$\square$

\subsection{The finite-stage sequence and accumulation time}

\refstepcounter{equation}\label{cor:finite-stage-sequence}
\textbf{Corollary \theequation\ (Exact finite stages with growing target shear).}
There are smooth odd exact Euler solutions $U^j$ on $[0,S_j]$ for
every $j\ge J-1$, with all the maintained bounds above. Their target
times increase to a finite $T>0$, and
\begin{equation}\label{eq:induction-accumulation}
 0<T-t_j\le\frac89 W_{j+1}<2W_{j+1}=S_j-t_j,
 \qquad \|M^j(t_j,0)\|\ge\tfrac12 h_j\longrightarrow\infty.
\end{equation}
Every normal-stage initial increment has the support and Sobolev
estimates of Proposition~\ref{prop:initial-packet}, with
\eqref{eq:induction-amplitude-smallness}.

\emph{Proof.} Proposition~\ref{prop:seed-packet} initializes the state:
its proof gives the stronger cumulative bounds
\eqref{eq:induction-quadratic-bounds} with $\rho_{J-1}$, and
\eqref{eq:seed-first-geometry} gives $\eta_J=0$.
Apply Proposition~\ref{prop:stage-induction} successively.
The choices and margins are uniform in the number of stages, so
no parameter needs to be chosen again at a later step.

The time estimates in Proposition~\ref{prop:stage-frequency-time}
give $W_{i+1}\le W_i/4$ and $t_i-t_{i-1}\le2W_i/3$.
Summing the resulting geometric series proves
\[
 T-t_j=\sum_{i=j+1}^{\infty}(t_i-t_{i-1})
 \le\frac23\sum_{i=j+1}^{\infty}W_i\le\frac89W_{j+1}.
\]
Each summand is positive, and $t_J>0$, so $0<T<\infty$ and
$t_j<T<S_j$. At the target, \eqref{eq:induction-target-gradient}
and $\|q_jp_j^T\|=1$ give
\[
 \|M^j(t_j,0)\|\ge h_j-C_Mh_{j-1}-C_Ek_j^{-1/2}
                         \ge\tfrac12 h_j.
\]
The last inequality follows from \eqref{eq:stage-separations}, and
the prescribed $h_j$ tends to infinity. The initial assertions are
outputs of the same stage proposition.\hfill$\square$

\textbf{Remark.} These are exact solutions with different initial
data. To obtain blowup for a single initial datum, one must still
sum the initial increments and use stability to transfer the growing
target gradients to a hypothetical smooth solution from that limit.
The finite-stage induction supplies the uniform inputs for that
argument in Sections~\ref{sec:limiting-initial-datum}--\ref{sec:limiting-solution-stability}.

\section{The limiting initial datum}
\label{sec:limiting-initial-datum}
\fancyhead[R]{\small The limiting initial datum}

Corollary~\ref{cor:finite-stage-sequence} constructs exact solutions
with growing target gradients, but their initial data differ from
stage to stage. We now show that these data converge to one smooth,
compactly supported velocity. The two parts of each initial increment
have different sources of smallness: the oscillatory part retains the
exponentially small amplitude $\alpha_j$, whereas the mean part starts
at order $k_j^{-2}$ and has no angle derivatives. Both features are
needed to sum derivatives of arbitrarily high fixed order.

\subsection{Summability of the two initial increments}

For each normal stage $j\ge J$, use the physical decomposition
\eqref{eq:initial-split} and write
\begin{equation}\label{eq:limit-initial-increment}
 w_{j,0}:=U^j(0)-U^{j-1}(0)
       =u_{j,\mathrm{osc},0}+u_{j,\mathrm{mean},0}.
\end{equation}
The seed contributes the single fixed smooth datum $U^{J-1}(0)$;
only the normal-stage increments need to be summed. All parameters
remain fixed as in Sections~\ref{sec:stage-parameters}--\ref{sec:stage-induction}.

\refstepcounter{equation}\label{lem:initial-increment-summability}
\textbf{Lemma \theequation\ (Summability in every fixed Sobolev norm).}
Let the increments be those of
Corollary~\ref{cor:finite-stage-sequence}, and let $b>0$ be as in
\eqref{eq:induction-amplitude-smallness}. For every integer $m\ge0$
there is an index $j_m\ge J$ such that, for all $j\ge j_m$,
\begin{equation}\label{eq:limit-increment-decay}
 \|u_{j,\mathrm{osc},0}\|_{H^m}
       \le e^{-b x_{j-1}/2},\qquad
 \|u_{j,\mathrm{mean},0}\|_{H^m}
       \le e^{-x_{j-1}/j^2}.
\end{equation}
Consequently $\sum_{j\ge J}\|w_{j,0}\|_{H^m}<\infty$.
Increasing $j_m$ if necessary, its tails satisfy, for $j\ge j_m$,
\begin{equation}\label{eq:limit-initial-tail}
 \sum_{i=j+1}^{\infty}\|w_{i,0}\|_{H^m}
       \le 2e^{-b x_j/2}+2e^{-x_j/(j+1)^2}.
\end{equation}

\textbf{Remark.} Only the starting index $j_m$ depends on the
Sobolev order. The parameters $J,x_0$, the input exponent and
the frequency choices are not changed when $m$ increases.

\emph{Proof.} Set $X_j=x_{j-1}$. The input budget
\eqref{eq:stage-input-budget} and the calculation in
Proposition~\ref{prop:stage-frequency-time} give, with all parameters
now fixed,
\begin{equation}\label{eq:limit-budget-asymptotic}
 \frac{\log P_j}{X_j/j^2}
 \le C\left(\frac1j+\frac{C_*}{j^2}
 +\frac{j^2[\log X_j+(D_0+1)\log x_0+\log J+1]}{X_j}\right)
 \longrightarrow0.
\end{equation}
Indeed $X_j=x_0\prod_{\nu=J}^{j-1}\nu^2$, so the last term also
tends to zero. This asymptotic statement is stronger than the fixed
frequency threshold $P_j^Q\le k_j^\vartheta$ and is what allows
arbitrarily large fixed $m$.

Combine \eqref{eq:initial-packet-sobolev} with
\eqref{eq:induction-amplitude-smallness} and the stage scales
of Subsection~\ref{intro:subsec:stage-parameters}. After absorbing
the amplitude's fixed power of $P_j$ into a constant $C_m$, we obtain
\begin{equation}\label{eq:limit-logarithmic-costs}
 \begin{aligned}
 \|u_{j,\mathrm{osc},0}\|_{H^m}
 &\le\exp\!\left[-bX_j+\frac{mX_j}{j^2}
                      +\frac{mX_j}{j^{7/2}}+C_m\log P_j\right],\\
 \|u_{j,\mathrm{mean},0}\|_{H^m}
 &\le\exp\!\left[-\frac{2X_j}{j^2}
                      +\frac{mX_j}{j^{7/2}}+C_m\log P_j\right].
 \end{aligned}
\end{equation}
The term $mX_j/j^2$ in the first line is the cost of $k_j^m$.
It is absent from the second line because the mean coefficient is
independent of the angle; only the localization cost
$\ell_j^{-m}$ remains there.

For fixed $m$, divide the positive terms in the first exponent by
$X_j$. Their sum tends to zero, by
\eqref{eq:limit-budget-asymptotic}, so it is eventually at most $b/2$.
For the second exponent, divide its positive terms by $X_j/j^2$:
\[
 \frac{m}{j^{3/2}}+C_m\frac{\log P_j}{X_j/j^2}\longrightarrow0.
\]
It is eventually at most $1$. These two observations prove
\eqref{eq:limit-increment-decay}.

Finally $X_{i+1}=i^2X_i$, and
\[
 \frac{X_{i+1}/(i+1)^2}{X_i/i^2}
       =\frac{i^4}{(i+1)^2}\ge2\qquad(i\ge3).
\]
Thus both positive decay exponents at least double at each later
stage. Once an exponent $A$ is at least $\log2$,
$\sum_{n\ge0}e^{-2^nA}\le\sum_{n\ge0}e^{-(n+1)A}\le2e^{-A}$.
Apply this separately to the two bounds, starting at $i=j+1$,
to obtain \eqref{eq:limit-initial-tail}.
The finitely many earlier increments are smooth, which proves the
full summability assertion.\hfill$\square$

\subsection{A single smooth, compactly supported datum}

Fix $R\ge2$ such that the support of the base datum
\eqref{eq:base-datum} lies in $\overline{B(0,R)}$.
The seed support and \eqref{eq:initial-packet-support} then place
every initial increment in this same ball.

\refstepcounter{equation}\label{prop:limiting-initial-datum}
\textbf{Proposition \theequation\ (The limiting initial velocity).}
The series
\begin{equation}\label{eq:limiting-initial-datum}
 u_0=U^{J-1}(0)+\sum_{j=J}^{\infty}w_{j,0}
       =\lim_{j\to\infty}U^j(0)
\end{equation}
converges in every $H^m(\R^3)$, $m\ge0$ an integer, to the same
velocity $u_0$. This velocity is smooth, odd, divergence free, and
supported in $\overline{B(0,R)}$. For every fixed $m$ and all
$j\ge j_m$,
\begin{equation}\label{eq:limiting-datum-error}
 \|u_0-U^j(0)\|_{H^m}
       \le2e^{-b x_j/2}+2e^{-x_j/(j+1)^2}\longrightarrow0.
\end{equation}
In particular, for every multi-index $I$,
\begin{equation}\label{eq:limiting-datum-uniform-derivatives}
 \|\partial^I(U^j(0)-u_0)\|_{L^\infty}
       \longrightarrow0.
\end{equation}

\emph{Proof.} Lemma~\ref{lem:initial-increment-summability} makes
the series absolutely convergent in each complete space $H^m$.
All these limits agree with its $L^2$ limit, since convergence in
$H^m$ implies convergence in $L^2$. Telescoping
\eqref{eq:limit-initial-increment} identifies its partial sums with
$U^j(0)$, and \eqref{eq:limit-initial-tail} proves
\eqref{eq:limiting-datum-error}.

For any multi-index $I$, choose an integer $m>|I|+3/2$.
Sobolev embedding $H^m(\R^3)\hookrightarrow C_b^{|I|}(\R^3)$ then
proves \eqref{eq:limiting-datum-uniform-derivatives} and supplies
compatible continuous derivatives of every order. Hence $u_0$ is
smooth. Every partial sum vanishes outside $\overline{B(0,R)}$;
its $L^2$ limit vanishes there almost everywhere, and continuity
makes the vanishing pointwise. Oddness passes to the limit in
$L^2$, and divergence freeness passes in $H^1$ (or distributions).
The smooth representative therefore has all the stated properties.
\hfill$\square$

The construction now provides one admissible smooth initial datum.
Section~\ref{sec:limiting-solution-stability} compares its Euler
solution with the finite stages, using
\eqref{eq:limiting-datum-error} at $m=3$ and the growing target
gradients in \eqref{eq:induction-accumulation}.

\section{Smooth comparison and finite-time blowup}
\label{sec:limiting-solution-stability}
\fancyhead[R]{\small Smooth comparison and finite-time blowup}

Let $u_0$ be the datum of Proposition~\ref{prop:limiting-initial-datum}.
The local-existence argument in the proof of
Lemma~\ref{lem:base-flow}, based on \eqref{eq:base-sobolev-energy},
applies to this smooth divergence-free datum. Denote its unique
maximal smooth Euler solution by $u$ and its lifespan by
$T_*=T_*(u_0)>0$. Here smoothness includes continuity in every fixed
Sobolev norm on compact time intervals before $T_*$.
Keep $T=\lim_jt_j$ for the accumulation time of
Corollary~\ref{cor:finite-stage-sequence}.

If $u$ remained smooth past $T$, the small initial differences would
remain small through every target time $t_j$. Since $H^3(\R^3)$
controls the velocity gradient pointwise, this would contradict
\eqref{eq:induction-accumulation}. The comparison below obtains its
constants from the smooth reference solution; it does not require
a uniform bound on the gradients of the finite-stage solutions.

\subsection{Comparison with a smooth reference solution}

\refstepcounter{equation}\label{lem:smooth-reference-stability}
\textbf{Lemma \theequation\ ($H^3$ stability near a smooth solution).}
Let $0<\sigma<\infty$ and let $u,v$ be smooth exact Euler solutions
on $[0,\sigma]\times\R^3$,
with finite Sobolev norms, and suppose
$\sup_{0\le t\le\sigma}\|u(t)\|_{H^4}\le A$.
There is an absolute constant $C$ such that, if
\begin{equation}\label{eq:stability-initial-smallness}
 \|v(0)-u(0)\|_{H^3}\,e^{C(A+1)\sigma}\le\tfrac12,
\end{equation}
then
\begin{equation}\label{eq:stability-comparison}
 \|v(t)-u(t)\|_{H^3}
 \le\|v(0)-u(0)\|_{H^3}\,e^{C(A+1)t}
 \qquad(0\le t\le\sigma).
\end{equation}
The constant is independent of any a priori Sobolev bound on $v$.

\emph{Proof.} Set $w=v-u$ and let $\pi$ be the pressure of $v$
minus that of $u$. Subtracting their exact equations gives
\begin{equation}\label{eq:stability-difference-equation}
 \partial_tw+(u+w)\cdot\nabla w+w\cdot\nabla u+\nabla\pi=0,
 \qquad \nabla\cdot w=0.
\end{equation}
For a multi-index $I$, $|I|\le3$, differentiate
\eqref{eq:stability-difference-equation} and take the
$L^2$ inner product with $\partial^Iw$. Incompressibility gives
the two cancellations
\[
 \langle\partial^Iw,(u+w)\cdot\nabla\partial^Iw\rangle=0,
 \qquad
 \langle\partial^Iw,\nabla\partial^I\pi\rangle=0.
\]
Thus only the differentiated transport commutators
$[\partial^I,u\cdot\nabla]w$, $[\partial^I,w\cdot\nabla]w$, and
$\partial^I(w\cdot\nabla u)$ remain.

For $a=u$ or $a=w$, the commutator $[\partial^I,a\cdot\nabla]w$
is a linear combination of terms
$(\partial^Ja)\cdot\nabla\partial^{I-J}w$, with $0<J\le I$
componentwise (and vanishes when $I=0$).
Leibniz' rule and the embeddings
$H^3\hookrightarrow W^{1,\infty}$ and $H^1\hookrightarrow L^6$
give
\begin{equation}\label{eq:stability-product-bounds}
 \begin{aligned}
 \sum_{|I|\le3}\|[\partial^I,u\cdot\nabla]w\|_{L^2}
       &\le C\|u\|_{H^3}\|w\|_{H^3},\\
 \sum_{|I|\le3}\|[\partial^I,w\cdot\nabla]w\|_{L^2}
       &\le C\|w\|_{H^3}^2,\\
 \|w\cdot\nabla u\|_{H^3}
       &\le C\|w\|_{H^3}\|u\|_{H^4}.
 \end{aligned}
\end{equation}
In the commutators, terms with only one derivative on $a$ or on $w$
are bounded by putting that factor in $L^\infty$ and the other in $L^2$.
The only middle case at top order is
$\partial^2a\,\partial^2w$: use $L^3\cdot L^6$, with $L^3$
obtained by interpolation between $L^2$ and $L^6$.
This proves the first two bounds. The last follows from the same
product estimates, or the algebra property of $H^3$ applied to
$w$ and $\nabla u$.

Summing the inner products and applying \eqref{eq:stability-product-bounds} yields
\begin{equation}\label{eq:stability-difference-energy}
 \frac12\frac{d}{dt}\|w\|_{H^3}^2
 \le C\|u\|_{H^4}\|w\|_{H^3}^2+C\|w\|_{H^3}^3.
\end{equation}
Equivalently, $y(t)=\|w(t)\|_{H^3}$ satisfies
\begin{equation}\label{eq:stability-scalar-inequality}
 y'\le CAy+Cy^2.
\end{equation}
At a zero of the norm, the corresponding integral inequality follows
by applying \eqref{eq:stability-difference-energy} to
$(\|w\|_{H^3}^2+\epsilon^2)^{1/2}$ and then letting $\epsilon\downarrow0$.
While $y\le1$, \eqref{eq:stability-scalar-inequality} and Gronwall give
$y(t)\le y(0)e^{C(A+1)t}\le1/2$ by
\eqref{eq:stability-initial-smallness}. Since $y$ is continuous and
$y(0)\le1/2$, it cannot have a first time at which $y=1$.
The estimate holds on the entire interval, proving
\eqref{eq:stability-comparison}.\hfill$\square$

\subsection{The lifespan bound for the limiting datum}

\refstepcounter{equation}\label{prop:limiting-finite-lifespan}
\textbf{Proposition \theequation\ (Blowup no later than the accumulation time).}
For the smooth compactly supported datum $u_0$ in
\eqref{eq:limiting-initial-datum}, its maximal smooth lifespan obeys
\begin{equation}\label{eq:limiting-lifespan-bound}
 0<T_*(u_0)\le T<S_{\rm base}<\infty.
\end{equation}

\textbf{Remark.} The conclusion gives only $T_*\le T$; the limiting
solution may cease to be smooth earlier. Comparison through all target
times is used only under the contradiction assumption $T_*>T$.
No stability of blowup under perturbations of the initial datum is
proved here.

\emph{Proof.} Local existence gives $T_*>0$.
Suppose $T_*>T$. Then $u$ is smooth on the compact interval $[0,T]$,
and
\[
 A:=\sup_{0\le t\le T}\|u(t)\|_{H^4}<\infty.
\]
For every $j$, the exact solution $U^j$ exists on $[0,t_j]$ by
Corollary~\ref{cor:finite-stage-sequence}. The initial convergence
\eqref{eq:limiting-datum-error} with $m=3$ implies, for all
sufficiently large $j$,
\[
 \|U^j(0)-u_0\|_{H^3}\,e^{C(A+1)T}\le\tfrac12.
\]
Apply Lemma~\ref{lem:smooth-reference-stability} with $v=U^j$
and $\sigma=t_j<T$, using the same $A$ for every stage. It gives
\begin{equation}\label{eq:limiting-solution-comparison}
 \sup_{0\le t\le t_j}\|U^j(t)-u(t)\|_{H^3}
 \le e^{C(A+1)T}\|U^j(0)-u_0\|_{H^3}\longrightarrow0.
\end{equation}
By Lemma~\ref{lem:smooth-reference-stability}, $C$ is universal, so the
factor $e^{C(A+1)T}$ depends only on $u$ and $T$, not on $U^j$,
and is therefore independent of the stage.

By Sobolev embedding and \eqref{eq:limiting-solution-comparison},
\begin{equation}\label{eq:limiting-target-comparison}
 \|M^j(t_j,0)-\nabla u(t_j,0)\|
 \le C\|U^j(t_j)-u(t_j)\|_{H^3}\longrightarrow0.
\end{equation}
But \eqref{eq:induction-accumulation} gives
$\|M^j(t_j,0)\|\ge h_j/2\to\infty$, whereas
$\|\nabla u(t_j,0)\|\le CA$.
The triangle inequality in \eqref{eq:limiting-target-comparison}
is incompatible with these two bounds. Hence $T_*\le T$.
Finally \eqref{eq:induction-accumulation} at the seed horizon gives
$T<S_{J-1}=S_{\rm base}$, proving
\eqref{eq:limiting-lifespan-bound}.\hfill$\square$

Section~\ref{sec:endpoint-conclusions} proves the endpoint conclusions
at the actual maximal time $T_*$, which need not equal $T$.

\section{Continuation criteria and endpoint divergence}
\label{sec:endpoint-conclusions}
\fancyhead[R]{\small The endpoint conclusions}

Proposition~\ref{prop:limiting-finite-lifespan} gives a finite maximal
smooth lifespan $T_*=T_*(u_0)$. We now identify two quantities that
must diverge there. The first conclusion follows from the Sobolev
energy estimate and a uniform local restart. For the vorticity
conclusion, a logarithmic estimate converts time-integrability of
$\|\operatorname{curl}u\|_{L^\infty}$ into a uniform $H^3$ bound,
which permits the same restart. This is the Beale--Kato--Majda
continuation argument \cite{beale-kato-majda1984}; we give the
estimates needed here explicitly.

\subsection{The continuation criterion}

\refstepcounter{equation}\label{lem:endpoint-restart}
\textbf{Lemma \theequation\ (Continuation from a bounded $H^3$ norm).}
Let $u$ be a smooth Euler solution on $[0,S)\times\R^3$, where
$0<S<\infty$, with $u(0)$ in every $H^m$ and
\begin{equation}\label{eq:endpoint-H3-input}
 \sup_{0\le t<S}\|u(t)\|_{H^3}\le M<\infty.
\end{equation}
Then $u$ extends smoothly beyond $S$. In particular, such an
extension exists if
\begin{equation}\label{eq:endpoint-gradient-integrability}
 \int_0^S\|\nabla u(t)\|_{L^\infty}\,dt<\infty.
\end{equation}

\emph{Proof.} Increase $M$ to at least $1$. The local-existence
argument used with \eqref{eq:base-sobolev-energy} gives a time
$\delta_0(M)>0$ for every divergence-free initial velocity of
$H^3$ norm at most $M$. This time depends on the $H^3$ bound alone.
Let $C_{\rm emb}$ satisfy
$\|\nabla f\|_{L^\infty}\le C_{\rm emb}\|f\|_{H^3}$, and set
\[
 \delta=\min\left\{\frac{\delta_0(M)}2,
                    \frac1{2C_3C_{\rm emb}M}\right\}>0,
\]
where $C_3$ is the constant in \eqref{eq:base-sobolev-energy}
at order three. For any $s<S$, restart from $u(s)$ and denote
the restarted solution by $v_s$. The same energy estimate gives
\begin{equation}\label{eq:endpoint-restart-bound}
 \|v_s(t)\|_{H^3}
 \le\frac{M}{1-C_3C_{\rm emb}M(t-s)}\le2M
 \qquad(s\le t\le s+\delta).
\end{equation}
For each integer $m\ge3$, apply
\eqref{eq:base-sobolev-energy} again to obtain
\begin{equation}\label{eq:endpoint-higher-restart}
 \|v_s(t)\|_{H^m}
 \le\|u(s)\|_{H^m}
       \exp\!\bigl(2C_mC_{\rm emb}M(t-s)\bigr).
\end{equation}
Thus the smooth higher-order solutions persist on this same
interval; its length does not shrink with $m$.
Choose $s\in(\max\{0,S-\delta/2\},S)$.
Uniqueness identifies $v_s$ with $u$ on $[s,S)$, while
$s+\delta>S$. This supplies the smooth extension.

Finally \eqref{eq:base-sobolev-energy} directly gives
\begin{equation}\label{eq:endpoint-integrated-energy}
 \|u(t)\|_{H^3}\le\|u(0)\|_{H^3}
       \exp\!\left(C_3\int_0^t\|\nabla u(\tau)\|_{L^\infty}\,d\tau\right).
\end{equation}
Condition \eqref{eq:endpoint-gradient-integrability} therefore
implies \eqref{eq:endpoint-H3-input}, proving the last assertion.
\hfill$\square$

\subsection{The logarithmic vorticity estimate}

The vorticity does not control the velocity gradient by a direct
$L^\infty$ estimate for the Biot--Savart operator. The following
logarithmic bound is sufficient for continuation.

\refstepcounter{equation}\label{lem:endpoint-logarithmic-bound}
\textbf{Lemma \theequation\ (Gradient bound with a logarithmic Sobolev cost).}
For every divergence-free $u\in H^3(\R^3)$, writing
$\omega=\operatorname{curl}u$, one has
\begin{equation}\label{eq:endpoint-logarithmic-bound}
 \|\nabla u\|_{L^\infty}
 \le C\left[1+\|u\|_{L^2}
       +\|\omega\|_{L^\infty}\log(e+\|u\|_{H^3})\right].
\end{equation}

\emph{Proof.} Since $\nabla\cdot u=0$,
$-\Delta u=\operatorname{curl}\omega$. The Fourier identity for
the whole-space Biot--Savart operator therefore gives
\begin{equation}\label{eq:endpoint-kernel-representation}
 \nabla u(x)=\mathsf L\omega(x)
       +\operatorname{p.v.}\!\int_{\R^3}\mathsf K(z)\omega(x-z)\,dz.
\end{equation}
Here $\mathsf L$ is a fixed linear map. The entries of
$\mathsf K$ are fixed linear combinations of the second derivatives
of $(4\pi|z|)^{-1}$ away from zero, namely
$(3z_az_b-\delta_{ab}|z|^2)/(4\pi|z|^5)$.
Consequently
\[
 |\mathsf K(z)|\le C|z|^{-3},\qquad
 |\nabla\mathsf K(z)|\le C|z|^{-4},\qquad
 \int_{\mathbb S^2}\mathsf K(\vartheta)\,d\vartheta=0.
\]
The local map $\mathsf L$ accounts for the distributional terms
at the origin when differentiating the Newtonian potential.

Fix a smooth radial cutoff $\chi$, equal to one on $|z|\le1$
and zero on $|z|\ge2$. For $0<d<1/2$, split the integral in
\eqref{eq:endpoint-kernel-representation} into $|z|<d$, the
remaining part with cutoff $\chi$, and the part with cutoff
$1-\chi$. The zero spherical mean permits subtraction of
$\omega(x)$ in the inner integral. Since
$H^3(\R^3)\hookrightarrow C^{1,1/4}(\R^3)$,
\begin{equation}\label{eq:endpoint-kernel-near-middle}
 \begin{aligned}
 \left|\int_{|z|<d}\mathsf K(z)
          [\omega(x-z)-\omega(x)]\,dz\right|
 &\le C\|u\|_{H^3}\int_0^d r^{-3}r^{1/4}r^2\,dr
 \le C d^{1/4}\|u\|_{H^3},\\
 \left|\int_{|z|\ge d}\chi(z)\mathsf K(z)\omega(x-z)\,dz\right|
 &\le C\|\omega\|_{L^\infty}\int_d^2\frac{dr}{r}
 =C\|\omega\|_{L^\infty}\log(2/d).
 \end{aligned}
\end{equation}
For the far part, use $\omega=\operatorname{curl}u$ and integrate
by parts, moving its derivative onto $(1-\chi)\mathsf K$.
That differentiated kernel belongs to $L^2$: it vanishes near zero
and decays as $|z|^{-4}$ at infinity. Cauchy--Schwarz gives
\begin{equation}\label{eq:endpoint-kernel-far}
 \left|\int_{\R^3}(1-\chi(z))\mathsf K(z)\omega(x-z)\,dz\right|
 \le C\|\nabla[(1-\chi)\mathsf K]\|_{L^2}\|u\|_{L^2}
 \le C\|u\|_{L^2}.
\end{equation}
The integration by parts is justified first with a distant cutoff;
its error tends to zero by the kernel decay and $u\in L^2$.

Choose $d=\tfrac14(e+\|u\|_{H^3})^{-4}$.
Then $d<1/2$, $d^{1/4}\|u\|_{H^3}\le1$, and
$\log(2/d)\le C\log(e+\|u\|_{H^3})$.
Combining \eqref{eq:endpoint-kernel-near-middle} and
\eqref{eq:endpoint-kernel-far} with the local term
$|\mathsf L\omega|\le C\|\omega\|_{L^\infty}$ proves
\eqref{eq:endpoint-logarithmic-bound}.
The argument can be performed on smooth approximations and passed
to the $H^3$ limit.\hfill$\square$

\subsection{Blowup at the maximal time}

\refstepcounter{equation}\label{prop:endpoint-divergence}
\textbf{Proposition \theequation\ (Endpoint gradient and vorticity divergence).}
Let $u_0$ be the smooth compactly supported datum of
Proposition~\ref{prop:limiting-initial-datum}, and let $u$ be its
maximal smooth Euler solution. At the finite time $T_*(u_0)$ of
Proposition~\ref{prop:limiting-finite-lifespan},
\begin{equation}\label{eq:endpoint-divergence}
 \limsup_{t\uparrow T_*}\|\nabla u(t)\|_{L^\infty}=\infty,
 \qquad
 \int_0^{T_*}\|\operatorname{curl}u(t)\|_{L^\infty}\,dt=\infty.
\end{equation}

\emph{Proof.} If the first limsup were finite, continuity on
compact subintervals of $[0,T_*)$ would give
$\sup_{t<T_*}\|\nabla u(t)\|_{L^\infty}<\infty$.
Since $T_*<\infty$, condition
\eqref{eq:endpoint-gradient-integrability} would hold with $S=T_*$.
Lemma~\ref{lem:endpoint-restart} would extend $u$ smoothly beyond
$T_*$, contradicting maximality.

For the second assertion put $\omega=\operatorname{curl}u$ and
suppose instead that $\int_0^{T_*}\|\omega(t)\|_{L^\infty}\,dt<\infty$.
Taking the $L^2$ inner product of Euler with $u$, the transport and
pressure terms cancel by incompressibility; hence
$\|u(t)\|_{L^2}=\|u_0\|_{L^2}$ for $t<T_*$.
Set $Z(t)=\log(e+\|u(t)\|_{H^3})$.
Dividing \eqref{eq:base-sobolev-energy} at order three by
$e+\|u\|_{H^3}$ and applying
Lemma~\ref{lem:endpoint-logarithmic-bound} gives
\begin{equation}\label{eq:endpoint-logarithmic-energy}
 Z'(t)\le C\|\nabla u(t)\|_{L^\infty}
       \le C(1+\|u_0\|_{L^2})+C\|\omega(t)\|_{L^\infty}Z(t).
\end{equation}
Gronwall's inequality yields, for every $t<T_*$,
\begin{equation}\label{eq:endpoint-vorticity-continuation}
 Z(t)\le\bigl[Z(0)+C(1+\|u_0\|_{L^2})t\bigr]
       \exp\!\left(C\int_0^t\|\omega(\tau)\|_{L^\infty}\,d\tau\right).
\end{equation}
The assumed finite integral bounds $Z$ uniformly on $[0,T_*)$.
Consequently $\|u(t)\|_{H^3}$ is uniformly bounded there.
Lemma~\ref{lem:endpoint-restart} again extends the solution beyond
$T_*$, which is impossible. This proves both assertions.
\hfill$\square$

Together, Propositions~\ref{prop:limiting-initial-datum},
\ref{prop:limiting-finite-lifespan}, and~\ref{prop:endpoint-divergence}
prove Theorem~\ref{thm:main}: a smooth compactly
supported divergence-free initial velocity with finite maximal smooth
lifespan and the two endpoint divergences
\eqref{eq:endpoint-divergence}.

\section*{Acknowledgments}
The author is supported by the Ram\'on y Cajal grant RYC2022-035363-I.

\end{document}